\algnewcommand{\Input}{\item[\textbf{Input:}]}
\algnewcommand{\Output}{\item[\textbf{Output:}]}
\newtheorem{theorem}{Theorem}[section]
\newtheorem{theoremIntro}{Theorem}
\newtheorem*{theorem*}{Theorem}
\newtheorem{lemma}[theorem]{Lemma}
\newtheorem{cor}[theorem]{Corollary}
\newtheorem{defi}[theorem]{Definition}
\newtheorem{prop}[theorem]{Proposition}
\newtheorem{claim}[theorem]{Claim}
\theoremstyle{definition}
\theoremstyle{plain}
\newcommand{\N}{\mathbb{N}}
\newcommand{\R}{\mathbb{R}}
\newcommand{\Var}{{\rm Var}}
\newcommand{\Event}{\mathcal{E}}
\def\Prob{{\mathbb P}}
\def\dist{{\rm dist}}
\def\neigh{{\mathcal N}}
\newcommand{\1}{\mathbbm{1}}
\newcommand{\E}{\mathbb{E}}
\newcommand{\p}{\mathbb{P}}
\newcommand{\cE}{\mathcal{E}}
\newcommand{\cB}{\mathcal{B}}
\newcommand{\cS}{\mathcal{S}}
\newcommand{\cI}{\mathcal{I}}
\newcommand{\cJ}{\mathcal{J}}
\newcommand{\Bin}{\mathrm{Binomial}}
\newcommand{\Ber}{\mathrm{Bernoulli}}
\newcommand{\eps}{\varepsilon}
\newcommand{\var}{\mathsf{v}}
\newcommand{\HD}{\mathrm{HD}}
\newcommand{\typ}{\mathbf{Typ}_{G_0}(m,\kappa,K,\delta)}
\title{Exact Matching of Random Graphs with Constant Correlation}
\author[1]{Cheng Mao \thanks{Email: cheng.mao@math.gatech.edu. C.M.\ was partially supported by NSF grant DMS-2053333.}}
\author[2]{Mark Rudelson \thanks{Email: rudelson@umich.edu. M.R.\ was partially supported by the NSF grant DMS-2054408.}}
\author[1]{Konstantin Tikhomirov \thanks{Email: ktikhomirov6@gatech.edu.
K.T.\ was partially supported by the Sloan Research Fellowship and by NSF grant DMS-2054666.}}
\affil[1]{School of Mathematics, Georgia Institute of Technology}
\affil[2]{Department of Mathematics, University of Michigan}
\date{\today}
\begin{document}

\maketitle

\begin{abstract}
This paper deals with the problem of {\it graph matching} or {\it network alignment} for Erd\H{o}s--R\'enyi graphs, which can be viewed as a noisy average-case version of the graph isomorphism problem. Let $G$ and $G'$ be $G(n, p)$ Erd\H{o}s--R\'enyi graphs marginally, identified with their adjacency matrices. Assume that $G$ and $G'$ are correlated such that $\mathbb{E}[G_{ij} G'_{ij}] = p(1-\alpha)$. For a permutation $\pi$ representing a latent matching between the vertices of $G$ and $G'$, denote by $G^\pi$ the graph obtained from permuting the vertices of $G$ by $\pi$. Observing $G^\pi$ and $G'$, we aim to recover the matching $\pi$. In this work, we show that for every $\varepsilon \in (0,1]$, there is $n_0>0$ depending on $\varepsilon$ and absolute constants $\alpha_0, R > 0$ with the following property. Let $n \ge n_0$, $(1+\varepsilon) \log n \le np \le n^{\frac{1}{R \log \log n}}$, and $0 < \alpha < \min(\alpha_0,\varepsilon/4)$. There is a polynomial-time algorithm $F$ such that $\mathbb{P}\{F(G^\pi,G')=\pi\}=1-o(1)$. This is the first polynomial-time algorithm that recovers the {\it exact matching} between vertices of correlated Erd\H{o}s--R\'enyi graphs with {\it constant correlation} with high probability. The algorithm is based on comparison of {\it partition trees} associated with the graph vertices.
\end{abstract}

\tableofcontents

% \newpage

\section{Introduction}

The problem of \emph{graph matching} (also known as \emph{graph alignment} or \emph{network alignment}) refers to finding a mapping between vertices of two given graphs in order to maximize alignment of their edges. 
If the two graphs are isomorphic, the problem is the celebrated {\it graph isomorphism problem},
for which no polynomial-time algorithm is known in the worst case (see \cite{babai2016graph} and references therein). 
In general, the noisy graph matching problem can be formulated as the \emph{quadratic assignment problem}, which is NP-hard to solve or approximate (see surveys \cite{Pardalos94thequadratic,burkard1998quadratic}).

While in the worst case the problem appears intractable, 
an optimal matching of certain {\it random graphs} can be realized in polynomial time.
In particular, the graph isomorphism problem for Erd\H{o}s--R\'enyi graphs
above the connectivity threshold can be solved in polynomial time with high probability
\cite{babai1980random,bollobas1982distinguishing,czajka2008improved}.
More recently, numerous results have been obtained in the literature for matching a pair of {\it correlated Erd\H{o}s--R\'enyi graphs} \cite{pedarsani2011privacy,yartseva2013performance,lyzinski2014seeded,kazemi2015growing,feizi2016spectral,cullina2016improved,shirani2017seeded,DMWX18,barak2019nearly,bozorg2019seedless,cullina2019partial,dai2019analysis,mossel2020seeded,fan2020spectral,ganassali20a,wu2021settling,pmlr-v134-mrt}.
At the same time, conditions for existence of a polynomial-time algorithm for recovering the latent matching between the two graphs are far from being fully understood.
In this work, we make further progress along this line of research by proposing a polynomial-time algorithm which produces an exact matching between a pair of correlated Erd\H{o}s--R\'enyi graphs with {\it constant correlation}, which is the first result of this kind in the literature.

\subsection{The correlated Erd\H{o}s--R\'enyi graph model}
\label{sec:model}

% Let $G_0$ be the parent $G(n,q)$ graph.
% Suppose $G$ and $G'$ are two correlated graphs, with correlation $1-\alpha$, where $\alpha$ is a small positive constant.
% That is, each of $G$ and $G'$ is marginally a $G(n,p)$ graph and $p = q(1-\alpha)$. 
% We observe $G^{\pi}$ and $G'$ and want to recover $\pi$.

We consider the \emph{correlated Erd\H{o}s--R\'enyi graph model} \cite{pedarsani2011privacy} in this work. 
Fix $p\in (0,1)$, $\alpha \in [0, 1-p]$, and a positive integer $n$. Let $G_0$ be a $G(n, \frac{p}{1-\alpha})$ Erd\H{o}s--R\'enyi graph, which is called the \emph{parent graph}.
Conditional on the parent graph $G_0$, a subgraph $G$ is obtained by removing every edge of $G_0$ independently with probability $\alpha$; 
moreover, another subgraph $G'$ of $G_0$ is obtained in the same way  (conditionally) independently of $G$. 
Then $G$ and $G'$ are marginally both $G(n, p)$ graphs,
and for every pair of distinct vertices $i$ and $j$ in $[n] := \{1, 2, \dots, n\}$,
\begin{equation*}
\Prob\big\{\mbox{$i$ is adjacent to $j$ in $G$}\,|\,\mbox{$i$ is adjacent to $j$ in $G'$}\big\}
=1-\alpha.
% \label{eq:corr-90878987}
\end{equation*}
Note that $\alpha$ indicates the noise level in the model, while $1-\alpha$ can be viewed as the correlation between the two graphs. 
Given a permutation $\pi : [n] \to [n]$, let $G^{\pi}$ denote the graph obtained from permuting the vertices of $G$ by $\pi$. In other words, $i$ is adjacent to $j$ in $G$ if and only if $\pi(i)$ is adjacent to $\pi(j)$ in $G^{\pi}$. 
The permutation $\pi$ is unknown and represents the latent matching between the vertices of the two graphs. 
Observing the graphs $G^{\pi}$ and $G'$, we aim to recover the matching $\pi$ exactly. 

\subsection{Prior work and our contributions}

We use the standard asymptotic notation $O(\cdot)$, $o(\cdot)$, and $\Omega(\cdot)$ for a growing $n$; we also use $\tilde O(\cdot)$ to hide a polylogarithmic factor in $n$. Moreover, we use $C, C', c, c'$, possibly with subscripts, to denote universal positive constants that may change at each appearance. 

Let us focus our discussion on the exact recovery of the latent matching $\pi$.
First, it is without loss of generality to assume that the average degree of each graph exceeds the so-called \emph{connectivity threshold}. 
To be more precise, if $np \le (1-\eps) \log n$, a $G(n,p)$ graph will almost surely contain isolated vertices, so exact recovery of the matching is impossible in this case. 
We therefore assume that the average degree satisfies $np \ge (1+\eps) \log n$ for an arbitrarily small absolute constant $\eps > 0$. Then a $G(n,p)$ graph is known to be connected almost surely as $n$ grows. 

For the correlated Erd\H{o}s--R\'enyi graph model, the optimal information-theoretic threshold for exact recovery of $\pi$ is known \cite{wu2021settling}. 
For example, in the regime $\frac{p}{1-\alpha} = o(1)$, exact matching is possible if $np(1-\alpha) \ge (1+\eps') \log n$ for any constant $\eps' > 0$. In particular, if $np = (1+\eps) \log n$ for a small constant $\eps > 0$, then this threshold requires $\alpha$ to be slightly smaller than $\eps$. 
In the dense case where $np$ is much larger than $\log n$, the information-theoretic threshold even allows $\alpha$ to be close to $1$. 
However, this optimal condition is achieved by the maximum likelihood estimator which employs an exhaustive search over the set of permutations and is therefore computationally infeasible. 
Several recent works developed quasi-polynomial and polynomial time algorithms for exact recovery of $\pi$ under stronger conditions. 
A selection of prior results along with ours are listed in Table~\ref{tab:main-results}. 

\begin{table}[h]
\caption{Conditions for Exact Matching}
\centering
\begin{tabular}{ | c | c | c | } 
\hline
  & Condition & Time Complexity \\
\hline
\cite{wu2021settling} & $np (1-\alpha) \ge (1+\eps') \log n$ if $\frac{p}{1-\alpha} = o(1)$ & exponential \\
\hline
\cite{barak2019nearly} & $np \ge n^{o(1)}$, $1 - \alpha \ge (\log n)^{-o(1)}$ & $n^{O(\log n)}$ \\ 
\hline
\multirow{2}{*}{\cite{DMWX18}} & $np \ge (\log n)^{C}$, $\alpha \le (\log n)^{-C}$ & \multirow{2}{*}{$\tilde O(n^3 p^2 + n^{2.5})$} \\ \cline{2-2}
& $C \log n \le np \le e^{(\log \log n)^{C}}$, $\alpha \le (\log \log n)^{-C}$ & \\ 
\hline
\cite{FMWX19b} & $np \ge (\log n)^{C}$, $\alpha \le (\log n)^{-C}$ & $O(n^3)$ \\ 
\hline
\cite{pmlr-v134-mrt} & $np \ge (\log n)^{C}$, $\alpha \le (\log \log n)^{-C}$ & $\tilde O(n^2)$ \\ 
\hline
This Work & $(1+\eps) \log n \le np \le n^{\frac{1}{C \log \log n}}$, $\alpha \le \min(\text{const}, \eps/4)$ & $n^{2+o(1)}$ \\ 
\hline
\end{tabular}
\label{tab:main-results}
\end{table}

As shown in Table~\ref{tab:main-results}, before this work, no polynomial-time algorithm is known to achieve exact recovery if the noise parameter $\alpha$ is a small constant nor if the average degree $np$ is close to the connectivity threshold $\log n$. 
Our work achieves both conditions and therefore resolves what was seen as a main open problem in this literature. 
In particular, if $np = (1+\eps) \log n$ for a constant $\eps > 0$, the condition required by our algorithm differs from the optimal information-theoretic condition by at most a constant factor. 

While our main focus is exact recovery of the latent matching $\pi$, part of our strategy applies to partial recovery of $\pi$ and is expected to carry over to sparser regimes where exact matching is impossible; see Theorem~\ref{thm:main-1-intro} and Section~\ref{sec:approx-match} for details. 
The new algorithm we propose is based on exploring large neighborhoods of vertices via \emph{partition trees}, a technique that may be of further interest. 
Moreover, the last step of our algorithm is to obtain an exact matching from a (potentially adversarial) partial matching. 
To this end, we develop a method that tolerates any constant fraction of wrongly matched pairs in the initial partial matching; see Sections~\ref{sec:refine-to-exact} and~\ref{s: perfect} for details.

\subsection{Main results}

Everywhere in this paper, when discussing the computational complexity of a function,
we assume that elementary arithmetic operations as well as the square root and the logarithm,
can be computed exactly in time $\tilde O(1)$. 
%that
%\begin{itemize}
%    \item The time complexity of any elementary arithmetic operation
%    on a pair of $\ell$--digit numbers ($\ell>1$), as well as taking the logarithm or the square root of an $\ell$--digit
%    number
%    is $\ell^{O(1)}$;
%    \item The time complexity of generating a uniform random $\ell$--subset of an $u$--element array
%    is $O(\ell\log u)^{O(1)}$.
%\end{itemize}
%When discussing our algorithms, we will always assume exact rather than floating point arithmetic.
We note that analyzing the algorithms using the floating point arithmetic, while certainly possible,
adds unnecessary technical details to the presentation and does not affect the order of the time complexity.

Our first main result deals with almost exact recovery of the latent permutation.
% Although it does not guarantee an exact matching, 
The algorithm succeeds
with probability at least $1-n^{-D}$ for an arbitrary constant $D>0$, even when the average degree
is logarithmic in $n$.

\begin{theoremIntro}[Almost exact matching]
\label{thm:main-1-intro}
For any constant $D > 0$, there exist constants $\alpha_0, n_0, R, c > 0$ depending on $D$
with the following property.
Let $G^\pi$ and $G'$ be the graphs given by the correlated Erd\H{o}s--R\'enyi graph model defined in Section~\ref{sec:model},
with parameters $n$, $p$, and $\alpha$ such that 
$$
n \ge n_0, \qquad 
\alpha \in (0, \alpha_0), \qquad
\log n \le np (1-\alpha) \le n^{\frac{1}{R \log \log n}} . 
$$
Then there is a random function $F_{\sf al}$ defined on pairs of graphs on $[n]$ and taking values in the 
set of permutations on $[n]$,
such that 
\begin{itemize}
\item $F_{\sf al}$ is independent from the graphs $G^\pi$ and $G'$,
\item $F_{\sf al}$ has expected time complexity $\tilde O(n^2)$, and
\item for any latent permutation $\pi:[n] \to [n]$,
\begin{align*}
\Prob\big\{F_{\sf al}(G^\pi,G')(i)\neq \pi(i)\mbox{ for at most $n^{1-c}$ indices $i \in n$}
\big\}\geq 1 - n^{-D}.
\end{align*}
\end{itemize}
\end{theoremIntro}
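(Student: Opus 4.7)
The plan is to attach to each vertex $v$ of each graph a canonical local signature—a \emph{partition tree} rooted at $v$—so that matched pairs receive essentially the same signature while mismatched pairs receive different ones. Given such signatures computable in time $\tilde O(n^{1-\gamma})$ per vertex for some small $\gamma>0$, the algorithm $F_{\sf al}$ hashes the signatures using randomness independent of the input graphs, sorts the two hash lists, and declares matches by equality of hashes. The total runtime is then $\tilde O(n^{2-\gamma}) \le \tilde O(n^2)$, and the hashing randomness is independent of $G^\pi, G'$ as required.

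For each graph $H$ and vertex $v$, define $T_H(v)$ recursively: level $0$ is $\{v\}$, and at level $\ell \le d$ the children of each previously placed vertex are its new neighbors in $H$, aggregated into \emph{blocks} by a canonical coloring that depends only on the partition tree already built at levels below $\ell$. Choose the depth $d$ so that the BFS ball has size approximately $n^{1-\gamma}$; the upper bound $np \le n^{1/(R\log\log n)}$ together with $np \ge \log n$ makes $d = O(\log n/\log\log n)$ suffice. Using standard branching-process comparisons for Erd\H{o}s--R\'enyi graphs in this regime, I would define a deterministic event on the parent graph $G_0$—holding with probability $1 - n^{-D-2}$—under which at least $n - n^{1-c}$ vertices are \emph{typical}, meaning that their depth-$d$ balls are essentially tree-like with concentrated level sizes and prescribed branching profiles. (This plays the role of the event denoted $\typ$ later in the paper.)

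Conditional on this typical event, for each typical vertex $v$ the two copies $G^\pi$ and $G'$ arise from $G_0$ by independent edge-deletion with rate $\alpha$. I would show that the canonical hashes of $T_{G^\pi}(\pi(v), d)$ and $T_{G'}(v, d)$ coincide with probability $\ge 1 - n^{-D-3}$; a union bound over the at most $n$ typical vertices, together with the $\le n^{1-c}$ non-typical vertices that we simply allow to be mismatched, delivers the claimed error count with probability $\ge 1 - n^{-D-1}$. For distinguishability, I would argue that for any pair $(u,w)$ with $w \neq \pi(u)$ the canonical hashes differ with probability at least $1 - n^{-D-4}$, then union-bound over the $\binom{n}{2}$ pairs; the distinctness comes from anti-concentration of the branching profiles combined with the fact that a BFS ball of size $n^{1-\gamma}$ rooted at different vertices explores structurally different subsets of $G_0$.

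The main obstacle is the noise-stability statement of the previous paragraph. Naively, a single edge flipped at an early level of the BFS cascades downward and corrupts an entire subtree, giving only $1 - O(\alpha n^{1-\gamma})$ per-vertex agreement, which is useless. The remedy is to design the partition tree so that the block structure at each level depends only on coarse, quantized statistics of the previous level—essentially rounded degree profiles between blocks—so that a constant fraction of per-level noise only reassigns a bounded number of vertices between blocks and the canonical hash is preserved. A martingale argument along the BFS exploration, exploiting the sub-polynomial degree bound $np \le n^{1/(R\log\log n)}$ to keep level-wise block sizes sub-polynomial in $n$, should then yield level-by-level concentration with exponential tails, producing the polynomial failure probabilities the theorem demands.
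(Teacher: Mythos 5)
There is a genuine gap, and it is the central one: your plan requires the signature (canonical hash) of a typical vertex to be \emph{exactly} preserved when passing from $G^\pi$ to $G'$, with per-vertex failure probability $n^{-D-3}$, and this cannot hold at constant noise level $\alpha$. The difficulty is not only the cascading of early errors that you identify; it is that the per-vertex, per-coordinate perturbation is of the \emph{same order as the discriminating signal}. Conditionally on $G_0$, the degrees $\deg_G(j)$ and $\deg_{G'}(j)$ are independent $\mathrm{Binomial}(\deg_{G_0}(j),1-\alpha)$ variables, so $\deg_G(j)-\deg_{G'}(j)$ has standard deviation of order $\sqrt{\alpha np}$, which for constant $\alpha$ is comparable to $\sqrt{np}$, the scale of the degree fluctuations themselves. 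Consequently any quantization of degree statistics coarse enough to be noise-stable is too coarse to distinguish vertices, and any quantization fine enough to distinguish vertices is flipped by the noise for a constant fraction of vertices at every level. Your proposed remedy (rounded degree profiles so that "noise only reassigns a bounded number of vertices between blocks") therefore fails: a constant \emph{fraction} of each level is reassigned. Indeed, in the paper's construction the corresponding classes $T_s^m(i,G)$ and $T_s^m(i,G')$ overlap only in a $(1-8\kappa)^m=o(1)$ fraction of their elements (Proposition~\ref{prop:nasva8ya8b341}), and the normalized signature coordinates of a \emph{correct} pair are nearly independent: $\E\bigl[(f_s-f'_s)^2\bigr]/(\var_s+\var'_s)\approx 1-(1-9\kappa)^m$, i.e.\ barely below the value $1$ attained by independent coordinates. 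Exact equality of any such hash has essentially zero probability, so the sort-and-match step collapses.

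The missing idea is that at constant correlation one must \emph{detect a vanishingly small correlation} rather than certify agreement: the paper compares signatures by the sparsified, variance-normalized statistic $\sum_{s\in J}(f_s(i)-f'_s(i'))^2/(\var_s(i)+\var'_s(i'))$ over $|J|=2w=\Theta((\log n)^5)$ random coordinates and thresholds it at $2w\bigl(1-1/\sqrt{\log n}\bigr)$, separating $2w\bigl(1-(\log n)^{-0.1}\bigr)$ (correct pairs) from $2w\bigl(1-(\log n)^{-0.9}\bigr)$ (wrong pairs). This forces all $n^2$ pairwise comparisons (there is no exact-equality hashing shortcut), which is still within the $\tilde O(n^2)$ budget. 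Secondary issues in your write-up — the depth $d=O(\log n/\log\log n)$ with balls of size $n^{1-\gamma}$ versus the paper's $m=\Theta(\log\log n)$, and the unspecified anti-concentration argument for wrong pairs, which in the paper requires a delicate decoupling of the two neighborhoods and a sparsification step to control overlaps between different classes — are downstream of this main obstruction and would need to be reworked once the comparison rule is changed.
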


The next result establishes existence of a polytime procedure producing an exact matching with high probability.
\begin{theoremIntro}[Exact matching]
\label{thm:main-2-intro}
For any constant  $\varepsilon\in (0,1]$, there
exists a constant $n_0>0$ depending on $\eps$ and absolute constants $\alpha_0, R > 0$ with the following property. 
Let $G^\pi$ and $G'$ be the graphs given by the correlated Erd\H{o}s--R\'enyi graph model defined in Section~\ref{sec:model},
with parameters $n$, $p$, and $\alpha$ such that 
$$
n \ge n_0, \qquad 
(1+\eps) \log n \le np \le n^{\frac{1}{R \log \log n}}, \qquad 
0 < \alpha \leq \min(\alpha_0,\eps/4). 
$$
Then there is a random function $F_{\sf ex}$ defined on pairs of graphs on $[n]$ and taking values in the 
set of permutations on $[n]$,
such that 
\begin{itemize}
\item $F_{\sf ex}$ is independent from the graphs $G^\pi$ and $G'$,
\item $F_{\sf ex}$ has expected time complexity $n^{2+o(1)}$, and
\item for every permutation $\pi:[n] \to [n]$,
\begin{align*}
\Prob\big\{F_{\sf ex}(G^\pi,G')= \pi\big\}\ge 1 - n^{-10}-\exp(-\eps pn/10).
\end{align*}
\end{itemize}
\end{theoremIntro}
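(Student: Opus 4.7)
The plan is to deduce Theorem \ref{thm:main-2-intro} from the almost-exact matching produced by Theorem \ref{thm:main-1-intro} via a local refinement step. First, I verify the hypotheses of Theorem \ref{thm:main-1-intro}: since $\alpha\le \eps/4$ and $np\ge (1+\eps)\log n$, we have $np(1-\alpha)\ge (1+\eps)(1-\eps/4)\log n\ge \log n$, while the upper bound $np(1-\alpha)\le np\le n^{1/(R\log\log n)}$ is immediate. Applying Theorem \ref{thm:main-1-intro} with $D=10$ thus provides $\hat\pi_0:=F_{\sf al}(G^\pi,G')$, independent of the graphs beyond its input, that agrees with $\pi$ on all but at most $n^{1-c}$ vertices with probability at least $1-n^{-10}$.

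For the refinement, I associate to every ordered pair $(i,v)\in[n]\times[n]$ the neighborhood-overlap score
\[
S(i,v):=\big|\{j\in[n]\setminus\{i\}:\, j\sim_{G'} i\text{ and }\hat\pi_0(j)\sim_{G^\pi} v\}\big|.
\]
If $v=\pi(i)$ and $j$ is correctly matched, the joint condition collapses to ``$j\sim_{G'}i$ and $j\sim_G i$'', which has probability $p(1-\alpha)$ by the correlated-model construction; consequently $S(i,\pi(i))$ concentrates near $np(1-\alpha)$. For $v\ne \pi(i)$, the two edge-events involve essentially disjoint edges of the parent graph, so $S(i,v)$ concentrates near $np^2$, which is far smaller than $np(1-\alpha)$ across the admissible sparsity range. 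I then set $\hat\pi_1(i):=\arg\max_v S(i,v)$ and, following Sections \ref{sec:refine-to-exact} and \ref{s: perfect}, convert $\hat\pi_1$ into an actual bijection by resolving collisions and applying a consolidation step that tolerates a constant fraction of errors in its input (the latter being the robust-refinement strategy the authors highlight in the introduction).

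The main analytical task---and the main obstacle---is to establish $S(i,\pi(i))>S(i,v)$ simultaneously for every $i\in[n]$ and $v\ne\pi(i)$, with probability at least $1-n^{-10}-\exp(-\eps pn/10)$. The exponential term absorbs the exceptional event that some vertex has anomalously low degree in the intersection subgraph $G\cap G'$, whose expected degree is $n p(1-\alpha)^2$ of order at least $\eps pn$ up to constants; no local procedure can correct matches incident to such vertices, but their existence is controlled by a Chernoff-type bound on shared-edge degrees. On the complementary good event, one combines (i) a Bernstein/Chernoff lower tail for $S(i,\pi(i))$ showing that the correctly matched $j$'s contribute at least $(1-\alpha-o(1))np$ and dominate the contribution of the at most $n^{1-c}$ misaligned $j$'s, and (ii) an upper-tail Chernoff estimate together with a union bound over the $n^{2}$ pairs for $\max_{v\ne\pi(i)}S(i,v)$. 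The most delicate point is that $\hat\pi_0$ depends on the same graphs $G^\pi$ and $G'$ used in the score, breaking clean independence; this is addressed by sample-splitting (reserving an independent random subset of edges for the refinement step) or by exploiting the locality properties of $F_{\sf al}$ guaranteed by Theorem \ref{thm:main-1-intro}.
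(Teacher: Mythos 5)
Your reduction to Theorem~\ref{thm:main-1-intro} plus a neighborhood-overlap refinement is the paper's route (Corollary~\ref{cor:main-4} via Algorithm~\ref{alg:refine}), but the proposal has a genuine gap exactly where the real work lies: a \emph{single} round of the score comparison cannot produce an exact matching. Because $\hat\pi_0$ depends on $G^\pi$ and $G'$, the set of initially mismatched vertices may concentrate adversarially in the neighborhood of some vertex $i$; for such an $i$ the misaligned $j$'s can inflate $S(i,v)$ for a wrong $v$ above any threshold and deflate $S(i,\pi(i))$ below it. The paper's expansion argument (Lemmas~\ref{938174932874-9} and~\ref{198370194871948}) only shows that the number of such bad vertices drops by a constant factor per round, not to zero, which is why Algorithm~\ref{alg:refine} runs $\lceil\log_2 n\rceil$ iterations and Theorem~\ref{1-20941-0498} is proved by iterating Proposition~\ref{prop:improving-partial-matching}. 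Your ``consolidation step that tolerates a constant fraction of errors'' is precisely this missing iterative construction and its analysis, not something you can cite from the introduction. Likewise, neither of your two proposed fixes for the dependence of $\hat\pi_0$ on the graphs is viable as stated: sample-splitting the edges would leave each half at density below the connectivity threshold when $np=(1+\eps)\log n$ (so exact recovery from either half is impossible), and no ``locality'' property of $F_{\sf al}$ is available. The paper instead absorbs the dependence by a union bound over all candidate bad sets $Q,W$ inside Lemma~\ref{198370194871948}, made possible by the decoupling Lemma~\ref{938174932874-9}.

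A secondary but related problem is your concentration claim. At $np=(1+\eps)\log n$ the statistic $S(i,\pi(i))$ does \emph{not} concentrate near $np(1-\alpha)$ uniformly over $i$: the per-vertex lower-tail probability at a constant fraction of the mean is only $\exp(-\Theta(pn))$, which is why Lemma~\ref{1983109487209} can only guarantee the much weaker bound $|\neigh_G(i)\cap\neigh_{G'}(i)|\ge \eps^2pn/256$ for all $i$, at the cost of the $\exp(-\eps pn/8)$ failure probability, and why the acceptance threshold in Algorithm~\ref{alg:refine} is $\eps^2pn/512$ rather than anything near $np(1-\alpha)/2$. Similarly the uniform upper bound on wrong-pair overlaps is $\delta'' pn$ (Lemma~\ref{31-9847-1987-}), not $O(np^2)$. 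Your later sentence acknowledging the low-degree exceptional event contradicts the claimed $(1-\alpha-o(1))np$ contribution; the quantitative separation you need is $\Theta(\eps^2 pn)$ versus $2^{-10}\eps^2 pn$, and the whole argument must be organized around that weaker, uniform bound.
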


The actual computational procedures for $F_{\sf al}$ and $F_{\sf ex}$ in the above theorems
will be discussed in Section~\ref{s:algo}.
Note that the theorems assume $np \le n^{\frac{1}{R \log \log n}}$, that is, the graphs in consideration are sufficiently sparse. 
This is because the success of our main algorithm relies on the condition that the neighborhood of radius $O(\log \log n)$ around any typical vertex is a tree. 
In the denser regime where $n^{\frac{1}{R \log \log n}} \ll np \le O(1)$, the problem of matching two Erd\H{o}s--R\'enyi graphs with constant correlation remains open.
It is interesting to study whether an extension of the algorithms in this work or our earlier work \cite{pmlr-v134-mrt} can solve the problem. 
A major difficulty is to handle the probabilistic dependency across multiple steps of an iterative algorithm in the dense regime.

\subsection{Notation}% and structure of the paper}
For any positive integer $n$, let $[n]$ be the set of integers $\{1,2,\dots,n\}$. 
Let $\N$ denote the set of positive integers and $\N_0$ the set of nonnegative integers. 
Let $\land$ and $\lor$ denote the $\min$ and the $\max$ operator for two real numbers, respectively.
% For two positive functions $f$ and $g$ depending on $n$, we will write $f\ll g$ if $f=o_n(g)$ when $n\to\infty$.
% Let $\sign : \R \to \{-1,1\}$ denote the sign function defined by $\sign(x) = -1$ if $x<0$ and $\sign(x) = 1$ if $x\ge 0$.

For a graph $G$ with vertex set $[n]$ and $i \in [n]$, let $\deg_G(i)$ denote the degree of $i$ in $G$. 
For distinct vertices $i, j \in [n]$, let $\dist_G(i,j)$ denote the distance between $i$ and $j$ in the graph $G$. 
Let $\neigh_G(i)$ denote the set of neighbors of $i$ in $G$. 
For a subset $S \subset [n]$, let $\neigh_G(S) := \bigcup_{i \in S} \neigh_G(i)$. 
% Moreover, let $\neigh_G(i; S)$ denote the set of neighbors of $i$ within $S$. 
For $r \in \N$, let $\cB_G(i, r)$ and $\cS_G(i, r)$ denote the ball and the sphere of radius $r$ centered at $i$ in $G$, respectively. 
% In particular, 
% % $\cB_G(i,0) = \{i\}$ 
% $\cS_G(i,1) = \neigh_G(i)$
% and $\cB_G(i,1) = \{i\} \cup \neigh_G(i)$.  
% For $S, S' \subset [n]$, let $\edges_G(S,S')$ denote the set of edges between $S$ and $S'$ in $G$. 
% The subscript $G$ in any of the above notations may be omitted if there is no ambiguity. 
Let $G(S)$ denote the subgraph of $G$ induced by $S \subset [n]$. 
% For $n \in \N_0$ and $p\in(0,1)$, we denote by $F_{n,p}$ the cumulative distribution function of $\Bin(n,p)$. 
In particular, $G(\cB_G(i,r))$ is the $r$-\emph{neighborhood} of $i$ in $G$. 

%\smallskip

%The rest of the paper is organized as follows.
%In Section~\ref{s:algo}, we present the algorithms which define the random functions $F_{\sf al}$
%and $F_{\sf ex}$ from the Theorems~\ref{thm:main-1-intro} and~\ref{thm:main-2-intro}.
%In Section~\ref{s:overview} we provide an outline of our proofs.
%The theoretical guarantees for the algorithm's performance are treated in
%Sections~\ref{s:large neigh},~\ref{s: signatures},~\ref{s: wrong sign},~\ref{s: perfect}.

\section{Algorithms and theoretical guarantees} \label{s:algo}

Let $n \in \N$ and $p \in (0,1)$ be global constants that are known to the algorithms. 

\subsection{Partition trees and vertex signatures}

In order to recover the latent matching $\pi$ between vertices of the two graphs, we associate a \emph{signature}, that is, a $2^m$--dimensional vector, to every vertex in $G^\pi$ and every vertex in $G'$. 
The signature of vertex $i$ in a given graph $\Gamma$ is constructed based on the \emph{partition tree} rooted at $i$, which is, by definition, a complete binary tree $T$ whose nodes $\{T^m_s\}_{s \in \{-1,1\}^m}$ at level $m$ form a partition of the sphere $\cS_\Gamma(i, m)$. 
Algorithm~\ref{alg:ver-sig} gives the precise construction of the partition tree and the signature associated to a vertex. 

\begin{algorithm}[ht]
\normalsize
\caption{{\tt VertexSignature}}
\label{alg:ver-sig}
\begin{algorithmic}[1]
\Input a graph $\Gamma$ on the vertex set $[n]$, a vertex $i \in [n]$, and a depth parameter $m \in \N$ 
% and an edge density parameter $p \in (0,1)$
\Output a signature vector $f \in \R^{2^m}$ and a vector of variances $\var \in \R^{2^m}$
% \State{$d \leftarrow p (n-1)$}
% \Comment{{\it \small $d$ denotes the average degree}}
\State{$T_{\varnothing}^0 \leftarrow \{i\}$}
\Comment{{\it \small $\varnothing$ denotes the empty tuple}}
\For{$k = 0, \dots, m-1$}
\For{$s \in \{-1,1\}^k$}
\State{$T_{(s,+1)}^{k+1} \leftarrow \big\{j\in\neigh_\Gamma(T_s^k) \cap \cS_\Gamma(i,k+1) :\;\deg_\Gamma(j)\geq np \big\}$}
\State{$T_{(s,-1)}^{k+1} \leftarrow \big\{j\in\neigh_\Gamma(T_s^k) \cap \cS_\Gamma(i,k+1) :\;\deg_\Gamma(j)< np \big\}$}
\EndFor
\EndFor
% \State{define $f \in \R^{2^m}$ by $f_s := \big| \edges_G\big(T_s^m, [n] \setminus \cB_G(i,m)\big) \big| - np \, |T_s^m|$ for $s \in \{-1,1\}^m$}
% \State{$\var \leftarrow np(1-p) |T_s^m|$}
\State{define $f(i) \in \R^{2^m}$ by $f(i)_s := \sum_{j \in \neigh_\Gamma(T_s^m) \cap \cS_\Gamma(i,m+1)} \big( \deg_\Gamma(j) - 1 - np \big)$ for $s \in \{-1,1\}^m$}
\State{define $\var(i) \in \R^{2^m}$ by $\var(i)_s := np(1-p) |\neigh_\Gamma(T_s^m) \cap \cS_\Gamma(i,m+1)|$ for $s \in \{-1,1\}^m$}
\State{\Return $f(i)$ and $\var(i)$}
\end{algorithmic}
\end{algorithm}

Algorithm~\ref{alg:ver-sig} can be informally described as follows.
Given a vertex $i$, we construct inductively a binary tree of sets $T_{s}^k$, $k = 0, \dots, m$, $s\in \{-1,1\}^k$,
starting with $T_{\varnothing}^0:=\{i\}$. Each set $T_{s}^k$, $s\in \{-1,1\}^k$,
is a subset of the sphere $\cS_\Gamma(i,k)$. 
For every $k<m$ and $s\in \{-1,1\}^{k}$,
$T_{s}^k$ has two children $T_{(s,-1)}^{k+1}$ and $T_{(s,+1)}^{k+1}$,
with the union $T_{(s,-1)}^{k+1}\cup T_{(s,+1)}^{k+1}$ equal to $\neigh_\Gamma(T_s^k) \cap \cS_\Gamma(i,k+1)$.
Here, we write $(s,\pm 1)$ for a binary vector in $\{-1,1\}^{k+1}$ formed by concatenating $s$ and $\pm 1$.
The set $T_{(s,-1)}^{k+1}$ is the collection of all vertices in $\neigh_\Gamma(T_s^k) \cap \cS_\Gamma(i,k+1)$
with degree strictly less than $np$, and $T_{(s,+1)}^{k+1}$ --- the vertices with degrees at least $np$
(note that the input of the algorithm is a realization of a $G(n,p)$ random graph,
hence the threshold value).
The collection of sets of vertices
$$
T = \big\{ T_{s}^k : k = 0, \dots, m , \, s \in \{-1,1\}^k \big\}
$$
associated with a given vertex $i$
is referred to as the partition tree rooted at $i$. 
It can be viewed as a data structure encoding statistics of {\it paths} of length $m$
starting at $i$ and classified according to the degrees of the comprised vertices.
The key point of our approach is that the partition trees contain sufficient information
for recovering the latent matching between the correlated graphs. 
We refer to Figure~\ref{fig:part tree ex} for
an example of the partition tree of a vertex in a graph.

\begin{figure}[ht]
\caption{An example of a partition tree of a vertex $i$
of a graph with parameters $np=3.5$ and $m=2$.
The blue lines denote the edges of the graph. The nodes of the partition tree of $i$ are
$$
\mbox{$T_{\varnothing}^0$}=\{i\};\;\;
\mbox{$T_{(-1)}^{1}=\{i_1,i_2\}$};\;\;
\mbox{$T_{(+1)}^{1}=\{i_3\}$};
$$
% and
$$
\mbox{$T_{(-1,-1)}^{2}=\{i_{12},i_{21},i_{22}\}$};\;\;
\mbox{$T_{(-1,+1)}^{2}=\{i_{11}\}$};\;\;
\mbox{$T_{(+1,-1)}^{2}=\{i_{31},i_{32}\}$};\;\;
\mbox{$T_{(+1,+1)}^{2}=\{i_{33}\}$}.
$$}

\centering  

\subfigure
{
\begin{tikzpicture}[every node/.style={circle,thick,draw}]
\node (i) at (0,3) {$i$};
\node (i1) at (-4,2) {$i_1$};
\node (i2) at (0,2) {$i_2$};
\node (i3) at (4,2) {$i_3$};
\node (i11) at (-6,1) {$i_{11}$};
\node (i12) at (-4,1) {$i_{12}$};
\node (i21) at (-2,1) {$i_{21}$};
\node (i22) at (0,1) {$i_{22}$};
\node (i31) at (2,1) {$i_{31}$};
\node (i32) at (4,1) {$i_{32}$};
\node (i33) at (6,1) {$i_{33}$};
\node (i111) at (-7,0) {$\cdot$};
\node (i112) at (-6,0) {$\cdot$};
\node (i113) at (-5,0) {$\cdot$};
\node (i121) at (-4,0) {$\cdot$};
\node (i122) at (-3,0) {$\cdot$};
\node (i211) at (-2,0) {$\cdot$};
\node (i212) at (-1,0) {$\cdot$};
\node (i221) at (0,0) {$\cdot$};
\node (i222) at (1,0) {$\cdot$};
\node (i311) at (2,0) {$\cdot$};
\node (i312) at (3,0) {$\cdot$};
\node (i321) at (4,0) {$\cdot$};
\node (i322) at (5,0) {$\cdot$};
\node (i331) at (6,0) {$\cdot$};
\node (i332) at (7,0) {$\cdot$};
\node (i333) at (8,0) {$\cdot$};

\draw[blue, very thick] (i) to (i1);
\draw[blue, very thick] (i) to (i2);
\draw[blue, very thick] (i) to (i3);
\draw[blue, very thick] (i1) to (i11);
\draw[blue, very thick] (i1) to (i12);
\draw[blue, very thick] (i2) to (i21);
\draw[blue, very thick] (i2) to (i22);
\draw[blue, very thick] (i3) to (i31);
\draw[blue, very thick] (i3) to (i32);
\draw[blue, very thick] (i3) to (i33);
\draw[blue, very thick] (i11) to (i111);
\draw[blue, very thick] (i11) to (i112);
\draw[blue, very thick] (i11) to (i113);
\draw[blue, very thick] (i12) to (i121);
\draw[blue, very thick] (i12) to (i122);
\draw[blue, very thick] (i21) to (i211);
\draw[blue, very thick] (i21) to (i212);
\draw[blue, very thick] (i22) to (i221);
\draw[blue, very thick] (i22) to (i222);
\draw[blue, very thick] (i31) to (i311);
\draw[blue, very thick] (i31) to (i312);
\draw[blue, very thick] (i32) to (i321);
\draw[blue, very thick] (i32) to (i322);
\draw[blue, very thick] (i33) to (i331);
\draw[blue, very thick] (i33) to (i332);
\draw[blue, very thick] (i33) to (i333);
\end{tikzpicture}
}
\label{fig:part tree ex}
\end{figure}

With the partition tree constructed, Algorithm~\ref{alg:ver-sig} then defines the signature $f(i) \in \R^{2^m}$ of vertex $i$ to be a vector whose entries are based on degrees of neighbors of vertices in the leaves of the partition tree. Finally, 
% the variances of the entries of $f(i)$ are recorded in the vector $\var(i)$.
the auxiliary vector
$\var(i)\in\R^{2^m}$ encodes the variances of the entries of the signature vector
in a $G(n,p)$ random graph, conditional on a realization of the $(m+1)$--neighborhood of $i$. 
For matching vertices of $G^\pi$ and $G'$, we will use normalized differences of signatures with components
$\frac{f_s(i) - f'_s(i')}{\sqrt{\var_s(i) + \var'_s(i')}}$, where the superscript
``$\,'\,$'' denotes that the signature vector and the vector of variances are for a vertex $i'$ in $G'$; see Algorithm~\ref{alg:compare} for details.

Since we will take $m = O(\log \log n)$ and the average degree of each graph is assumed to be $n^{\frac{1}{R \log \log n}}$ for a sufficiently large constant $R$, the expected time complexity of
computing one signature vector with Algorithm~\ref{alg:ver-sig}
given the adjacency
matrix of $\Gamma$ is 
% $\tilde O(2^m n^2)$. 
$O(n)$.

% Pick a vertex $i$ of $G$ whose signature is to be constructed.
% Construct inductively subsets of vertices $\{T_s^k(i,G)\}_{s\in\{-1,1\}^k}$, $0\leq k\leq m$ as follows.
% We define
% $T^0(i,G):=\{i\}$, and
% $$
% T_{+1}^1(i,G):=\big\{j\in\neigh(i):\;\deg(j)\geq d\big\};\quad T_{-1}^1(i,G):=\big\{j\in\neigh(i):\;\deg(j)< d\big\},
% $$
% where $d:=p(n-1)$ is the average degree of a vertex.
% Now, assuming that $\{T_s^k(i,G)\}_{s\in\{-1,1\}^k}$ has been defined, we define
% \begin{align*}
% T_{(s,+1)}^{k+1}(i,G)&:=\big\{j\in\neigh(T_s^k(i,G)):\;\deg(j)\geq d,\; \dist(j,i)=k+1\big\};\\
% T_{(s,-1)}^{k+1}(i,G)&:=\big\{j\in\neigh(T_s^k(i,G)):\;\deg(j)< d,\; \dist(j,i)=k+1\big\},
% \end{align*}
% for all $s\in\{-1,1\}^k$.
% 
% The sets $\{T_s^k\}_{s\in\{-1,1\}^k}$ for $G'$ and $G_0$
% are defined accordingly.
% 
% For $I, I' \subset [n]$, let $\edges_G(I,I')$ denote the set of edges between $I$ and $I'$ in $G$. 
% Define a signature $f(i) \in \{-1,1\}^{2^m}$ for vertex $i$ of $G$ by 
% $$
% f(i)_s := \big| \edges_G\big(T_s^m(i,G), [n] \setminus \cB_{G}(i,m)\big) \big| - p n |T_s^m(i,G)| ,
% $$
% and define $f'(i)$ for vertex $i$ of $G'$ similarly. 
% Let $J$ be a uniform random subset of $\{-1,1\}^m$. 
% We match vertices $i$ and $j$ from the two graphs if 
% $$
% \sum_{s \in J} \frac{(f(i)_s - f'(j)_s)^2}{n p (1-p) (|T^m_s(i,G)| + |T^m_s(j,G')|)} \le |J| \big(1 - 0.5 \, (1-...)^m\big) .
% $$

\subsection{Almost exact matching}
\label{sec:approx-match}

With the signatures constructed, we then match vertex $i$ in $G^\pi$ and vertex $i'$ in $G'$ if and only if their signatures are sufficiently close. 
Ideally, the difference between the signatures of a ``correct'' pair of vertices, $\pi(i)$ in $G^\pi$ and $i$ in $G'$, should be small, while the difference between the signatures of a ``wrong'' pair of vertices, $\pi(i)$ in $G^\pi$ and $i'$ in $G'$ for $i \ne i'$, should be large. 
Algorithm~\ref{alg:compare} compares signatures in terms of a \emph{sparsified} $\ell_2$--distance weighted by the associated variances.
The results of the vertex comparisons are stored in an $n\times n$ matrix, denoted by $B$ in the algorithm description.

\begin{algorithm}[ht]
\normalsize
\caption{{\tt SignatureComparison}}
\label{alg:compare}
\begin{algorithmic}[1]
\Input two graphs $\Gamma$ and $\Gamma'$ on the vertex set $[n]$ 
% and an edge density parameter $p \in (0,1)$
\Output a matrix $B \in \{0,1\}^{n \times n}$
\State{$m \leftarrow \lceil 22 \log \log n \rceil$}
\State{$w \leftarrow \lfloor (\log n)^5 \rfloor$}
\For{$i = 1, \dots, n$}
\State{$\big(f(i), \var(i)\big) \leftarrow {\tt VertexSignature}(\Gamma,i,m)$} 
\State{$\big(f'(i), \var'(i)\big) \leftarrow {\tt VertexSignature}(\Gamma',i,m)$}
\EndFor
\State{$J \leftarrow$ a uniform random subset of $\{-1,1\}^m$ of cardinality $2w$}
\For{$i = 1, \dots, n$}
\For{$i' = 1, \dots, n$}
\If{$\sum_{s \in J} \frac{(f_s(i) - f'_s(i'))^2 }{\var_s(i) + \var'_s(i')} < 2w \big( 1 - \frac{1}{\sqrt{\log n}} \big)$}
\State{$B_{i,i'} \leftarrow 1$}
\Else
\State{$B_{i,i'} \leftarrow 0$}
\EndIf
\EndFor
\EndFor
\State{\Return $B$}
\end{algorithmic}
\end{algorithm}

To be more precise, given two vertices $i$ and $i'$ in graphs $\Gamma$ and $\Gamma'$,
with signatures $f(i)$ and $f'(i')$ and variance vectors $\var(i)$ and $\var'(i')$, respectively, the algorithm computes the sum
$\sum_{s \in J} \frac{(f_s(i) - f'_s(i'))^2 }{\var_s(i) + \var'_s(i')}$,
where $J$ is a uniform random subset of $\{-1,1\}^m$ of a cardinality polylogarithmic in $n$. If this sum is smaller than the threshold $|J| \big( 1 - \frac{1}{\sqrt{\log n}} \big)$, then we match the vertices $i$ and $i'$. 
The main difficulty of the signature comparison is that,
under the assumption of constant correlation between the graphs $G$ and $G'$,
the signature vectors of vertex $i$ in $G$ and $G'$ will be only slightly correlated
with a high probability. To distinguish between a correct and a wrong matching,
we need to be able to distinguish between ``very slightly correlated'' and ``essentially uncorrelated''
signature vectors, which is achieved through a rather delicate analysis.
This is the reason why the threshold $|J| \big( 1 - \frac{1}{\sqrt{\log n}} \big)$
is only slightly different from the value $|J|$ which would be the expected
squared $\ell_2$--distance between two independent random vectors in $R^J$, normalized
so that the variance of each component of the difference is one.

Moreover, taking the sparsified distance over $J$ in Algorithm~\ref{alg:compare}, rather than summing over all indices $s\in \{-1,1\}^m$,
weakens the dependence across entries of the signature vectors and allows us to prove strong concentration bounds for
$\sum_{s \in J} \frac{(f_s(i) - f'_s(i'))^2 }{\var_s(i) + \var'_s(i')}$.
The idea of comparing the sparsified signature vectors is taken from earlier work \cite{pmlr-v134-mrt} by the current authors.

It is not difficult to see that the expected time complexity of Algorithm~\ref{alg:compare}
is of order $\tilde O(n^2)$, because it amounts to computing and comparing all the signature vectors, which are of length polylogarithmic in $n$. 
Theorem~\ref{thm:main-1}, which is the main technical result of the paper, then guarantees that Algorithm~\ref{alg:compare} distinguishes correct pairs from wrong pairs for most vertices with high probability. The proof of the theorem is given at the end of Section~\ref{s: wrong sign}. 

\begin{theorem}[Difference between signatures of typical vertices]
\label{thm:main-1}
For any constant $D > 0$, there exist constants $\alpha_0, n_0, R, c > 0$ depending on $D$ with the following property. 
Let $G^\pi$ and $G'$ be the two graphs given by the correlated Erd\H{o}s--R\'enyi graph model defined in Section~\ref{sec:model} with underlying matching $\pi : [n] \to [n]$ and parameters $n$, $p$, and $\alpha$ such that 
$$
n \ge n_0, \qquad 
\alpha \in (0, \alpha_0), \qquad
\log n \le np (1-\alpha) \le n^{\frac{1}{R \log \log n}} . 
$$
% and let $J$ be a uniform random subset of $\{-1,1\}^m$ of cardinality $2w$. 
% For each $i \in [n]$, run Algorithm~\ref{alg:ver-sig} to obtain 
% $$
% \big(f(i), \var(i)\big) \leftarrow {\tt VertexSignature}(G^\pi,i,m) , \qquad
% \big(f'(i), \var'(i)\big) \leftarrow {\tt VertexSignature}(G',i,m) . 
% $$
Let $B \in \{0,1\}^{n \times n}$ be given by Algorithm~\ref{alg:compare} with $G^\pi$ and $G'$ as input graphs. 
Then, with probability at least $1 - n^{-D}$, there exists a subset $\cI \subset [n]$ with $|\cI| \ge n - n^{1-c}$ such that
% \begin{align*}
% &\sum_{s \in J} \frac{ \big(f_s(\pi(i)) - f'_s(i) \big)^2 }{\var_s(\pi(i)) + \var'_s(i)} 
% < |J| \Big( 1 - \frac{1}{\sqrt{\log n}} \Big) \quad \text{ for any } i \in \cI, \\
% &\sum_{s \in J} \frac{ \big(f_s(\pi(i)) - f'_s(i') \big)^2 }{\var_s(\pi(i)) + \var'_s(i')}
% > |J| \Big( 1 - \frac{1}{\sqrt{\log n}} \Big) \quad \text{ for any } i, i' \in \cI , \; i \ne i' . 
% \end{align*}
$B_{\pi(i),i} = 1$ for any $i \in \cI$, and $B_{\pi(i),i'} = 0$ for any distinct $i, i' \in \cI$.
\end{theorem}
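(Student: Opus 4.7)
The plan is to exhibit, with probability at least $1-n^{-D}$, a set $\cI\subseteq[n]$ of size $n-O(n^{1-c})$ such that the pointwise guarantees $B_{\pi(i),i}=1$ for $i\in\cI$ and $B_{\pi(i),i'}=0$ for distinct $i,i'\in\cI$ hold simultaneously. Since the signature of a vertex depends only on its $(m+1)$-neighborhood and $m=\lceil 22\log\log n\rceil$, such neighborhoods have size $n^{o(1)}$ for typical vertices, so the $(m+1)$-neighborhoods of distinct vertices are typically disjoint and one can work pairwise and apply a union bound. Throughout, I would couple $G^\pi$ and $G'$ through the parent graph $G_0$, classifying each edge of $G_0$ as surviving in both children, in $G$ only, or in $G'$ only, so that the source of randomness is transparent.

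The first step is to define a typicality event for $G_0$ and its children: call a vertex $i$ \emph{good} if its $(m+1)$-neighborhood in $G_0$ has the expected exponential ball growth, is tree-like up to a bounded number of short cycles, the partition-tree nodes $T_s^k$ each have cardinality comparable to $|\cS(i,k)|/2^k$ up to a multiplicative factor $1\pm n^{-\eps'}$ (reflecting near-symmetry of the binomial degree distribution around $np$ via Berry--Esseen), and its $(m+1)$-neighborhood intersects at most $n^{1-2c}$ other $(m+1)$-neighborhoods. A second-moment and coupling argument, along the lines used in \cite{pmlr-v134-mrt,DMWX18} for related models, shows that all but at most $n^{1-c}$ vertices are good with probability at least $1-n^{-D-1}$.

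For a correct pair $(\pi(i),i)$ with $i$ good, I would analyze each summand $\frac{(f_s(\pi(i))-f'_s(i))^2}{\var_s(\pi(i))+\var'_s(i)}$ by decomposing $f_s$ into contributions from edges shared between $G$ and $G'$ and from edges present in only one graph. Vertices classified into the same $T_s^k$ in both graphs (a positive fraction, by a careful level-by-level bound on threshold misclassification) yield strongly correlated level-$(m+1)$ degrees, producing a positive cross moment $\E[f_s(\pi(i))\,f'_s(i)]$ of order $\kappa(\alpha)\bigl(\var_s+\var'_s\bigr)$ for a constant $\kappa(\alpha)>0$. Consequently, the expected summand is at most $1-\kappa(\alpha)$, well below $1-1/\sqrt{\log n}$. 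For a wrong pair $(\pi(i),i')$ with $i\neq i'$ both good and with disjoint $(m+1)$-neighborhoods in $G_0$, the two signatures are independent, and a direct second-moment computation gives expected summand $1-o(1/\sqrt{\log n})$, the deficit coming from the empirical-variance correction rather than from any cross correlation.

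Finally, I would apply Bernstein's inequality to the sparsified sum $S(i,i'):=\sum_{s\in J}\frac{(f_s(\pi(i))-f'_s(i'))^2}{\var_s(\pi(i))+\var'_s(i')}$ over the random index set $J$. Since the summands are bounded (after truncating degrees to their typical range on the good event), essentially independent conditionally on the partition trees, and $|J|=2w=2\lfloor(\log n)^5\rfloor$, the sparsified distance concentrates around $2w\,\E[\cdot]$ with deviation $O(w/\log n)$ with probability at least $1-n^{-D-3}$; this deviation is absorbed both by the constant gap for correct pairs and by the $1/\sqrt{\log n}$ threshold margin for wrong pairs. A union bound over the $O(n^2)$ pairs of good vertices, together with the typicality event, completes the proof. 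The principal obstacle is preserving the constant-order positive correlation $\kappa(\alpha)$ across $m+1\sim 22\log\log n$ levels of thresholding: each level introduces a $\Theta(1/\sqrt{np})$-probability misclassification per vertex, and one must show that the surviving shared structure is still rich enough to yield a constant-sized signal rather than a polylogarithmically decaying one, which is where the specific choices of depth $m$, threshold, and signature format become critical.
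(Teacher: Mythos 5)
Your overall architecture --- coupling through the parent graph $G_0$, a typicality event for $(m+1)$-neighborhoods, decomposing each signature entry into contributions from shared and unshared edges, concentration of the sparsified sum over $J$, and a union bound over pairs --- matches the paper. But there are two genuine gaps.

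First, the signal for correct pairs is \emph{not} a constant $\kappa(\alpha)$. The paper's key estimate (Proposition~\ref{prop:nasva8ya8b341}) is that $|T_s^m(i,G)\cap T_s^m(i,G')|\ge (np/2)^m(1-8\kappa)^m$, which is a \emph{vanishing} fraction of $|T_s^m|\asymp (np/2)^m$; after normalization the expected deficit per coordinate is of order $(1-9\kappa)^m\approx(\log n)^{-O(\kappa)}$, which with $m=\lceil 22\log\log n\rceil$ and $\alpha_0$ (hence $\kappa$) small is arranged to be at least $(\log n)^{-0.1}$. You correctly flag the survival of correlation across $m$ levels of thresholding as the principal obstacle, but you then assert the wrong resolution: the cross-moment does decay polylogarithmically, and the proof works not by preserving a constant signal but by placing the threshold $1-1/\sqrt{\log n}$ strictly between the correct-pair deficit $(\log n)^{-0.1}$ and the wrong-pair deficit $(\log n)^{-0.9}$. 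This three-way calibration of $m$, $\alpha_0$, and the threshold exponent is the heart of the theorem, and your plan as written does not supply it; if the signal really decayed faster than $1/\sqrt{\log n}$ the algorithm would fail.

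Second, you cannot reduce wrong pairs to the independent case by restricting $\cI$ to vertices with pairwise disjoint $(m+1)$-neighborhoods. Two neighborhoods intersect whenever $\dist_{G_0}(i,i')\le 2m+2$, and each vertex has up to $K(np)^{2m+2}=n^{o(1)}$ such partners; no subset of size $n-n^{1-c}$ can have all pairwise-disjoint neighborhoods, so excluding these pairs is not an option. The theorem demands $B_{\pi(i),i'}=0$ for \emph{every} distinct pair in $\cI$, so overlapping wrong pairs must be analyzed directly. The paper does this in Section~\ref{s: wrong sign}: a pairwise sparsification lemma (Lemma~\ref{lem:1034712597198324-22}) bounds the aggregate overlap $\sum_{s\in J}|L_s(i,i')|$ using Bernstein for sampling without replacement, and a case analysis on $\dist_{G_0}(i,i')$ (Lemma~\ref{lem:asvvbav91gb391b9b9vs}) handles the residual dependence; this overlap is precisely the source of the $(\log n)^{-0.9}$ wrong-pair deficit, not an ``empirical-variance correction.'' Relatedly, the paper requires the neighborhoods to be exact trees (not merely tree-like with a few short cycles), since the independence of the level-$(m+1)$ degree variables used throughout depends on it.
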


To pass from a matrix $B$ in the above theorem to a permutation, we apply a simple procedure,  Algorithm~\ref{alg:match}, which yields an almost exact estimate $\hat \pi$ of the underlying matching $\pi$. 
The computational complexity of Algorithm~\ref{alg:match} is clearly $O(n^2)$. 
Proposition~\ref{prop:main-2} ensures that Algorithm~\ref{alg:match} succeeds deterministically.

\begin{algorithm}[ht]
\normalsize
\caption{{\tt ApproximateMatching}}
\label{alg:match}
\begin{algorithmic}[1]
\Input a binary matrix $B \in \{0,1\}^{n \times n}$
\Output a permutation $\hat \pi : [n] \to [n]$
\State{$H \leftarrow$ the bipartite graph whose adjacency matrix is $B$}
\State{let $V = V' = [n]$ be the two parts of vertices of $H$}
\While{the edge set of $H$ is nonempty}
\State{pick an arbitrary edge $i \sim i'$ in $H$ where $i \in V$ and $i' \in V'$}
\State{define $\hat \pi(i') := i$}
\State{delete the edge $i \sim i'$ from $H$}
\State{$V \leftarrow V \setminus \{i\}$}
\State{$V' \leftarrow V' \setminus \{i'\}$}
\EndWhile
\If{$V \ne \varnothing$}
\State{define $\hat \pi|_{V'}$ to be an arbitrary bijection from $V'$ to $V$ so that $\hat \pi$ is a permutation on $[n]$}
\EndIf
\State{\Return $\hat \pi$}
\end{algorithmic}
\end{algorithm}

\begin{prop}[Matching from comparisons]
\label{prop:main-2}
Fix a permutation $\pi : [n] \to [n]$, a matrix $B \in \{0,1\}^{n \times n}$, and a subset $\cI \subset [n]$ with $|\cI| \ge n - k$ for a positive integer $k \le n/4$. 
Suppose that $B_{\pi(i),i} = 1$ for any $i \in \cI$, and $B_{\pi(i),i'} = 0$ for any distinct $i, i' \in \cI$. 
Then Algorithm~\ref{alg:match} outputs a permutation $\hat \pi : [n] \to [n]$ satisfying 
$$
\big| \big\{ i \in [n] : \hat \pi(i) \ne \pi(i) \big\} \big| \le 4k . 
$$
\end{prop}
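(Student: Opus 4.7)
Let $\cJ := [n]\setminus \cI$ and note that $|\cJ|\le k$ and $|\pi(\cJ)|\le k$. View $H$ as a bipartite graph with left part $V=[n]$ and right part $V'=[n]$. The strategy is to exploit the hypothesis to pin down exactly which edges of $H$ are available in the ``good'' block $\pi(\cI)\times\cI$, and then track how the greedy loop can go wrong.

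The plan is to start from three structural observations drawn directly from the hypotheses: (a) for every $i\in\cI$ the edge $(\pi(i),i)$ is present in $H$; (b) for every $i'\in\cI$, the only neighbour of $i'$ lying in $\pi(\cI)$ is $\pi(i')$; (c) symmetrically, for every $j=\pi(i)\in\pi(\cI)$, the only neighbour of $j$ lying in $\cI$ is $i$. Call an edge \emph{correct} if it equals $(\pi(i),i)$ for some $i\in\cI$, and \emph{wrong} otherwise. By (b) and (c), every wrong edge has at least one endpoint in $\pi(\cJ)\cup\cJ$. Since the loop deletes a vertex from each side whenever it picks an edge, each vertex of $\pi(\cJ)\cup\cJ$ is consumed at most once; in particular, the number of $i'\in\cI$ matched to some $j\ne\pi(i')$ during the loop (which forces $j\in\pi(\cJ)$) is at most $|\pi(\cJ)|\le k$.

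Next I would bound the number of $i'\in\cI$ that are still in $V'$ when the loop terminates. The termination condition says $H$ has no edges, so for every such $i'$ the vertex $\pi(i')$ must have been deleted earlier (otherwise the edge $(\pi(i'),i')$ would still witness a nonempty $H$). By (c), the only way $\pi(i')$ is deleted without being matched to $i'$ itself is that it was matched, during some iteration, to a right-vertex in $\cJ$. Since each element of $\cJ$ is consumed at most once, the count of such ``stranded'' $\cI$-vertices is at most $|\cJ|\le k$. Combining, at most $k+k=2k$ elements of $\cI$ are either wrongly matched during the loop or left unmatched at termination; each of the other $\ge |\cI|-2k \ge n-3k$ elements $i'\in\cI$ satisfies $\hat\pi(i')=\pi(i')$. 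Together with the at most $|\cJ|\le k$ possible errors at indices in $\cJ$, this yields
$$
\big|\{i'\in[n]:\hat\pi(i')\ne\pi(i')\}\big|\le 2k+k=3k\le 4k,
$$
which is the desired conclusion (the bound $4k$ in the statement is a safe slack over the tight $3k$).

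The only real subtlety is step bounding the ``stranded'' vertices of $\cI$, since one must argue that the loop cannot leave a correct edge alive at termination. This is precisely where property (c) is used: it forces the disappearance of $\pi(i')$ to be charged to a distinct element of $\cJ$, giving the clean bound $k$. Everything else is bookkeeping via the invariant that each deleted vertex is deleted once. No probabilistic input is needed; the proposition is purely deterministic.
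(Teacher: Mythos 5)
Your proof is correct and rests on the same structural facts the paper uses (a wrong match of an $\cI$-vertex must land in $\pi(\cJ)$, and a correct edge $(\pi(i'),i')$ can only be destroyed by $\pi(i')$ being matched into $\cJ$); you simply organize the bookkeeping as a direct charging of errors to distinct elements of $\cJ\cup\pi(\cJ)$, whereas the paper first lower-bounds the number of loop iterations by $n-2k$ and then the number of correct iterations by $n-4k$. Your accounting in fact yields the slightly sharper bound $3k$, of which the stated $4k$ is a safe relaxation.
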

\begin{proof}
First, it is clear that Algorithm~\ref{alg:match} outputs a bijection $\hat \pi : [n] \to [n]$, so $\hat \pi$ is well-defined. 

Next, we claim that the while loop in Algorithm~\ref{alg:match} will be run for at least $n - 2k$ iterations. 
To this end, suppose that it has been run for strictly fewer than $n - 2k$ iterations, after which we have $|V| = |V'| \ge 2k + 1$. 
Since $|\cI| \ge n-k$, it follows that $|\cI \cap V'| \ge k+1$. 
For each $i' \in \cI \cap V'$, consider two cases:
\begin{itemize}
\item
Suppose that $\pi(i') \in V$, that is, $\pi(i')$ has not been deleted. 
By the assumption on $B$, we have $B_{\pi(i'),i} = 1$, so the edge $\pi(i') \sim i'$ is still present in the graph $H$. As a result, the while loop will be run for at least one more iteration. 

\item
Suppose that $\pi(i')$ has already been deleted in a previous iteration, say, along with another vertex $j'$. 
Then there is an edge $\pi(i') \sim j'$ in the original bipartite graph, that is, $B_{\pi(i'),j'} = 1$. 
As $j' \ne i'$, by the assumption on $B$, we must have $j' \in [n] \setminus \cI$. 
Since $\big|[n] \setminus \cI\big| \le k$, this case can occur for at most $k$ vertices $i'$. 
\end{itemize}
To conclude, because $|\cI \cap V'| \ge k+1$, there is at least one $i' \in \cI \cap V'$ that falls into the first case above. 
Thus, the while loop will be run for at least one more iteration, and the claim is proved.

Furthermore, consider an iteration of the while loop in which we pick an edge $i \sim i'$ in $H$ for $i \in V$ and $i' \in V'$. 
There are two cases: 
\begin{itemize}
\item
Suppose that both $\pi^{-1}(i)$ and $i'$ are in $\cI$. Since $B_{i,i'} = 1$, by the assumption on $B$, we must have $\pi^{-1}(i) = i'$ so that $\pi(i') = i = \hat \pi(i')$. 

\item
Suppose that either $\pi^{-1}(i)$ or $i'$ is in $[n] \setminus \cI$. After this iteration, $i$ and $i'$ are deleted from the vertex sets. Since $\big|[n] \setminus \cI\big| \le k$, this case can occur at most $2k$ times in total. 
\end{itemize}
Recall that the while loop will be run for at least $n - 2k$ iterations, and among them, at least $n - 4k$ iterations fall into the first case above. 
Consequently, we have $\pi(i') = \hat \pi(i')$ for at least $n - 4k$ vertices $i' \in [n]$. 
\end{proof}

Combining Theorem~\ref{thm:main-1} and Proposition~\ref{prop:main-2} immediately yields the following result. 

\begin{cor}[Almost exact matching]
\label{cor:main-3}
In the same setting as in Theorem~\ref{thm:main-1}, using the matrix $B \in \{-1,1\}^{n \times n}$ given by Algorithm~\ref{alg:compare} as the input, we run Algorithm~\ref{alg:match} to produce $\hat \pi : [n] \to [n]$. 
Then, with probability at least $1 - n^{-D}$, it holds that 
$$
\big| \big\{ i \in [n] : \hat \pi(i) \ne \pi(i) \big\} \big| \le 4 n^{1-c} . 
$$
\end{cor}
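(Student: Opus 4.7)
The corollary is obtained by pipelining the two preceding results, so the plan is purely bookkeeping, with no new probabilistic or combinatorial input required. First, I would invoke Theorem~\ref{thm:main-1} to produce, with probability at least $1 - n^{-D}$, a subset $\cI \subset [n]$ with $|\cI| \ge n - n^{1-c}$ such that the matrix $B$ returned by Algorithm~\ref{alg:compare} satisfies $B_{\pi(i),i} = 1$ for every $i \in \cI$ and $B_{\pi(i),i'} = 0$ for every pair of distinct $i, i' \in \cI$. Call this event $\mathcal{E}$; no further probabilistic analysis is needed past this point, since the output of Algorithm~\ref{alg:match} is a deterministic function of $B$.

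On the event $\mathcal{E}$, I would apply Proposition~\ref{prop:main-2} with $k := \lceil n^{1-c} \rceil$. The only hypothesis of the proposition that needs checking is $k \le n/4$; this is guaranteed by enlarging the threshold $n_0$ (which is allowed to depend on $D$, and thereby on $c$) so that $\lceil n^{1-c} \rceil \le n/4$ for all $n \ge n_0$. Since $\cI$ from Theorem~\ref{thm:main-1} satisfies $|\cI| \ge n - n^{1-c} \ge n - k$, Proposition~\ref{prop:main-2} immediately yields
\[
\big| \big\{ i \in [n] : \hat \pi(i) \ne \pi(i) \big\} \big| \le 4k \le 4 n^{1-c}
\]
on $\mathcal{E}$. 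Taking the complement of $\mathcal{E}$ gives the stated failure probability bound $n^{-D}$.

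I do not anticipate any genuine obstacle: the only subtleties are the integer-ceiling adjustment and ensuring that the same constants $c$, $\alpha_0$, and $n_0$ from Theorem~\ref{thm:main-1} are re-used for the corollary (possibly after harmlessly increasing $n_0$). The content of the corollary is entirely carried by Theorem~\ref{thm:main-1}; Proposition~\ref{prop:main-2} serves only as a deterministic wrapper turning the matrix $B$ into a bona fide permutation $\hat\pi$.
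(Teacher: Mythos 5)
Your proposal is correct and matches the paper exactly: the paper gives no separate argument, stating only that combining Theorem~\ref{thm:main-1} with Proposition~\ref{prop:main-2} immediately yields the corollary, which is precisely the pipelining you describe. The only microscopic point is the rounding of $k$ (taking $k=\lfloor n^{1-c}\rfloor$, which suffices since $|\cI|$ is an integer, avoids even the cosmetic overshoot from the ceiling), but this is immaterial.
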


Note that Corollary~\ref{cor:main-3} implies Theorem~\ref{thm:main-1-intro} in the introduction.

\subsection{Exact matching}
\label{sec:refine-to-exact}

Having obtained an estimate $\hat \pi$ of the underlying matching $\pi$, we now aim to recover $\pi$ exactly by refining $\hat \pi$. 
% In this section, we consider the problem of recovering the exact matching between vertices of $G$ and $G'$
% given a partial matching obtained in the previous stage.
The algorithm we propose is based on iterative refinements of an initial partial matching by studying intersections of neighborhoods of vertices in $G^\pi$ and $G'$.
At each step, we obtain a matching with the number of incorrectly matched
pairs of vertices smaller by a constant factor than that number in the previous step.
After a logarithmic number of such iterations, we obtain the exact matching with high probability. 
A formal description of the procedure is given in Algorithm~\ref{alg:refine}.
% , where in each iteration of refinement, $g_\ell$ is a permutation on $[n]$. 

\begin{algorithm}[ht]
\normalsize
\caption{{\tt RefinedMatching}}
\label{alg:refine}
\begin{algorithmic}[1]
\Input two graphs $\Gamma$ and $\Gamma'$ on $[n]$, a permutation $\hat \pi : [n] \to [n]$, and a parameter $\eps > 0$
% and an edge density parameter $p \in (0,1)$
\Output a permutation $\tilde \pi : [n] \to [n]$
\State{$\pi_0 \leftarrow \hat \pi$}
\For{$\ell = 1, \dots, \lceil \log_2 n \rceil$}
\For{$i = 1, \dots, n$}
\State{\textbf{if} there is a vertex $i' \in [n]$ such that}
\State{\qquad \textbullet \  $\big|\pi_{\ell-1}^{-1}\big(\neigh_{\Gamma}(i)\big)\cap\neigh_{\Gamma'}(i')\big|\geq \varepsilon^2 pn/512$}
\State{\qquad \textbullet \  $\big|\pi_{\ell-1}^{-1}\big(\neigh_{\Gamma}(i)\big)\cap\neigh_{\Gamma'}(j')\big|< \varepsilon^2 pn/512$ for all $j'\in[n]\setminus\{i'\}$}
\State{\qquad \textbullet \  $\big|\pi_{\ell-1}^{-1}\big(\neigh_{\Gamma}(j)\big)\cap\neigh_{\Gamma'}(i')\big|<\varepsilon^2 pn/512$ for all $j\in[n]\setminus\{ i\}$}
\State{\textbf{then}}
\State{\qquad $\pi_{\ell}(i') \leftarrow i$}
\State{\textbf{end if}}
\EndFor
\State{extend $\pi_\ell$ to a permutation on $[n]$ in an arbitrary way}
\EndFor
\State{$\tilde \pi \leftarrow \pi_{\lceil \log_2 n \rceil}$}
\State{\Return $\tilde \pi$}
\end{algorithmic}
\end{algorithm}

The underlying reason for why Algorithm~\ref{alg:refine} succeeds is a certain expansion property
of sparse Erd\H{o}s--R\'enyi graphs.
Note that at each step of Algorithm~\ref{alg:refine},
we assign $\pi_{\ell}(i'):= i$ whenever $i'$ is a vertex in $[n]$ with
$\big|\pi_{\ell-1}^{-1}\big(\neigh_{G^\pi}(i)\big)\cap\neigh_{G'}(i')\big|$
``large'' and both $\big|\pi_{\ell-1}^{-1}\big(\neigh_{G^\pi}(i)\big)\cap\neigh_{G'}(j')\big|$
and $\big|\pi_{\ell-1}^{-1}\big(\neigh_{G^\pi}(j)\big)\cap\neigh_{G'}(i')\big|$
``small'' for all $j'\in[n]\setminus\{i'\}$ and $j\in[n]\setminus\{ i\}$.
Accordingly, the partial matching $\pi_\ell$ will be an improvement over $\pi_{\ell-1}$
unless there are many (of order roughly $|\{v \in [n] :\;\pi_{\ell-1}(v)\neq \pi(v)\}|$) vertices of $G$ or $G'$
with a considerable proportion of neighbors wrongly matched by $\pi_{\ell-1}$.
This, however, can be ruled out with a high probability.
The basic principle can be formulated as follows.
If $I$ is any random subset of $[n]$ containing a vast majority of the vertices of $G$,
then for any positive constant $c>0$ the set $\{i \in [n] :\; |\neigh_G(i)\cap I^c|\geq cpn\}$
has cardinality at most $\frac{1}{4}|I^c|$ with high probability; see Section~\ref{s: perfect} for details.

Moreover, observe that the expected time complexity of Algorithm~\ref{alg:refine} is $n^{2+o(1)}$. 
The $n^2$ part comes from the loop over $i \in [n]$ and the ``if'' statement which consists in searching over $i' \in [n]$. 
All the other computations can be done in $n^{o(1)}$ time because the neighborhoods are of typical size $n^{o(1)}$. 
The following theorem provides guarantees on the performance of Algorithm~\ref{alg:refine}.

\begin{theorem}[Refining a partial matching]
\label{1-20941-0498}
For any $\varepsilon\in(0,1]$, there exists $n_0 > 0$ and $\kappa \in (0,1)$ depending on $\varepsilon$ with the following property. 
Let $G^\pi$ and $G'$ be the two graphs given by the correlated Erd\H{o}s--R\'enyi graph model defined in Section~\ref{sec:model} with underlying matching $\pi : [n] \to [n]$ and parameters $n$, $p$, and $\alpha$ such that 
$$
n \ge n_0, \qquad
(1+\varepsilon)\log n \le pn \le \frac{\sqrt{n}}{4 \log n}, \qquad
\alpha\in(0,\varepsilon/4]. 
$$
Given a random matching $\hat \pi : [n] \to [n]$ (possibly depending on $G^\pi$ and $G'$),
and with $G^\pi$, $G'$, $\hat \pi$ as the input, let $\tilde \pi : [n] \to [n]$ be the output of Algorithm~\ref{alg:refine}. 
Then we have 
$$
\Prob\big\{ \tilde \pi = \pi \} \ge \Prob\big\{|\{i\in [n] : \hat \pi(i) \ne \pi(i)\}| \le \kappa n\big\} - \exp(-\eps pn/10).
$$
\end{theorem}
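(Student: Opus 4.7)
The plan is to exhibit a contraction: on a high-probability event depending only on $G$ and $G'$, the number of incorrectly matched vertices $M_\ell := |\{i : \pi_\ell(i) \ne \pi(i)\}|$ at least halves at every iteration, provided the initial count is bounded by $\kappa n$. Since $M_0 \le \kappa n$ is integer-valued, $\lceil \log_2 n \rceil$ iterations then drive $M_\ell$ strictly below $1$, giving $\tilde \pi = \pi$.

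First I would introduce a graph-measurable event $\cE$, depending only on $G$ and $G'$, on which the following three properties hold simultaneously: (i) for every $v \in [n]$, $|\neigh_G(v) \cap \neigh_{G'}(v)| \ge (1-\eps/3)\, pn$; (ii) for every pair $u \ne v$, $|\neigh_G(u) \cap \neigh_{G'}(v)| < \eps^2 pn / 2048$; and (iii) the expansion statement that for every $\Delta \subseteq [n]$ with $|\Delta| \le 2 \kappa n$, the set $B_G(\Delta) := \{v : |\neigh_G(v) \cap \Delta| \ge \eps^2 pn / 4096\}$ has cardinality at most $|\Delta|/20$, and analogously with $G'$ in place of $G$. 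Property (i) follows from a Chernoff bound on the binomial common-neighbor count with parameter $p(1-\alpha) \ge (1 - \eps/4) p$, combined with a union bound using $pn \ge (1+\eps)\log n$. Property (ii) follows from a Bernstein bound on a binomial with expectation $\le p^2 n$, noting that $p^2 n = o(pn)$ under the hypothesis $pn \le \sqrt{n} / (4 \log n)$, and a union bound over $n^2$ pairs. Property (iii) is the standard expansion estimate for sparse Erd\H{o}s--R\'enyi graphs: a Chernoff bound on the binomial $|\neigh_G(v) \cap \Delta|$ with expectation $p|\Delta|$ is taken with union bounds over $v$ and over all subsets $\Delta$ of size $s \le 2 \kappa n$, and the entropy $\binom{n}{s}$ is absorbed provided $\kappa$ is chosen small enough relative to $\eps$. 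Together these yield $\Prob(\cE) \ge 1 - \exp(-\eps pn/10)$.

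The core of the argument is the one-step contraction on $\cE$. Let $\Delta_\ell := \{v : \pi_{\ell-1}(v) \ne \pi(v)\}$ and $D_\ell := \Delta_\ell \cup \pi_{\ell-1}^{-1}(\pi(\Delta_\ell))$, so $|D_\ell| \le 2 M_{\ell-1}$. Since $\neigh_{G^\pi}(\pi(v)) = \pi(\neigh_G(v))$ and $\pi_{\ell-1}^{-1} \circ \pi$ agrees with the identity outside $\Delta_\ell$, the symmetric difference of $\pi_{\ell-1}^{-1}(\neigh_{G^\pi}(i))$ and $\neigh_G(\pi^{-1}(i))$ has size at most $2\,|\neigh_G(\pi^{-1}(i)) \cap D_\ell|$. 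On $\cE$, fix $i'$ outside the bad set $\cB_\ell := B_G(D_\ell) \cup B_{G'}(D_\ell) \cup \pi^{-1}\bigl(B_G(D_\ell) \cup B_{G'}(D_\ell)\bigr)$, and set $i := \pi(i')$. The first condition of Algorithm~\ref{alg:refine} at $(i, i')$ holds because $|\neigh_G(i') \cap \neigh_{G'}(i')| \ge (1-\eps/3) pn$ by (i), while the correction from $D_\ell$ is below $\eps^2 pn/2048$, still well above the threshold $\eps^2 pn / 512$. For $j' \ne i'$ with $j' \notin \cB_\ell$, $|\pi_{\ell-1}^{-1}(\neigh_{G^\pi}(i)) \cap \neigh_{G'}(j')|$ is bounded by $|\neigh_G(i') \cap \neigh_{G'}(j')| + 2|\neigh_{G'}(j') \cap D_\ell|$, which is below $\eps^2 pn / 512$ by (ii) and (iii); a symmetric analysis handles the third condition of the algorithm. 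Thus $\pi_\ell(i') = \pi(i')$ for every $i' \notin \cB_\ell$. Conversely, the same bookkeeping forces any wrong assignment $\pi_\ell(i') = i \ne \pi(i')$ to have $i' \in \cB_\ell$ or $\pi^{-1}(i) \in \cB_\ell$. Using (iii) to bound $|\cB_\ell| \le 4 |D_\ell|/20 \le 2 M_{\ell-1}/5 \le M_{\ell-1}/2$, we obtain $M_\ell \le M_{\ell-1}/2$.

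Iterating $\lceil \log_2 n \rceil$ times gives $M_{\lceil \log_2 n \rceil} < 1$ on $\cE \cap \{M_0 \le \kappa n\}$, hence $\tilde \pi = \pi$; combined with the bound on $\Prob(\cE^c)$ this yields the theorem. The main obstacle I anticipate is calibrating the constants in (iii) so that the resulting contraction factor is strictly below $1$ while still absorbing the combinatorial entropy $\binom{n}{|\Delta|}$ in the union bound over all $\Delta$ of size $\le 2 \kappa n$; this is why $\kappa$ must be chosen small in terms of $\eps$ and is precisely where the connectivity-threshold hypothesis $pn \ge (1+\eps)\log n$ is indispensable. A secondary subtlety is that the intermediate permutations $\pi_\ell$ depend on $G$ and $G'$, which is why (iii) must hold uniformly over all admissible $\Delta$ rather than only for a fixed set.
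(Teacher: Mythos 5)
Your overall strategy --- a graph-measurable event combining (a) a lower bound on $|\neigh_G(v)\cap\neigh_{G'}(v)|$, (b) an upper bound on $|\neigh_G(u)\cap\neigh_{G'}(v)|$ for $u\neq v$, and (c) an expansion estimate holding uniformly over all small subsets, followed by a per-iteration halving of the error count over $\lceil\log_2 n\rceil$ rounds --- is exactly the paper's (Lemmas~\ref{198370194871948}, \ref{1983109487209}, \ref{31-9847-1987-} and Proposition~\ref{prop:improving-partial-matching}).

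However, your property (i) is false in the regime of interest, and the step claiming it would fail. With $pn=(1+\eps)\log n$ and $\alpha$ as large as $\eps/4$, the count $|\neigh_G(v)\cap\neigh_{G'}(v)|$ is $\Bin(n-1,p(1-\alpha))$ with mean as low as roughly $(1-\eps/4)pn$, so demanding $(1-\eps/3)pn$ asks for a downward deviation of only about $(\eps/12)pn$, i.e.\ $O(\eps\sqrt{\log n})$ standard deviations. The per-vertex failure probability is then only $n^{-O(\eps^2)}$; the union bound over $n$ vertices diverges, and in fact $n^{1-O(\eps^2)}$ vertices violate the bound with high probability. This is precisely why the paper's Lemma~\ref{1983109487209} claims only $|\neigh_G(i)\cap\neigh_{G'}(i)|\ge\eps^2pn/256$: a threshold of the form $c\,\eps^2 pn$ has per-vertex failure probability $\exp(-(1-O(\eps))pn)\le n^{-1-\Omega(\eps)}$, which the union bound tolerates. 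Fortunately your downstream use of (i) only needs the common-neighbor count to exceed the algorithm's threshold $\eps^2pn/512$ plus an $O(\eps^2pn)$ correction from wrongly matched neighbors, so substituting the correct $\Theta(\eps^2 pn)$ bound repairs the argument with no other changes.

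Two smaller points. In verifying the algorithm's second condition you bound the correction by $2|\neigh_{G'}(j')\cap D_\ell|$, which you control only for $j'\notin\cB_\ell$, whereas that condition must hold for \emph{all} $j'\neq i'$; the fix is to bound the correction instead by $|\neigh_G(i')\cap D_\ell|$ (the image of $\neigh_G(i')\cap\Delta_\ell$ under the injective map $\pi_{\ell-1}^{-1}\circ\pi$ has exactly that cardinality), which is what the paper does and requires only $i'\notin\cB_\ell$. And in (iii), the events $\{|\neigh_G(v)\cap\Delta|\ge t\}$ are not independent across $v\in\Delta$, so the union bound over pairs of a candidate bad set and a set $\Delta$ needs a decoupling step (the paper's Lemma~\ref{938174932874-9}) before Chernoff's inequality can be applied to a product of probabilities.
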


The following corollary of the above theorems is our final result on exact recovery of the underlying matching. 

\begin{cor}[Exact matching]
\label{cor:main-4}
Fix a constant $\varepsilon\in(0,1]$. 
There exists a constant $n_0>0$ depending on $\eps$ and absolute constants $\alpha_0, R > 0$ with the following property. 
Let $G^\pi$ and $G'$ be the two graphs from the correlated Erd\H{o}s--R\'enyi graph model defined in Section~\ref{sec:model} with underlying matching $\pi : [n] \to [n]$ and parameters $n$, $p$, and $\alpha$ satisfying 
$$
n \ge n_0, \qquad 
(1+\eps) \log n \le np \le n^{\frac{1}{R \log \log n}}, \qquad 
0 < \alpha < \alpha_0 \land (\eps/4) . 
$$
Run Algorithm~\ref{alg:compare} (with $G^\pi$ and $G'$ as input graphs) to obtain $B \in \{0,1\}^{n \times n}$, then run Algorithm~\ref{alg:match} to obtain $\hat \pi : [n] \to [n]$, and finally run Algorithm~\ref{alg:refine} to obtain $\tilde \pi : [n] \to [n]$. 
Then we have 
$$
\Prob\big\{ \tilde \pi = \pi \} \ge 1 - n^{-10} - \exp(-\eps pn/10).
$$
\end{cor}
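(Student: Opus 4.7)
The plan is to chain Corollary~\ref{cor:main-3} (almost exact matching) with Theorem~\ref{1-20941-0498} (refinement of a partial matching): the first guarantees that the permutation $\hat\pi$ produced by Algorithms~\ref{alg:compare} and~\ref{alg:match} disagrees with $\pi$ on at most $4n^{1-c}$ coordinates with very high probability, and the second then upgrades such an approximate matching to the exact one, with only an exponentially small additional loss in probability.

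First I would apply Corollary~\ref{cor:main-3} with the specific choice $D=10$. This fixes absolute constants $\alpha_0, R, c > 0$ (independent of $\eps$) such that, under the hypotheses of Corollary~\ref{cor:main-4} and for all $n$ beyond some threshold,
$$
\Prob\bigl\{|\{i\in[n]:\hat\pi(i)\neq\pi(i)\}|\leq 4n^{1-c}\bigr\}\geq 1-n^{-10}.
$$
These $\alpha_0$ and $R$ are then exactly the two absolute constants appearing in the statement of Corollary~\ref{cor:main-4}. Next I would invoke Theorem~\ref{1-20941-0498} with the same $\eps$, applied to the random partial matching $\hat\pi$ just constructed: this yields $\kappa=\kappa(\eps)\in(0,1)$ and a (possibly larger) threshold $n_0$ such that
$$
\Prob\{\tilde\pi=\pi\}\geq \Prob\bigl\{|\{i\in[n]:\hat\pi(i)\neq\pi(i)\}|\leq \kappa n\bigr\}-\exp(-\eps pn/10).
$$
For $n$ sufficiently large (depending on $\eps$, through $\kappa$, and on $c$), one has $4n^{1-c}\leq \kappa n$, so the event controlled by Corollary~\ref{cor:main-3} is contained in the event appearing on the right-hand side above. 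Chaining the two probability bounds immediately gives the desired estimate $\Prob\{\tilde\pi=\pi\}\geq 1-n^{-10}-\exp(-\eps pn/10)$.

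The only remaining work — and this is bookkeeping rather than a genuine mathematical obstacle — is to verify compatibility of parameter ranges. The upper bound $np\leq n^{1/(R\log\log n)}$ imposed by Corollary~\ref{cor:main-3} is much stronger than the bound $pn\leq\sqrt{n}/(4\log n)$ required by Theorem~\ref{1-20941-0498}, so the latter is automatically satisfied after possibly enlarging $R$ and $n_0$. The hypotheses $\alpha<\alpha_0\land (\eps/4)$ and $(1+\eps)\log n\leq np$ directly imply both the condition $\alpha\in(0,\alpha_0)$, $\log n\leq np(1-\alpha)\leq n^{1/(R\log\log n)}$ of Corollary~\ref{cor:main-3} (using $1-\alpha\leq 1$ and $\alpha_0$ small) and the condition $\alpha\in(0,\eps/4]$, $(1+\eps)\log n\leq pn$ of Theorem~\ref{1-20941-0498}. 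Once these trivial checks are in place, Corollary~\ref{cor:main-4} follows by a single line combining the two cited results; there is no non-routine step.
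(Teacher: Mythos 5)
Your proposal is correct and follows essentially the same route as the paper: apply Corollary~\ref{cor:main-3} with $D=10$, choose $\kappa=\kappa(\eps)$ from Theorem~\ref{1-20941-0498}, and note that $4n^{1-c}\le\kappa n$ for $n$ large enough so the two probability bounds chain together. The extra parameter-range checks you include are routine and consistent with the paper's (more terse) argument.
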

\begin{proof}
First, we apply Corollary~\ref{cor:main-3} with $D = 10$ to obtain $\big| \big\{ i \in [n] : \hat \pi(i) \ne \pi(i) \big\} \big| \le 4 n^{1-c}$ with probability at least $1 - n^{-10}$. 
Then, we choose $\kappa$ depending on $\eps$ according to Theorem~\ref{1-20941-0498}. 
If $n$ is sufficiently large depending on $\eps$, then $4 n^{1-c} \le \kappa n$, so Theorem~\ref{1-20941-0498} gives the result.
\end{proof}

Observe that
Corollary~\ref{cor:main-3} implies Theorem~\ref{thm:main-2-intro} from the introduction.

\subsection{Further related work}
The algorithms proposed above are related to several existing methods for graph matching. 
First, to achieve exact matching under the stronger condition $\alpha \le (\log n)^{-C}$, the paper \cite{DMWX18} introduced a method based on comparing the degree profiles of vertices, that is, the empirical distributions of neighbors' degrees. The condition can be further improved to $\alpha \le (\log \log n)^{-C}$ for sparse graphs by exploring neighbors' degree profiles. 
Note that the degree profiles of neighbors of a vertex are determined by the $3$-neighborhood of the vertex. 
On the other hand, our algorithm uses degree information in a neighborhood of radius $\Theta(\log \log n)$ around each vertex, which is key to matching graphs with constant correlation. 
Prior to our work, large neighborhood statistics were used in the paper \cite{mossel2020seeded} which studied seeded graph matching --- the version of the problem where a handful of correctly matched pairs of vertices are given to the algorithm as ``seeds''. 
We remark that, while the idea of leveraging degree statistics in neighborhoods is not new, our method of exploiting correlation via partition trees is novel.

The local tree structure in sparse graphs has been used in previous work for partial matching \cite{ganassali20a} and correlation detection \cite{ganassali2021correlation}. 
It is worth noting that these papers considered local trees whose nodes are vertices of the observed graphs, while we consider partition trees whose nodes are \emph{sets} of vertices. 
This is crucial to our analysis, which centers around estimating the overlaps between nodes of partition trees. 

Moreover, the procedure of refining a partial matching (Algorithm~\ref{alg:refine}) bears similarity to algorithms in prior works on similar topics \cite{kazemi2015growing,lubars2018correcting,yu2020graph}. 
The problems studied in these works, however, are inherently different from ours. To be more precise, it is assumed in these papers that the initial partial matching $\pi_0$ is \emph{independent} from the observed graphs, while the initial matching we study can be an estimator computed from the observed graphs or even \emph{adversarially} chosen. 
This adversarial setting has been studied in \cite{barak2019nearly,DMWX18}, but they require a vanishing fraction of wrongly matched pairs in the initial matching, while our result can tolerate any constant fraction. 

Finally, the strategy of matching vertices via comparing signature vectors appeared in the paper \cite{pmlr-v134-mrt} by the current authors. 
In this previous paper, to exploit correlation between the two graphs, we started with comparing certain degree quantiles of the vertices and then refine the matched quantiles of vertices in two steps to obtain the final matching. The essence of this procedure is different from that in the current paper, although both eventually lead to weakly correlated signature vectors of length polylogarithmic in $n$. 
The more ``global'' strategy of comparing degree quantiles allows us to match $G(n,p)$ graphs with any average degree $np \ge (\log n)^C$ but requires a smaller noise level $\alpha \le (\log \log n)^{-C}$. 
On the other hand, the current work adopts a more ``local'' approach of comparing neighborhoods that are typically trees, yielding a better condition $\alpha \le \text{const}$ for sparse graphs.

\section{Overview of the analysis}\label{s:overview}

Our proof starts with some preparatory observations on the structure of large neighborhoods in
sparse Erd\H{o}s--R\'enyi Graphs, comprised in Section~\ref{s:large neigh}.
The main goal is to show that with a high probability and for an appropriate choice of a parameter $k$, the
$k$--neighborhoods of 
a vast majority of vertices are trees having sizes in a prescribed range,
and with a prescribed statistics of nodes with very high or very low degrees.
The main tools in this section are standard concentration inequalities
and standard properties of the binomial distribution.

\smallskip

The signature comparison is carried out in Sections~\ref{s: signatures} and~\ref{s: wrong sign}.
We recall that the depth parameter $m$ for constructing the signatures is double logarithmic in $n$.
Section~\ref{s: signatures} starts with a crucial observation that the classes $T_s^m(i,G)$
and $T_s^m(i,G')$ from the partition trees of $i$ in $G$ and $G'$, respectively,
typically have intersections which introduce a detectable correlation of the degree statistics of their neighbors.
More specifically, we show that under an appropriate graphs density assumption, with high probability
almost every vertex $i$ has $|T_s^m(i,G)\cap T_s^m(i,G')|\geq (np/2)^m(1-\upsilon)^m$,
where $\upsilon>0$ is an arbitrarily small constant.
The proof is accomplished by induction, by considering $T_s^\ell(i,G)\cap T_s^\ell(i,G')$ for $0\leq \ell\leq m$ (see Proposition~\ref{prop:nasva8ya8b341}).
Note that the size of the intersection $|T_s^m(i,G)\cap T_s^m(i,G')|$ is still vanishing compared to the typical order of magnitude of $|T_s^m(i,G)|$ and $|T_s^m(i,G')|$ (that is, roughly $(np/2)^m$).

Further, in Subsection~\ref{subs:sparsification} we discuss the sparsification procedure
which greatly simplifies the signature comparison.
Sparsification is introduced to avoid the situation when for many pairs of distinct indices $s,s'\in\{-1,1\}^m$,
the sets $T_s^m(i,G)$ and $T_{s'}^m(i,G')$ still have a considerable intersection,
which would introduce complex dependencies between components of the signature vectors of $i$
in $G$ and $G'$.
While such an event appears difficult to control directly, by taking a relatively small
uniform random subset $J$ of indices in $\{-1,1\}^m$ instead of the entire index set,
we can guarantee that the undesired situation does not occur with high probability.
More precisely, we are able to show that under some additional technical assumptions,
the sets $R_s(i) := \neigh_{G}\big(T_s^m(i,G)\big) \cap \neigh_{G'}\big(T^m_{J \setminus \{s\}}(i,G')\big) \cap \cS_{G_0}(i,m+1)$
and $R'_s(i) := \neigh_{G'}\big(T_s^m(i,G')\big) \cap \neigh_G\big(T^m_{J \setminus \{s\}}(i,G)\big) \cap \cS_{G_0}(i,m+1)$,
with $s\in J$,
have small cardinalities with a high probability.
The comparison of signatures of correct pairs of vertices is then accomplished in Subsection~\ref{subs: difference matched},
with Subsection~\ref{subs: conclusion matched} summarizing the results.

Section~\ref{s: wrong sign}, where the comparison of signatures of wrong pairs of vertices is carried out,
has the same high-level structure as Section~\ref{s: signatures},
although in that case somewhat more delicate estimates are required to show that
the signature vectors of distinct vertices of $G$ and $G'$ are ``essentially uncorrelated''.

\smallskip

Construction of an exact matching between vertices of the two graphs from the approximate matching
obtained in Theorem~\ref{thm:main-1} and Corollary~\ref{cor:main-3}, is accomplished in Section~\ref{s: perfect}.
The goal of this section is to show that with high probability, Algorithm~\ref{alg:refine}
described in Section~\ref{s:algo}, with the input partial matching
given by Corollary~\ref{cor:main-3}, will produce the exact matching between $G^\pi$ and $G'$.

\section{Large neighborhoods in an Erd\H{o}s--R\'enyi graph}\label{s:large neigh}

In this section, we consider structural properties of vertex neighborhoods
in sparse Erd\H{o}s--R\'enyi graphs. While some of the statements
(in particular, regarding the sets $T_s^l$ from Algorithm~\ref{alg:ver-sig})
are method-specific, others, dealing with degree concentration and existence of cycles,
are standard. Nevertheless, we prefer to provide the proofs for completeness.
For the reader's convenience, we recall Chernoff's inequality for sums of Bernoulli
random variables:
\begin{lemma}[{Chernoff's inequality; see, for example, \cite[Section~2.3]{MR3837109}}]
\label{akjfnpfjnpfiwnfpifn}
Let $b_1,\dots,b_u$
be independent Bernoulli random variables with a parameter $q\in(0,1)$. 
Then
$$
\Prob\bigg\{\sum_{i=1}^u b_i>s\bigg\}\leq
\exp(-qu)\bigg(\frac{e\,qu}{s}\bigg)^s, \quad s>qu,
$$
and
$$
\Prob\bigg\{\sum_{i=1}^u b_i<s\bigg\}\leq
\exp(-qu)\bigg(\frac{e\,qu}{s}\bigg)^s, \quad 0<s<qu.
$$
In particular, for every $s\geq e^2 qu$,
$$
\Prob\bigg\{\sum_{i=1}^u b_i>s\bigg\}<
\exp(-qu)\exp(-s),
$$
and for every $s\in (qu,2qu]$,
$$
\Prob\bigg\{\sum_{i=1}^u b_i>s\bigg\}<
\exp\bigg(-\frac{c(s-qu)^2}{qu}\bigg),
$$
where $c>0$ is a universal constant.
\end{lemma}

\subsection{Cardinality estimates for large neighborhoods and vertex classes}

\begin{lemma}[Sizes of neighborhoods and their intersections]
\label{lem:nabon3q0hq0bq0597yg}
For any $D>1$, there is $K>0$ and $n_0 \in\N$ depending only on $D$ such that the following holds.
Let $G$ be a $G(n,p)$ graph with $n \ge n_0$ and $pn \ge \log n$. 
With probability at least $1-n^{-D}$, we have that 
\begin{equation}
\label{eq:gob30481gb03h0}
|\cB_G(i,l)| \le K (np)^l \quad \text{ for any } i,l \in [n].
\end{equation}
On the event that \eqref{eq:gob30481gb03h0} holds, for any $m \in \N$ and any $i,j \in [n]$ such that $i \neq j$ and  $G(\cB_G(i,3m))$ is a tree, if $d := \dist_G(i,j) \le 2m$, then 
\[
|\cB_G(i,m) \cap \cB_G(j,m)| \le K (np)^{m - \lceil d/2 \rceil} . 
\]
\end{lemma}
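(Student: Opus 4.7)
The plan is to prove the two claims of the lemma essentially independently. For the first (uniform ball-size bound), I would perform breadth-first exploration around a fixed vertex $i$, setting $X_l := |\cS_G(i,l)|$. Conditional on the first $l-1$ layers being revealed, each unexplored vertex attaches to $\cS_G(i,l-1)$ independently, so $X_l$ is stochastically dominated by $\Bin(n, p X_{l-1})$, with conditional mean at most $np\, X_{l-1}$. I would then argue inductively that $X_l \le K_l (np)^l$ with the $K_l$ uniformly bounded by some $K=K(D)$: for $l=1$ a direct Binomial tail bound gives $X_1 \le A\, np$ with probability $\ge 1-n^{-D-2}$ for an absolute constant $A=A(D)$, while for $l \ge 2$ Chernoff's inequality with a slack $\eta_l \asymp \sqrt{\log n/(np)^l}$ yields $X_l \le (1+\eta_l)K_{l-1}(np)^l$ with the same tail, and because $np\ge\log n$ the series $\sum_{l\ge 2}\eta_l$ is dominated by a geometric sum in $(\log n)^{-1/2}$, hence bounded uniformly in $n$. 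The product $\prod_l(1+\eta_l)$ is therefore absorbed into one constant, a further geometric sum converts the sphere bound into the ball bound $|\cB_G(i,l)|\le K(np)^l$, and a union bound over $i\in[n]$ and over the $O(\log n/\log\log n)$ values of $l$ for which the bound is not vacuous (for larger $l$ one has $|\cB_G(i,l)|\le n\le (np)^l$ trivially) delivers probability $\ge 1-n^{-D}$.

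For the second claim I would argue deterministically on the event of the first. Since $G(\cB_G(i,3m))$ is a tree and both balls of radius $m$ are contained in it, there is a unique path $i=u_0,u_1,\dots,u_d=j$. For any $v\in\cB_G(i,m)\cap\cB_G(j,m)$, the unique geodesics from $v$ to $i$ and from $v$ to $j$ share an initial segment up to a branching vertex $w=u_k$ on this path, and tree additivity of distances gives $\dist_G(i,v)=k+\dist_G(w,v)$ and $\dist_G(j,v)=(d-k)+\dist_G(w,v)$. Combined with $\dist_G(i,v),\dist_G(j,v)\le m$, this forces $\dist_G(w,v)\le m-\max(k,d-k)\le m-\lceil d/2\rceil$. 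Consequently
\[
\cB_G(i,m)\cap\cB_G(j,m) \subseteq \bigcup_{k=0}^{d}\cB_G\bigl(u_k,\,m-\max(k,d-k)\bigr),
\]
and bounding each summand via the first claim produces a geometric sum in $(np)$ around the midpoint of the path, dominated by the term at $k=\lceil d/2\rceil$. Absorbing the resulting geometric constant into $K$ yields the desired inequality.

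The main technical obstacle lies in the first claim, namely keeping $K$ genuinely a constant rather than growing with $l$ (or with $n$). A naive layer-by-layer Chernoff with a constant multiplicative slack compounds to $K=C^l$, which is too coarse once $l$ can be as large as $\Theta(\log n/\log\log n)$. The remedy is to let $\eta_l$ shrink with $l$: because the mean $K_{l-1}(np)^l$ grows fast, Chernoff can tolerate $\eta_l$ as small as $\sqrt{\log n/(np)^l}$ while still yielding an $n^{-D-2}$ tail, and summability of these slacks is precisely where the assumption $np\ge\log n$ is used, balancing $\sqrt{\log n}$ against $(np)^{-l/2}$. The second claim, once the first is in hand, is a short deterministic geometric bookkeeping argument using only the tree hypothesis.
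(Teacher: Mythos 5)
Your proof is correct. The second (deterministic) half is essentially identical to the paper's argument: both reduce to the unique $i$--$j$ path in the tree $G(\cB_G(i,3m))$, locate for each $v$ in the intersection the branching vertex $w$ on that path, deduce $v\in\cB_G(w,\,m-\max(\dist(w,i),\dist(w,j)))$ with $m-\max(k,d-k)\le m-\lceil d/2\rceil$, and sum a geometric series along the path. For the first half, the paper dismisses the ball bound in one line by noting that $|\cS_G(i,l)|$ is stochastically dominated by a $\Bin(n^l,p^l)$ variable (counting potential paths of length $l$ from $i$) and applying Chernoff plus a union bound; you instead run a layer-by-layer BFS induction with multiplicative slacks $\eta_l\asymp\sqrt{\log n/(np)^l}$ whose product stays bounded because $np\ge\log n$. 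Your route is more work but makes explicit exactly where a uniform constant $K=K(D)$ comes from, which the paper leaves implicit. One small slip: the conditional domination should read $\Bin(n\,X_{l-1},p)$ rather than $\Bin(n,pX_{l-1})$ (the latter is not a valid parameterization once $pX_{l-1}>1$), though the conditional mean $np\,X_{l-1}$ you use is the right one and the argument goes through unchanged.
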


\begin{proof}
The first part of the lemma is standard: It is not hard to see that $|\cS_G(i,l)|$ is stochastically dominated by a $\Bin(n^l, p^l)$ random variable, so the bound follows from Chernoff's inequality (third estimate
in Lemma~\ref{akjfnpfjnpfiwnfpifn}) and a union bound. 
We omit the details.

For the second part, assume that \eqref{eq:gob30481gb03h0} holds. Fix distinct $i,j \in [n]$ and let $d = \dist_G(i,j) \le 2m$. 
Then there exists a path $\gamma$ from $i$ to $j$ of length $d$. 
Fix $v \in \cB_G(i,m) \cap \cB_G(j,m)$. 
Then there is a path from $i$ to $v$ in $\cB_G(i,m)$ which we denote by $\gamma_i$, and there is a path from $j$ to $v$ in $\cB_G(j,m)$ which we denote by $\gamma_j$. 
Since $\cB_G(i,m) \cup \cB_G(j,m)$ is contained in $\cB_G(i,3m)$ which is assumed to be a tree, the paths $\gamma$, $\gamma_i$, and $\gamma_j$ are unique. 
% Moreover, $\gamma$ must be contained in the union of $\gamma_i$ and $\gamma_j$, because otherwise there is a cycle in $\cB_G(i,3m)$, giving a contradiction.
Since the union of $\gamma$, $\gamma_i$, and $\gamma_j$ is a tree, all of them must pass through a unique common vertex $w$. 
% In particular, we have either $\dist_G(w,i) \le \lfloor r/2 \rfloor$ or $\dist_G(w,j) \le \lfloor r/2 \rfloor$. 
Since the length of $\gamma_i$ is at most $m$, we have $v \in \cB_G(w, m - \dist_G(i,w))$; similarly, we get $v \in \cB_G(w, m - \dist_G(j,w))$. 
As a result, if $l := m - \max(\dist_G(w,i), \dist_G(w,j))$, then $v \in \cB_G(w, l)$. 
Note that $m - d \le l \le m - \lceil d/2 \rceil$ in particular. 

We now count the total number of vertices $v \in \cB_G(i,m) \cap \cB_G(j,m)$ by reversing the above reasoning: 
For any fixed $w$ on the path $\gamma$, there are at most $K(np)^l$ vertices in $\cB_G(w, l)$; this gives an upper bound on the number of possible $v$ that connects to $w$. 
Letting $w$ vary on the path $\gamma$ and recalling the definition of $l$, we easily see that each $l = m-d, \dots, m - \lceil d/2 \rceil$ corresponds to at most two $w$. 
Therefore, the total number of $v$ can be bounded by 
$$
2 \sum_{l=m-d}^{m - \lceil d/2 \rceil} K(np)^l \le 4 K (np)^{m - \lceil d/2 \rceil} ,
$$
which finishes the proof (up to a change of the constant $K$).
\end{proof}

\begin{lemma}[Sizes of vertex classes] \label{lem:  class size}
For any $C, D>1$, there exists $Q>0$, $R>0$, and $n_0 \in \N$ depending only on $C$ and $D$ such that the following holds. Let $G$ be a $G(n,p)$ graph with $n \ge n_0$ and 
\begin{equation}
\log n \le np \le n^{\frac{1}{R \log \log n}}
\label{eq:kagj2985hugbbr08}
\end{equation}
Fix a positive integer $m \le C \log \log n$. 
For $l \in [m]$, $s \in \{-1,1\}^l$, and $i \in [n]$, let $T_s^l(i)$ denote the class $T_s^l$ of vertices constructed in {\tt VertexSignature}$(G,i,m)$ (Algorithm~\ref{alg:ver-sig}). 
Then, with probability at least $1-n^{-D}$, for any $i \in [n]$ such that $G(\cB_G(i,m+1))$ is a tree, any $l \in [m]$, and any $s \in \{-1,1\}^l$, we have 
\[
|T_s^l(i)| \le Q \left( \frac{np}{2} \right)^l.
\]
\end{lemma}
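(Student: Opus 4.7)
The plan is to prove the bound by induction on $l$, revealing $G$ in BFS order from $i$ and applying Chernoff-type concentration at each level, with a union bound over all $i \in [n]$, $l \in [m]$, and $s \in \{-1,1\}^l$ (totaling at most $n \cdot (\log n)^{O(C)}$ events, absorbable into the failure probability $n^{-D}$). I would first work on the high-probability event from Lemma~\ref{lem:nabon3q0hq0bq0597yg} ensuring $|\cB_G(i,l)| \le K(np)^l$ throughout. Because $np \le n^{1/(R\log\log n)}$ and $m \le C\log\log n$, we have $(np)^{m+1} = n^{o(1)} \ll n$, so vertices in $\cB_G(i,l)$ form a negligible fraction of $[n]$ at every level and the distribution of each child-count is effectively $\Bin(n-1,p)$.

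For the induction, I would reveal $G$ in BFS order from $i$. Since $G(\cB_G(i,m+1))$ is a tree by assumption, every $v \in \cS_G(i,l)$ with $l \le m$ satisfies $\deg_G(v) = 1 + |\{\text{children of } v\}|$, so after revealing $\cB_G(i,l+1)$ the classes $T_s^l(i)$ with $|s|=l$ are all determined. Conditional on the (tree) ball $\cB_G(i,l)$, the children-sets of distinct vertices at level $l-1$ are disjoint, their sizes are independent $\Bin(n-|\cB_G(i,l-1)|,p)$, and each resulting child $j$ independently satisfies $\deg_G(j) \ge np$ with probability close to $1/2$. The inductive step then reads: assuming $|T_{s'}^{l-1}(i)| \le A_{l-1}$, Chernoff gives that the total number of children $N_l$ is at most $np \cdot A_{l-1}(1+\delta_l)$, and the degree-split produces a class of size at most $N_l/2 \cdot (1+\delta'_l) \le (np/2)\cdot A_{l-1}(1+\delta_l)(1+\delta'_l) =: A_l$, where $\delta_l, \delta'_l = O(\sqrt{\log n/(np\,A_{l-1})})$.

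For the base case I would use the trivial bound $|T_s^1(i)| \le |\cS_G(i,1)| \le C_1\, np$ for some $C_1 = C_1(D)$ (holding whp by Chernoff on $\Bin(n-1,p)$), giving $A_1 = 2 C_1 (np/2)$. Then $A_l \sim (np/2)^l$ grows geometrically via the recursion, and the errors satisfy $\delta_l,\delta'_l = O\bigl((np/2)^{-(l-1)/2}\sqrt{\log n / np}\bigr)$, a geometrically decaying sequence since $np \ge \log n$. Summing, $\sum_l (\delta_l + \delta'_l) = O(1)$, and hence $\prod_{l=1}^m (1+\delta_l)(1+\delta'_l) = e^{O(1)}$, yielding a constant $Q = Q(C,D)$ with $A_l \le Q(np/2)^l$ as required.

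The main obstacle will be implementing the Chernoff bounds cleanly within a BFS filtration: whether $v \in T^l_{(s',b)}$ depends on level-$(l+1)$ information (the children of $v$), so one must sequentially expose edges and track which Binomial variables are conditionally independent given what has been revealed, while also verifying that the tree assumption on $\cB_G(i,m+1)$ remains consistent with the conditioning at every step. A secondary point is the choice of $R$: it must be large enough that $(np)^{m+1} = n^{o(1)}$ and that the Chernoff failure probabilities at each level are at most $n^{-D-1}/(\log n)^{O(C)}$, so that the final union bound over all $(i,l,s)$ gives the claimed $n^{-D}$.
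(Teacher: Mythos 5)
Your overall strategy---induction over the levels $l$, a Chernoff/Hoeffding bound at each level producing a multiplicative error $(1+\delta_l)$ whose product over $l\le m$ stays bounded, and a union bound over all $(i,l,s)$---is exactly the paper's. However, the step you yourself flag as ``the main obstacle'' is where the actual content of the proof lies, and as written your key independence claim is false. You assert that, conditional on the revealed ball, the children-set sizes of the vertices in $T_{s'}^{l-1}(i)$ are independent $\Bin(n-|\cB_G(i,l-1)|,p)$. But membership of $v$ in $T_{s'}^{l-1}(i)$ is itself determined by $\deg_G(v)$, i.e.\ by precisely the edges from $v$ to level $l$ whose number you are about to sum. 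For a class whose last coordinate is $+1$, each child count is a Binomial \emph{conditioned on exceeding} $np$, which biases it upward and invalidates a naive Chernoff bound. The paper resolves this by conditioning on $\widehat T_s^l=F$, observing that each count is $Z_j$ conditioned on $Z_j-1\ge np$ with $\Prob\{Z_j-1\ge np\}>1/4$, and using $\Prob\{Z_j>np(1+t)\mid Z_j-1\ge np\}\le 4\,\Prob\{Z_j>np(1+t)\}$ together with a separate count of the (few) vertices whose conditioned degree exceeds $np(1+\tfrac{1}{2m})$. Some such device is needed before your per-level error estimate $\delta_l=O(\sqrt{\log n/(np\,A_{l-1})})$ is justified.

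A second point your outline leaves unresolved is the interaction with the tree hypothesis: conditioning on the event $\{G(\cB_G(i,m+1))\text{ is a tree}\}$ destroys the independence of the edge indicators, so you cannot both assume the tree structure and treat the unexplored edges as fresh Bernoulli variables. The paper sidesteps this by introducing the modified degree $\widehat{\deg}(j)=|\neigh(j)\cap\cS(i,l+1)|+1$ and modified classes $\widehat T_s^l$ defined for \emph{every} realization of $G$; the cardinality bound is proved for $\widehat T_s^l$ with no conditioning on the tree event, and the implication ``$G(\cB_G(i,m+1))$ is a tree $\Rightarrow \widehat T_s^l=T_s^l$'' is deterministic. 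Incorporating these two devices, your induction (base case $|T^1_{\pm 1}(i)|\le Knp$, then child-count bound followed by a Hoeffding bound for the degree split with success probability $\tfrac12+O(n^{-0.4})$) matches the paper's proof step for step.
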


\begin{proof}
Fix $i \in [n]$ throughout the proof. 
We omit the subscript $G$ in notations $\cB_G$, $\cS_G$, etc.\ for simplicity. 
For readability, we split the proof into a few parts. 

\paragraph{Redefining the classes of vertices}
Let us define a modified version of the degree of a vertex and the classes $T_s^l = T_s^l(i)$ which coincide with the original definitions when $G(\cB(i,m+1))$ is a tree. 
Namely, for $j \in \cS(i,l)$, we set
\begin{equation}
\widehat{\deg}(j) := | \neigh(j) \cap \cS(i,l+1)| +1.
\label{eq:asdoigqenqerig}
\end{equation}
Define classes $\widehat{T}_s^l$ the same way as $T_s^l$ but with $\deg(j)$ replaced by $\widehat{\deg}(j)$; that is, $\widehat{T}_{\varnothing}^0 = \{i\}$, and for $l \in \N_0$ and $s \in \{-1,1\}^l$, 
\begin{align*}
& \widehat{T}_{(s,+1)}^{l+1} = \big\{j\in\neigh(\widehat{T}_s^l) \cap \cS(i,l+1) :\;\widehat{\deg}(j)\geq np \big\} , \\
& \widehat{T}_{(s,-1)}^{l+1} = \big\{j\in\neigh(\widehat{T}_s^l) \cap \cS(i,l+1) :\;\widehat{\deg}(j)< np \big\} . 
\end{align*}
Note that if $G(\cB(i,m+1))$ is a tree, then $\deg(j)= \widehat{\deg}(j)$ for any $j \in \cB(i,m) \setminus \{i\}$, and thus $\widehat{T}_s^l = T_s^l$ for any $l \le m$ and any $s \in \{-1,1\}^l$. 
Therefore, it suffices to bound the cardinality of $\widehat{T}^l_s$ for $l \in [m]$. 

\paragraph{Setup of the induction}
% It is a standard fact that for any $D>1$, there exists $K>1$ depending only on  $D$ such that
% \begin{equation}\label{eq: degree bounded}
% \Prob(\cE) \ge 1-n^{-D-1} \quad \text{ where } \cE := \{ \deg(j) < K np \;\;  \forall \, j \in [n] \} . 
% \end{equation}
% On the event $\Event$, we have 
For the base case $l = 1$ of the induction, it is standard to show that 
\begin{equation}
\label{eq:dsafnfdggftctwl}
\p\left\{ |\widehat{T}^1_1| \le K np \right\} 
\ge \p\left\{ \widehat{\deg}(i) \le K np \right\} 
\ge 1 - n^{-D-1} 
\end{equation}
for a constant $K>0$ depending only on $D$, since $np \ge \log n$ and $n \ge n_0 = n_0(D)$. 
The same bound holds for $|\widehat{T}^1_{-1}|$ as well. 

Next, we set up the induction from $l \in [m-1]$ to $l+1$. 
Define 
\begin{equation}
\label{eq:agob2t29gh20}
Q_l := 2K \left(1+ \frac{1}{m} \right)^l,
\end{equation}
so $Q_l \le 2e K$ for all $l \in [m]$. 
Let $s=(s_1, \ldots s_l)$ be such that $s_l=1$.
The case $s_l=-1$ can be handled in the same way. 
As the induction hypothesis, we assume that 
$$ |\widehat{T}_s^l| \le Q_l (np/2)^l . $$ 
The induction step consists in proving that
\begin{equation*}
% \label{eq: lkdfjvkl}
|\widehat{T}_{(s,1)}^{l+1}| \le Q_{l+1}  (np/2)^{l+1}
\end{equation*}
with overwhelming probability (which will be explained more precisely below).  
The same bound for $\widehat{T}_{(s,-1)}^{l+1}$ can be established similarly, finishing the induction.

To make the high-probability statement in the induction precise, we define an event 
\begin{equation}
\cE_l := \left\{ \widehat{\deg}(j) \le K np \  \forall \, j \in \cB(i,l) \right\} 
\label{eq:adsio2350591ho1i}
\end{equation}
for any $l \in [m]$. 
Moreover, let $\p_l$ and $\E_l$ denote respectively the probability and the expectation conditional on the subgraph $G(\cB(i,l))$. 
For the induction step, we will show that conditional on an instance of $G(\cB(i,l))$ such that $\cE_{l-1}$ occurs, we have 
\begin{equation}
\label{eq:ag893232uib}
\Prob_l \left\{ |\widehat{T}_{(s,1)}^{l+1}| \le  Q_{l+1} \left( \frac{np}{2} \right)^{l+1} \text{ and } \cE_l \text{ occurs} \;\Big|\; |\widehat{T}_s^l| \le  Q_l \left(\frac{np}{2} \right)^l
\right\} \ge 1 - n^{-D-1} . 
\end{equation}
Taking a union bound over \eqref{eq:dsafnfdggftctwl} and \eqref{eq:ag893232uib} for $l \in [m-1]$ and $s \in \{-1,1\}^l$, 
we can remove the conditioning and obtain 
\begin{align*}
\Prob \left\{ |\widehat{T}_{s}^{l}| \le 2 e K \left( \frac{np}{2} \right)^{l} \  \forall \, l \in [m] , \, s \in \{-1,1\}^{l} \right\}  
% & \le \sum_{l=0}^{m-1} \sum_{s \in \{-1,1\}^l} \Prob_l \left\{ \cE \text{ occurs, } |\widehat{T}_{(s, \pm 1)}^{l+1}| \ge  Q_{l+1} \left( \frac{np}{2} \right)^{l+1}, \text{ and }  |\widehat{T}_s^l| \le  Q_l \left(\frac{np}{2}   \right)^l  \right\} \\
\ge 1 - \sum_{l=1}^m 2^l \cdot n^{-D-1} 
\ge 1 - n^{-D} . 
\end{align*}

% where the expectation in the sum is taken with respect to the realizations of $\cB(i,l)$.
% In view of \eqref{eq: degree bounded}, and the already mentioned fact that $\widehat{T}_s^l = T_s^l$ whenever $\cB(i, m+1)$ is a tree, this completes the proof.

\paragraph{The induction step, part 1}
We will estimate the size of the set $\neigh(\widehat{T}_s^l) \cap \cS(i,l+1)$ first. 
% To be more specific, we prove that 
% \begin{align*}
% &\p_l \left\{ |\neigh(\widehat{T}_s^l) \cap \cS(i,l+1)| \ge  2 Q_{l+1} \left( \frac{np}{2} \right)^{l+1}
% \; \Big| \;  |\widehat{T}_s^l| \le  Q_l \left(\frac{np}{2} \right)^l  \right\} 
% \le 2 n^{-D-1}.
% \end{align*}
Let us condition on an instance of $G(\cB(i,l))$ such that $\cE_{l-1}$ occurs. 
Further, fix a set $F \subset \cS(i,l)$ such that $1 \le |F| \le Q_l (np/2)^l$, and condition on $\widehat{T}_s^l =F$. 
Then the random variables $ \widehat{\deg}(j)-1$ are independent across different $j$: 
The independence is ensured by the modified definition \eqref{eq:asdoigqenqerig} of the degree of a vertex as we exclude the edges connecting different vertices in $F$.

Moreover, for any $j \in F$, $\widehat{\deg}(j)-1$ is distributed as a   $\text{Binomial}(n-|\cB(i,l)|,p)$ random variable $Z_j$ conditioned on $Z_j - 1 \ge np$.
Since $\cE_{l-1}$ occurs, we have $|\cB(i,l)| \le 2 (Knp)^l \le n^{0.1}$ in view of the conditions $l \le m \le C \log \log n$ and \eqref{eq:kagj2985hugbbr08}. 
It is not hard to see that $\Prob\{Z_j - 1 > np\} > 1/4$. 
Then Chernoff's inequality (fourth estimate
in Lemma~\ref{akjfnpfjnpfiwnfpifn}) yields
\begin{align}
\p_l \left\{  \widehat{\deg}(j) >np(1+t) \;\Big|\; \widehat{T}_s^l =F \right\}
&=\Prob \left\{ Z_j > np(1+t) \mid Z_j \ge np \right\} \notag \\
&\le 4 \, \Prob \left\{ Z_j > np(1+t) \right\} \notag \\
&\le 4 \exp (-c t^2 np) 
\label{eq:adgi34189hgrbqe9g9130}
\end{align}
for any 
% $t \in (c/\sqrt{np},1)$ 
$t \in (0,1)$ 
and a universal constant $c>0$. 
In addition, a bound similar to \eqref{eq:adgi34189hgrbqe9g9130} for $j \in \cS(i,l) \setminus F$ can also be established, yielding
$$
\p_l \left\{ \max_{j \in \cS(i,l)} \widehat{\deg}(j) > K np \;\Big|\; \widehat{T}_s^l =F \right\}
\le n^{-D-2} 
$$
for $K>0$ depending only on $D$. 
Since we already condition on an instance of $G(\cB(i,l))$ such that $\cE_{l-1}$ occurs, the above inequality together with the definition of $\cE_l$ in \eqref{eq:adsio2350591ho1i} implies that 
\begin{equation}
\p_l \left\{ \cE_l \text{ occurs} \;\Big|\; \widehat{T}_s^l =F \right\}
\ge 1 - n^{-D-2} . 
\label{eq:ogq1b34ob}
\end{equation}

Next, define an indicator 
$$
I_j := \1 \left\{ \widehat{\deg}(j) >np\left(1+ \frac{1}{2m}\right) \right\} . 
$$
Then \eqref{eq:adgi34189hgrbqe9g9130} with $t = 1/(2m)$ shows that $I_j$ is a $\text{Bernoulli}(\theta)$ random variable with $\theta < 4 \exp (\frac{-c  np}{m^2})$. 
As $|F| \le  Q_l (np/2)^l $, using the conditional independence of $\widehat{\deg}(j)$ for different $j$ and applying Chernoff's inequality (first estimate in Lemma~\ref{akjfnpfjnpfiwnfpifn}), we get
\begin{align}
\p_l \left\{ \sum_{j \in F} I_j  \ge \frac{1}{2m} \left( \frac{np}{2} \right)^l  \;\Big|\; \widehat{T}_s^l =F \right\} 
%& \le \exp \left( -c \left[ \frac{1}{\theta |F| (2m)^2} \left( \frac{np}{2} \right)^{2l}
%\wedge \frac{1}{2m} \left( \frac{np}{2} \right)^l \log \frac{\frac{1}{2m} \left( \frac{np}{2} \right)^l}{\theta |F|} \right] \right)
\le \exp \left( - \frac{c'}{m^3}   \left( \frac{np}{2} \right)^{l+1} \right) %\notag \\
 \le n^{-D-2}
\label{eq:ogq1b34ob---222}
\end{align}
for a constant $c'>0$, since $m \le C \log \log n$, $np \ge \log n$, and $n \ge n_0 = n_0(C,D)$. 

% Let $\mathcal{I}= \{j \in F: I_j=1\}$, and
% assume that $ |\mathcal{I}|  < \frac{1}{2m} \left( \frac{np}{2} \right)^l $.
If $\sum_{j \in F} I_j < \frac{1}{2m} \left( \frac{np}{2} \right)^l$, the event $\cE_l$ occurs, and $|F| \le Q_l \left( \frac{np}{2} \right)^{l}$, then
\begin{align*}
\sum_{j \in F}  \widehat{\deg}(j) 
&\le \sum_{j \in F} I_j \cdot K np+ |F| \cdot np \left(1+ \frac{1}{2m} \right) \\
&\le \frac{1}{2m} \left( \frac{np}{2} \right)^l K np + Q_l \left( \frac{np}{2} \right)^l np \left(1+ \frac{1}{2m} \right) \\
&\le 2 Q_l \left(1+\frac{3}{4m}\right) \left( \frac{np}{2} \right)^{l+1} 
\end{align*}
by the definition of $Q_l$ in \eqref{eq:agob2t29gh20}. 
Combining this with \eqref{eq:ogq1b34ob} and \eqref{eq:ogq1b34ob---222} yields 
\begin{align*}
&\p_l \left\{ |\neigh(F) \cap \cS(i,l+1)| \le  2 Q_l \left(1+\frac{3}{4m}\right) \left( \frac{np}{2} \right)^{l+1}  \text{ and } \cE_l \text{ occurs}  \;\Big|\; \widehat{T}_s^l =F \right\} \\
&\ge \p_l \left\{  \sum_{j \in F} \widehat{\deg}(j) \le 2 Q_l \left(1+\frac{3}{4m}\right) \left( \frac{np}{2} \right)^{l+1}  \text{ and } \cE_l \text{ occurs}  \;\Big|\; \widehat{T}_s^l =F \right\} \\
&\ge 1 - 2 n^{-D-2}
\end{align*}
for all $F$ such that $ |F| \le  Q_l \left(\frac{np}{2} \right)^l$.
Removing the conditioning on $\widehat{T}_s^l = F$, we derive that
\begin{align}
&\p_l \left\{ |\neigh(\widehat{T}_s^l) \cap \cS(i,l+1)| \le  2 Q_l \left(1+\frac{3}{4m}\right) \left( \frac{np}{2} \right)^{l+1}
\text{ and } \cE_l \text{ occurs} 
\; \Big| \;  |\widehat{T}_s^l| \le  Q_l \left(\frac{np}{2} \right)^l  \right\} \notag \\
&= \sum_{  |F| \le  Q_l \left(\frac{np}{2} \right)^l}
\p_l \left\{ |\neigh(F) \cap \cS(i,l+1)| \le  2 Q_l \left(1+\frac{3}{4m}\right) \left( \frac{np}{2} \right)^{l+1}
\text{ and } \cE_l \text{ occurs} 
\; \Big| \; \widehat{T}_s^l =F  \right\} \notag \\
& \qquad \qquad \qquad \cdot \Prob_l \left\{ \widehat{T}_s^l =F \; \Big| \;  |\widehat{T}_s^l| \le  Q_l \left(\frac{np}{2} \right)^l  \right\} \notag \\
&\ge 1 - 2 n^{-D-2}. 
\label{eq: ankloj}
\end{align}

\paragraph{The induction step, part 2}
For brevity, denote $H=\neigh \big(\widehat{T}_s^l\big) \cap \cS(i,l+1)$.
% Let $\Event(l)$ be the event that $\widehat{\deg}(j) \le K np$ for all $j \in \cB(i,l)$ and
% \[
% |H| \le 2 Q_{l+1} \left( \frac{np}{2} \right)^{l+1}.
% \]
% We proved that
% \begin{equation}\label{eq: ankloj}
% \p_l \left\{ \Event(l)  \;\Big|\; |\widehat{T}_s^l| \le  Q_l \left(\frac{np}{2} \right)^l  \right\}
% \ge 1- 2 n^{-D-2}.
% \end{equation}
We further condition on an instance of $G(\cB(i,l+1))$ such that 
$$
|H| \le  2 Q_l \left(1+\frac{3}{4m}\right) \left( \frac{np}{2} \right)^{l+1}
\text{ and } \cE_l \text{ occurs} , 
$$
that is, the high-probability event in \eqref{eq: ankloj} occurs. 
Then the quantities $\widehat{\deg}(j) - 1$ for $j \in H$ are i.i.d.\ $\text{Binomial}(n-|\cB(i,l+1)|, p)$ random variables. 
For $j \in H$, denote by $Y_{j}$ the indicator of the event $\widehat{\deg}(j) \ge np$. 
Since $|\cB(i,l+1)| \le 2(Knp)^{l+1} \le n^{0.1}$ by the conditions $l \le m \le C \log \log n$ and \eqref{eq:kagj2985hugbbr08}, we have 
\[
\left| \E_{l+1} \left[ Y_{j} \right] - \frac{1}{2} \right| = 
\left| \Prob_{l+1} \left\{ \widehat{\deg}(j) \ge np \right\} - \frac{1}{2} \right| \le \frac{C_2 n^{0.1} p}{\sqrt{np}} 
\le n^{-0.4} 
\]
where $C_2>0$ is a universal constant. 
Therefore, 
\[
|\widehat{T}_{(s,1)}^{l+1}|
=   \sum_{j \in H}  Y_{j} 
\le \sum_{j \in H}  \big( Y_{j} - \E_{l+1} Y_{j} \big) +  \left( \frac{1}{2} + n^{-0.4} \right) |H| .
\]
% If we condition on an instance of edges between vertices in $\cB(i,l+1)$ such that $\cE(l)$ holds, then 
Hoeffding's inequality then yields
\begin{align*}
& \p_{l+1} \left\{ |\widehat{T}_{(s,1)}^{l+1}| \ge \left( \frac{1}{2}+ n^{-0.4} \right)|H| + \frac{1}{8m} Q_l \left( \frac{np}{2} \right)^{l+1} \right\}  \\
& \le \p_{l+1} \left\{ \sum_{j \in H} \big(Y_j - \E_{l+1} Y_j \big) \ge \frac{1}{8m} Q_l \left( \frac{np}{2} \right)^{l+1} \right\} \\
& \le \exp \left( - \frac{c}{|H|} \left(\frac{1}{8m} Q_l \left( \frac{np}{2} \right)^{l+1} \right)^2  \right)
\le n^{-D-2}
\end{align*}
if $m \le C \log \log n$ and $n \ge n_0 = n_0(C,D)$, where we used $|H| \le 2 Q_l \left(1+\frac{3}{4m}\right) (np/2)^{l+1}$. 
Moreover,
$$
\left( \frac{1}{2}+ n^{-0.4} \right)|H| + \frac{1}{8m} Q_l \left( \frac{np}{2} \right)^{l+1} 
\le Q_l \left(1+\frac{7}{8m}\right) \left( \frac{np}{2} \right)^{l+1} + \frac{1}{8m} Q_l \left( \frac{np}{2} \right)^{l+1} 
\le Q_{l+1} \left( \frac{np}{2} \right)^{l+1} , 
$$
so we obtain
\begin{align*}
& \p_{l+1} \left\{ |\widehat{T}_{(s,1)}^{l+1}| \le Q_{l+1} \left( \frac{np}{2} \right)^{l+1} \right\}  
\ge 1 - n^{-D-2} . 
\end{align*}
It follows from the above inequality and \eqref{eq: ankloj} that 
\begin{align*}
& \Prob_l \left\{ |\widehat{T}_{(s,1)}^{l+1}| \le  Q_{l+1} \left( \frac{np}{2} \right)^{l+1} \text{ and } \cE_l \text{ occurs} \;\Big|\; |\widehat{T}_s^l| \le  Q_l \left(\frac{np}{2} \right)^l
\right\} \\
& \ge \Prob_l \left\{ |\widehat{T}_{(s,1)}^{l+1}| \le  Q_{l+1} \left( \frac{np}{2} \right)^{l+1} \;\Big|\;  |H| \le  2 Q_l \left(1+\frac{3}{4m}\right) \left( \frac{np}{2} \right)^{l+1}, \; \cE_l \text{ occurs}, \text{ and }  |\widehat{T}_s^l| \le  Q_l \left(\frac{np}{2} \right)^l
\right\} \\
& \qquad \cdot \Prob_l \left\{ |H| \le  2 Q_l \left(1+\frac{3}{4m}\right) \left( \frac{np}{2} \right)^{l+1} \text{ and } \cE_l \text{ occurs} \;\Big|\; |\widehat{T}_s^l| \le  Q_l \left(\frac{np}{2} \right)^l
\right\} \\
&\ge 1 - n^{-D-1},
\end{align*}
% where we used \eqref{eq: ankloj} to bound the last term.
% The bound for the size of $\widehat{T}_{(s,-1)}^{l+1}$ can be obtained in the same way.
which completes the induction. 
\end{proof}

\subsection{One-neighborhoods of typical vertices}

\begin{lemma}[Minimal degree of a typical vertex] \label{lem:  nq}
For any $\delta'>0$ there are
$n_0' \in \N$ and $c'>0$ depending on $\delta'$ with the following property.
Assume that $n>n_0'$ and that $r\in(0,1)$ satisfies $rn\geq \log n$. Let $\Gamma$
be a $G(n,r)$ Erd\H{o}s--R\'enyi graph, and let $J$ be any fixed subset of $[n]$. Then for any integer $k$ such that  $\max(2,|J|\exp(-c'rn))\leq k\leq c'n$,
\[
\Prob \left\{ \deg_{\Gamma}(i) < (1-\delta') rn  \mbox{ for at least $k$ vertices $i\in J$}\right\}
\leq \exp(-c' rn k).
\]
\end{lemma}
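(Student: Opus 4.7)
The plan is a standard union bound argument combined with a Chernoff lower-tail estimate, with the key structural observation that once we fix a candidate set $S$ of allegedly low-degree vertices, the parts of the degrees contributed by edges going outside $S$ become mutually independent.

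First, I would fix an arbitrary subset $S\subseteq J$ with $|S|=k$ and, for each $i\in S$, write $\deg_\Gamma(i)=d_S(i)+d_{S^c}(i)$, where $d_{S^c}(i)$ counts the neighbors of $i$ lying in $[n]\setminus S$. By the product structure of the $G(n,r)$ model, the random variables $\{d_{S^c}(i)\}_{i\in S}$ are mutually independent, each distributed as $\mathrm{Bin}(n-k,r)$. In particular, $\deg_\Gamma(i)<(1-\delta')rn$ implies $d_{S^c}(i)<(1-\delta')rn$.

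Next, I would choose $c'$ small enough (in particular, $c'\leq \delta'/4$) so that for $k\leq c'n$ the mean $(n-k)r$ of $d_{S^c}(i)$ satisfies $(1-\delta')rn\leq (1-\delta'/2)(n-k)r$. Then the standard Chernoff bound for the lower tail of the binomial gives
\[
\Prob\bigl\{d_{S^c}(i)<(1-\delta')rn\bigr\}\leq \exp\bigl(-c_0(\delta')^2\, rn\bigr)
\]
for some absolute $c_0>0$. Using the independence of the $d_{S^c}(i)$ across $i\in S$, the probability that every vertex of $S$ has degree below $(1-\delta')rn$ is at most $\exp\bigl(-c_0(\delta')^2\,rn\,k\bigr)$.

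Finally, I would take a union bound over the $\binom{|J|}{k}\leq (e|J|/k)^k$ choices of $S$, giving
\[
\Prob\bigl\{\text{at least }k\text{ low-degree vertices in }J\bigr\}\leq \exp\Bigl(k\log(e|J|/k)-c_0(\delta')^2\,rn\,k\Bigr).
\]
The assumed lower bound $k\geq |J|\exp(-c'rn)$ yields $\log(|J|/k)\leq c'rn$, so the entropy term contributes at most $k(1+c'rn)\leq 2c'rn\,k$ (using $rn\geq \log n$ and $n\geq n_0'$). Shrinking $c'$ once more so that $2c'\leq c_0(\delta')^2/2$ absorbs the entropy term into half of the Chernoff exponent and yields the claimed bound $\exp(-c'rn\,k)$ with an appropriate final choice of $c'$ depending only on $\delta'$.

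No real obstacle is present here; the only point that requires some care is coordinating the constants so that a single $c'$ simultaneously (i) keeps $k\leq c'n$ small enough for the Chernoff mean shift $(n-k)r\to nr$ to cost at most a factor $(1-\delta'/2)$, (ii) makes the lower bound $k\geq |J|\exp(-c'rn)$ strong enough to dominate the $\binom{|J|}{k}$ entropy term, and (iii) leaves a linear-in-$rnk$ gap in the final exponent.
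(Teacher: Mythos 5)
Your proof is correct and follows essentially the same route as the paper's: decouple the within-set edges so that the remaining degree contributions of the $k$ chosen vertices are independent, apply a Chernoff lower-tail bound to each, multiply, and take a union bound over the $\binom{|J|}{k}$ subsets, using the hypothesis $k\geq |J|\exp(-c'rn)$ to absorb the entropy term. The only cosmetic difference is that the paper achieves the decoupling by sequentially conditioning on the edges of $v_1,\dots,v_{\ell-1}$ and lower-bounding $\deg_\Gamma(v_\ell)$ by $\sum_{j\neq v_1,\dots,v_\ell}b_{v_\ell,j}$, whereas you remove all edges internal to $S$ in one shot; the two decouplings are interchangeable here.
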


\begin{proof}
For any distinct $i, j \in [n]$, let $b_{ij}$ be the indicator of the event $\{(i,j)\mbox{ is an edge of $\Gamma$}\}$.
Fix for a moment any $2\leq k\leq |J|$ and $k$ distinct vertices $v_1,v_2,\dots,v_k$ in $J$.
Let $1\leq \ell\leq k$. Conditioned on any realization of random variables
$(b_{v_i\,j})_{i \in [\ell-1], \, j\neq v_i}$, the variables $(b_{v_\ell,\,j})_{j\neq v_1,\dots,v_{\ell}}$
are (conditionally) i.i.d.\ Bernoulli($r$). Hence, by Chernoff's inequality (second estimate in Lemma~\ref{akjfnpfjnpfiwnfpifn}),
\begin{align*}
\Prob\bigg\{\sum_{j\neq v_1,\dots,v_{\ell}}b_{v_\ell,\,j}<(1-\delta')rn\;\big|\;(b_{v_i\,j})_{i \in [\ell-1], \, j\neq v_i}\bigg\}
&\leq \exp(-(n-\ell)r)\bigg(\frac{e(n-\ell)r}{(1-\delta')rn}\bigg)^{(1-\delta')rn}\\
&\leq \exp(-\delta' rn/2)
\end{align*}
if $\ell \le k \le \delta' n/2$. 
This implies
$$
\Prob\bigg\{\sum_{j\neq v_1,\dots,v_{\ell}}b_{v_\ell,\,j}<(1-\delta')rn\mbox{ for all }1\leq \ell\leq k\bigg\}
\leq \exp(- k \delta' rn/2).
$$
% as long as $k\leq c(\delta')^2\,n/2$.
By the above bound and a union bound over all possible subsets of $J$ of cardinality $k$, we have
\[
\Prob \left\{ \deg_{\Gamma}(i) < (1-\delta') rn  \mbox{ for at least $k$ vertices $i\in J$}\right\}
\leq \exp(- k \delta' rn/2) \binom{|J|}{k} 
\leq \exp(- k \delta' rn/4) 
\]
if $k \ge e |J| \exp(-\delta' rn/4)$. 
% 
% In view of the above bound and Markov's inequality, the probability that $w\geq 2k$ vertices from $J$
% have neighborhoods of size less than $(1-\delta')rn$ each, can be bounded from above by
% $$
% \frac{\exp(-ck\,(\delta')^2\,rn/2)\,|J|^k}{w(w-1)\dots(w-k+1)}
% \leq \bigg(\frac{2|J|\,\exp(-c(\delta')^2\,rn/2)}{w}\bigg)^k,
% $$
% provided that $k\leq c(\delta')^2\,n/2$. The result follows by taking $w=2k\geq 2|J|\,\exp(-c(\delta')^2\,rn/4)$.
\end{proof}

An immediate consequence of the above lemma is the following result.

\begin{lemma}[Degrees of typical vertices] \label{lem:3rqbg931g98sd97g3}
For any $\kappa \in (0, 1/2)$,  there exists $c>0$ and $n_0 \in \N$
depending on $\kappa$ with the following property.
Suppose that $n>n_0$ and $nr \ge \log n$. 
Let $\Gamma$ be a $G(n,r)$ Erd\H{o}s--R\'enyi graph. 
Then with probability at least $1-\exp(-c rn \log n)$,
\begin{equation}
\label{eq:fadi20412g0b13gbf03qb}
\big|\big\{ i \in [n] : \; |\neigh_{\Gamma}(i)| \ge (1-\kappa) nr \big\}\big| > n - n^{1-c} . 
\end{equation}
\end{lemma}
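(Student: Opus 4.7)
The plan is to deduce the statement directly from Lemma~\ref{lem:  nq} by a single, carefully tuned application. Let $c' > 0$ and $n_0' \in \N$ be the constants produced by Lemma~\ref{lem:  nq} for $\delta' := \kappa$, and set $c := c'/2$. I would apply Lemma~\ref{lem:  nq} with $J := [n]$ and threshold $k := \lceil n^{1-c}\rceil$.

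The admissibility hypothesis of Lemma~\ref{lem:  nq} is $\max(2, |J|\exp(-c'rn)) \le k \le c'n$. For $n$ large, $k \le n^{1-c} + 1 \le c'n$ and $k \ge 2$. For the remaining bound, the hypothesis $rn \ge \log n$ gives
\[
n\exp(-c'rn) \le n \cdot n^{-c'} = n^{1-c'} \le n^{1-c} \le k .
\]
Thus Lemma~\ref{lem:  nq} asserts that, with probability at least $1 - \exp(-c'rnk)$, strictly fewer than $k$ vertices $i \in [n]$ satisfy $\deg_\Gamma(i) < (1-\kappa)rn$. On this event the complementary set has cardinality at least $n - k + 1 > n - n^{1-c}$, which is exactly \eqref{eq:fadi20412g0b13gbf03qb}.

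It remains to upgrade the probability bound to the required form. Since $k \ge n^{1-c} \ge \log n$ for $n$ sufficiently large, we get $\exp(-c'rnk) \le \exp(-c'rn\log n) \le \exp(-crn\log n)$, matching the statement. There is no substantive obstacle here; the entire proof is a matter of choosing the free parameter $k$ in Lemma~\ref{lem:  nq} so that the cardinality bound $n - n^{1-c}$ and the target exponential decay $\exp(-crn\log n)$ emerge simultaneously, and adjusting $c$ (here $c = c'/2$) to absorb the discrepancy between the constant produced by Lemma~\ref{lem:  nq} and the one appearing in the conclusion.
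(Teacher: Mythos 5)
Your proof is correct and takes essentially the same route as the paper: a single application of Lemma~\ref{lem:  nq} with $J=[n]$ and a suitably chosen $k$ (the paper takes roughly the smallest admissible $k$, namely $\lceil\max(\log n,\,n\exp(-c'rn))\rceil$, while you take the largest compatible one, $k=\lceil n^{1-c}\rceil$; both choices yield the stated cardinality and probability bounds). The only tacit point is that one should take $c'\le 1$ without loss of generality (always possible by shrinking it), so that $k=\lceil n^{1-c}\rceil\ge 2$ and $n^{1-c}\ge\log n$ for large $n$; the paper's proof relies on the same implicit normalization.
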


\begin{proof}
We apply Lemma~\ref{lem:  nq} with $J = [n]$ to obtain the following: There is a constant $c>0$ depending on $\kappa$ such that for any integer $k$ such that  $\max(2, n \exp(-cnr))\leq k\leq cn$,
\[
\Prob \left\{ |\neigh_\Gamma(i)| < (1-\kappa) nr  \mbox{ for at least $k$ vertices $i\in [n]$}\right\}
\leq \exp(-c k rn).
\]
Taking
$$
k := \Big\lceil \max \left( \log n, \, n \exp(-cnr) \right)  \Big\rceil \le n^{1-c} , 
$$
we see that $|\neigh_{\Gamma}(i)| \ge (1-\kappa) nr$ for more than $n - n^{1-c}$ vertices $i \in [n]$ with probability at least $1 - \exp(- c rn \log n)$. 
\end{proof}

\begin{lemma}[Number of common neighbors]\label{183741-47}
For any $\delta''>0$ there are
$n_0'' \in \N$ and $c''>0$ depending on $\delta''$ with the following property.
Assume that $n>n_0''$ and that $r\in(0,1/2]$ satisfies $rn\geq \log n$. Let $\Gamma$
be a $G(n,r)$ Erd\H{o}s--R\'enyi graph, and let $J$ be any fixed subset of $[n]$ such that $|J|\cdot rn\log n\leq \sqrt{n}$.
Then for every $v\in J$ we have
\[
\Prob \left\{\big|\neigh_{\Gamma}(v)\cap \big(\neigh_{\Gamma}(J\setminus\{v\})\cup J\big)\big| \geq\delta'' rn\right\}
\leq \exp(-c'' rn \log n).
\]
\end{lemma}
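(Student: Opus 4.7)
The plan is to set $W := J \setminus \{v\}$ and decompose the set of interest as the disjoint union
\[
\neigh_{\Gamma}(v) \cap \big(\neigh_{\Gamma}(W)\cup J\big) = \big(\neigh_{\Gamma}(v) \cap J\big) \sqcup \big(\neigh_{\Gamma}(v) \cap \neigh_{\Gamma}(W) \cap ([n]\setminus J)\big),
\]
and then to show that each summand exceeds $\delta'' rn/2$ only on an event of probability at most $\exp(-c''rn\log n)$, concluding by a union bound.

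For the first summand, $|\neigh_{\Gamma}(v)\cap J|$ has distribution $\Bin(|J|-1, r)$. I would apply the elementary Chernoff-type bound $\Prob\{\Bin(N,r)\geq k\} \leq (eNr/k)^k$ with $k := \lceil \delta'' rn/2\rceil$. The hypothesis $|J|\cdot rn\log n \leq \sqrt{n}$ yields $2e|J|/(\delta'' n) \leq 2e/(\delta''\sqrt{n}(\log n)^2)$, whose logarithm is at most $-\tfrac{1}{2}\log n$ for $n$ large, so raising to the $k$-th power produces a tail of order $\exp(-c''rn\log n)$.

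For the second summand the key step is to re-index by the ``outside'' vertex. For each $u\in [n]\setminus J$, let $Y_u$ be the indicator that $u\sim v$ in $\Gamma$ and $u$ has at least one neighbor in $W$. Then $Y_u$ depends only on the edges joining $u$ to $J$, so the family $(Y_u)_{u\in [n]\setminus J}$ is mutually independent; each $Y_u$ is $\Ber(r(1-(1-r)^{|W|}))$, stochastically dominated by $\Ber(|J|r^2)$. Consequently the sum is dominated by $\Bin(n, |J|r^2)$, whose mean satisfies $n|J|r^2 = (|J|rn)\cdot r \leq r\sqrt{n}/\log n$. The same Chernoff estimate with $k:=\lceil \delta''rn/2\rceil$ gives $(2e|J|r/\delta'')^k \leq (2e/(\delta''\sqrt{n}\log n))^{\delta''rn/2} \leq \exp(-c''rn\log n)$.

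The main subtle point is identifying the right independence structure in the second step. The naive approach of bounding the summand by $\sum_{w\in W}|\neigh_{\Gamma}(v)\cap\neigh_{\Gamma}(w)|$ produces $|W|$ binomials that share the edges incident to $v$ and are therefore not independent; Markov's inequality on their sum yields only a polynomial tail, which falls short of the target rate. Re-indexing by $u$ exploits the fact that for distinct $u$ the edge sets $\{(u,j):j\in J\}$ are disjoint, which restores genuine independence of the $Y_u$ and is precisely what allows Chernoff to deliver the required $\exp(-c''rn\log n)$ rate.
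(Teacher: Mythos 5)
Your proof is correct, and for the main term it takes a genuinely different (and arguably cleaner) route than the paper. Both arguments use the same decomposition into $\neigh_{\Gamma}(v)\cap J$ and the common-neighborhood part outside $J$, and both handle $|\neigh_{\Gamma}(v)\cap J|$ by the same elementary Chernoff bound exploiting the smallness of $|J|$. The difference is in the common-neighborhood term. The paper conditions on the edges emanating from $J\setminus\{v\}$: it first introduces an auxiliary high-probability event $\Event'$ on which $|\neigh_{\Gamma}(J\setminus\{v\})|\le |J|\cdot ern\log n\le e\sqrt{n}$, and then observes that, conditionally, the edges from $v$ into this (now fixed) set are i.i.d.\ Bernoulli($r$), so the count is a binomial with at most $e\sqrt{n}$ trials, to which Chernoff applies. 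Your re-indexing by the outside vertex $u$ instead makes the indicators $Y_u$ unconditionally mutually independent, since each depends only on the disjoint edge bundle $\{(u,j):j\in J\}$, and packages the two requirements on $u$ (adjacent to $v$, adjacent to $W$) into a single Bernoulli parameter $r\bigl(1-(1-r)^{|W|}\bigr)\le |J|r^2$. This dispenses with the conditioning step and the extra $\exp(-ern\log n)$ error term coming from $\Event'$, at the cost of having to notice the disjointness of the edge bundles; the paper's conditioning device is the same one it reuses elsewhere (e.g.\ in Lemma~\ref{lem:  nq} and Lemma~\ref{lem: degree far}), which is presumably why it is phrased that way. Your diagnosis of why the naive bound $\sum_{w\in W}|\neigh_{\Gamma}(v)\cap\neigh_{\Gamma}(w)|$ fails is also accurate: the shared edges at $v$ destroy independence across $w$, and first moment bounds only give a polynomial tail.
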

\begin{proof}
% Without loss of generality, assume that $n\geq e^2$.
For any distinct $i, j \in [n]$, let $b_{ij}$ denote the indicator of the event $\{(i,j)\mbox{ is an edge of $\Gamma$}\}$. 
For any given vertex $i$ of $\Gamma$, we have, by Chernoff's inequality
(third estimate in Lemma~\ref{akjfnpfjnpfiwnfpifn}),
$$
|\neigh_\Gamma(i)|\leq ern\log n
$$ 
with probability at least $1-\exp(-rn)\exp(-ern\log n)$,
whence the event 
$$
\Event':=\big\{|\neigh_{\Gamma}(J\setminus\{v\})|\leq |J|\cdot ern\log n\big\}
$$
has probability at least $1-\exp(-ern\log n)$. Observe that
$\neigh_{\Gamma}(J\setminus\{v\})$ (and so $\Event'$)
is measurable w.r.t.\ the collection of variables $(b_{ij})_{i\in J\setminus\{v\},\, j\neq i}$.
Conditioned on any realization of $(b_{ij})_{i\in J\setminus\{v\},\, j\neq i}$ such that $\Event'$ holds,
the variables $b_{vj}$, $j\in \neigh_{\Gamma}(J\setminus\{v\})\setminus J$, are (conditionally) i.i.d.\ Bernoulli($r$).
Hence, by Chernoff's inequality (first estimate in Lemma~\ref{akjfnpfjnpfiwnfpifn}) and the assumption $|J|\cdot r n \log n\leq \sqrt{n}$,
$$
\Prob\big\{\big|\neigh_{\Gamma}(v)\cap \big(\neigh_{\Gamma}(J\setminus\{v\})\setminus J\big)\big|\geq \delta''rn/2\;|\;\Event'\big\}
\le \Big(\frac{e|J|\cdot e r^2 n \log n}{\delta''rn/2}\Big)^{\delta''rn/2}
\leq \exp(-c_1rn\log n)
$$
for some $c_1>0$ depending on $\delta''$.
Finally, we note that, again by Chernoff's inequality and the assumption on $|J|$,
$$
\Prob\big\{|\neigh_{\Gamma}(v)\cap J|\geq \delta''rn/2\;|\;\Event'\big\}
\leq \exp(-c_2rn\log n)
$$
for some $c_2>0$ depending on $\delta''$, and the result follows.
\end{proof}

\begin{lemma}[Neighbors' degrees in a typical graph] \label{lem: degree far}
For any $\kappa, \lambda \in (0, 1/2)$,  there exist $c,\delta>0$ and $n_0 \in \N$
depending on $\kappa$ and $\lambda$ with the following property.
Let $n>n_0$ and $rn\geq \log n$, and let $\Gamma$ be a $G(n,r)$ Erd\H{o}s--R\'enyi graph.
Further, let $J$ be a fixed subset of $[n]$ such that 
\begin{equation}
\label{eq:adong03g81b0vvab}
\frac{1}{2}\log n \le |J| \le \frac{\sqrt{n}}{rn \log n} .
\end{equation}
Then with probability at least $1-\exp(-c rn\,\log n)$,
\begin{subequations}
\begin{align}
&\big|\big\{
j\in \neigh_{\Gamma}(i):\;\deg_\Gamma(j)\geq
nr+\delta\sqrt{nr}
\big\}\big|\geq (1/2-\kappa) nr  \label{0983274019847}
, \quad \text{and} \\
&\big|\big\{
j\in \neigh_{\Gamma}(i):\;\deg_\Gamma(j)\leq
nr-\delta\sqrt{nr}
\big\}\big|\geq (1/2-\kappa) nr \label{1-847-1}
\end{align}
\end{subequations}
for at least $|J|-\max(\lambda \log n,|J|\exp(-crn))$ vertices $i \in J$. 
\end{lemma}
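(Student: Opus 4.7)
My plan is to reduce the statement to a joint tail bound on $\Prob\{E_{v_1}\cap\cdots\cap E_{v_k}\}$ (where $E_v$ denotes the failure of \eqref{0983274019847} or \eqref{1-847-1} at $v$), and then conclude with a union bound over $k$-subsets of $J$. First I would pick $\delta=\delta(\kappa)\in(0,1)$ small enough that a standard normal places mass at least $1/2-\kappa/8$ in each of $[\delta,\infty)$ and $(-\infty,-\delta]$; Berry--Esseen applied to a $\mathrm{Binomial}(N,r)$ with $Nr\geq(\log n)/2$ and $N=n-O(|J|)$ then gives probability at least $1/2-\kappa/4$ beyond $Nr\pm\delta\sqrt{Nr}$. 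Separately, I would apply Lemma~\ref{183741-47} to $J$ and union-bound over $v\in J$ to obtain a ``good event'' $\mathcal{G}$ of probability at least $1-\exp(-c_0 rn\log n)$ on which $|\neigh_\Gamma(v)\cap(\neigh_\Gamma(J\setminus\{v\})\cup J)|\leq\delta'' rn$ for every $v\in J$, with $\delta''=\delta''(\kappa)>0$ chosen small (say $\delta''=\kappa/20$).

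For the joint bound, fix distinct $v_1,\ldots,v_k\in J$, set $S:=\{v_1,\ldots,v_k\}$, and condition on the induced subgraph $\Gamma_0:=\Gamma[[n]\setminus S]$. For $j\in[n]\setminus S$ write $d_0(j):=\deg_{\Gamma_0}(j)$ and $\alpha_j:=\sum_{m=1}^k b_{v_m,j}$, so that $\deg_\Gamma(j)=d_0(j)+\alpha_j$ and $\alpha_j\leq k$. Let
\[
H^+:=\{j\in[n]\setminus S:d_0(j)\geq rn+\delta\sqrt{rn}\},\qquad H^-:=\{j\in[n]\setminus S:d_0(j)\leq rn-\delta\sqrt{rn}-1\}.
\]
By Berry--Esseen, $\E|H^\pm|\geq(1/2-\kappa/6)n$, and by a strong concentration argument for $G(n,r)$ (the naive edge-Lipschitz Azuma bound is too weak here, so I would use a vertex-exposure martingale with a ``typical'' Lipschitz estimate or a Talagrand-style inequality), $|H^\pm|\geq(1/2-\kappa/5)n$ with probability at least $1-\exp(-c_1 n)$. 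The key structural observation is that on $\mathcal{G}$, (i) every $j\in\neigh_\Gamma(v_\ell)\cap H^+$ automatically satisfies $\deg_\Gamma(j)\geq d_0(j)\geq rn+\delta\sqrt{rn}$, and (ii) every $j\in\neigh_\Gamma(v_\ell)\cap H^-$ with $\alpha_j=1$ (which covers all but $\leq\delta'' rn$ such vertices by the definition of $\mathcal{G}$) satisfies $\deg_\Gamma(j)=d_0(j)+1\leq rn-\delta\sqrt{rn}$. Thus on $\mathcal{G}$, $E_{v_\ell}\subset\widetilde E_{v_\ell}^+\cup\widetilde E_{v_\ell}^-$, where $\widetilde E_{v_\ell}^\pm:=\bigl\{|\neigh_\Gamma(v_\ell)\cap H^\pm|<(1/2-\kappa+2\delta'')\,rn\bigr\}$.

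Conditional on $\Gamma_0$, the Bernoulli vectors $(b_{v_\ell,j})_{j\in[n]\setminus S}$ are mutually independent across $\ell$, so each $|\neigh_\Gamma(v_\ell)\cap H^\pm|$ is $\mathrm{Binomial}(|H^\pm|,r)$ and these counts are conditionally independent in $\ell$. On $\{|H^\pm|\geq(1/2-\kappa/5)n\}$, Chernoff gives $\Prob\{\widetilde E_{v_\ell}^\pm\mid\Gamma_0\}\leq\exp(-c_2\kappa^2 rn)$ provided $\delta''<\kappa/8$; multiplying across $\ell$ yields $\Prob\{E_{v_1}\cap\cdots\cap E_{v_k}\}\leq 3\exp(-c_3 rn k)$. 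A union bound then delivers
\[
\Prob\bigl\{|\{v\in J:E_v\}|\geq k_0\bigr\}\leq\binom{|J|}{k_0}\cdot 3\exp(-c_3 rn k_0),
\]
and substituting $k_0=\max(\lambda\log n,|J|\exp(-c_3 rn/2))$ together with $|J|\leq\sqrt{n}/(rn\log n)$ absorbs the combinatorial factor $(e|J|/k_0)^{k_0}$ and produces $\exp(-c rn\log n)$, as required.

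The hard part will be twofold. First, establishing that $|H^\pm|\geq(1/2-\kappa/5)n$ with probability at least $1-\exp(-c_1 n)$ (so that the ``bad $\Gamma_0$'' contribution is negligible) requires a concentration tool stronger than edge-Azuma, since $|H^\pm|$ is 2-Lipschitz in $\binom{n}{2}$ many edges. Second, the lower-tail count at $v_\ell$ couples the randomness of different $v_m$'s through the shared variable $\alpha_j$, which prevents a direct factorization of the joint probability. My fix is to separate the benign regime $\alpha_j=1$ (on which $d_0(j)+1$ is a deterministic upper bound for $\deg_\Gamma(j)$) from the coupling regime $\alpha_j\geq 2$, and to use Lemma~\ref{183741-47} to bound the latter globally by $\delta'' rn$ on the event $\mathcal{G}$; once this decoupling is in place, the joint probability factorizes cleanly over the conditionally independent Bernoulli vectors $(b_{v_\ell,\cdot})_\ell$.
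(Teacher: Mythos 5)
Your overall architecture (decoupling the shared randomness via Lemma~\ref{183741-47}, exploiting conditional independence across the chosen vertices, and closing with a union bound over $k$-subsets of $J$) is sound and parallels the paper's, but the proof rests on an unproven claim that is not a routine application of any standard tool: that $|H^\pm|\ge(1/2-\kappa/5)n$ with probability $1-\exp(-c_1 n)$ (or at least $1-\exp(-c\,rn\log n)$, which is what your union bound actually requires, since $H^\pm$ depends on the chosen set $S$ and therefore cannot be pulled outside the sum over $k_0$-subsets). You correctly note that edge-Lipschitz Azuma is useless here, but the alternatives you name do not obviously deliver the bound either: a vertex-exposure martingale with typical Lipschitz constants of order $\sqrt{nr}$ gives deviation probability roughly $\exp(-\epsilon^2/r)$, and a sequential-conditioning union bound in the style of Lemma~\ref{lem: nq} fails outright because the per-vertex conditional probability of lying below the threshold is only about $1/2$, which cannot beat the entropy factor $\binom{n}{(1/2+\kappa/5)n}$. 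Moreover the target rate $\exp(-c_1 n)$ is genuinely at the edge of what is true: a global downward fluctuation of the edge count by $\Theta(n\sqrt{nr})$ edges, which has probability $\exp(-\Theta(n))$, already shifts a constant fraction of degrees across the threshold $rn+\delta\sqrt{rn}$. So this step is a real large-deviations problem, not a black-box concentration step, and as written the proof does not go through.

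The paper avoids this entirely by conditioning in the opposite direction: it reveals the edges incident to $J$ (rather than the graph $\Gamma_0$ outside $S$), which fixes the disjoint sets $U_i=\neigh_\Gamma(i)\setminus(\neigh_\Gamma(J\setminus\{i\})\cup J)$, and then measures the degree of each $j\in U_i$ only through the \emph{fresh} edges $Z_j=\sum_{k\notin J\cup\neigh_\Gamma(J)}b_{jk}$. Berry--Esseen is applied to each individual conditionally binomial $Z_j$ (giving success probability $\ge 1/2-\kappa/4$), the indicators over $j\in U_i$ are conditionally i.i.d., and Hoeffding over the set $U_i$ of size $\approx rn$ yields the per-vertex failure probability $\exp(-c\kappa^2 rn)$; independence across $i\in J$ then gives the final Chernoff bound. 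No statement about the global degree distribution of the graph is ever needed. If you want to salvage your route, you would either need to prove the $|H^\pm|$ concentration at rate $\exp(-c\,rn\log n)$ (nontrivial), or restructure the conditioning as the paper does so that the relevant degree statistics are built from edges that are still unrevealed.
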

%\begin{lemma}[Neighbors' degrees in a typical parent graph] \label{lem: degree far}
%For any $\varepsilon,  \kappa>0$ there exist $\delta, \tau>0$, and $n_0 \in \N$
%depending on $\varepsilon$ and $\kappa$ such that, assuming $qn>(1+\varepsilon) \log n$ and $n>n_0$,
%with  probability at least $1-n^{-\tau}$
%\begin{align}
%&\big|\big\{
%j\in \neigh_{G_0}(i):\;\deg(j)\geq
%(n-1)q+\delta\sqrt{(n-1)q}
%\big\}\big|\geq (1/2-\kappa)|\neigh_{G_0}(i)|  \label{0983274019847}
%\intertext{and}
%&\;\big|\big\{
%j\in \neigh_{G_0}(i):\;\deg(j)\leq
%(n-1)q-\delta\sqrt{(n-1)q}
%\big\}\big|\geq (1/2-\kappa)|\neigh_{G_0}(i)|.  %\label{1-847-1}
%\end{align}
%for at least $n-n^{1-\tau}$ vertices $i \in [n]$ such that the 1--neighborhood of $i$ is a tree.
%\end{lemma}

\begin{proof}
Choose $\delta' = \delta'':=\kappa/4$, and let $c',c''$ be the corresponding constants
from Lemmas~\ref{lem:  nq} and~\ref{183741-47}. Denote by $\Event$
the event
\begin{align*}
\Event:=\big\{&|\neigh_{\Gamma}(v)| \geq (1-\delta') rn  \mbox{ for at least $|J|-\max \big( (\lambda/2) \log n,|J|\exp(-c'rn) \big)$ vertices $v\in J$,} \\
&|\neigh_{\Gamma}(v)\cap (\neigh_{\Gamma}(J\setminus\{v\})\cup J)| <\delta'' rn\mbox{ for all $v\in J$, and} \\
&\mbox{$|\neigh_{\Gamma}(J)|\leq |J|\cdot ern\log n$}\big\}.
\end{align*}
In view of Lemmas~\ref{lem:  nq} and~\ref{183741-47}, and by applying Chernoff's inequality (Lemma~\ref{akjfnpfjnpfiwnfpifn}) to estimate
the upper tails of $|\neigh_{\Gamma}(v)|$ for $v\in J$, it is not difficult to see that 
the event $\Event$ has probability at least 
$$
1 - \exp(-c' (\lambda/2) \, rn \log n) -n\cdot \exp(-c'' rn \log n) -\exp(-ern\log n) 
\ge 1 - \exp(-2 c_1 rn \log n)
$$
for a constant $c_1>0$ depending on $\kappa$ and $\lambda$. 

Note that $\Event$ is measurable with respect to the variables
$(b_{ij})_{i\in J,\, j\neq i}$, where $b_{ij}$ is the indicator of the event $\{(i,j)\mbox{ is an edge of $\Gamma$}\}$.
Condition on any realization of the variables $(b_{ij})_{i\in J,\, j\neq i}$ such that the event $\Event$ holds.
Denote $U_i:=\neigh_{\Gamma}(i)\setminus (\neigh_{\Gamma}(J\setminus\{i\})\cup J)$ for $i\in J$. 
Observe that the sets $U_i$, $i \in J$, are disjoint, and everywhere on $\Event$ we have 
\begin{equation}
\big| \big\{ i \in J : |U_i|\geq (1-\kappa/2)rn \big\} \big| \ge |J|-\max \big( (\lambda/2) \log n,|J|\exp(-c'rn) \big) .
\label{eq:ognibq0h03308hrhga0}
\end{equation}

Let $\delta\in(0,1)$ be a parameter to be chosen later. 
Fix $i\in J$ and let
$$
Z_{j}:=\sum_{k\in [n]\setminus (J\cup \neigh_{\Gamma}(J))}b_{jk} , \quad j \in U_i . 
$$
Then the indicators of the events
$$
\left\{ Z_j \geq
nr+\delta\sqrt{nr}
\right\},\quad j\in U_i,
$$
are conditionally i.i.d.\ given $(b_{vj})_{v\in J,\, j\neq v}$. Observe that for every $j\in U_i$, the variable $Z_j$ is conditionally Binomial($n-|J\cup \neigh_{\Gamma}(J)|$,$r$), 
% with the mean $r(n-|J\cup \neigh_{\Gamma}(J)|)$
% and the variance $r(1-r)(n-|J\cup \neigh_{\Gamma}(J)|)$, 
and so 
$$
nr+\delta\sqrt{nr}- \E[Z_j] \leq 2\delta\sqrt{\Var(Z_j)}+3r^{1/2}\sqrt{\Var(Z_j)}
$$
in view of the assumption \eqref{eq:adong03g81b0vvab} on $|J|$. Hence, by the Berry--Esseen theorem (or by a direct computation),
there is a choice of $\delta$, as a function of $\kappa$, so that
$$
Z_j \geq nr+\delta\sqrt{nr}
$$
with conitional probability at least $1/2-\kappa/4$. 
Thus, applying Hoeffding's inequality, for every $i$ such that $|U_i|\geq (1-\kappa/2)rn$, we have
$$
\left|\left\{
j\in U_i:\; Z_j \geq
nr+\delta\sqrt{nr}
\right\}\right|
= \sum_{j \in U_i} \1 \left\{ Z_j \ge nr + \delta \sqrt{nr} \right\}
\geq (1/2-\kappa)rn
$$
with conditional probability at least $1-\exp(-c_2 \kappa^2 rn)$,
for some universal constant $c_2>0$.

Finally, the variables
$$
\bigg|\bigg\{
j\in U_i:\;\sum_{k\in [n]\setminus (J\cup \neigh_{\Gamma}(J))}b_{jk}\geq
nr+\delta\sqrt{nr}
\bigg\}\bigg|,\quad i\in J,
$$
are conditionally mutually independent given $(b_{vj})_{v\in J,\, j\neq v}$. 
It remains to apply Chernoff's inequality (second estimate in Lemma~\ref{akjfnpfjnpfiwnfpifn}) to the %following
sum of the indicators 
$$
\sum_{i\in J:\;|U_i|\geq (1-\kappa/2)rn} \1 \bigg\{\bigg|\bigg\{
j\in U_i:\;\sum_{k\in [n]\setminus (J\cup \neigh_{\Gamma}(J))}b_{jk}\geq
nr+\delta\sqrt{nr}
\bigg\}\bigg|\geq (1/2-\kappa)rn\bigg\} 
$$
to obtain the following: Since the above sum is over at least $|J|-\max \big( (\lambda/2) \log n,|J|\exp(-c'rn) \big)$ summands by \eqref{eq:ognibq0h03308hrhga0}, with probability at least $1-\exp(-c_3 rn \log n)$, 
$$
\bigg|\bigg\{
j\in U_i:\;\sum_{k\in [n]\setminus (J\cup \neigh_{\Gamma}(J))}b_{jk}\geq
nr+\delta\sqrt{nr}
\bigg\}\bigg|\geq (1/2-\kappa)rn
$$
for at least $|J| - \max\big((3\lambda/4) \log n, |J| \exp(-c_3 rn)\big)$ vertices $i \in J$, where $c_3>0$ is a constant depending on $\kappa$ and $\lambda$. 
Then \eqref{0983274019847} follows immediately. The condition \eqref{1-847-1} is verified by analogy.
\end{proof}

We now prove a lemma similar to the above, but with $J = [n]$.

\begin{lemma}[Neighbors' degrees of typical vertices] \label{lem: degree far-2}
For any $\kappa \in (0, 1/2)$,  there exist $c,\delta>0$ and $n_0 \in \N$
depending on $\kappa$ with the following property.
Suppose that $n>n_0$ and $\log n \le rn \le n^{1/4}$. 
Let $\Gamma$ be a $G(n,r)$ Erd\H{o}s--R\'enyi graph. 
Then with probability at least $1-\exp(-c rn \log n)$,
\begin{subequations}
\begin{align}
&\big|\big\{
j\in \neigh_{\Gamma}(i):\;\deg_\Gamma(j)\geq
nr+\delta\sqrt{nr}
\big\}\big|\geq (1/2-\kappa) nr  \label{0983274019847-2}
, \quad \text{and} \\
&\big|\big\{
j\in \neigh_{\Gamma}(i):\;\deg_\Gamma(j)\leq
nr-\delta\sqrt{nr}
\big\}\big|\geq (1/2-\kappa) nr .  \label{1-847-1-2}
\end{align}
\end{subequations}
for at least $n-n^{1-c}$ vertices $i \in [n]$. 
\end{lemma}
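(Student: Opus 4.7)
The plan is to reduce to Lemma~\ref{lem: degree far} by partitioning $[n]$ into a family of disjoint subsets of size in the window $\bigl[\tfrac{1}{2}\log n,\tfrac{\sqrt n}{rn\log n}\bigr]$, applying that lemma to each subset, and then combining the estimates via a union bound.

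First, I would set $L:=\bigl\lfloor \tfrac{\sqrt n}{rn\log n}\bigr\rfloor$ and partition $[n]$ into $N=\lceil n/L\rceil$ disjoint subsets $J_1,\dots,J_N$ each of size between $\tfrac{1}{2}\log n$ and $L$. The hypothesis $rn\le n^{1/4}$ gives $L\ge n^{1/4}/\log n$ for $n$ large, so such a partition exists and in fact $N\le 2n/L\le 2n^{3/4}\log n$. Each $J_k$ is then eligible for Lemma~\ref{lem: degree far} (take for instance $\lambda=1$ there), with constants $c_0,\delta>0$ and $n_0$ depending only on $\kappa$.

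Applying Lemma~\ref{lem: degree far} to each fixed $J_k$, with probability at least $1-\exp(-c_0 rn\log n)$ both \eqref{0983274019847} and \eqref{1-847-1} hold for all but at most $\max(\log n,|J_k|\exp(-c_0 rn))$ vertices of $J_k$. A union bound over $k=1,\dots,N$ gives a failure probability of at most
\[
N\exp(-c_0 rn\log n)\le 2n^{3/4}\log n\cdot \exp(-c_0 rn\log n)\le \exp(-\tfrac{c_0}{2} rn\log n)
\]
for $n$ large, using $rn\ge\log n$. On the complementary event, the total number of ``bad'' vertices in $[n]$ is bounded by
\[
\sum_{k=1}^N \max\bigl(\log n,\,|J_k|\exp(-c_0 rn)\bigr)\le N\log n+n\exp(-c_0 rn)\le 2n^{3/4}\log^2 n+n^{1-c_0},
\]
since $rn\ge\log n$ ensures $\exp(-c_0 rn)\le n^{-c_0}$. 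Choosing $c\in(0,\min(1/4,c_0))$ small enough and $n$ large, this is at most $n^{1-c}$, which yields the claim.

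The main obstacle is really just the bookkeeping: one must verify that the window $[\tfrac{1}{2}\log n,\tfrac{\sqrt n}{rn\log n}]$ is nonempty (this is precisely where the assumption $rn\le n^{1/4}$ is used, since it gives $\tfrac{\sqrt n}{rn\log n}\ge\tfrac{n^{1/4}}{\log n}\gg\log n$), and that after the union bound both the aggregate failure probability and the aggregate number of bad vertices remain within the target bounds. No new probabilistic input beyond Lemma~\ref{lem: degree far} is needed.
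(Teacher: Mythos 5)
Your proposal is correct and follows essentially the same route as the paper: partition $[n]$ into roughly $rn^{3/2}\log n$ blocks of size at most $\tfrac{\sqrt n}{rn\log n}$, apply Lemma~\ref{lem: degree far} to each block, and union-bound both the failure probability and the count of bad vertices. The only nitpick is that Lemma~\ref{lem: degree far} requires $\lambda\in(0,1/2)$, so you should take, say, $\lambda=1/4$ rather than $\lambda=1$; this changes nothing in the estimates.
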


\begin{proof}
Let $\ell := \big\lceil r n^{3/2} \log n \big\rceil$. 
We partition $[n]$ into $\ell$ fixed subsets $J_1, \dots, J_\ell$ such that 
$$
|J_i| \le n/\ell \le \frac{\sqrt{n}}{r n \log n}
$$
for every $i \in [\ell]$. 
By Lemma~\ref{lem: degree far} and a union bound over $J_1, \dots, J_\ell$, the probability that 
\begin{equation}
\label{eq:0310bg0wbeg0abg}
\big|\big\{
j\in \neigh_{\Gamma}(i):\;\deg_\Gamma(j)\geq
nr+\delta\sqrt{nr}
\big\}\big| < (1/2-\kappa) nr
\end{equation}
for at most $\max(\log n,|J_\ell|\exp(-crn))$ vertices $i \in J_\ell$ for all $\ell \in [\ell]$ is at least 
$$
1 - \ell \exp(-4c rn \log n) 
\ge 1 - \exp(-2c rn \log n) ,
$$
where $c>0$ depends on $\kappa$. 
On this high-probability event, \eqref{eq:0310bg0wbeg0abg} holds for at most 
$$
\ell \cdot \max(\log n, \, |J_\ell|\exp(-crn))
\le \max( 2 r n^{3/2} \log^2 n , \, n \, \exp(-crn))
\le n^{1-2c} 
$$
vertices $i \in [n]$, since $\log n \le rn \le n^{1/4}$. 
This proves \eqref{0983274019847-2}. 
The proof of \eqref{1-847-1-2} is analogous. 
\end{proof}

\subsection{Tree structure of large neighborhoods}

\begin{lemma}[Counting subsets via probabilistic method]
\label{lem:3481041gb013ghgabo}
Let $n,d,k \in \N$ be such that $2k \le d \le n$. 
Fix an integer $M \ge (2n/d)^k d \log(en/d)$. 
There exists a collection of $M$ subsets $A_i \subset [n]$ with $|A_i| = k$ for each $i \in [M]$ such that the following holds. 
For every subset $B \subset [n]$ with $|B| = d$, there is $i \in [M]$ such that $A_i \subset B$. 
\end{lemma}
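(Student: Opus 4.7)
The plan is a straightforward application of the probabilistic method. I would sample $A_1, \ldots, A_M$ independently and uniformly at random from the collection of all $\binom{n}{k}$ subsets of $[n]$ of size $k$, and show that with positive probability, every $d$-subset $B\subset[n]$ contains at least one of the $A_i$. Since the statement is about existence of a deterministic configuration, it suffices to verify that the expected number of ``bad'' sets $B$ (those containing no $A_i$) is strictly less than $1$.

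For a fixed $d$-subset $B$, the probability that a single uniformly random $k$-subset $A$ is contained in $B$ equals
$$
\frac{\binom{d}{k}}{\binom{n}{k}}=\prod_{j=0}^{k-1}\frac{d-j}{n-j}.
$$
The key step is to invoke the hypothesis $2k\le d$: for every $j\in\{0,\ldots,k-1\}$, one has $d-j\ge d-k+1>d/2$, while $n-j\le n$. Hence each factor is at least $d/(2n)$, giving $\binom{d}{k}/\binom{n}{k}\ge (d/(2n))^k$. By independence of the $A_i$, the probability that none of them lies inside $B$ is bounded by $(1-(d/(2n))^k)^M\le \exp\!\bigl(-M(d/(2n))^k\bigr)$.

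I would then take a union bound over the $\binom{n}{d}\le(en/d)^d$ choices of $B$, so the expected number of uncovered $d$-subsets is at most
$$
\binom{n}{d}\exp\!\bigl(-M(d/(2n))^k\bigr)\le\exp\!\bigl(d\log(en/d)-M(d/(2n))^k\bigr).
$$
The hypothesis $M\ge(2n/d)^kd\log(en/d)$ makes the exponent nonpositive, so this expectation is at most $1$; moreover the bounds $\binom{n}{d}<(en/d)^d$ (strict for $d\ge 1$) and $(1-x)^M<e^{-Mx}$ for $x\in(0,1)$ make the inequality strict. Consequently, there exists a realization of $A_1,\ldots,A_M$ with zero uncovered $B$, which is precisely the conclusion of the lemma. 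I do not anticipate any real obstacle: the only point requiring care is extracting the clean lower bound $(d/(2n))^k$ for $\binom{d}{k}/\binom{n}{k}$, for which the assumption $2k\le d$ is tailored.
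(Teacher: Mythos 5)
Your proposal is correct and follows essentially the same argument as the paper: sample the $A_i$ i.i.d.\ uniformly among $k$-subsets, lower-bound $\binom{d}{k}/\binom{n}{k}$ by $(d/(2n))^k$ using $2k\le d$, and take a union bound over the $\binom{n}{d}\le(en/d)^d$ choices of $B$. No issues.
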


\begin{proof}
Fix a subset $B \subset [n]$ with $|B| = d$. 
Let $A_1, \dots, A_M$ be i.i.d.\ uniformly random subsets of $[n]$ with $|A_i| = k$ for each $i \in [M]$. Then we have 
$$
\p\{ A_i \subset B \} = \frac{d (d-1) \cdots (d-k+1)}{n (n-1) \cdots (n-k+1)} > \Big(\frac{d}{2n}\Big)^k 
$$
and thus
$$
\p\left\{ A_i \not\subset B \  \forall \, i \in [M] \right\}
< \bigg(1 - \Big(\frac{d}{2n}\Big)^k\bigg)^M 
< \exp\bigg( - M \Big(\frac{d}{2n}\Big)^k \bigg) . 
$$
A union bound over all subsets $B \subset [n]$ with $|B| = d$ yields 
$$
\p\left\{ \exists \, B \subset [n], \, |B| = d, \text{ s.t. } A_i \not\subset B \  \forall \, i \in [M] \right\}
< \exp\bigg( - M \Big(\frac{d}{2n}\Big)^k \bigg) \cdot \binom{n}{d} 
\le 1 
$$
if $M \ge (2n/d)^k d \log(en/d)$. 
Taking the complement of the above event completes the proof. 
\end{proof}

\begin{lemma}[Tree structure in a typical graph]
\label{lem:o3qnroq9vuh9auvg}
For $n \in \N$ and $r \in (0,1)$, let $\Gamma$ be a $G(n,r)$ Erd\H{o}s--R\'enyi graph. 
For any $x \in \N$, 
% Then we have
% $$
% \p \left\{ |\{ i \in [n] : \Gamma(\cB_G(i,x)) \text{ is not a tree}\}| \ge 300 x^4 (\log n)^8 (nr)^{3x} \right\} \le 2^{-\log^2 n}
% $$
the probability that there are at least 
% $300x^4 \big(\lceil \log(n)\rceil\big)^8\,(nr)^{3x}$ 
% $300 x^4 (\log n)^8 (nr)^{3x}$ 
$(5x)^3 (\log n)^6 (nr)^{3x}$ 
vertices of $\Gamma$ whose $x$--neighborhoods are not trees, is bounded from above by
$
\exp(-\log^2 n).
$
\end{lemma}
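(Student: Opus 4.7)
The plan is to associate each vertex with a non-tree $x$-neighborhood to a small ``witness'' subgraph, then combine the covering lemma (Lemma \ref{lem:3481041gb013ghgabo}) with a subgraph counting bound to derive exponential concentration.

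First, I would verify the witness structure. Running BFS from $v$ within $\cB_\Gamma(v,x)$, the neighborhood fails to be a tree iff there exists a non-tree edge $(u,w)$ with $u,w \in \cB_\Gamma(v,x)$; such an edge closes a cycle of length at most $\dist_\Gamma(v,u)+\dist_\Gamma(v,w)+1 \le 2x+1$, entirely contained in $\cB_\Gamma(v,x)$. Pick such a shortest cycle $C$ and a shortest path $P$ from $v$ to $C$ (of length $\le x$); then $\tau_v := P\cup C$ is a ``tadpole'' subgraph of $\Gamma$ rooted at $v$ with at most $3x+1$ vertices and at most $3x+1$ edges.

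Next, let $W$ denote the number of bad vertices and set $T := (5x)^3(\log n)^6(nr)^{3x}$. If $T>n$ the bound is vacuous, so I may assume $T \le n$. Suppose $W \ge T$. Applying Lemma \ref{lem:3481041gb013ghgabo} with $d=T$ and some $k$ to be chosen (around $\lceil \log^2 n\rceil$), I obtain a family of $M := (2n/T)^k\,T\log(en/T)$ subsets $A_1,\ldots,A_M \subset [n]$ of size $k$ such that some $A_i$ is contained in the bad set. A union bound then gives $\p(W\ge T) \le M\cdot \max_A \p(A\subset \mathrm{Bad})$.

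For a fixed $A=\{v_1,\ldots,v_k\}$, each $v_j\in A$ being bad yields a tadpole $\tau_j$ rooted at $v_j$; enumerating these,
\[
\p(A\subset\mathrm{Bad}) \;\le\; \sum_{(\tau_j)_{j=1}^k} r^{|E(\tau_1\cup\cdots\cup\tau_k)|}.
\]
Classifying the tuples by the (labelled) union $H := \tau_1\cup\cdots\cup\tau_k$, I use the key observation that every connected component of $H$ contains a cycle from some tadpole, so $|E(H)|\ge |V(H)|$. Each fixed isomorphism type with $|V(H)|\le (3x+1)k$ contributes at most $n^{|V(H)|-k}\,r^{|E(H)|} \le (nr)^{(3x+1)k}/n^k$ after fixing the $k$ roots, and the number of types is at most $\mathrm{poly}(x,k)^k$. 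Combining,
\[
\p(W\ge T) \;\le\; T\log(en/T)\cdot \Bigl(\tfrac{C x^{C} k^{C}\, nr}{(5x)^3 (\log n)^6}\Bigr)^{k}.
\]

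Taking $k=\lceil\log^2 n\rceil$, the desired bound $\exp(-\log^2 n)$ follows as soon as the base in parentheses is at most, say, $1/e$. The main obstacle will be making this bound valid uniformly across the range of $r$: when $nr$ is at most polylogarithmic this is immediate, and when $nr$ is so large that $T>n$ the statement is vacuous, but for the intermediate regime where $nr$ is moderately large I would need to sharpen the tadpole count by exploiting the ball-intersection estimate from Lemma \ref{lem:nabon3q0hq0bq0597yg} (which lowers the effective exponent on $nr$ because distinct vertices of a cycle force balls to intersect in smaller sets) and/or refine the classification of union types to gain a compensating decaying factor.
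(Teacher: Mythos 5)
Your overall strategy is the same as the paper's: witness each bad root by a ``tadpole'' (path plus cycle), note that every component of the union $H$ is unicyclic so that $|E(H)|\ge|V(H)|$, count shapes, and combine with Lemma~\ref{lem:3481041gb013ghgabo} and a union bound. The genuine gap is the one you flag in your last paragraph, and it is not a peripheral issue: bounding the cycle length by $2x+1$ and the path length by $x$ \emph{independently} gives $|V(H)|\le(3x+1)k$, hence a contribution of order $(nr)^{(3x+1)k}n^{-k}$ per shape, whereas the threshold only supplies $T^k\asymp\big((\log n)^6(nr)^{3x}\big)^k$; the uncancelled $(nr)^{k}$ is exactly the factor $nr$ sitting in the base of your final display. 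The regime where this breaks is precisely the regime in which the lemma is used: Proposition~\ref{prop:anvavg80q34811} invokes it with $x=m+1=\Theta(\log\log n)$ and $nr=nq$ as large as $n^{1/(R\log\log n)}$, for which $T\le\sqrt n$ (so the statement is far from vacuous) and $nr$ is far beyond polylogarithmic. Your proposed repairs (ball-intersection estimates, refining the classification of union types) are not the right tool and would not obviously recover the missing factor.

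The fix is structural and simple: the path length and the cycle length of a witness are coupled, because the whole cycle must fit inside $\cB_\Gamma(v,x)$. Concretely, take a BFS tree of $\Gamma(\cB_\Gamma(v,x))$ rooted at $v$ and a non-tree edge $(u,w)$; the two tree paths from $v$ to $u$ and to $w$, together with the edge $(u,w)$, form a tadpole (path to the branch point, then a cycle) with at most $\dist_\Gamma(v,u)+\dist_\Gamma(v,w)+1\le 2x+1$ edges and vertices in total. With this sharper witness, and since each subsequent root attached to an already-built component contributes only a path of length at most $x$, one gets $|V(H)|=|E(H)|\le(2x+1)h+x(k-h)\le(2x+1)k\le 3xk$ for $x\ge1$. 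Then $(nr)^{3xk}$ cancels exactly against $(nr)^{3x k}$ coming from $T^k$, leaving a base of order $k^{O(1)}/(\log n)^6<1/2$ for $k=\lceil\log^2 n\rceil$, which yields $\exp(-\log^2 n)$. This is in essence what the paper's iterative construction achieves (its stated range $z_j\le 2x$ is itself a harmless off-by-one, repaired by the same coupling). With that one correction your argument goes through.
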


\begin{proof}
For an integer $k \ge 2$, fix for a moment $k$ distinct vertices $v_1,\dots,v_k \in [n]$, and consider the event $\Event$ that the $x$--neighborhood
of each of the vertices is \emph{not} a tree.
Given any realization of $\Gamma$ from $\Event$, we shall construct an auxiliary subgraph $H$ of $\Gamma$ iteratively
as follows. 

At the beginning of the process, $H$ is empty. 
Since the $x$--neighborhood of $v_1$ contains a cycle, there is a cycle in this neighborhood which we denote as an ordered collection of vertices $S_1=(S_1(u))_{u=0}^{z_1}$, satisfying $S_1(0)=S_1(z_1)$ and $3\leq z_1\leq 2x$,
and a path $P_1=(P_1(u))_{u=0}^{\ell_1}$ of length $0\leq \ell_1\leq x$ starting at $v_1$ and ending at $S_1(0)$,
with $P_1\cup S_1$ containing only one cycle, that is, $S_1$.
We then add the vertex and the edge sets of $P_1\cup S_1$ to $H$.

Next, we consider the vertex $v_2$. One of the following two assertions is then true for $v_2$:
\begin{itemize}
\item Either there is a cycle $S_2=(S_2(u))_{u=0}^{z_2}$ in $\Gamma$ and a path $P_2=(P_2(u))_{u=0}^{\ell_2}$ starting at $v_2$ and
ending at $S_2(0)$, with the same conditions on lengths and cycle number
as above, and such that the vertex set of $P_2\cup S_2$ is \emph{disjoint}
from the vertex set of $P_1\cup S_1$. In this case, we add $P_2\cup S_2$ to $H$.
\item Or, there is a path $P_2=(P_2(u))_{u=0}^{\ell_2}$ of length $\ell_2\leq x$, starting at $v_2$
and ending at some vertex of $P_1\cup S_1$, such that $P_1\cup S_1\cup P_2$ contains only one cycle, $S_1$.
We then add $P_2$ to $H$.
\end{itemize}

To summarize the first two steps informally, the subgraph $H$ now consists of either (1) two connected components, with one component containing a cycle $S_1$ and a path $P_1$ connecting $v_1$ to a vertex in $S_1$, and the other component containing a cycle $S_2$ and a path $P_2$ connecting $v_2$ to a vertex in $S_2$; or (2) one connected component, containing a cycle $S_1$, a path $P_1$ connecting $v_1$ to a vertex in $S_1$, and a path $P_2$ connecting $v_2$ to a vertex in $S_1 \cup P_1$. 

Next, we continue to do the same construction for $v_3$, after which the possibilities of the subgraph $H$ include the following: (1) three components, $S_1 \cup P_1$, $S_2 \cup P_2$, and $S_3 \cup P_3$; (2) two components, $S_1 \cup P_1 \cup P_2$ and $S_3 \cup P_3$; (3) two components, $S_1 \cup P_1$ and $S_2 \cup P_2 \cup P_3$; (4) one component, $S_1 \cup P_1 \cup P_2 \cup P_3$. 

More rigorously, repeating the process for all $v_1, \dots, v_k$, we can guarantee that 
there exists an integer $1\leq h\leq k$, a partition of $[k]$ into $h$ non-empty subsets
$T_1,\dots,T_h$, cycles $S_j=(S_j(u))_{u=0}^{z_j}$, $1\leq j\leq h$, with $3\leq z_j\leq 2x$,
and paths $P_i=(P_i(u))_{u=0}^{\ell_i}$, $1\leq i\leq k$, 
with $0\leq \ell_i\leq x$,
such that all of the following conditions are satisfied:
\begin{itemize}
\item
each of $T_1, \dots, T_h$ consists of consecutive integers, and they form an ordered partition of $[k]$ (for example, $k = 6$, $h = 3$, $T_1 = \{1, 2, 3\}$, $T_2 = \{4\}$, and $T_3 = \{5, 6\}$);
\item $3\leq z_j\leq 2x$ for all $1\leq j\leq h$, and $0\leq \ell_i\leq x$ for all $1\leq i\leq k$;
\item $S_j(0)=S_j(z_j)$ for each $1\leq j\leq h$;
\item $P_i(0)=v_i$ for all $1\leq i\leq k$ ($i$th path starts at $v_i$);
\item $P_i(\ell_i)=S_j(0)$ for all $1\leq j\leq h$ and $i=\min T_j := \min\{i' : i' \in T_j\}$
(the path $P_i$ attaches vertex $v_i$ to the cycle $S_j$ at $S_j(0)$ for $i = \min T_j$);
\item $P_i(\ell_i)\in \bigcup\limits_{i'\in T_j,\,i'<i}P_{i'} \cup S_j$, for all $1\leq j\leq h$ and $i\in T_j\setminus\{\min T_j\}$
(each vertex $v_i\in T_j$ is attached via the path $P_i$ to the union of $S_j$ and the paths $P_{i'}$ with $i'\in T_j$
and $i'<i$);
\item The vertices $S_j(u)$, $0\leq u<z_j$, $1 \le j \le h$, and $P_i(u)$, $0\leq u<\ell_i$, $1\leq i\leq k$, are all distinct;
\item All unordered pairs of vertices $\{S_j(u),S_j(u+1)\}$, $0\leq u < z_j$, $1\leq j\leq h$, and $\{P_i(u),P_i(u+1)\}$,
$0\leq u < \ell_i$, $1\leq i\leq k$, are edges of $\Gamma$;
\item
As a consequence of the above conditions, the subgraphs $\bigcup_{i' \in T_j} P_{i'} \cup S_j$ are disjoint across different $1 \le j \le h$;
\item The subgraph $H:= \big( \bigcup_{j=1}^h S_j \big) \cup \big( \bigcup_{i=1}^k P_i \big)$ contains exactly $h$ cycles
and $h$ connected components (one cycle in each component).
\end{itemize}

Thus, the probability of $\Event$ can be (crudely) estimated from above as
% \begin{align*}
% &\sum_{h=1}^k\sum_{T_1\sqcup\dots\sqcup T_h=[k]}\;\sum_{z_1,\dots,z_h\in\{3,\dots,2x\}}\;
% \sum_{\ell_1,\dots,\ell_k\in\{0,\dots,x\}}\;
% r^{z_1+\dots+z_h+\ell_1+\dots+\ell_k}\\
% &\cdot \bigg( \prod_{i \in [k] \setminus \{\min T_1,\dots,\min T_h\}} (3xk)^{1_{\ell_i\neq 0}} \bigg) 
% \;(3xk)^k n^{z_1+\dots+z_h+\ell_1+\dots+\ell_k-k}.
% \end{align*}
\begin{align*}
\sum_{h=1}^k\sum_{T_1\sqcup\dots\sqcup T_h=[k]}\;\sum_{z_1,\dots,z_h\in\{3,\dots,2x\}}\;
\sum_{\ell_1,\dots,\ell_k\in\{0,\dots,x\}}\;
r^{z_1+\dots+z_h+\ell_1+\dots+\ell_k} n^{z_1+\dots+z_h+\ell_1+\dots+\ell_k-k} (3xk)^k,
\end{align*}
where the exponent of $r$ is the number of edges, the exponent of $n$ is the number of vertices (besides the fixed $v_1, \dots, v_k$), and the factor $(3xk)^k$ is bounding the number of possible vertices to which $v_1, \dots, v_k$ are attached via the paths. 
To bound the above sum, we note that
\begin{itemize}
\item
the number of all possible ordered partitions $T_1, \dots, T_h$ of $[k]$ is at most $\binom{2k}{k}$ by a standard ``stars and bars'' argument;

\item
the sums over $z_1, \dots, z_h$ and $\ell_1, \dots, \ell_k$ give a factor at most $(2x)^k(x+1)^k$;

\item
$z_1 + \cdots + z_h + \ell_1 + \cdots + \ell_k \le 3kx$.
\end{itemize}
Combining the above estimates, we conclude that the probability of $\mathcal{E}$ is bounded from above by
$$
\binom{2k}{k} \,(2x)^k(x+1)^k\,(nr)^{3kx}(3xk)^{k}n^{-k}
\le (3 k x)^{3k} (nr)^{3kx} n^{-k} . 
% \leq (6x^2k^2)^{2k}\,(nr)^{3kx} n^{-k}.
$$
%MR  Replaced $k \cdot k!$ by $\binom{2k}{k}$ in the formula above.

Finally, let $T$ be the collection of all vertices of $\Gamma$
whose $x$--neighborhoods are not trees. 
We are interested in bounding $\Prob\big\{|T|\geq d\big\}$ where $d \ge 2k$ using the above bound on the probability of $\cE$. 
By Lemma~\ref{lem:3481041gb013ghgabo}, it suffices to take a union bound over $\big\lceil (2n/d)^k d \log(en/d) \big\rceil$ subsets of cardinality $k$, yielding 
$$
\Prob\big\{|T|\geq d\big\}\leq (3 k x)^{3k} (nr)^{3kx} n^{-k} \big\lceil (2n/d)^k d \log(en/d) \big\rceil 
% \leq \frac{(4 k x)^{3k} (nr)^{3kx} \log(en/d)}{d^{k-1}} 
\le \exp(-\log^2 n)
$$
once we take $k = \lceil \log^2 n \rceil$ and $d \ge (5x)^3 (\log n)^6 (nr)^{3x}$. 
% 
% For the final part of the proof, let $T$ be the collection of all vertices of $\Gamma$
% whose $x$--neighborhoods are not trees.
% By the above bound and symmetry, for every $2\leq k\leq n$ and $b\geq 2k$,
% $$
% \Prob\big\{|T|\geq b\big\}\leq \frac{(6x^2k^2)^{2k}\,(nr)^{3kx}}{b(b-1)\dots(b-k+1)}
% \leq \frac{(12x^2k^2)^{2k}\,(nr)^{3kx}}{b^k}.
% $$
% The result follows.
%\notes{I couldn't understand what ``by symmetry'' means in the original proof. Maybe I'm missing something very simple, but in any case, I added the following lemma of an easy probabilistic method to complete the union bound.}
\end{proof}

\subsection{Typical vertices in the parent graph} 

Recall that the parent graph $G_0$ is a $G(n,q)$ random graph. We consider parameters $n$, $q$, and $m \in \N$ satisfying 
\begin{equation*}
n \ge n_0, \qquad 
\log n \le nq \le n^{\frac{1}{R \log \log n}}, \qquad
m \le C \log \log n , 
% \label{eq:nqm-cond}
\end{equation*}
where $n_0$, $R$, and $C$ are positive constants. 

Let us first define a \emph{typical} vertex of the parent graph $G_0$.
The definition incorporates all the structural properties of vertex neighborhoods that will
be important for signature comparison.

\begin{defi}
\label{def:jo13ig013gh031hg0a2}
We say that a vertex $i \in [n]$  of $G_0$ is \emph{typical} with parameters $m \in \N$, $\kappa>0$, $K>1$, and $\delta>0$, and write $i \in \typ$, if $i$ has the following properties:
\begin{enumerate}[label=(A\arabic*)]
\item \label{enum:1}
$G_0\big( \cB_{G_0}(i, m+1) \big)$ is a tree.

\item \label{enum:2}
$\deg_{G_0}(j) \le K nq$ for any $j \in \cB_{G_0}(i, m+1)$.

\item \label{enum:3}
% $(nq/3)^l \le |\cS_{G_0}(i,l)| \le \left( 3nq \right)^l$ 
$|\cS_{G_0}(i,l)| > (1-\kappa) nq \cdot 3^{l-1}$ and $|\cB_{G_0}(i,l)| \le K (nq)^l$ 
for all $l \in [m]$.

\item \label{enum:4}
For any $l \in \{0, \ldots, m\}$, the following holds. Denote by $\HD(l)$ the set of vertices in the $l$-sphere whose degrees are relatively high:
\[
\HD(l) := \{j \in \cS_{G_0}(i,l): \ \deg_{G_0}(j) > (1-\kappa)nq \}.
\]
Then
\[
|\cS_{G_0}(i,l) \setminus \HD(l)| \le \frac{\kappa}{K \cdot 3^l} |\cS_{G_0}(i,l)|.
\]

\item \label{enum:5}
For any $l \in \{0, \ldots, m-1\}$, the following holds.
For $j \in \cS_{G_0}(i,l)$, denote by $V_+(j)$ the set of its neighbors whose degrees are noticeably larger than the mean:
\[
V_+(j) := \big\{ j' \in \neigh_{G_0}(j) \cap \cS_{G_0}(i,l+1): \
\deg_{G_0}(j')> nq + \delta \sqrt{nq} \big\}.
\]
Furthermore, denote by $W_+(l)$ the subset of $\cS_{G_0}(i,l)$ for which the sets $V_+(j)$ are large:
\[
W_+(l):=
\big\{
j \in \cS_{G_0}(i,l): \ |V_+(j)| \ge (1/2-\kappa) nq
\big\}.
\]
Then
\[
|\cS_{G_0}(i,l) \setminus W_+(l)| \le  \frac{\kappa}{K \cdot 3^l} |\cS_{G_0}(i,l)|.
\]

\item \label{enum:6}
For any $l \in \{0, \ldots, m-1\}$, the following holds.
For $j \in \cS_{G_0}(i,l)$, denote by $V_-(j)$ the set of its neighbors whose degrees are noticeably smaller than the mean:
\[
V_-(j) := \big\{ j' \in \neigh_{G_0}(j) \cap \cS_{G_0}(i,l+1): \
\deg_{G_0}(j')< nq - \delta \sqrt{nq} \big\} . 
\]
Furthermore, let
\[
W_-(l) :=
\big\{
j \in \cS_{G_0}(i,l): \ |V_-(j)| \ge (1/2-\kappa) nq
\big\}.
\]
Then
\[
|\cS_{G_0}(i,l) \setminus W_-(l)| \le  \frac{\kappa}{K \cdot 3^l} |\cS_{G_0}(i,l)|.
\]
\end{enumerate}
\end{defi}

The sets $\HD(l), W_+(l),W_-(l)$ depend, of course, on $i$. However, we do not mention this dependence explicitly to lighten the notation.
The following is the main statement of the subsection.

\begin{prop}
\label{prop:anvavg80q34811}
For any $\kappa \in (0, 1/2)$ and $C, D>1$, there exist 
% $\delta \in (0, 1/2)$ depending on $\kappa$, $K>1$ depending on $D$, $R>1$ depending on $C, D$, and 
$\delta, c \in (0, 1/2)$ and $K, R, n_0 > 1$ depending on $\kappa, C, D$ such that the following holds. 
If 
$$
n \ge n_0, \qquad 
\log n \le nq \le n^{\frac{1}{R \log \log n}}, \qquad
m \le C \log \log n , 
$$
then with high probability, most vertices of a $G(n,q)$ graph $G_0$ are typical with parameters $m$, $\kappa$, $K$, and $\delta$:
$$
\p \left\{ \left| \typ \right| \ge n - n^{1-c} \right\} \ge 1 - n^{-D} . 
$$
\end{prop}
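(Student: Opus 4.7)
The plan is to reduce the statement to a counting exercise driven by the lemmas already established in this section. Define the following bad sets in $G_0$: let $L_{\rm tr}$ be the set of vertices whose $(m+1)$-neighborhood is not a tree; let $L_{\rm dg}$ be the set of vertices with degree less than $(1-\kappa')nq$, where $\kappa':=\kappa/4$; and let $L_{\pm}$ be the sets of vertices failing the conclusions \eqref{0983274019847-2} and \eqref{1-847-1-2} of Lemma~\ref{lem: degree far-2} applied with parameter $\kappa'$. Call a vertex $i$ \emph{good} if (i) $\cB_{G_0}(i,m+1)$ is disjoint from $L_{\rm tr}\cup L_{\rm dg}\cup L_+\cup L_-$, (ii) every vertex of $\cB_{G_0}(i,m+1)$ has degree at most $Knq$, and (iii) $|\cB_{G_0}(i,l)|\le K(nq)^l$ for every $l\le m+1$.

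The first step is to verify that every good vertex belongs to $\typ$. Indeed (ii) is exactly \ref{enum:2}, (iii) supplies the upper bound of \ref{enum:3}, and $i\notin L_{\rm tr}$ gives \ref{enum:1}. Since $\cB_{G_0}(i,m+1)$ is a tree and contains no vertex of $L_{\rm dg}\cup L_{+}\cup L_{-}$, every $j\in\cS_{G_0}(i,l)$ with $l\le m$ has degree at least $(1-\kappa')nq$ and has at least $(1/2-\kappa')nq$ neighbors of degree above $nq+\delta\sqrt{nq}$ and at least $(1/2-\kappa')nq$ of degree below $nq-\delta\sqrt{nq}$. In the tree, at most one such neighbor can lie outside $\cS_{G_0}(i,l+1)$ (the parent of $j$), so discarding it and using $\kappa'\le\kappa/4$ delivers \ref{enum:4}, \ref{enum:5}, \ref{enum:6}. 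The lower bound in \ref{enum:3} then follows by induction: the base case $l=1$ is $\deg_{G_0}(i)\ge (1-\kappa)nq$; and since every $j\in\cS_{G_0}(i,l)$ belongs to $\HD(l)$, the tree structure yields
\[
|\cS_{G_0}(i,l+1)|\ge |\HD(l)|\cdot\big((1-\kappa')nq-1\big)\ge 3\,|\cS_{G_0}(i,l)|\ge (1-\kappa)nq\cdot 3^l,
\]
with the middle inequality following from $nq\ge\log n$ and $n\ge n_0$.

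The second step is to bound the number of non-good vertices. Lemma~\ref{lem:o3qnroq9vuh9auvg} applied with $x=m+1$ gives $|L_{\rm tr}|\le (5(m+1))^3(\log n)^6(nq)^{3(m+1)}$ with probability at least $1-\exp(-\log^2 n)$; under $m\le C\log\log n$ and $nq\le n^{1/(R\log\log n)}$ this is at most $n^{O(C/R)}$. Lemma~\ref{lem:3rqbg931g98sd97g3} gives $|L_{\rm dg}|\le n^{1-c_1}$, and Lemma~\ref{lem: degree far-2} gives $|L_+|,|L_-|\le n^{1-c_1}$ for some $c_1=c_1(\kappa)>0$, each with probability at least $1-\exp(-c_1 nq\log n)$. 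A Chernoff bound with a union bound over vertices makes (ii) hold for all $i$ with probability at least $1-n^{-D-1}$ for $K=K(D)$ sufficiently large, and Lemma~\ref{lem:nabon3q0hq0bq0597yg} makes (iii) hold for all $i$ with probability at least $1-n^{-D-1}$. On the intersection of these events, the set of non-good vertices is contained in $L_{\rm tr}\cup\bigcup_{j\in L_{\rm dg}\cup L_+\cup L_-}\cB_{G_0}(j,m+1)$, and hence has cardinality at most
\[
n^{O(C/R)}+3\,n^{1-c_1}\cdot K(nq)^{m+1}\le n^{O(C/R)}+3K\,n^{1-c_1+2C/R}.
\]
Choosing $R=R(C,D,\kappa)$ large enough makes this at most $n^{1-c}$ for some $c>0$, and a final union bound absorbs the exceptional probabilities into $n^{-D}$.

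Overall the argument is organizational: the analytic content sits inside the lemmas of this section. The only direct computation needed here is the inductive lower bound in \ref{enum:3}, which is easy because the true branching factor $nq$ dwarfs the target factor $3$. The main care is in the order of constant selection: $\kappa'$ and $K$ are fixed from $\kappa$ and $D$ first; the cited lemmas then produce $c_1$ and $\delta$ depending on $\kappa$; and finally $R$ is chosen large enough that the ball-expansion factor $(nq)^{m+1}\le n^{2C/R}$ is comfortably dominated by the $n^{-c_1}$ saving in the size of each bad set.
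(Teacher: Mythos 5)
Your proof is correct, and it takes a genuinely different route from the paper's. The paper verifies Conditions~\ref{enum:4}--\ref{enum:6} locally and conditionally: for each $i$ and each level $l$ it conditions on $G_0(\cB_{G_0}(i,l-1))$ and the edges leaving it, treats $\cS_{G_0}(i,l)$ as a \emph{fixed} set $J$ in the remaining graph $\widehat G$, and invokes Lemma~\ref{lem:  nq} and Lemma~\ref{lem: degree far} to show that only $O(1)$, resp.\ $O(\log n)$, sphere vertices violate the degree requirements --- which is precisely why Definition~\ref{def:jo13ig013gh031hg0a2} tolerates exceptional fractions $\kappa/(K\cdot 3^l)$ at all. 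You instead define the bad vertices once and globally, via the unconditional ``$J=[n]$'' statements (Lemmas~\ref{lem:3rqbg931g98sd97g3} and~\ref{lem: degree far-2}), and declare $i$ good only when its entire $(m+1)$-ball is clean; good vertices then satisfy \ref{enum:4}--\ref{enum:6} with \emph{empty} exceptional sets, and your observation that in a tree only the parent of $j$ can fail to lie in $\cS_{G_0}(i,l+1)$ correctly bridges the whole-neighborhood statements of those lemmas to the outward sets $V_\pm(j)$. This bypasses the conditioning and the level-by-level bookkeeping entirely, at the price of a contamination factor: each of the $\le 3n^{1-c_1}$ bad vertices spoils the $\le K(nq)^{m+1}$ vertices whose ball contains it. Under the standing density assumption this factor is $(nq)^{m+1}\le n^{O(C/R)}$ and is absorbed by taking $R$ large, exactly as you argue, so the trade is sound here (though it leans harder on the sparsity than the paper's local argument does). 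The order in which you fix $\kappa'$, $K$, $\delta$, $c_1$, and finally $R$ is also consistent with the dependencies claimed in the statement.
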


\begin{proof}
Consider each condition in Definition~\ref{def:jo13ig013gh031hg0a2}: 
\begin{enumerate}[leftmargin=*]
\item 
By Lemma~\ref{lem:o3qnroq9vuh9auvg}, with probability at least $1-\exp(-\log^2 n)$, there are at most 
$$
(5m+5)^3 (\log n)^6 (nq)^{3m+3} 
\le \sqrt{n}
$$
vertices of $G_0$ whose $(m+1)$--neighborhoods are not trees, where the above inequality holds because $m \le C \log \log n$ and $nq \le n^{\frac{1}{R \log \log n}}$ for $R$ depending on $C$. 

\item
By Chernoff's inequality (third estimate in Lemma~\ref{akjfnpfjnpfiwnfpifn}) and the condition $nq \ge \log n$, we in fact have $\deg_{G_0}(j) \le Knq$ for all $j \in [n]$ with probability at least $1 - n^{-D-1}$, where $K$ depends on $D$. 

\item
The upper bound $|\cB_{G_0}(i,l)| \le K(np)^l$ holds simultaneously for all $i \in [n]$ with probability at least $1-n^{-D-1}$ by \eqref{eq:gob30481gb03h0}. 

For the lower bound in the case $l=1$, the bound \eqref{eq:fadi20412g0b13gbf03qb} in Lemma~\ref{lem:3rqbg931g98sd97g3} with $\Gamma = G_0$ shows that $|\cS_{G_0}(i,1)| = |\neigh_{G_0}(i)| \ge (1-\kappa) nq$ for at least $n - n^{1-c_1}$ vertices $i \in [n]$ with probability at least $1 - n^{-D-1}$, where $c_1>0$ depends on $\kappa$ and $D$. 

The lower bound in the case $l \in \{2, \dots, m\}$ is trivial, because the size of $\cS_{G_0}(i,l)$ is of order $(nq)^l$ which is much larger than $nq \cdot 3^{l-1}$ for all $i \in [n]$ with overwhelming probability.

\item
For $l = 0$, if $i$ satisfies Condition~\ref{enum:3}, then $\deg(i) > (1-\kappa) nq$ and so $\cS_{G_0}(i,0) = \HD(0) = \{i\}$. 

Next, fix $l \in [m]$. 
We will show that with high probability, any vertex $i \in [n]$ satisfying Conditions~\ref{enum:1} and~\ref{enum:3} also satisfies Condition~\ref{enum:4}. 
Towards this end, condition on any realization of the subgraph $G_0(\cB_{G_0}(i,l-1))$ and all the edges between $\cB_{G_0}(i,l-1)$ and its complement. 
Let $\tilde \p$ denote the conditional probability. 
Under this conditioning, $\cS_{G_0}(i,l)$ is determined. 
Let $\widehat{G} := G_0\big([n] \setminus \cB_{G_0}(i,l-1)\big)$ and $\widehat{n} := n - |\cB_{G_0}(i,l-1)|$. 
Then $\widehat{G}$ is conditionally a $G(\widehat{n},q)$ graph and $\cS_{G_0}(i,l)$ is a fixed subset of vertices. 

For $j \in \cS_{G_0}(i,l)$, let us define 
$$
\widehat{\HD}(l) := \{j \in \cS_{G_0}(i,l): \ \deg_{\widehat{G}}(j) + 1 > (1-\kappa) nq \}.
$$
If $G_0(\cB_{G_0}(i,m+1))$ is a tree, then $\deg_{\widehat{G}}(j) + 1 = \deg_{G_0}(j)$, and so $\widehat{\HD}(l) = \HD(l)$. 
Applying Lemma~\ref{lem:  nq} with $\Gamma = \widehat{G}$ and $J = \cS_{G_0}(i,l)$, we obtain
\begin{equation}
\label{eq:osna03q0b0b0ienai65}
\tilde \p \big\{ \big|\cS_{G_0}(i,l) \setminus \widehat{\HD}(l)\big| < k \big\} \ge 1 - \exp(-c_2 q \widehat{n} k) 
\end{equation}
for a constant $c_2>0$ depending on $\kappa$, where  $\max(2,|J|\exp(-c_2 q \widehat{n}))\leq k\leq c_2 \widehat{n}$. 
If $i$ satisfies Condition~\ref{enum:3}, then 
$$
|\cB_{G_0}(i,l)| \le K(nq)^m \le K n^{\frac{C}{R }}, 
$$
%MR Corrected the right-hand side in the formula above.
since $m \le C \log \log n$ and $nq \le n^{\frac{1}{R \log \log n}}$ where $R$ depends on $C$ and $D$. 
As a result, $0.9\, n \le \widehat{n} \le n$ and 
$$
|J| \exp(-c_2 q \widehat{n}) \le K n^{\frac{C}{R }} \exp(-c_2 q \widehat{n}) 
\le 1 , 
$$
%MR changed the middle term in the inequality above.
where the last inequality holds because $nq \ge \log n$ and $n \ge n_0 = n_0(\kappa, C, D)$ if we choose $R=R(C)$ appropriately. 
%MR Added:  if we choose $R=R(C)$ appropriately. 
Therefore, we can take $k = \big\lceil \frac{2(D+2)}{c_2} \big\rceil$ so that the error probability in \eqref{eq:osna03q0b0b0ienai65} is at most $n^{-D-2}$. 
Finally, since 
$$
k = \Big\lceil \frac{2(D+2)}{c_2} \Big\rceil 
< \frac{\kappa}{K \cdot 3^l} \frac{nq \cdot 3^{l-1}}{2}
< \frac{\kappa}{K \cdot 3^l} |\cS_{G_0}(i,l)|
$$
for $i$ satisfying Condition~\ref{enum:3}, we obtain from \eqref{eq:osna03q0b0b0ienai65} that with probability at least $1 - n^{-D-1}$, for any vertex $i \in [n]$ satisfying Conditions~\ref{enum:1} and~\ref{enum:3}, 
\[
|\cS_{G_0}(i,l) \setminus \HD(l)| < \frac{\kappa}{K \cdot 3^l} |\cS_{G_0}(i,l)|.
\]

\item
For $l = 0$, the bound \eqref{0983274019847-2} in Lemma~\ref{lem: degree far-2} with $\Gamma = G_0$ shows that with probability at least $1 - n^{-D-2}$, we have $\cS_{G_0}(i,0) = \{i\} = W_+(0)$ for at least $n - n^{1-c_3}$ vertices $i \in [n]$, where $c_3>0$ depends on $\kappa$ and $D$. 
Therefore, Condition~\ref{enum:5} holds for these vertices. 

Next, fix $l \in [m-1]$. 
We will show that with high probability, any vertex $i \in [n]$ satisfying Conditions~\ref{enum:1} and~\ref{enum:3} also satisfies Condition~\ref{enum:5}. 
As in the proof of Condition~\ref{enum:4}, we let $\tilde \p$ denote the probability conditional on any realization of the subgraph $G_0(\cB_{G_0}(i,l-1))$ and all the edges between $\cB_{G_0}(i,l-1)$ and its complement. 
Again, let $\widehat{G} := G_0\big([n] \setminus \cB_{G_0}(i,l-1)\big)$ and $\widehat{n} := n - |\cB_{G_0}(i,l-1)|$ so that $\widehat{G}$ is conditionally a $G(\widehat{n},q)$ graph.

For $j \in \cS_{G_0}(i,l)$, let
\[
\widehat V_+(j) := \big\{ j' \in \neigh_{\widehat G}(j) : \
\deg_{\widehat G}(j')> nq + \delta \sqrt{nq} \big\} 
\]
and
\[
\widehat W_+(l):=
\big\{
j \in \cS_{G_0}(i,l): \ |\widehat V_+(j)| \ge (1/2-\kappa) nq
\big\}.
\]
If $G_0(\cB_{G_0}(i,m))$ is a tree, then it is not hard to see that $\widehat V_+(j) = V_+(j)$ and thus $\widehat W_+(l) = W_+(l)$. 
To bound $|\cS_{G_0}(i,l) \setminus \widehat W_+(l)|$, we apply Lemma~\ref{lem: degree far} with $\Gamma = \widehat G$, $J = \cS_{G_0}(i,l)$, and $\lambda = \frac{\kappa}{4 K}$. 
Note that by Condition~\ref{enum:3} together with the conditions $\log n \le nq \le n^{\frac{1}{R \log \log n}}$ and $m \le C \log \log n$, we have 
\begin{equation*}
\frac{1}{2}\log n \le |\cS_{G_0}(i,l)| \le \frac{\sqrt{n}}{rn \log n} ,
\end{equation*}
so Lemma~\ref{lem: degree far} can indeed be applied. 
Hence, we obtain that with probability at least $1-\exp(-c_4  rn\,\log n) \ge 1 - n^{-D-2}$,
\[
|\cS_{G_0}(i,l) \setminus \widehat W_+(l)| 
\le \max \Big( \frac{\kappa}{4 K} \log n, \, |\cS_{G_0}(i,l)|\exp(-c_4 rn) \Big)
\le \frac{\kappa}{K \cdot 3^l} |\cS_{G_0}(i,l)| 
\]
by Condition~\ref{enum:3} and $m \le C \log \log n$, where $c_4, \delta > 0$ depend on $\kappa$ and $D$. 

\item
This part is analogous to the previous one. 
\end{enumerate}
Finally, by a union bound, with probability at least $1 - n^{-D}$, the number of non-typical vertices is at most
$\sqrt{n} + n^{1-c_1} + 2\, n^{1-c_3} \le n^{1-c}$
for a constant $c>0$. 
The proof is therefore complete. 
\end{proof}

\section{Signatures of correct pairs of vertices}\label{s: signatures}

The goal of this section is to show that with probability close to one, for
almost every vertex $i$,
the signatures $f(i)$ and $f'(i)$ computed in the graphs $G$ and $G'$ are close
to each other, in the sense that the sparsified $\ell_2$--distance
of appropriately normalized signatures is less than a given threshold.
The constant correlation between $G$ and $G'$
introduces a very large noise so that the normalized signatures are still ``almost orthogonal'',
and a high precision of the estimates is required to distinguish this case
from the case when signatures of different vertices are compared.

\subsection{Overlap between neighborhoods of a typical vertex in the child graphs}

Let the graphs $G_0$, $G$, and $G'$ be given by the correlated Erd\H{o}s--R\'enyi graph model with parameters $n, p, \alpha$ as defined in Section~\ref{sec:model}. 
That is, $G_0$ is a $G(n,q)$ Erd\H{o}s--R\'enyi graph, where $q := \frac{p}{1-\alpha}$. 
Conditional on $G_0$, the subgraph $G$ is obtained by removing every edge of $G_0$ independently with probability $\alpha$, and $G'$ is a conditionally i.i.d.\ copy of $G$. 
Fix $m \in \N$. For $l \in [m]$, $s \in \{-1,1\}^l$, and $i \in [n]$, let $T_s^l(i,G)$ denote the set $T_s^l$ of vertices constructed in {\tt VertexSignature}$(G,i,m)$ (Algorithm~\ref{alg:ver-sig}); similarly, let $T_s^l(i,G')$ and $T_s^l(i,G_0)$ denote the sets constructed by Algorithm~\ref{alg:ver-sig} when the input graph is $G'$ and $G_0$ respectively. 
Recall that collections $\typ$ of vertices of the parent graph $G_0$ were described in Definition~\ref{def:jo13ig013gh031hg0a2}.
In what follows, $\p_0$ denotes the probability conditional on an instance of $G_0$. 

\begin{prop}
\label{prop:nasva8ya8b341}
For any parameters $\kappa,K,\delta,D>0$, there exist $\alpha_0\in(0,1)$ and $n_0>0$ depending only on these parameters such that the following holds for any $\alpha \in (0,\alpha_0)$ and $n\geq n_0$ satisfying $nq \ge \log n$.
Condition on an instance of the parent graph $G_0$.  
Fix $m \in \N$. 
%For any parameters $\kappa,K,\delta,D>0$, there exists $\alpha_0>0$ depending only on these parameters such that the following holds for any $\alpha \in (0,\alpha_0)$. 
Then with (conditional) probability at least $1-n^{-D}$, for every typical vertex
$i \in \typ$ and any $s \in \{-1,1\}^m$, 
\[
|T_s^m(i,G)\cap T_s^m(i,G')|\geq (np/2)^m(1-8\kappa)^m . 
\]
\end{prop}

\begin{proof}
%Let $\p_0$ denote the probability conditional on an instance of $G_0$. 
Fix a typical vertex $i \in \typ$, and let $l \in \{0, \ldots, m-1\}$.
We start with proving the following claim showing that with overwhelming probability, the degrees of all vertices in $\HD(l)$ in the child graphs $G$ and $G'$ remain relatively large, where $\HD(l)$ is defined in Condition~\ref{enum:4}.

\begin{claim}  \label{claim 111}
There exists $\alpha_0>0$ depending on $\kappa$, $K$, and $D$ such that for any $\alpha \in (0, \alpha_0)$, 
% with probability at least $$,
\[
\p_0 \big\{ \deg_G(j) \land \deg_{G'}(j) > (1-2\kappa) np \ \forall \, j \in \HD(l) \big\} \ge 1-n^{-D-1} . 
\]
% for any $j \in \HD(l)$.
\end{claim}

To prove the claim, let $j \in \HD(l)$. Then
\begin{align*}
& \Prob_0 \left\{ \deg_G(j) \le (1-2\kappa) np \right\} 
+ \Prob_0 \left\{ \deg_{G'}(j) \le (1-2\kappa) np \right\}   \\
& \le \Prob_0 \left\{ (1-\alpha) \deg_{G_0}(j) - \deg_G(j) > \kappa np \right\} 
+ \Prob_0 \left\{ (1-\alpha) \deg_{G_0}(j) - \deg_{G'}(j) > \kappa np \right\} . 
\end{align*}
Note that $\deg_G(j)$ and $\deg_{G'}(j)$ are $\text{Binomial}( \deg_{G_0}(j), 1-\alpha)$ random variables conditional on $G_0$, where $\deg_{G_0}(j) \le Knq$ by Condition~\ref{enum:2}. 
Hence, the right-hand side of the inequality above is bounded by
\(
2 \exp \big( - c \frac{\kappa^2 np}{\alpha} \big)
\)
for a constant $c>0$ depending on $K$. 
We can choose $\alpha$ sufficiently small depending on $\kappa$, $K$, and $D$ such that this error probability is at most $n^{-D-2}$. 
The claim follows by taking the union bound over $j \in \HD(l)$.

\medskip

Next, we will show that with overwhelming probability, the sets $W_+(l)$ and $W_-(l)$ defined in Conditions~\ref{enum:5} and~\ref{enum:6} respectively remain at least the same size under a slight change of their definitions. Namely, to take advantage of the independence of degrees of different vertices, we consider the spheres around $i$ and the neighborhoods in $G_0$, but evaluate the degrees in $G$ and $G'$.
Specifically, we prove the following claim.
\begin{claim} \label{claim 222}
There exists $\alpha_0>0$ depending on $\kappa$, $K$, $\delta$, and $D$ such that for any $\alpha \in (0, \alpha_0)$, the following holds.
For $j \in \cS_{G_0}(i,l)$, denote by $V_+^{G,G'}(j)$ the set of its neighbors in $G_0$ whose degrees in $G$ and $G'$ are larger than the mean:
\[
V_+^{G,G'}(j) := \big\{j' \in \neigh_{G_0}(j) \cap \cS_{G_0}(i,l+1): \
\deg_{G}(j')>  np  \text{ and }  \deg_{G'}(j')> np \big\}.
\]
Then
\[
\Prob_0 \left\{ |V_+^{G,G'}(j)| \ge (1/2- 2\kappa) nq \  \forall \, j \in W_+(l)    \right\} \ge 1- n^{-D-1}.
\]
\end{claim}

To prove this claim, consider a vertex $j \in W_+(l)$ where $W_+(l)$ is defined in Condition~\ref{enum:5}, and estimate the probability of the event that $|V_+^{G,G'}(j)| < (1/2- 2\kappa) nq$.
Note that since $i$ is typical,  these events are independent for different $j \in W_+(l)$ conditional on $G_0$. 
Consider $j' \in V_+(j)$, so that $\deg_{G_0}(j')> nq+\delta\sqrt{nq}$.
The distribution of the random variable $\deg_G(j')$ is conditionally $\text{Binomial}(\deg_{G_0} (j'), 1-\alpha)$. %, where $\deg_{G_0}(j') \le Knq$ by Condition~\ref{enum:2}. 
Therefore, assuming $\alpha\leq 1/2$, by Bernstein's inequality we have
\begin{align*}
\Prob_0 \big\{ \deg_G(j') \le nq(1-\alpha) \big\}
&\leq \Prob_0 \big\{\deg_{G_0}(j') - \deg_G(j')\ge \alpha \deg_{G_0}(j')+(1-\alpha)\delta\sqrt{nq}\big\}\\
&\leq 
\exp\bigg(-\frac{c(1-\alpha)^2\delta^2 nq}{\alpha \deg_{G_0}(j') +(1-\alpha)\delta\sqrt{nq}}\bigg),
%\le \Prob_0 \big\{ (1-\alpha) \deg_{G_0}(j') - \deg_G(j')\ge  \delta \sqrt{nq} (1-\alpha) \big\}
%\le 2 \exp \left(- c \frac{\delta^2}{\alpha}  \right) , 
\end{align*}
where $c>0$ is a universal constant, and where $\deg_{G_0}(j') \le Knq$ by Condition~\ref{enum:2}. 
Let $\tau>0$ be a number depending on $\kappa$, $K$, and $D$ which will be chosen soon. 
Then, since $n$ is large, we can choose $\alpha_0\in(0,1/2]$ depending on $\delta$, $K$, and $\tau$ (and thus only on $\kappa$, $K$, $\delta$, and $D$) such that for any $\alpha \in (0, \alpha_0)$, the above error probability is less than $\tau/2$.
Similarly, one can show that 
\[
\Prob_0 \big\{ \deg_{G'}(j') \le nq(1-\alpha) \big\}
\le \tau/2,
\]
and so
\[
\Prob_0 \big\{ j' \in V_+(j) \setminus V_+^{G,G'}(j) \big\} =  \Prob_0 \big\{ \deg_G(j') \le nq(1-\alpha) \big\} + \Prob_0 \big\{ \deg_{G'}(j') \le nq(1-\alpha) \big\}
\le \tau.
\]
Note that $|V_+(j)| \le \deg_{G_0}(j) \le Knq$, and that
the events $\{j' \in V_+(j) \setminus V_+^{G,G'}(j)\}$ are independent for different $ j' \in V_+(j)$ conditional on $G_0$ for a typical vertex $i$. 
Assuming that $\tau K\leq \kappa$, we get
by Chernoff's inequality (first estimate in Lemma~\ref{akjfnpfjnpfiwnfpifn}),
\[
\Prob_0 \big\{ |V_+(j) \setminus V_+^{G,G'}(j)| \ge \kappa nq \big\}
\leq \bigg(\frac{e \tau K nq}{\kappa nq}\bigg)^{\kappa nq}.
%\le \exp \left(-c_2 \frac{(\kappa nq)^2}{\tau |V_+(j)|} \right)  
%\le  \exp \left(-c_3 \frac{\kappa^2}{\tau} nq \right)  
\]
%for a universal constant $c_2>0$ and a constant $c_3>0$ depending on $K$. %because $|V_+(j)| \le \deg_{G_0}(j) \le Knq$. 
As $nq \ge \log n$, we can choose $\tau$ depending only on $\kappa$, $K$, and $D$ so that the bound above does not exceed $n^{-D-2}$.
By the union bound,
\begin{align*}
&\Prob_0 \big\{ \exists\, j \in W_+(l) \text{ s.t. }  |V_+^{G,G'}(j)| < (1/2- 2\kappa) nq \big\} \\
&\le \Prob_0 \big\{ \exists\, j \in W_+(l) \text{ s.t. }  |V_+(j) \setminus V_+^{G,G'}(j)| \ge \kappa nq \big\}
\le n^{-D-1},
\end{align*}
finishing the proof of the claim. 
% To complete the proof of the claim, note that since $\tau$ depends only on $\kappa$ and $D$, $\alpha_0$ depends only on these parameters as well.

\medskip

Applying the same argument, one can establish a similar claim for the set $W_-(l)$.

\begin{claim} \label{claim 333}
There exists $\alpha_0>0$ depending on $\kappa$, $K$, $\delta$, and $D$ such that for any $\alpha \in (0, \alpha_0)$, the following holds.
For $j \in \cS_{G_0}(i,l)$, denote by $V_-^{G,G'}(j)$ the set of its neighbors in $G_0$ whose degrees in $G$ and $G'$ are smaller than the mean:
\[
V_-^{G,G'}(j)= \big\{j' \in \neigh_{G_0}(j) \cap \cS_{G_0}(i,l+1): \
\deg_{G}(j') <  np  \text{ and }  \deg_{G'}(j') < np \big\}.
\]
Then
\[
\Prob_0 \left\{ |V_-^{G,G'}(j)| \ge (1/2- 2\kappa) nq \  \forall \, j \in W_-(l) \right\} \ge 1- n^{-D-1}.
\]
\end{claim}

\medskip

Equipped with Claims \ref{claim 111}, \ref{claim 222}, and \ref{claim 333}, we can complete the proof of the proposition. 
Let $\Event(i,l)$ be the event that the following statements hold: 
\begin{itemize}
\item $\deg_{G}(j) > (1-2\kappa) np$ and $\deg_{G'}(j) > (1-2\kappa) np$ for at least
\(
\left( 1- \frac{\kappa}{K \cdot 3^l} \right) |\cS_{G_0}(i,l)|
\)
vertices $j \in \cS_{G_0}(i,l)$;

\item     
% For $j \in \cS_{G_0}(i,l)$, denote by $V_+^{G,G'}(j)$ ($V_+^{G,G'}(j)$) the set of its neighbors in $G_0$ whose degree in $G$ and $G'$ is larger (correspondingly, smaller) than the mean:
% \begin{align*}
% V_+^{G,G'}(j)
% &= \{j' \in \neigh_{G_0}(j) \cap \cS_{G_0}(i,l+1): \
% \deg_{G}(j')>  nq(1-\alpha)  \text{ and }  \deg_{G'}(j')> nq (1-\alpha) \}, \\
% \intertext{and}
% V_-^{G,G'}(j)
% &= \{j' \in \neigh_{G_0}(j) \cap \cS_{G_0}(i,l+1): \
% \deg_{G}(j')<  nq(1-\alpha)  \text{ and }  \deg_{G'}(j')< nq (1-\alpha) \}.    
% \end{align*}
% 
% Then
\(
|V_+^{G,G'}(j)| \ge (1/2- 2\kappa) nq
\)
% \text{ and } 
and 
\(
|V_-^{G,G'}(j)| \ge (1/2- 2\kappa) nq
\)
for at least     
\(
\left( 1- \frac{\kappa}{K \cdot 3^l} \right) |\cS_{G_0}(i,l)|
\)
vertices $j \in \cS_{G_0}(i,l)$.

\end{itemize}
Then the above claims together with Conditions~\ref{enum:4}, \ref{enum:5}, and~\ref{enum:6} imply that
\[
\Prob_0 \left\{ \bigcap_{l=0}^{m-1} \Event(i,l) \right\} \ge 1 - n^{-D}.
\]

Assuming that the event $\bigcap_{l=0}^{m-1} \Event(i,l)$ occurs, 
we will show that 
\begin{equation}
\label{eq:j0j010g1ih01hg30in}
|T_{s}^{l}(i,G)\cap T_{s}^{l}(i,G')| \ge (1-8\kappa)^l (np/2)^l
\end{equation}
for all $l \in \{0, \ldots, m\}$ and $s \in \{-1,1\}^l$ by induction on $l$.
For $l=0$, this inequality  trivially holds. Assume that \eqref{eq:j0j010g1ih01hg30in} holds for $l \in \{0,\dots,m-1\}$ and $s \in \{-1,1\}^l$, and consider, for instance, $s'=(s,1) \in \{-1,1\}^{l+1}$.
The case  $s'=(s,-1) \in \{-1,1\}^{l+1}$ is handled the same way.

Let $W $ be the set of all vertices $j \in  T_{s}^{l}(i,G)\cap T_{s}^{l}(i,G')$ such that
% \begin{itemize}
% \item 
$\deg_{G}(j) > (1-2\kappa) np$, $\deg_{G'}(j) > (1-2\kappa) np$, and
% 
% \item   
$
|V_+^{G,G'}(j)| \ge (1/2- 2\kappa) nq.
$
% \end{itemize}
% 
Then
\[
|W|
\ge |T_{s}^{l}(i,G)\cap T_{s}^{l}(i,G')|- 2  \frac{\kappa}{K \cdot 3^l} |\cS_{G_0}(i,l)|
\ge (1-2\kappa) |T_{s}^{l}(i,G)\cap T_{s}^{l}(i,G')| , 
\]
where the last inequality relies on the induction hypothesis \eqref{eq:j0j010g1ih01hg30in}
% assumption providing the lower bound on
% $|T_{s}^{l}(i,G)\cap T_{s}^{l}(i,G')|$ 
and that $|\cS_{G_0}(i,l)| \le K (nq)^l$ in Condition~\ref{enum:3}.

For any $j \in W$, the entire set $V_+^{G,G'}(j)$ is contained in 
$T_{(s,1)}^{l+1}(i,G)\cap T_{(s,1)}^{l+1}(i,G')$. Since these sets are disjoint for different $j \in W$,
\begin{align*}
|T_{(s,1)}^{l+1}(i,G)\cap T_{(s,1)}^{l+1}(i,G')|
& \ge \sum_{j \in W} |V_+^{G,G'}(j)|
\ge |W| \cdot (1/2- 2\kappa) nq  \\
& \ge  (1/2- 4\kappa) nq \, |T_{s}^{l}(i,G)\cap T_{s}^{l}(i,G')|
\ge (1-8\kappa)^{l+1} (np/2)^{l+1},
\end{align*}
where the last inequality follows from the induction hypothesis.
% This completes the induction step. 
% The proposition follows by replacing $\kappa$ by $\kappa/8$ and adjusting $\alpha_0$ accordingly.
\end{proof}

\bigskip

For the rest of this section,
we fix a positive integer $m$ and simplify the notation of classes of vertices by omitting $m$ as follows: For $s \in \{-1,1\}^l$ and $i \in [n]$, let $T_s(i)$ denote the set $T_s^m$ of vertices constructed by {\tt VertexSignature}$(G,i,m)$ (Algorithm~\ref{alg:ver-sig}); similarly, let $T_s'(i)$ denote the set constructed by {\tt VertexSignature}$(G',i,m)$. 
Moreover, for any subset $J \subset \{-1,1\}^m$, we define
$$
T_J(i) := \bigcup_{s \in J} T_s(i) 
\quad \text{ and } \quad
T'_J(i) := \bigcup_{s \in J} T'_s(i) .
$$
% For any subset $S \subset [n]$, we let $S^c := [n] \setminus S$.

% Moreover, for $Q>0$, we define an event 
% \begin{equation}
% \label{eq:event-g-gp}
% \cE_Q := \Big\{ |T_s(i)| \lor |T_s'(i)| \le Q \Big( \frac{np}{2} \Big)^m \  \forall \, i \in [n] , \, s \in \{-1,1\}^m \Big\} . 
% \end{equation}

\subsection{Sparsification}\label{subs:sparsification}

For pairs of distinct indices $s,s'\in\{-1,1\}^m$, the sets $T_s(i)$ and $T'_{s'}(i)$ may have a considerable intersection, which introduces complex dependencies between components of the signature vectors of $i$ in $G$ and $G'$. 
To tackle this issue, we now use sparsification---taking a small random subset $J$ of indices in $\{-1,1\}^m$ instead of the entire index set---to guarantee that the undesired situation does not occur for too many pairs $s, s' \in J$ with high probability.
We first state a lemma from \cite{pmlr-v134-mrt}.

\begin{lemma}[Lemma~16 of \cite{pmlr-v134-mrt}]
\label{lem:spa}
Fix a constant $S>0$ and an even integer $k\in \N$. Let $\Omega$ and $\Omega'$ be two finite sets, and let
$$
\Omega=\bigcup_{i=1}^k\Omega_i\quad\mbox{and}\quad \Omega'=\bigcup_{i=1}^k\Omega_i'
$$
be partitions of $\Omega$ and $\Omega'$ respectively such that 
% \begin{equation}
$|\Omega_i'| \le S/k$
% \label{eq:spa-cond}
% \end{equation}
for all $i \in [k]$.
Furthermore, let $w \in \{2, 3 , \dots, k/2\}$ and let $I$ be a uniform random subset of $[k]$ of cardinality $2w$. Then,
for any $L\geq 1$ and $\rho\in (0, 1/4)$ such that $\rho w$ is an integer, we have
$$
\Prob\Big\{\big|\big\{i\in I:\;\exists \, j \in I \setminus \{ i \} \, \text{ s.t. } |\Omega_i\cap \Omega_j'|\geq L S/k^2\big\}\big|\geq 2 \rho w\Big\}\leq
\bigg(\frac{8 w^3}{L}\bigg)^{ \rho w}.
$$
\end{lemma}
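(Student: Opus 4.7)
The approach is to reformulate the lemma as a sparsification statement for a random induced subgraph of a sparse ``heavy-pairs'' graph, and then to bound the resulting probability by a combinatorial moment argument.

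First, I would define an auxiliary directed graph $D$ on vertex set $[k]$ by declaring $(i,j)$ to be an edge iff $|\Omega_i \cap \Omega_j'| \ge LS/k^2$. The total number of such edges can be controlled as follows: since the $\Omega_i$ partition $\Omega$, for every fixed $j$ one has $\sum_i |\Omega_i \cap \Omega_j'| \le |\Omega_j'| \le S/k$; summing over $j$ gives $\sum_{i,j} |\Omega_i \cap \Omega_j'| \le S$, and hence $|E(D)| \le k^2/L$. The problem thus reduces to analyzing the induced subgraph $D[I]$, where $I$ is a uniform random $2w$-subset of $[k]$.

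Next, I would observe that the event in the lemma---that there are at least $2\rho w$ indices $i \in I$ each having some $j \in I\setminus\{i\}$ with $(i,j)\in E(D)$---implies that $D[I]$ contains at least $2\rho w$ edges, because distinct ``sources'' contribute distinct edges (their first coordinates differ). So it suffices to show $\Prob\{|E(D[I])| \ge 2\rho w\} \le (8w^3/L)^{\rho w}$. I would prove this by a moment / enumeration bound. The clean regime is that of vertex-disjoint configurations: a matching of size $\rho w$ in $D$ requires $2\rho w$ specific vertices to land in $I$ (probability at most $(2w/k)^{2\rho w}$), and the number of size-$\rho w$ matchings in $D$ is at most $|E(D)|^{\rho w}/(\rho w)!$, which together yield a bound comfortably within the target. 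The harder regime is when $D[I]$ has many edges concentrated on a small vertex cover (a ``star-like'' configuration); this is handled by enumerating the cover first and then the incident edges, noting that each extra incident edge contributes only one extra $(2w/k)$ factor to the probability but at most a factor of $k$ to the enumeration (bounded by the maximum possible out-degree of a vertex in $D$, which is at most $k$). The two factors balance to give a per-edge cost of $O(w^3/L)$, and summing over the matching and cover regimes produces the claimed $(8w^3/L)^{\rho w}$ bound.

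The main obstacle is obtaining the correct exponent of $w$ together with the cancellation of $k$ in the final rate. A direct union bound over $2\rho w$-subsets of $E(D)$ yields an enumeration factor of order $(k^2/L)^{2\rho w}$, which dominates the $(2w/k)^v$ probability factor unless the number $v$ of distinct vertices of each configuration is carefully tracked. Separating the matching-like and star-like regimes, and optimizing the enumeration cost against the probability cost in each, is the delicate combinatorial step at the heart of the argument---and is where the $w^3$ exponent (rather than the $w^2$ one obtained from matchings alone) arises.
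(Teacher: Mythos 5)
The paper does not actually reprove this statement (it just imports Lemma~16 of \cite{pmlr-v134-mrt}), so your argument has to stand on its own, and it has a genuine gap at the reduction step. You replace the lemma's event by the weaker event $\{|E(D[I])|\ge 2\rho w\}$, discarding the information that the $2\rho w$ witnessing edges have \emph{distinct sources}. The weaker statement is false. Take $\Omega=\Omega'$ of size $S$, let $\Omega_{i_0}=\Omega$ and $\Omega_i=\varnothing$ for $i\neq i_0$, and let the $\Omega'_j$ all have size exactly $S/k$; with $L=k$ every pair $(i_0,j)$ is an edge of $D$, so $D[I]$ has $2w-1\ge 2\rho w$ edges whenever $i_0\in I$, an event of probability $2w/k$, which exceeds $(8w^3/L)^{\rho w}=(8w^3/k)^{\rho w}$ once $\rho w\ge 2$ and $k$ is large. (The lemma's actual event is empty in this example, since $i_0$ is the only possible bad source.)

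The same loss surfaces in your star-regime arithmetic: the two factors you describe --- at most $k$ enumeration choices per extra incident edge (an out-degree bound) against one extra $2w/k$ probability factor --- multiply to $2w$ per edge, not $O(w^3/L)$; no power of $L$ can appear this way. What saves the argument is exactly the distinct-sources structure: it forces any star-like part of a witness to consist of many \emph{sources pointing into} a few targets, and for a fixed target $j$ the number of admissible sources is at most $k/L$ (the sets $\Omega_i\cap\Omega'_j$, $i\in[k]$, are disjoint subsets of $\Omega'_j$, so $\sum_i|\Omega_i\cap\Omega'_j|\le S/k$), giving a usable per-edge cost of $(k/L)\cdot(2w/k)=2w/L$. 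A workable version: from $2\rho w$ distinct bad sources with chosen targets, greedily extract a maximal set of $\mu$ vertex-disjoint source--target pairs; the $\sigma\ge 2\rho w-2\mu$ leftover sources are distinct, lie outside the matched set, and point into it, so every witness occupies at least $2\rho w$ distinct vertices of $I$ and is enumerated with cost at most $(k^2/L)^{\mu}(2\mu k/L)^{\sigma}$; combined with the inclusion probability $(2w/k)^{2\mu+\sigma}$ and the observation that one may assume $L>8w^3$ (otherwise the claimed bound exceeds $1$), this closes. Your edge count $|E(D)|\le k^2/L$ and the pure matching computation are correct; the reduction and the star case as written are not.
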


\begin{proof}
This is a restatement of Lemma~16 of \cite{pmlr-v134-mrt} with $\gamma^{-1} |\Omega'|$ replaced by $S$, and $L$ replaced by $L/\gamma$; the same proof works to give the statement above. 
Note that in that lemma, it is assumed in addition that $|\Omega_i'| \ge \gamma |\Omega'|/k$, but this condition is never used in the proof.
\end{proof}

The following lemma shows that the intersection between the neighbors of $T_s(i)$ in $G$ and the neighbors of $T'_{s'}(i)$ in $G'$ is not too large for most pairs $s, s' \in J$. 
More precisely, in \eqref{eq:def-rs}, we let $R_s(i)$ denote the intersection between the neighbors of $T_s(i)$ and the neighbors of $T'_{s'}(i)$ for any $s' \in J, \, s' \ne s$, and we define $R'_s(i)$ analogously. 
The lemma states that there is a subset $\tilde J(i) \subset J$ that is almost as large as $J$ (note \eqref{eq:j-jp} and that we take $2w = |J| \ge 2(\log n)^4$ in Lemma~\ref{lem:exp-diff-cor-sig}) such that $R_s(i)$ and $R'_s(i)$ are sufficiently small for all $s \in \tilde J(i)$. 
Note the extra factor $1/2^m$ in \eqref{eq:adoisvnaiv0bq-1} for $s \in \tilde J(i)$ compared to \eqref{eq:adoisvnaiv0bq-2} for any $s \in J$. 
This will be crucial to controlling $\eta_s(i)$ and $\eta'_s(i)$ in Lemma~\ref{lem:small-overlaps-asdf} and subsequent estimates in Lemma~\ref{lem:exp-diff-cor-sig}.

\begin{lemma}[Sparsification]
\label{lem:1034712597198324}
For constants $C_1, C_2 > 0$, there exists $K = K(C_1, C_2) > 0$ with the following property. 
Let $J$ be a uniform random subset of $\{-1,1\}^m$
of cardinality $2w$ for an integer $w > 2(\log n)^2$. 
With respect to the randomness of $J$, the following holds with probability at least $1 - \exp(-(\log n)^{1.5})$ for any vertex $i \in [n]$: If 
\begin{itemize}
\item 
$G_0\big(\cB_{G_0}(i,m+1)\big)$ is a tree, 

\item
$\deg_{G}(j) \lor \deg_{G'}(j) \le C_1 np$ for all $j \in \cB_{G_0}(i,m)$, and

\item
$|T_s(i)| \lor |T_s'(i)| \le C_2 \big( \frac{np}{2} \big)^m$ for all $s \in \{-1,1\}^m$, 
\end{itemize}
then there is a subset $\tilde J(i) \subset J$
such that
\begin{equation}
|J \setminus \tilde J(i)| < (\log n)^2 ,
\label{eq:j-jp}
\end{equation}
and 
\begin{subequations}
\begin{align}
&|R_s(i)| \lor |R'_s(i)| \le K w^4 \frac{(np)^{m+1}}{4^m}  \quad \text{ for all } s \in \tilde J(i) , \label{eq:adoisvnaiv0bq-1} \\
&|R_s(i)| \lor |R'_s(i)| \le K \frac{(np)^{m+1}}{2^m} \quad \text{ for all } s \in J ,
% \setminus \tilde J(i) , 
\label{eq:adoisvnaiv0bq-2}
\end{align}
\end{subequations}
where
\begin{subequations}
\label{eq:def-rs}
\begin{align}
&R_s(i) := \neigh_{G}\big(T_s(i)\big) \cap \neigh_{G'}\big(T'_{J \setminus \{s\}}(i)\big) \cap \cS_{G_0}(i,m+1) , \\
&R'_s(i) := \neigh_{G'}\big(T'_s(i)\big) \cap \neigh_G\big(T_{J \setminus \{s\}}(i)\big) \cap \cS_{G_0}(i,m+1) . 
\end{align}
\end{subequations}
\end{lemma}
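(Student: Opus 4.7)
The plan is to cast both \eqref{eq:adoisvnaiv0bq-1} and \eqref{eq:adoisvnaiv0bq-2} as consequences of Lemma~\ref{lem:spa} applied to two natural partitions of $\cS_{G_0}(i,m+1)$. For each $s\in\{-1,1\}^m$, set
\[
\Omega_s:=\neigh_G\big(T_s(i)\big)\cap\cS_{G_0}(i,m+1),\qquad \Omega'_s:=\neigh_{G'}\big(T'_s(i)\big)\cap\cS_{G_0}(i,m+1).
\]
The first bulleted hypothesis, that $G_0\big(\cB_{G_0}(i,m+1)\big)$ is a tree, implies that every $v\in\cS_{G_0}(i,m+1)$ has a unique $G_0$-parent in $\cS_{G_0}(i,m)$; combined with the fact that the $T_s(i)$ are pairwise disjoint subsets of $\cS_G(i,m)\subset\cS_{G_0}(i,m)$, this forces $\{\Omega_s\}_s$ (and likewise $\{\Omega'_s\}_s$) to be pairwise disjoint. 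The degree and cardinality hypotheses then give the crude bound $|\Omega_s|\vee|\Omega'_s|\leq C_1C_2(np)^{m+1}/2^m$. Since $R_s(i)\subset\Omega_s$ and $R'_s(i)\subset\Omega'_s$, this immediately yields the global bound \eqref{eq:adoisvnaiv0bq-2} with a $K$ depending only on $C_1,C_2$.

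For the refined bound \eqref{eq:adoisvnaiv0bq-1}, I would apply Lemma~\ref{lem:spa} with $k:=2^m$ (identifying $[k]$ with $\{-1,1\}^m$), with the two disjoint families above (augmented if needed by dummy classes collecting any vertices of $\cS_{G_0}(i,m+1)$ not covered by an $\Omega_s$ or $\Omega'_s$), with $S:=C_1C_2(np)^{m+1}$ so that $S/k\geq|\Omega'_s|$, and with $L:=K_0 w^3$ for a large constant $K_0=K_0(C_1,C_2)$. Choosing $\rho$ so that $\rho w=\big\lceil 2((\log n)^{1.5}+\log n)\big\rceil$, the assumption $w>2(\log n)^2$ guarantees $\rho<1/4$ for large $n$. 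The lemma then bounds by
\[
\bigg(\frac{8w^3}{L}\bigg)^{\rho w}=\bigg(\frac{8}{K_0}\bigg)^{\rho w}\leq 2^{-\rho w}
\]
the probability that more than $2\rho w$ indices $s\in J$ admit some $s'\in J\setminus\{s\}$ with $|\Omega_s\cap\Omega'_{s'}|\geq LS/k^2$. A second application with $G$ and $G'$ exchanged controls the analogous event for the sets $R'_s$. A union bound over the two applications and over $i\in[n]$ then leaves an event of probability at least $1-\exp(-(\log n)^{1.5})$ on which at most $4\rho w$ indices $s\in J$ are ``bad'' for any admissible $i$.

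On this event, I define $\tilde J(i)$ to be the complement, inside $J$, of the union of the two bad sets, so that $|J\setminus\tilde J(i)|\leq 4\rho w<(\log n)^2$, giving \eqref{eq:j-jp}. For $s\in\tilde J(i)$, the identity $R_s(i)=\bigcup_{s'\in J\setminus\{s\}}(\Omega_s\cap\Omega'_{s'})$ together with the threshold bound inside the union yields
\[
|R_s(i)|\;\leq\;|J|\cdot\frac{LS}{k^2}\;=\;2w\cdot\frac{K_0 C_1C_2\,w^3(np)^{m+1}}{4^m}\;\leq\;K\,\frac{w^4(np)^{m+1}}{4^m},
\]
and symmetrically for $R'_s(i)$, upon absorbing the constant $2K_0C_1C_2$ into $K$. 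This is precisely \eqref{eq:adoisvnaiv0bq-1}. The main obstacle I anticipate is purely bookkeeping: tuning the three parameters $S$, $L$, $\rho$ so that the threshold $LS/k^2$ supplied by Lemma~\ref{lem:spa} produces exactly the $w^4(np)^{m+1}/4^m$ order required, while simultaneously keeping $\rho$ safely below $1/4$, respecting $2w\leq k$, and leaving enough slack in the failure probability to accommodate a union bound over the $n$ choices of vertex $i$.
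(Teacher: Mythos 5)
Your proposal is correct and follows essentially the same route as the paper: use the tree hypothesis to see that the sets $\neigh_G(T_s(i))\cap\cS_{G_0}(i,m+1)$ (and their $G'$ analogues) are disjoint with the crude size bound $C_1C_2(np)^{m+1}/2^m$ (giving \eqref{eq:adoisvnaiv0bq-2}), then apply Lemma~\ref{lem:spa} twice (once with $G,G'$ swapped) with $k=2^m$, $S=C_1C_2(np)^{m+1}$, and $L$ of order $w^3$, decompose $R_s(i)$ as the union over $t\in J\setminus\{s\}$ of the pairwise intersections to pick up the extra factor $|J|=2w$, and union bound over $i$. Your parameter choices ($L=K_0w^3$, $\rho w=\lceil 2((\log n)^{1.5}+\log n)\rceil$) differ slightly from the paper's ($L=8ew^3$, $\rho w=(\log n)^2/2$) but are equally valid and comfortably meet the stated failure probability and the bound $|J\setminus\tilde J(i)|<(\log n)^2$.
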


\begin{proof}
Fix a vertex $i$ such that the three conditions in the lemma hold. 
Since $G_0(\cB_{G_0}(i,m+1))$ is a tree and by definition $\cS_{G}(i,m) = \bigcup_{s \in \{-1,1\}^m} T_s(i)$ and $\cS_{G'}(i,m) = \bigcup_{s \in \{-1,1\}^m} T'_s(i)$, we have partitions 
\begin{align*}
\cS_{G}(i,m+1) &= \bigcup_{s \in \{-1,1\}^m} \Big( \neigh_{G}\big(T_s(i)\big) \cap \cS_{G_0}(i,m+1) \Big) , \\
\cS_{G'}(i,m+1) &= \bigcup_{s \in \{-1,1\}^m} \Big( \neigh_{G'}\big(T_s'(i)\big) \cap \cS_{G_0}(i,m+1) \Big) . 
\end{align*}
Using the conditions $|T_s(i)| \lor |T_s'(i)| \le C_2 \big( \frac{np}{2} \big)^m$ for all $s \in \{-1,1\}^m$ and $\deg_{G}(j) \lor \deg_{G'}(j) \le C_1 np$ for all $j \in \cB_{G_0}(i,m)$, we see that 
$$
\Big| \neigh_{G}\big(T_s(i)\big) \cap \cS_{G_0}(i,m+1) \Big| 
\lor \Big| \neigh_{G'}\big(T_s'(i)\big) \cap \cS_{G_0}(i,m+1) \Big| 
\le C_1 C_2 \frac{(np)^{m+1}}{2^m} . 
$$
Note that \eqref{eq:adoisvnaiv0bq-2} is a consequence of the above bound. 

Moreover, we can apply Lemma~\ref{lem:spa} with $\Omega = \cS_{G}(i,m+1)$, $\Omega' = \cS_{G'}(i,m+1)$, $k = 2^m$, 
$S = C_1 C_2 (np)^{m+1}$, 
$L = 8 e w^3$, and $\rho = \frac{1}{2w}(\log n)^2$ to obtain the following:  
With probability at least $1 - \exp(-(\log n)^2/2)$, 
\begin{align}
&\Big|\Big\{s\in J:\;\exists \, t
\in J \setminus \{ s \} \, \text{ s.t. }
\big| \neigh_{G}\big(T_s(i)\big) \cap \neigh_{G'}\big(T'_t(i)\big) \cap \cS_{G_0}(i,m+1) \big|
\geq 8 e w^3 C_1 C_2 \frac{(np)^{m+1}}{4^m} \Big\}\Big| \notag \\
& < \frac 12 (\log n)^2 . 
\label{eq:ia08340b031br0w0gb}
\end{align}
A similar estimate holds if $T$ and $T'$ are swapped. 
Next, define 
\begin{align*}
\tilde J(i) &:= \bigg\{ s \in J :\;
|R_s(i)| \lor |R'_s(i)| \le 16 e w^4 C_1 C_2 \frac{(np)^{m+1}}{4^m} \bigg\}
\end{align*}
which is a superset of 
\begin{align*}
&\bigg\{ s \in J :\; \forall \, t \in J \setminus \{s\}, \, \big| \neigh_{G}\big(T_s(i)\big) \cap \neigh_{G'}\big(T'_t(i)\big) \cap \cS_{G_0}(i,m+1) \big|  \le 8 e w^3 C_1 C_2 \frac{(np)^{m+1}}{4^m} \bigg\} \\
& \bigcap \, \bigg\{ s \in J :\; \forall \, t \in J \setminus \{s\}, \, \big| \neigh_{G'}\big(T'_s(i)\big) \cap \neigh_{G}\big(T_t(i)\big) \cap \cS_{G_0}(i,m+1) \big| \le 8 e w^3 C_1 C_2 \frac{(np)^{m+1}}{4^m} \bigg\} . 
\end{align*}
As a result of \eqref{eq:ia08340b031br0w0gb} and the counterpart with $T$ and $T'$ swapped, we see that \eqref{eq:j-jp} holds. 
Moreover, \eqref{eq:adoisvnaiv0bq-1} holds by the definition of $\tilde J(i)$.

Finally, a union bound over vertices $i \in [n]$ completes the proof.
\end{proof}

\subsection{Difference between signatures of a typical pair}\label{subs: difference matched}

Throughout this subsection, we fix a vertex $i \in [n]$, a subset $J \subset \{-1,1\}^m$ of cardinality $2w$ for $w \in \N$, and a subset $\tilde J(i) \subset J$. 
Moreover, we condition on the neighborhoods $G_0(\cB_{G_0}(i,m+1))$, $G(\cB_{G_0}(i,m+1))$, and $G'(\cB_{G_0}(i,m+1))$ such that the following statements hold for fixed constants $K, \kappa > 0$: 
\begin{enumerate}[label=(B\arabic*)]
\item \label{enum:b1}
$G_0(\cB_{G_0}(i,m+1))$ is a tree;

\item \label{enum:b2}
$|\cB_{G_0}(i,m+1)| \le n^{0.1};$

\item \label{enum:b3}
$\deg_{G}(j) \lor \deg_{G'}(j) \le K np$ for all $j \in \cB_{G_0}(i,m)$ for a constant $K>0$;

\item \label{enum:b4}
$|T_s(i)| \lor |T_s'(i)| \le K (np/2)^m$ for all $s \in \{-1,1\}^m$;

\item \label{enum:b5}
\eqref{eq:j-jp}, \eqref{eq:adoisvnaiv0bq-1}, and \eqref{eq:adoisvnaiv0bq-2} hold;

\item \label{enum:b6}
$\big| \big\{ j \in \cS_{G_0}(i,m): \deg_{G \cap G'}(j) \le \frac 23 np (1-\alpha) \big\} \big| \le (nq/3)^m$, where $G \cap G'$ denotes the graph on $[n]$ whose edge set is the intersection of those of $G$ and $G'$;

\item \label{enum:b7}
$|T_s(i) \cap T'_s(i)| \geq (np/2)^m (1-8\kappa)^m$ for all $s \in \{-1,1\}^m$. 
\end{enumerate}
We consider the randomness with respect to the remaining possible edges of the graphs; let $\tilde \p$ and $\tilde \E$ denote the conditional probability and expectation respectively. 
For any $j \in \cS_{G_0}(i,m+1)$, it is not hard to see that the random variable $\deg_G(j) - 1$ is conditionally $\Bin(\tilde n, p)$ where 
$$\tilde n := n - |\cB_{G_0}(i,m+1)| \ge n - n^{0.1},$$ 
and these binomial variables are independent across different $j \in \cS_{G_0}(i,m+1)$.

\begin{lemma}[Small overlaps]
\label{lem:small-overlaps-asdf}
For any $D,K>0$, there exists $K' = K'(D,K) > 0$ such that the following holds. 
Define $R_s(i)$ and $R'_s(i)$ as in \eqref{eq:def-rs}. 
With (conditional) probability at least $1 - n^{-D}$, 
\begin{subequations}
\label{eq:eta-bounds}
\begin{align}
&|\eta_s(i)| \lor |\eta'_s(i)| \le K'  \frac{(np)^{m/2+1}}{2^m} w^2 \sqrt{\log n} \quad
\mbox{ for all }s\in \tilde J(i),
\label{eq:eta-bd} \\
&|\eta_s(i)| \lor |\eta'_s(i)| \le K'  \frac{(np)^{m/2+1}}{2^{m/2}} \sqrt{\log n} \quad 
\mbox{ for all }s\in J ,
% \setminus \tilde J(i),
\label{eq:eta-bd-2}
\end{align}
\end{subequations}
where 
\begin{equation}
\eta_s(i) := \sum_{j \in R_s(i)} \big( \deg_G(j) - 1 - np \big) 
\quad \text{ and } \quad
\eta'_s(i) :=  \sum_{j \in R'_s(i)} \big( \deg_{G'}(j) - 1 - np \big) . 
\label{eq:abov0v3qb0vqevrbqveu jfbvd}
\end{equation} 
\end{lemma}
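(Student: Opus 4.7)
The plan is to view $\eta_s(i)$ (and $\eta'_s(i)$) as a sum of conditionally independent, approximately centered binomial random variables and apply Chernoff's inequality, followed by substitution of the cardinality bounds on $R_s(i)$ (resp.\ $R'_s(i)$) and a union bound over $s\in J$.

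First, under the hypotheses \ref{enum:b1}--\ref{enum:b5}, the sets $R_s(i)$ and $R'_s(i)$ are deterministic given the conditioning. Indeed, the tree assumption \ref{enum:b1} forces $T_s(i),T'_s(i)\subset\cS_{G_0}(i,m)$, and once $G(\cB_{G_0}(i,m+1))$ and $G'(\cB_{G_0}(i,m+1))$ are fixed, the $G$-neighbors of $T_s(i)$ and the $G'$-neighbors of $T'_{J\setminus\{s\}}(i)$ inside $\cS_{G_0}(i,m+1)$ are all determined. The residual randomness resides solely in the edges between $\cS_{G_0}(i,m+1)$ and $[n]\setminus\cB_{G_0}(i,m+1)$; as recorded just before the statement, each $\deg_G(j)-1$ with $j\in\cS_{G_0}(i,m+1)$ is then conditionally $\Bin(\tilde n,p)$, with the variables across distinct $j$'s mutually independent since they depend on disjoint edge indicators.

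With this in hand, decompose
\[
\eta_s(i)=\Bigl(\sum_{j\in R_s(i)}\bigl(\deg_G(j)-1\bigr)\Bigr)-np\,|R_s(i)|.
\]
The bracketed sum is conditionally distributed as $\Bin(\tilde n\,|R_s(i)|,p)$ with mean $\tilde n p\,|R_s(i)|$ and variance at most $np\,|R_s(i)|$; by \ref{enum:b2} the bias $|\tilde\E\eta_s(i)|=(n-\tilde n)p\,|R_s(i)|\le n^{0.1}p\,|R_s(i)|$ is of lower order. A standard Chernoff bound then yields, with conditional probability at least $1-n^{-D-2}$,
\[
|\eta_s(i)|\le C_D\sqrt{|R_s(i)|\,np\,\log n}+n^{0.1}p\,|R_s(i)|,
\]
and, by symmetry under the conditionally independent thinning of $G_0$ into $G$ and $G'$, the same bound holds for $\eta'_s(i)$.

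Plugging \eqref{eq:adoisvnaiv0bq-1} into the bound for $s\in\tilde J(i)$ produces the claimed order $w^2(np)^{m/2+1}/2^m\sqrt{\log n}$ of \eqref{eq:eta-bd}, while \eqref{eq:adoisvnaiv0bq-2} yields \eqref{eq:eta-bd-2} for all $s\in J$; in both cases the square-root term dominates the additive bias under the standing parameter assumptions. A union bound over the $2w$ indices $s\in J$ (with $w$ polylogarithmic in $n$) and over $\eta_s$ versus $\eta'_s$ yields probability at least $1-n^{-D}$. The only mildly delicate point, which is more bookkeeping than a genuine obstacle, is verifying the conditional independence of $\{\deg_G(j)-1:j\in\cS_{G_0}(i,m+1)\}$: the tree property \ref{enum:b1} rules out hidden $G_0$-edges within $\cB_{G_0}(i,m+1)$ that would otherwise couple these degrees, and the independence of the thinning processes producing $G$ and $G'$ from $G_0$ ensures $\eta_s(i)$ and $\eta'_s(i)$ can be treated on the same probability space without cross-correlation through the conditioning.
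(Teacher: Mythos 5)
Your proposal is correct and follows essentially the same route as the paper's proof: represent $\sum_{j\in R_s(i)}(\deg_G(j)-1)$ as a conditional $\Bin(\tilde n\,|R_s(i)|,p)$ variable, apply Chernoff to get a deviation of order $\sqrt{np\,|R_s(i)|\log n}$ plus a negligible bias from $n-\tilde n\le n^{0.1}$, substitute the cardinality bounds \eqref{eq:adoisvnaiv0bq-1} and \eqref{eq:adoisvnaiv0bq-2} from Condition~\ref{enum:b5}, and finish with a union bound. The only cosmetic difference is that the paper writes the bound as $K_3\sqrt{np(|R_s(i)|+1)\log n}$ to absorb the additive $\log n$ term when $R_s(i)$ is small, which your argument handles implicitly since $np\ge\log n$ and the case $R_s(i)=\varnothing$ is trivial.
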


\begin{proof}
Recall that the variables $\deg_G(j) - 1$, $j \in R_s(i)$, are conditionally independent $\Bin(\tilde n, p)$ where $\tilde n \ge n - n^{0.1}$. 
As a result, $\sum_{j \in R_s(i)} \big( \deg_G(j) - 1 \big) \sim \Bin(\tilde n |R_s(i)|, p)$, and so 
\begin{align*}
|\eta_s(i)| &= \Big| \sum_{j \in R_s(i)} \big( \deg_G(j) - 1 - np \big) \Big|  \\
&\le K_2 \big( \sqrt{np |R_s(i)| \log n} + \log n \big)
+ (n - \tilde n) p\, |R_s(i)|  \\
&\le K_3 \sqrt{np (|R_s(i)|+1) \log n} 
% \label{eq:asdbg0q84b0-2}
\end{align*}
%MR added 1 in the inequality above to account for the situation when $R_s(i)$ is empty.
with probability at least $1 - n^{-D-1}$ for constants $K_2, K_3 > 0$ depending on $D$. 
We then combine the above bound with Condition~\ref{enum:b5} to obtain \eqref{eq:eta-bd} and \eqref{eq:eta-bd-2}. 
Finally, a union bound over $s \in \{-1,1\}^m$ completes the proof. 
% of the estimates for $\eta_s(i)$; the estimates for $\eta'_s(i)$ are proved analogously. 
\end{proof}

\begin{lemma}[Correlated binomial]
\label{lem:naog238tgqgb03q}
Let $\tilde \p$ and $\tilde \E$ denote the conditional probability and expectation respectively, defined at the beginning of this subsection. 
Fix a subset $I \subset \cS_{G_0}(i,m+1)$. 
Let 
$$
A := \sum_{j \in I} \big( \deg_G(j) - \deg_{G'}(j) \big) 
\quad \text{ and } \quad
B := \sum_{j \in I} \big( \deg_G(j) - 1 - \tilde n p \big) 
$$
where $\tilde n = n - |\cB_{G_0}(i,m+1)|$. 
Then we have 
\begin{itemize}
\item
$\tilde \E[A] = \tilde \E[B] = 0$; 

\item
$\tilde \E[A^2] = 2 \alpha p \, \tilde n |I|$ and $\tilde \E[B^2] = p (1-p) \tilde n |I|$;

\item
$\tilde \p \{ |A| \ge t \} \le 2 \exp \Big( \frac{-t^2/2}{\tilde \E[A^2] + t/3} \Big)$ and $\tilde \p \{ |B| \ge t \} \le 2 \exp \Big( \frac{ - t^2/2 }{ \tilde \E[B^2] + t/3 } \Big)$ for every $t>0$. 
\end{itemize}
\end{lemma}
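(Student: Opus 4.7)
The plan is to decompose both $A$ and $B$ as sums over the free potential edges from $I$ to $[n] \setminus \cB_{G_0}(i,m+1)$ and reduce the lemma to a textbook Bernstein calculation. Concretely, for each $j \in I$ and $k \in [n] \setminus \cB_{G_0}(i,m+1)$, I would introduce the parent edge indicator $b^0_{jk} := \1\{(j,k)\text{ is an edge of }G_0\}$ together with the independent retention variables $\xi_{jk}, \xi'_{jk} \sim \Ber(1-\alpha)$ used to sparsify $G_0$ into $G$ and $G'$, so that $b_{jk} := b^0_{jk}\xi_{jk}$ and $b'_{jk} := b^0_{jk}\xi'_{jk}$ are the edge indicators of $G$ and $G'$ on the pair $(j,k)$. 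Under the conditioning at the start of this subsection, only the $|I|\tilde n$ triples $(b^0_{jk},\xi_{jk},\xi'_{jk})$ with $k \notin \cB_{G_0}(i,m+1)$ remain random, these triples are mutually independent across distinct pairs $(j,k)$, and one has $\deg_G(j) - 1 = \sum_k b_{jk}$ and $\deg_{G'}(j) - 1 = \sum_k b'_{jk}$, with the sums ranging over $k \in [n] \setminus \cB_{G_0}(i,m+1)$.

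Once this decomposition is in place, the first two bullets follow by direct computation. Marginally $b_{jk} \sim \Ber(p)$ (since $q(1-\alpha) = p$), so each increment $b_{jk} - p$ is centered with variance $p(1-p)$, and summing over the $|I|\tilde n$ independent terms yields $\tilde\E[B] = 0$ and $\tilde\E[B^2] = p(1-p)\tilde n |I|$. For $A$, I would exploit the conditional independence of $\xi_{jk}$ and $\xi'_{jk}$ given $b^0_{jk}$ to compute $\tilde\E[b_{jk} b'_{jk}] = q(1-\alpha)^2 = p(1-\alpha)$, whence $\tilde\E[(b_{jk} - b'_{jk})^2] = 2p - 2p(1-\alpha) = 2p\alpha$; summing over the independent pairs gives $\tilde\E[A] = 0$ and $\tilde\E[A^2] = 2\alpha p \tilde n |I|$.

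For the tail bounds, the increments $b_{jk} - p \in [-p,\, 1-p]$ and $b_{jk} - b'_{jk} \in \{-1,0,1\}$ are each bounded by $1$ in absolute value, and their summed variances agree exactly with $\tilde\E[B^2]$ and $\tilde\E[A^2]$. Bernstein's inequality applied to each of the independent bounded centered sums $B$ and $A$ therefore delivers the displayed tail bounds without further work. There is no genuine obstacle here; the only care required is the bookkeeping showing that the conditioning listed at the start of the subsection fixes the three graphs on $\cB_{G_0}(i,m+1)$ but leaves the $|I|\tilde n$ external potential edges fully random with mutually independent underlying Bernoullis. Once that is spelled out, the lemma reduces to a standard concentration exercise.
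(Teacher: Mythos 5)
Your proposal is correct and follows essentially the same route as the paper: the paper likewise writes $A=\sum_{j\in I}\sum_{\ell\notin \cB_{G_0}(i,m+1)}X_{j\ell}(Y_{j\ell}-Y'_{j\ell})$ with parent-edge and retention Bernoullis (your $b^0_{jk}(\xi_{jk}-\xi'_{jk})$), computes the variance $2\alpha p\,\tilde n|I|$ the same way, and applies Bernstein's inequality to the bounded centered increments, while treating $B$ as a centered $\Bin(\tilde n|I|,p)$. No substantive difference.
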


\begin{proof}
We first consider the variable $B$. Since $\sum_{j \in I} (\deg_G(j) - 1)$ is conditionally $\Bin(\tilde n |I|, p)$, the mean, variance, and tail bound for $B$ are all standard facts. 

Next, consider the variable $A$. Note that we can write
$$
A = \sum_{j \in I} \, \sum_{\ell \in [n] \setminus \cB_{G_0}(i,m+1)} X_{j\ell} (Y_{j\ell} - Y'_{j\ell}) ,
$$
where $X_{j\ell} \sim \Ber(q)$ and $Y_{j\ell}, Y'_{j\ell} \sim \Ber(1-\alpha)$, all of which are independent. 
Thus, we have $\tilde \E[A] = 0$ and
$$
\tilde \E [A^2]
= \sum_{j \in I} \, \sum_{\ell \in [n] \setminus \cB_{G_0}(i,m+1)} \E \Big[ X_{j\ell}^2 (Y_{j\ell} - Y'_{j\ell})^2 \Big] 
=  2 q \alpha (1-\alpha) \tilde n |I| 
=  2 \alpha p \, \tilde n |I| . 
$$
Moreover, the random variables $X_{j\ell}(Y_{j\ell} - Y'_{j\ell})$ take values in $\{-1,0,1\}$ and are i.i.d.\ with mean zero and variance $2 \alpha p$. Hence Bernstein's inequality yields the desired tail bound for $A$. 
% implies that, for any $t > 0$, 
% $$
% \tilde \p \{ |A| \ge t \} \le 2 \exp \Big( \frac{-t^2/2}{2 \alpha p \, \tilde n |I| + t/3} \Big) ,
% $$
% which completes the proof.
\end{proof}

Since $i$ is fixed, we drop the argument $(i)$ in $T_s(i)$, $f(i)_s$, $R_s(i)$, etc.\ to ease the notation below when there is no ambiguity. 
Recall that $R_s$ and $R'_s$ are defined by \eqref{eq:def-rs}; moreover, $\eta_s$ and $\eta'_s$ are defined by \eqref{eq:abov0v3qb0vqevrbqveu jfbvd}. 
Let $f$ and $\var$ be the signature vector and the variance vector respectively given by {\tt VertexSignature}$(G,i,m)$ (Algorithm~\ref{alg:ver-sig}), and let $f'$ and $\var'$ be given by {\tt VertexSignature}$(G',i,m)$.

\begin{lemma}[Entrywise difference between signatures]
\label{lem:avsob3408yt754}
Condition further on a realization of edges between $R_s \cup R'_s$ and $\cS_{G_0}(i,m+2)$ in the graphs $G_0$, $G$, and $G'$.  
Let $\hat \p$ and $\hat \E$ denote the conditional probability and expectation respectively. 
Then, for $s \in J$, we have 
$$
f_s - f'_s = Z_s + \Delta_s
$$
for a random variable $Z_s$ and a deterministic quantity $\Delta_s$ satisfying
\begin{itemize}
\item 
$\hat \E[Z_s] = 0$;

\item
% $\hat \E[Z_s^2] = 2 \alpha p \, \tilde n \big| \neigh_G(T_s \cap T'_s) \cap \neigh_{G'}(T_s \cap T'_s) \big|$

% \qquad \qquad 
% $+ \, p (1-p) \tilde n \Big( \big| \neigh_{G}(T_s \cap T'_s) \triangle \neigh_{G'}(T_s \cap T'_s) \big| 
% + \big| \neigh_G(T_s \setminus T'_J) \big| 
% + \big| \neigh_{G'}(T'_s \setminus T_J) \big| \Big)$;
$\hat \E[Z_s^2] \le \var_s + \var'_s - 2 \tilde n p (1-p-\alpha) \big| \neigh_G(T_s) \cap \neigh_{G'}(T'_s) \cap \cS_{G_0}(i,m+1) \big|$;

\item
$\hat \p \big\{ | Z_s | \ge t \big\} \le 2 \exp \Big( \frac{ - t^2/2 }{ \hat E[Z_s^2] + t/3 } \Big)$;

\item
$|\Delta_s| \le |\eta_s| + |\eta'_s| + 2 n^{0.2} p$.
\end{itemize}
Moreover, the random variables $Z_s$ are conditionally independent for different $s \in J$. 
\end{lemma}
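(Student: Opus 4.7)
The plan is to decompose $f_s - f'_s$ by peeling off the ``boundary'' contributions from $R_s, R'_s$ (which, together with a tiny conditional-mean correction, form $\Delta_s$) and taking $Z_s$ to be the centered ``bulk'' difference, engineered so that the tree structure of $G_0$ makes the remaining randomness independent across both vertices and across $s \in J$. First, under condition~\ref{enum:b1}, each $j \in \cS_{G_0}(i,m+1)$ has a unique parent $p(j) \in \cS_{G_0}(i,m)$ and $(p(j),j)$ is its only $G_0$-edge inside $\cB_{G_0}(i,m+1)$; since $T_s \subset \cS_G(i,m) \subset \cB_{G_0}(i,m+1)$, this forces $N_s := \neigh_G(T_s) \cap \cS_{G_0}(i,m+1) = \{j \in \cS_{G_0}(i,m+1) : p(j) \in T_s,\ (p(j),j) \in G\}$, and analogously $N'_s$. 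The families $\{N_s\}, \{N'_s\}$ are each pairwise disjoint in $s$. Define $I_s := N_s \cap N'_s$, $U_s := N_s \setminus (I_s \cup R_s)$, $U'_s := N'_s \setminus (I_s \cup R'_s)$; note $I_s \cap R_s = \emptyset$ since $j \in I_s$ forces $p(j) \in T'_s$ whereas $j \in R_s$ requires $p(j) \in T'_t$ for some $t \neq s$. Thus $N_s = I_s \sqcup U_s \sqcup R_s$, and $I_s$ is exactly the set $\neigh_G(T_s) \cap \neigh_{G'}(T'_s) \cap \cS_{G_0}(i,m+1)$ that appears in the claimed variance bound.

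Expanding $f_s = \sum_{j \in N_s}(\deg_G(j) - 1 - np)$ and its primed version along this partition yields
\[
f_s - f'_s = \underbrace{\sum_{j \in I_s}(\deg_G(j) - \deg_{G'}(j)) + \sum_{j \in U_s}(\deg_G(j)-1-np) - \sum_{j \in U'_s}(\deg_{G'}(j)-1-np)}_{=: S} + (\eta_s - \eta'_s).
\]
Set $Z_s := S - \hat\E[S]$ and $\Delta_s := \hat\E[S] + \eta_s - \eta'_s$, so $\hat\E[Z_s] = 0$ tautologically. For $j \notin R_s \cup R'_s$, the conditional distribution is $\deg_G(j) = \1_{(p(j),j) \in G} + \Bin(\tilde n, p)$ with the analogous formula for $\deg_{G'}(j)$, the pair coupled through $G_0$ as in Lemma~\ref{lem:naog238tgqgb03q}; the tree structure makes the underlying edge variables independent across distinct $j \in \cS_{G_0}(i,m+1)$. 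Lemma~\ref{lem:naog238tgqgb03q} then gives per-vertex variances $2\alpha p \tilde n$ on $I_s$ and $\tilde n p(1-p)$ on $U_s \cup U'_s$, which sum to $\hat\E[Z_s^2] = 2\alpha p\tilde n|I_s| + \tilde n p(1-p)(|U_s|+|U'_s|)$. Substituting $|U_s| = |N_s|-|I_s|-|R_s|$ and its primed counterpart and using $2(1-p) - 2\alpha = 2(1-p-\alpha)$ converts this to $\tilde n p(1-p)(|N_s|+|N'_s|) - 2\tilde n p(1-p-\alpha)|I_s| - \tilde n p(1-p)(|R_s|+|R'_s|)$, which is at most $\var_s + \var'_s - 2\tilde n p(1-p-\alpha)|I_s|$ (using $\tilde n \le n$ and $\var_s = np(1-p)|N_s|$). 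The Bernstein tail bound is the aggregated form of Lemma~\ref{lem:naog238tgqgb03q}, valid since the centered summands are independent with bounded increments.

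For the $\Delta_s$ bound, the $I_s$-summands of $\hat\E[S]$ vanish by the $G \leftrightarrow G'$ symmetry, while $|\hat\E[\deg_G(j) - 1 - np]| = |\cB_{G_0}(i,m+1)|p \le n^{0.1}p$ for $j \in U_s$ by~\ref{enum:b2} (and similarly on $U'_s$); bounding $|U_s|+|U'_s| \le |N_s|+|N'_s| \le 2K^2(np)^{m+1}/2^m \le 2 n^{0.1}$ via~\ref{enum:b3}--\ref{enum:b4} and the range of $m$ and $np$ (for $R$ large enough) gives $|\Delta_s - (\eta_s - \eta'_s)| \le 2 n^{0.2}p$. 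For the conditional independence of $(Z_s)_{s \in J}$: if some $j$ lies in the index sets of both $Z_s$ and $Z_{s'}$ for distinct $s, s' \in J$, then the pairwise disjointness of $\{N_t\}, \{N'_t\}$ leaves only the cross cases $j \in N_s \cap N'_{s'}$ or $j \in N'_s \cap N_{s'}$; the first forces $p(j) \in T'_{s'} \subset T'_{J \setminus \{s\}}$ and thus $j \in R_s$, while the second symmetrically gives $j \in R'_s$, each contradicting $j \notin R_s \cup R'_s$. Hence the index sets of distinct $Z_s$'s are disjoint, and by the tree structure the edge variables they depend on are mutually independent, yielding the stated conditional independence. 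The main obstacle is the joint tightness of the three claims: the variance identity must pin down $|I_s|$ \emph{exactly} (not merely up to constants), the bias in $\Delta_s$ must survive being multiplied by potentially large $|U_s|, |U'_s|$, and the cross-$s$ independence must hold over all $s \in J$ at once -- and all three hinge delicately on the tree uniqueness of parents combined with the sparsification guarantee that $R_s, R'_s$ are small.
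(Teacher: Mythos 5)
Your proof is correct and follows essentially the same route as the paper: the same three-way partition of $\neigh_G(T_s)\cap\cS_{G_0}(i,m+1)$ into the overlap $I_s$, the bulk $U_s$, and the sparsification remainder $R_s$, the same appeal to the tree structure and to Lemma~\ref{lem:naog238tgqgb03q} for the variance and Bernstein bounds, and the same disjointness argument for independence across $s$. Your $Z_s$ coincides with the paper's (centering the bulk at $\tilde n p$), and your allocation of the $(\tilde n-n)p$ correction to $\Delta_s$ via $\hat\E[S]$ over $U_s\cup U'_s$ only is, if anything, a slightly tidier piece of bookkeeping than the paper's.
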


\begin{proof}
First, note that because of the further conditioning on the edges between $R_s \cup R'_s$ and $\cS_{G_0}(i,m+2)$, the quantities $\eta_s$ and $\eta'_s$ defined in \eqref{eq:abov0v3qb0vqevrbqveu jfbvd} become deterministic. 

As $G_0\big(\cB_{G_0}(i,m+1)\big)$ is a tree by Condition~\ref{enum:b1}, for every $v \in \cS_G(i,m)$, we have 
$$
\neigh_G(v) \cap \cS_G(i,m+1) = \neigh_G(v) \cap \cS_{G_0}(i,m+1) .
$$
Therefore, it holds that
\begin{align*}
f_s &= \sum_{j \in \neigh_G(T_s) \cap \cS_G(i,m+1)} \big( \deg_G(j) - 1 - np \big) \\
&= \sum_{j \in \neigh_G(T_s) \cap \cS_{G_0}(i,m+1)} \big( \deg_G(j) - 1 - \tilde n p \big) 
+ (\tilde n - n) p \, \big| \neigh_G(T_s) \cap \cS_{G_0}(i,m+1) \big| . 
\end{align*}
Furthermore, in view of the partition 
$$
\neigh_{G}(T_s) = \big(\neigh_{G}(T_s) \cap \neigh_{G'}(T'_s)\big) 
\cup \big(\neigh_{G}(T_s) \setminus \neigh_{G'}(T'_J)\big) 
\cup \big(\neigh_{G}(T_s) \cap \neigh_{G'}(T'_{J \setminus \{s\}})\big) , 
$$
we obtain
\begin{align*}
f_s 
&= \sum_{j \in \neigh_{G}(T_s) \cap \neigh_{G'}(T'_s) \cap \cS_{G_0}(i,m+1)} \big( \deg_G(j) - 1 - \tilde n p \big) \\
&\quad + \sum_{j \in ( \neigh_{G}(T_s) \setminus \neigh_{G'}(T'_J) ) \cap \cS_{G_0}(i,m+1)} \big( \deg_G(j) - 1 - \tilde n p \big) \\
& \quad + \eta_s + (\tilde n - n) p \, \big| \neigh_G(T_s) \cap \cS_{G_0}(i,m+1) \big| . 
\end{align*}
Consequently, 
% \begin{align*}
% &f_s - f'_s
% = \eta_s - \eta'_s + (\tilde n - n) p \, \Big( \big| \neigh_G(T_s) \cap \cS_{G_0}(i,m+1) \big| - \big| \neigh_{G'}(T'_s) \cap \cS_{G_0}(i,m+1) \big| \Big)\\
% &\quad + \sum_{j \in \neigh_G(T_s \cap T'_s) \cap \cS_{G_0}(i,m+1)} \big( \deg_G(j) - 1 - \tilde n p \big) 
% - \sum_{j \in \neigh_{G'}(T_s \cap T'_s) \cap \cS_{G_0}(i,m+1)} \big( \deg_{G'}(j) - 1 - \tilde n p \big) \\
% &\quad + \sum_{j \in \neigh_G(T_s \setminus T'_J) \cap \cS_{G_0}(i,m+1)} \big( \deg_G(j) - 1 - \tilde n p \big) 
% - \sum_{j \in \neigh_{G'}(T'_s \setminus T_J) \cap \cS_{G_0}(i,m+1)} \big( \deg_{G'}(j) - 1 - \tilde n p \big) . 
% \end{align*}
$$
f_s - f'_s = Z_s + \Delta_s ,
$$
where
\begin{align*}
Z_s & := \sum_{j \in \neigh_G(T_s) \cap \neigh_{G'}(T'_s) \cap \cS_{G_0}(i,m+1)} \big( \deg_G(j) - \deg_{G'}(j) \big) \\
&\quad + \sum_{j \in ( \neigh_{G}(T_s) \setminus \neigh_{G'}(T'_J) ) \cap \cS_{G_0}(i,m+1)} \big( \deg_G(j) - 1 - \tilde n p \big) \\
&\quad - \sum_{j \in ( \neigh_{G'}(T'_s) \setminus \neigh_{G}(T_J) ) \cap \cS_{G_0}(i,m+1)} \big( \deg_{G'}(j) - 1 - \tilde n p \big) 
% &\quad + \sum_{j \in ( \neigh_G(T_s \cap T'_s) \setminus \neigh_{G'}(T_s \cap T'_s) ) \cap \cS_{G_0}(i,m+1)} \big( \deg_G(j) - 1 - \tilde n p \big) \\
% &\quad - \sum_{j \in ( \neigh_{G'}(T_s \cap T'_s) \setminus \neigh_{G}(T_s \cap T'_s) ) \cap \cS_{G_0}(i,m+1)} \big( \deg_{G'}(j) - 1 - \tilde n p \big) \\
% &\quad + \sum_{j \in \neigh_G(T_s \setminus T'_J) \cap \cS_{G_0}(i,m+1)} \big( \deg_G(j) - 1 - \tilde n p \big) \\
% &\quad - \sum_{j \in \neigh_{G'}(T'_s \setminus T_J) \cap \cS_{G_0}(i,m+1)} \big( \deg_{G'}(j) - 1 - \tilde n p \big) . 
\end{align*}
and
% Let us define
$$
\Delta_s := \eta_s - \eta'_s + (\tilde n - n) p \, \Big( \big| \neigh_G(T_s) \cap \cS_{G_0}(i,m+1) \big| - \big| \neigh_{G'}(T'_s) \cap \cS_{G_0}(i,m+1) \big| \Big) . 
$$

For the deterministic quantity $\Delta_s$, we have 
$$
|\Delta_s| \le |\eta_s| + |\eta'_s| + n^{0.1} p \cdot 2 n^{0.1} = |\eta_s| + |\eta'_s| + 2 n^{0.2} p .
$$
% Moreover, define 
% $Z_s := f_s - f'_s - \Delta_s .$
% Then we have 
For the random variable $Z_s$, it is not hard to see that the three sums in its definition are over disjoint sets, so they are independent. 
Moreover, for $Z_s$, the probabilities $\tilde \p$ and $\hat \p$ coincide, because the extra conditioning for $\hat \p$ is on edges disjoint from those used to define $Z_s$. 
The means, variances, and tail bounds for the three terms of $Z_s$ can be derived using Lemma~\ref{lem:naog238tgqgb03q}. 
Namely, we obtain $\hat \E [Z_s] = 0$ and
\begin{align*}
\hat \E[Z_s^2] &= 2 \alpha p \, \tilde n \big| \neigh_G(T_s) \cap \neigh_{G'}(T'_s) \cap \cS_{G_0}(i,m+1) \big| \\
% &\quad + p (1-p) \tilde n \big| \big( \neigh_{G}(T_s \cap T'_s) \setminus \neigh_{G'}(T_s \cap T'_s) \big) \cap \cS_{G_0}(i,m+1) \big| \\
% &\quad + p (1-p) \tilde n \big| \big( \neigh_{G'}(T_s \cap T'_s) \setminus \neigh_{G}(T_s \cap T'_s) \big) \cap \cS_{G_0}(i,m+1) \big| \\
&\quad + p (1-p) \tilde n \big| \big( \neigh_G(T_s) \setminus \neigh_{G'}(T'_J) \big) \cap \cS_{G_0}(i,m+1) \big| \\
&\quad + p (1-p) \tilde n \big| \big( \neigh_{G'}(T'_s) \setminus \neigh_{G}(T_J) \big) \cap \cS_{G_0}(i,m+1) \big| \\
% &= 2 \alpha p \, \tilde n \big| \neigh_G(T_s \cap T'_s) \cap \neigh_{G'}(T_s \cap T'_s) \big| \\
% &\quad + p (1-p) \tilde n \Big( \big| \neigh_{G}(T_s \cap T'_s) \triangle \neigh_{G'}(T_s \cap T'_s) \big| 
% + \big| \neigh_G(T_s \setminus T'_J) \big| 
% + \big| \neigh_{G'}(T'_s \setminus T_J) \big| \Big) \\
&\le p (1-p) \tilde n \Big( \big| \neigh_{G}(T_s) \cap \cS_{G_0}(i,m+1) \big| 
+ \big| \neigh_{G'}(T'_s) \cap \cS_{G_0}(i,m+1) \big| \Big) \\
&\quad - \big( 2 p (1-p) \tilde n - 2 \alpha p \, \tilde n \big) \big| \neigh_G(T_s) \cap \neigh_{G'}(T'_s) \cap \cS_{G_0}(i,m+1) \big| \\
& \le \var_s + \var'_s - 2 \tilde n p ( 1 - p - \alpha ) \big| \neigh_G(T_s) \cap \neigh_{G'}(T'_s) \cap \cS_{G_0}(i,m+1) \big| ,
\end{align*}
where the first inequality holds because all the neighborhoods in the three terms are disjoint and contained in either $\neigh_{G}(T_s)$ or $\neigh_{G'}(T'_s)$.
The tail bound for $Z_s$ follows again from Bernstein's inequality as in Lemma~\ref{lem:naog238tgqgb03q}. 
The conditional independence of $Z_s$ for different $s \in J$ follows from the disjointness of sets that we sum over in the definition of $Z_s$. 
\end{proof}

\begin{lemma}[Hoeffding's inequality with truncation]
\label{lem:hoeffding-truncate}
Let $X_1, \dots, X_N$ be independent random variables satisfying that  $\big|\E[X_i]\big| \le \tau$ for $\tau > 0$ and that
$$
\p \big\{ \big| X_i - \E[X_i] \big| \ge t \big\} \le 2 \exp \Big( \frac{-c \, t^2}{1 + t} \Big) , \quad \forall \, t > 0, 
$$
for a constant $c>0$ for each $i \in [N]$. 
Then there exists a constant $C > 0$ depending only on $c$ such that, for any $\delta \in (0,0.1)$, 
$$
\p \Big\{ \Big| \sum_{i=1}^N \big( X_i^2 - \E[X_i^2] \big) \Big| \ge C \log(N/\delta) \sqrt{N \log (1/\delta)} + C \tau \Big( \sqrt{N \log (1/\delta)} + \log (1/\delta) \Big) \Big\} \le \delta . 
$$
\end{lemma}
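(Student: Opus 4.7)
The plan is to center each $X_i$ and expand the square in order to split the problem into two independent-sum bounds, one linear in the centered variable and one quadratic. Write $\mu_i := \E[X_i]$ and $W_i := X_i - \mu_i$, so that $|\mu_i| \le \tau$, $\E W_i = 0$, and $W_i$ inherits the tail bound $\Prob\{|W_i| \ge t\} \le 2\exp(-ct^2/(1+t))$. Then
\[
X_i^2 - \E[X_i^2] \;=\; 2\mu_i W_i \;+\; \bigl(W_i^2 - \E W_i^2\bigr),
\]
and it suffices to control each resulting sum with probability at least $1-\delta/2$ and combine via a union bound.

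For the linear sum $\sum_i 2\mu_i W_i$, the summands are independent, mean zero, and sub-exponential with sub-exponential norm of order $\tau/c$; in particular their variances are $O(\tau^2/c^2)$. Bernstein's inequality for sums of independent sub-exponentials then yields
\[
\Prob\Bigl\{\Bigl|\sum_{i=1}^N 2\mu_i W_i\Bigr| \ge C\tau\bigl(\sqrt{N\log(1/\delta)} + \log(1/\delta)\bigr)\Bigr\} \le \delta/2,
\]
which already accounts for the $\tau$-dependent contribution to the claimed bound.

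For the quadratic sum $\sum_i (W_i^2 - \E W_i^2)$ the plan is to truncate. Set $L := C_0 \log(N/\delta)$ with $C_0$ depending only on $c$ and large enough that the union bound yields $\Prob\{|W_i| \le L \text{ for all } i\} \ge 1 - \delta/4$. Let $A_i := W_i \mathbbm{1}_{\{|W_i| \le L\}}$; then $|A_i^2| \le L^2$ and $\E A_i^4 \le L^2 \E A_i^2 \le C L^2$, so Bernstein's inequality for bounded independent variables gives
\[
\Prob\Bigl\{\Bigl|\sum_{i=1}^N (A_i^2 - \E A_i^2)\Bigr| \ge C L\sqrt{N\log(1/\delta)} + C L^2 \log(1/\delta)\Bigr\} \le \delta/4.
\]
On the good event $\{|W_i|\le L \text{ for all } i\}$ the sums $\sum_i W_i^2$ and $\sum_i A_i^2$ agree, while the truncation bias $\sum_i \E[W_i^2 \mathbbm{1}_{\{|W_i|>L\}}]$ is negligible for $L = C_0 \log(N/\delta)$ by a direct integration against the sub-exponential tail (each summand is at most $C L^2 e^{-cL/2}$). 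Substituting $L = C_0 \log(N/\delta)$ then produces the leading $\log(N/\delta)\sqrt{N\log(1/\delta)}$ contribution, after absorbing the lower-order $L^2 \log(1/\delta)$ term into it, which is valid except possibly for very small $N$ where the claimed bound is anyway trivially larger than the target deviation.

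The main obstacle is that $W_i^2$ has sub-Weibull, not sub-exponential, tails, which is precisely what forces the extra $\log(N/\delta)$ factor compared with a pure Bernstein bound. Truncation circumvents this cleanly, but one has to balance $L$ so that all three error contributions -- the union-bound failure on $\{|W_i|\le L\}$, the bounded-Bernstein concentration for $A_i^2$, and the tail bias from the truncated mass -- simultaneously stay within the target deviation and confidence budget. The choice $L = C_0 \log(N/\delta)$ makes all three work, and the remainder of the argument is routine bookkeeping.
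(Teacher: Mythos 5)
Your proof is correct and follows essentially the same route as the paper's: the identical decomposition $X_i^2-\E[X_i^2]=(Y_i^2-\E[Y_i^2])+2\E[X_i]Y_i$, truncation of the centered variable at level $\Theta(\log(N/\delta))$ with the same three error sources, and Bernstein for the sub-exponential linear term. The only difference is that you apply Bernstein (with the variance bound $\E A_i^4\le L^2\E A_i^2$) rather than Hoeffding to the truncated squares, which if anything is the sharper and more defensible step; the lower-order term you must absorb for small $N$ is a caveat equally present (and unacknowledged) in the paper's own argument, and is harmless in the regime where the lemma is applied.
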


\begin{proof}
It is easily seen that 
\begin{align*}
X_i^2 - \E[X_i^2] 
&= \big(X_i - \E[X_i]\big)^2 - \E \big[ \big(X_i - \E[X_i]\big)^2 \big] + 2 \, \E[X_i] \big(X_i - \E[X_i]\big) \\
&= \big( Y_i^2 - \E[Y_i^2] \big) + 2 \, \E[X_i] \, Y_i 
\end{align*}
where $Y_i := X_i - \E[X_i]$. 
It suffices to control the sum of the above two terms over $i \in [N]$. 

Let us first control $\sum_{i = 1}^N \big( Y_i^2 - \E[Y_i^2] \big)$. 
For $M>0$, we have 
\begin{align*}
\E[Y_i^2] - \E[Y_i^2 \cdot \1\{|Y_i| \le M\}]
&= \E[Y_i^2 \cdot \1\{|Y_i| > M\}] \\
&= \int_{M}^\infty 2 t \cdot \p\{|Y_i| > t\} \, dt \\
&= 4 \int_{M}^\infty t \exp \Big( \frac{-c t^2}{1 + t} \Big) \, dt \\
&\le 16 \frac{c M + 1}{c^2} \exp(-cM/2) 
\le (\delta/N)^{100} ,
\end{align*}
if $\delta \in (0,0.1)$ and $M = C_1 \log(N/\delta)$ for a sufficiently large constant $C_1 = C_1(c) > 0$. 
Moreover, by a union bound, $Y_i^2 = Y_i^2 \cdot \1\{|Y_i| \le M\}$ for all $i \in [N]$ with probability at least $1 - (\delta/N)^{100}$ if $C_1$ is sufficiently large. 
Thus, by Hoeffding's inequality applied to $Y_i^2 \cdot \1\{|Y_i| \le M\}$, we have 
$$
\p \Big\{ \Big| \sum_{i=1}^N \big( Y_i^2 - \E[Y_i^2 \cdot \1\{|Y_i| \le M\}] \big) \Big| \ge 2 M \sqrt{N \log (1/\delta)} \Big\} \le \delta . 
$$
Combining the above two displays yields 
$$
\p \Big\{ \Big| \sum_{i=1}^N \big( Y_i^2 - \E[Y_i^2] \big) \Big| \ge 3 C_1 \log(N/\delta) \sqrt{N \log (1/\delta)}  \Big\} \le \delta . 
$$

Next, we turn to the term $\sum_{i = 1}^N \E[X_i] \, Y_i$. 
Since $\big|\E[X_i]\big| \le \tau$, the variable $\E[X_i] \, Y_i$ is sub-exponential with parameter $C_2 \tau$ for a universal constant $C_2 > 0$. 
Bernstein's inequality then implies that 
$$
\p \Big\{ \Big| \sum_{i=1}^N \E[X_i] \, Y_i \Big| \ge C_3 \tau \Big( \sqrt{N \log (1/\delta)} + \log (1/\delta) \Big) \Big\} \le \delta 
$$
for a universal constant $C_3 > 0$.

The above two parts combined complete the proof. 
\end{proof}

\begin{lemma}[Difference between signatures]
\label{lem:exp-diff-cor-sig}
For any constants $C, D, K, K' > 0$, there exist constants $n_0, R, \kappa > 0$ with the following property.
Suppose that 
% \begin{align*}
% &n \ge n_0, \qquad 
% \log n \le nq \le n^{\frac{1}{R \log \log n}}, \qquad
% \alpha \in (0, \kappa) , \\
% &3 \log \big(w^4 (\log n)^2\big) \le m \le C \log \log n , \qquad 
% w \ge \frac{(\log n)^4}{(1-8\kappa)^{2m}} \lor \frac{(\log n)^3}{(1-8\kappa)^{3m}} .
% \end{align*}
\begin{align*}
&n \ge n_0, \qquad 
\log n \le nq \le n^{\frac{1}{R \log \log n}}, \qquad
\alpha \in (0, \kappa) , \\
&\log \big(w^4 (\log n)^2\big) \le m \le C \log \log n , \qquad 
w \ge (\log n)^4 .
\end{align*}
Moreover, suppose that Conditions~\ref{enum:b1} to~\ref{enum:b7} hold with constants $K, \kappa > 0$ and a subset $J$ with $|J| = 2w$.  
Consider the same conditioning as in Lemma~\ref{lem:avsob3408yt754}, such that $\eta_s$ and $\eta'_s$ satisfy \eqref{eq:eta-bounds} for a constant $K'>0$. 
Then it holds with conditional probability at least $1 - n^{-D}$ that 
\begin{align*}
\sum_{s \in J} \frac{ \big(f_s - f'_s \big)^2 }{\var_s + \var'_s} 
\le 2w \big( 1 - (1-9\kappa)^m \big) .
\end{align*}
\end{lemma}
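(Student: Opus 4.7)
The plan is to invoke Lemma~\ref{lem:avsob3408yt754} to write $f_s - f'_s = Z_s + \Delta_s$, where the $Z_s$ are conditionally centered, independent across $s \in J$, with variance at most $\var_s + \var'_s - 2\tilde n p(1-p-\alpha)|R_s^\cap|$, where $R_s^\cap := \neigh_G(T_s) \cap \neigh_{G'}(T'_s) \cap \cS_{G_0}(i,m+1)$, and $\Delta_s$ is deterministic with $|\Delta_s| \le |\eta_s|+|\eta'_s|+2n^{0.2}p$. Via the inequality $(a+b)^2 \le (1+\epsilon)a^2 + (1+1/\epsilon)b^2$ with a small $\epsilon$, the analysis reduces to controlling $\sum_{s \in J} Z_s^2/(\var_s+\var'_s)$ and a separate error arising from the $\Delta_s$.

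The central estimate is
\[
\mu_s := \frac{2\tilde n p(1-p-\alpha)\,|R_s^\cap|}{\var_s + \var'_s} \ge (1-9\kappa)^m \qquad \text{for every } s \in J.
\]
For the numerator, the tree structure~\ref{enum:b1} makes the sets $\neigh_G(j)\cap\neigh_{G'}(j)\cap\cS_{G_0}(i,m+1)$ disjoint across $j \in \cS_{G_0}(i,m)$, so $|R_s^\cap| \ge \sum_{j \in T_s \cap T'_s}\bigl|\neigh_G(j)\cap\neigh_{G'}(j)\cap\cS_{G_0}(i,m+1)\bigr|$. Condition~\ref{enum:b6} shows that all but at most $(nq/3)^m$ of these $j$ contribute close to $np(1-\alpha)$, and since $(nq/3)^m \ll (np/2)^m(1-8\kappa)^m$ for $m = \Theta(\log\log n)$, the bulk of $T_s \cap T'_s$ (guaranteed by~\ref{enum:b7}) dominates the exceptional set. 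The denominator is upper bounded by $2np(1-p)\cdot K^2(np/2)^m \cdot np$ via~\ref{enum:b3} and~\ref{enum:b4}. Plugging in and using $\alpha \le \kappa$ to absorb the $(1-\alpha)$ factors lifts the exponent base from $1-8\kappa$ to $1-9\kappa$, yielding $\sum_{s \in J} \hat\E[Z_s^2]/(\var_s+\var'_s) \le 2w\bigl(1-(1-9\kappa)^m\bigr)$.

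To upgrade from expectation to a high-probability bound, I would apply Lemma~\ref{lem:hoeffding-truncate} to the conditionally independent sub-exponential variables $Z_s^2/(\var_s+\var'_s)$, truncated at $O(\log n)$, producing a deviation of order $\sqrt{w}\,\mathrm{polylog}(n)$; this is far smaller than the gap $2w(1-9\kappa)^m \gtrsim w\cdot(\log n)^{-O(\kappa)}$ under the assumption $w\ge(\log n)^4$. For the $\Delta_s$ contribution I would split $J = \tilde J(i) \sqcup (J\setminus\tilde J(i))$ via~\ref{enum:b5} and apply the two cases of Lemma~\ref{lem:small-overlaps-asdf}, combined with the lower bound $\var_s+\var'_s \gtrsim (np)^{m+1}/2^m$ implied by~\ref{enum:b7}, to verify that $\sum_s \Delta_s^2/(\var_s+\var'_s) = o(w(1-9\kappa)^m)$; the at-most-$(\log n)^2$ indices of $J\setminus\tilde J(i)$ are controlled separately by a crude per-summand bound of $1+o(1)$ using only~\ref{enum:b4} and~\ref{enum:b7}.

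The main obstacle is the razor-sharp tracking of constants in $\mu_s$: since $(1-9\kappa)^m$ vanishes with $n$, any leaked multiplicative factor $c < 1$ in $\mu_s$---arising, e.g., from the $\frac{2}{3}$ in~\ref{enum:b6} or the constant $K$ in~\ref{enum:b3}---would ruin the conclusion. Circumventing this requires (i) refining~\ref{enum:b6} to the tight estimate $\deg_{G \cap G'}(j) \ge (1-O(\kappa))np(1-\alpha)$ for typical $j$ (which follows by a further Chernoff step on top of the given condition), (ii) exploiting that $|\neigh_G(T_s)\cap\cS_{G_0}(i,m+1)|$ is in fact close to $|T_s|\cdot np$ rather than $K|T_s|\cdot np$, and (iii) choosing $\alpha$ small enough relative to $\kappa$ that the $(1-\alpha)^2$ factor from the independent child-graph sampling is absorbed into the $(1-9\kappa)^m$ slack.
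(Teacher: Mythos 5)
Your skeleton follows the paper's route: decompose $f_s-f'_s=Z_s+\Delta_s$ via Lemma~\ref{lem:avsob3408yt754}, lower-bound the overlap $|\neigh_G(T_s)\cap\neigh_{G'}(T'_s)\cap\cS_{G_0}(i,m+1)|$ by summing $\deg_{G\cap G'}(j)-1$ over $(T_s\cap T'_s)\setminus I$ using~\ref{enum:b6} and~\ref{enum:b7}, control the $\Delta_s$ contribution through the split $J=\tilde J(i)\sqcup(J\setminus\tilde J(i))$ from~\ref{enum:b5}, and concentrate with Lemma~\ref{lem:hoeffding-truncate}. However, your final paragraph reveals a genuine gap: you assert that any leaked constant factor $c<1$ in the per-coordinate variance gain $\mu_s$ would ruin the conclusion, and you propose to eliminate these losses by (i) refining~\ref{enum:b6}, (ii) showing $|\neigh_G(T_s)\cap\cS_{G_0}(i,m+1)|\approx|T_s|\cdot np$ up to $1+o(1)$, and (iii) tuning $\alpha$. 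Neither (i) nor (ii) is available: the lemma is a statement conditional on the fixed hypotheses~\ref{enum:b1}--\ref{enum:b7}, and those conditions only give $\deg_{G\cap G'}(j)>\tfrac23 np(1-\alpha)$ for most $j$ and $\deg_G(j)\le Knp$; there is no $(1+o(1))$-sharp version to appeal to inside this proof. So as you have framed it, the argument does not close.

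The missing observation is that constant-factor losses are harmless here, and this is exactly why the target is stated with base $1-9\kappa$ rather than $1-8\kappa$. The paper only proves $\hat\E[X_s^2]\le 1-\frac{(1-8\kappa)^m}{4K^2}$ for $s\in\tilde J(i)$ (the denominator $4K^2$ coming from the crude upper bound $\var_s\le K^2(np)^{m+2}/2^m$ via~\ref{enum:b3} and~\ref{enum:b4}, and the numerator $\tfrac12(np/2)^{m+1}(1-8\kappa)^m$ from the $\tfrac23$ in~\ref{enum:b6}), and after adding all error terms arrives at $\sum_{s\in J}X_s^2\le 2w\bigl(1-\frac{(1-8\kappa)^m}{6K^2}\bigr)$. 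The final step is simply
\[
\frac{(1-8\kappa)^m}{6K^2}\;\ge\;\frac{(1-8\kappa)^m}{(1+\kappa)^m}\;\ge\;(1-9\kappa)^m,
\]
where the first inequality uses $m\ge\log\log n$ (so $(1+\kappa)^m\ge 6K^2$ for $n\ge n_0(K,\kappa)$) and the second is the elementary bound $(1-8\kappa)\ge(1-9\kappa)(1+\kappa)$. In other words, the gap between exponent bases $1-8\kappa$ and $1-9\kappa$ grows like $(1+\kappa)^m\to\infty$ and swallows every constant you were worried about, including the $\tfrac23$, the $K^2$, and the $(1-\alpha)$ factors. Without this step your proof is incomplete, and with it your proposed refinements (i)--(iii) become unnecessary.
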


\begin{proof}
Let 
$$
I := \Big\{ j \in \cS_{G_0}(i,m): \deg_{G \cap G'}(j) \le \frac 23 np (1-\alpha) \Big\} . 
$$
By Conditions~\ref{enum:b6} and~\ref{enum:b7}, we have that for any $s \in \{-1,1\}^m$, 
$$
|I| \le (nq/3)^m 
\le \frac{1}{2} (np/2)^m (1-8\kappa)^m 
\le \frac{1}{2} \, |T_s \cap T'_s|
$$
if $\kappa, \alpha \in (0,0.01)$ and $m \ge 3$. 
Since $G_0\big(\cB_{G_0}(i,m+1)\big)$ is a tree, it is easy to see that 
\begin{align}
&\big| \neigh_G(T_s) \cap \neigh_{G'}(T'_s) \cap \cS_{G_0}(i,m+1) \big| \notag \\
&\ge \sum_{j \in (T_s \cap T'_s) \setminus I} \big( \deg_{G \cap G'}(j) - 1 \big) 
\ge \frac 12 np \cdot \frac{1}{2} (np/2)^m (1-8\kappa)^m 
= \frac{1}{2} (np/2)^{m+1} (1-8\kappa)^m .  
\label{eq:anb08vr0eabvbae}
\end{align}
Similarly, we also have 
\begin{align*}
\var_s &= n p (1-p) \big|\neigh_G(T_s) \cap \cS_{G}(i,m+1)\big| 
\ge n p (1-p) \sum_{j \in T_s \setminus I} \big( \deg_G(j) - 1 \big) \\
& \ge n p (1-p) \cdot \frac{1}{2} (np/2)^m (1-8\kappa)^m \cdot \frac{1}{2} np 
= (1-p) (np/2)^{m+2} (1-8\kappa)^{m} . 
\end{align*}
On the other hand, by Conditions~\ref{enum:b3} and~\ref{enum:b4}, 
\begin{align*}
\var_s = n p (1-p) \big|\neigh_G(T_s) \cap \cS_{G}(i,m+1)\big| 
\le np \sum_{j \in T_s} \deg_G(j) 
\le K^2 \frac{(np)^{m+2}}{2^m} .
\end{align*}
The same estimates also hold for $\var'_s$. 

We will apply Lemma~\ref{lem:hoeffding-truncate} with
$$
X_s := \frac{f_s - f'_s}{\sqrt{ \var_s + \var'_s }}
= \frac{Z_s + \Delta_s}{\sqrt{ \var_s + \var'_s }} ,
$$
where $Z_s$ and $\Delta_s$ satisfy the conclusion of Lemma~\ref{lem:avsob3408yt754}. 
Towards that end, let us first establish some estimates for the mean and variance of $X_s$. 
% Since
% $$
% \var_s + \var'_s = n p (1-p) \Big( \big|\neigh_G(T_s) \cap \cS_{G}(i,m+1)\big| + \big|\neigh_{G'}(T'_s) \cap \cS_{G'}(i,m+1)\big| \Big) , 
% $$ 
Using Lemma~\ref{lem:avsob3408yt754}, \eqref{eq:eta-bounds}, and the above estimates for $\var_s$ and $\var'_s$, we obtain 
\begin{align*}
\big|\hat \E[X_s]\big| = \frac{|\Delta_s|}{\sqrt{ \var_s + \var'_s }} 
&\le \frac{|\eta_s| + |\eta'_s| + 2 n^{0.2} p}{\sqrt{ 2 (1-p) (np/2)^{m+2} (1-8\kappa)^{m} }} \\
&\le \frac{ 2 K'  \frac{(np)^{m/2+1}}{2^{m}} w^2 \sqrt{\log n} + 2 n^{0.2} p}{\sqrt{ 2 (1-p) (np/2)^{m+2} (1-8\kappa)^{m} }} 
\le \frac{5 K' w^2 \sqrt{\log n}}{2^{m/2} (1-8\kappa)^{m/2}} \quad \text{ for } s \in \tilde J(i) , 
\end{align*}
\begin{align}
\big|\hat \E[X_s]\big| = \frac{|\Delta_s|}{\sqrt{ \var_s + \var'_s }} 
% &\le \frac{|\eta_s| + |\eta'_s| + 2 n^{0.2} p}{\sqrt{ 2 (1-p) (np/2)^{m+2} (1-8\kappa)^{m} }} \\
% &
\le \frac{ 2 K'  \frac{(np)^{m/2+1}}{2^{m/2}} \sqrt{\log n} + 2 n^{0.2} p}{\sqrt{ 2 (1-p) (np/2)^{m+2} (1-8\kappa)^{m} }} 
\le \frac{5 K' \sqrt{\log n}}{(1-8\kappa)^{m/2}} \quad \text{ for } s \in J ,
\label{eq:sadnvr0183gh0b03avs}
\end{align}
and 
\begin{align*}
\Var(X_s) = \frac{\hat \E[Z_s^2]}{\var_s + \var'_s}
&\le \frac{\var_s + \var'_s - 2 \tilde n p (1-p-\alpha) \big| \neigh_G(T_s) \cap \neigh_{G'}(T'_s) \cap \cS_{G_0}(i,m+1) \big|}{\var_s + \var'_s} \\
&\le 1 - \frac{\tilde n p (1-p-\alpha) (np/2)^{m+1} (1-8\kappa)^m}{K^2 (np)^{m+2} / 2^m}
\le 1 - \frac{(1-8\kappa)^m}{3 K^2}
\end{align*}
by Lemma~\ref{lem:avsob3408yt754} and \eqref{eq:anb08vr0eabvbae} if $\alpha \le 0.1$. 
Therefore, 
\begin{align*}
\hat \E[X_s^2] 
= \frac{\hat \E[Z_s^2] + \Delta_s^2}{\var_s + \var'_s} 
\le 1 - \frac{(1-8\kappa)^m}{3 K^2} + \frac{25 (K')^2 w^4 \log n}{2^{m} (1-8\kappa)^{m}} 
\le 1 - \frac{(1-8\kappa)^m}{4 K^2} \quad \text{ for } s \in \tilde J(i)
\end{align*}
if $\log \big(w^4 (\log n)^2\big) \le m \le C \log \log n$, $\kappa > 0$ is sufficiently small depending on $C$, and $n \ge n_0 = n_0(K,K',\kappa)$, and 
\begin{align*}
\hat \E[X_s^2] 
= \frac{\hat \E[Z_s^2] + \Delta_s^2}{\var_s + \var'_s} 
\le 1 - \frac{(1-8\kappa)^m}{3 K^2} + \frac{25 (K')^2 \log n}{(1-8\kappa)^{m}} 
\le \frac{26 (K')^2 \log n}{(1-8\kappa)^{m}} \quad \text{ for } s \in J . 
\end{align*}

By the bound $\hat \p \big\{ | Z_s | \ge t \big\} \le 2 \exp \Big( \frac{ - t^2/2 }{ \hat E[Z_s^2] + t/3 } \Big)$ in Lemma~\ref{lem:avsob3408yt754} and that $\Var(X_s) \le 1$, we see that 
$\hat \p \big\{ | X_s - \hat \E[X_s] | \ge t \big\} \le 2 \exp \Big( \frac{ - t^2/2 }{ 1 + t/3 } \Big)$. 
Therefore, Lemma~\ref{lem:hoeffding-truncate} can be applied to show that, with conditional probability at least $1 - n^{-D}$, 
\begin{align*}
\sum_{s \in J} X_s^2 
\le \sum_{s \in J} \hat \E[X_s^2] + C_1 \sqrt{w} (\log n)^{3/2} + C_1 \Big( \max_{s \in J} \big| \hat \E[X_s] \big| \Big) \big( \sqrt{w \log n} + \log n \big) 
\end{align*}
for a constant $C_1 > 0$ depending on $D$, where we recall $|J| = 2w$. 
Moreover, by Condition~\ref{enum:b5} and the above bounds on $\hat \E[X_s^2]$, we have 
\begin{align*}
\sum_{s \in J} \hat \E[X_s^2]
&= \sum_{s \in \tilde J(i)} \hat \E[X_s^2] + \sum_{s \in J \setminus \tilde J(i)} \hat \E[X_s^2] \\
&\le 2 w \Big( 1 - \frac{(1-8\kappa)^m}{4 K^2} \Big) 
+ (\log n)^2 \frac{26 (K')^2 \log n}{(1-8\kappa)^{m}} 
\le 2 w \Big( 1 - \frac{(1-8\kappa)^m}{5 K^2} \Big) 
\end{align*}
if $m \le C \log \log n$, $\kappa > 0$ is sufficiently small depending on $C$, $w \ge (\log n)^4$, and $n \ge n_0$. 
The above two bounds together with \eqref{eq:sadnvr0183gh0b03avs} imply that
\begin{align*}
\sum_{s \in J} X_s^2 
&\le 2 w \Big( 1 - \frac{(1-8\kappa)^m}{5 K^2} \Big) + C_1 \sqrt{w} (\log n)^{3/2} + C_1 \frac{5 K' \sqrt{\log n}}{(1-8\kappa)^{m/2}} \big( \sqrt{w \log n} + \log n \big) \\
&\le 2 w \Big( 1 - \frac{(1-8\kappa)^m}{6 K^2} \Big) 
\end{align*}
if, again, $m \le C \log \log n$, $\kappa > 0$ is sufficiently small depending on $C$, $w \ge (\log n)^4$, and $n \ge n_0$.  
Finally, if $m \ge \log \log n$ and $n \ge n_0 = n_0(K, \kappa)$, then 
%MR Replaced $m \ge \log n$ by $m \ge \log \log n$.
$$
\frac{(1-8\kappa)^m}{6 K^2} \ge \frac{(1-8\kappa)^m}{(1+\kappa)^m} \ge (1-9\kappa)^m , 
$$
so the proof is complete. 
\end{proof}

\subsection{Conclusion}\label{subs: conclusion matched}

We summarize the result of this section in the following proposition. 

\begin{prop}[Difference between signatures of typical correct pairs]
\label{prop:anb0ae8gvgr0gbav0rsa31}
For any constants $C,D>0$, there exist constants $n_0, R, \alpha_0, c > 0$ with the following property. 
Let $J$ be a uniform random subset of $\{-1,1\}^s$ of cardinality $2w$ for $w \in \N$. 
Suppose that 
\begin{align*}
&n \ge n_0, \qquad 
\log n \le nq \le n^{\frac{1}{R \log \log n}}, \qquad
\alpha \in (0, \alpha_0) , \\
&\log \big(w^4 (\log n)^2\big) \le m \le C \log \log n , \qquad 
w \ge (\log n)^4 .
\end{align*}
Then with probability at least $1 - n^{-D}$, for at least $n - n^{1-c}$ vertices $i \in [n]$, we have 
\begin{align}
\sum_{s \in J} \frac{ \big(f_s(i) - f'_s(i) \big)^2 }{\var_s(i) + \var'_s(i)} 
\le 2w \Big( 1 - \frac{1}{(\log n)^{0.1}} \Big) .
\label{eq:sig-upp-bd}
\end{align}
\end{prop}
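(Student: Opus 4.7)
The plan is to verify, for a large subset of vertices $i \in [n]$, all the structural hypotheses (B1)--(B7) of Subsection~5.3 simultaneously, and then invoke Lemma~\ref{lem:exp-diff-cor-sig} on each such vertex and union-bound over $i$.

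The first step is to apply Proposition~\ref{prop:anvavg80q34811} to $G_0$ with some sufficiently small $\kappa = \kappa(C,D) > 0$ (to be pinned down at the end) together with the corresponding $K, \delta$, producing with probability at least $1 - n^{-D-1}$ a set $\typ \subset [n]$ of size at least $n - n^{1-c}$. Fix such an $i \in \typ$. Conditions (B1) and (B2) follow immediately from (A1) and (A3) once $R$ is large enough to make $K(nq)^{m+1} \leq n^{0.1}$. Condition (B3) follows from (A2) by conditioning on $G_0$ and applying Chernoff to the independent $\alpha$-thinnings producing $\deg_G(j)$ and $\deg_{G'}(j)$ from $\deg_{G_0}(j) \leq Knq$, unioned over $\cB_{G_0}(i,m)$. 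Condition (B4) is Lemma~\ref{lem:  class size} applied marginally to the $G(n,p)$ graphs $G$ and $G'$; (B5) is the sparsification Lemma~\ref{lem:1034712597198324} applied to the independent random subset $J$ (whose conclusion is uniform over $i \in [n]$); and (B7) is Proposition~\ref{prop:nasva8ya8b341}, which forces the choice of $\alpha_0$.

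Condition (B6) is the only one not directly available off the shelf and needs a short independent argument. For each $j \in \HD(m)$ from (A4), the variable $\deg_{G \cap G'}(j)$ is conditionally $\Bin(\deg_{G_0}(j), (1-\alpha)^2)$ with mean at least $(1-\kappa)(1-\alpha)np$, so Chernoff gives $\mathbb{P}\bigl\{ \deg_{G \cap G'}(j) \leq \tfrac{2}{3} np(1-\alpha) \bigr\} \leq \exp(-c\,np)$ for $\kappa, \alpha$ small. A further Chernoff count over $\HD(m)$ controls the number of such exceptional $j$ by an absolute constant with probability at least $1 - n^{-D-2}$; combined with the a priori estimate $|\cS_{G_0}(i,m) \setminus \HD(m)| \leq \kappa(nq/3)^m$ from (A4), this delivers (B6) with room to spare. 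At this point Lemma~\ref{lem:small-overlaps-asdf} yields the $\eta_s, \eta'_s$ bounds in \eqref{eq:eta-bounds}, and Lemma~\ref{lem:exp-diff-cor-sig} produces
\[
\sum_{s \in J} \frac{(f_s(i) - f'_s(i))^2}{\var_s(i) + \var'_s(i)} \leq 2w\bigl(1 - (1-9\kappa)^m\bigr)
\]
with conditional probability at least $1 - n^{-D-2}$.

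The final choice of $\kappa$ is driven by the target bound: since $m \leq C\log\log n$ and $1 - 9\kappa \geq e^{-10\kappa}$ for small $\kappa$, one has $(1-9\kappa)^m \geq (\log n)^{-10\kappa C}$, so taking $\kappa \leq 1/(100\,C)$ forces $(1-9\kappa)^m \geq (\log n)^{-1/10}$, matching \eqref{eq:sig-upp-bd}. A final union bound over $i \in [n]$ collects all the per-vertex failure events at cost $O(n^{-D-1})$. The main subtle point, in my estimation, is not any single estimate but the layered conditioning --- first on $G_0$, then on the $(m+1)$-neighborhoods in $G$ and $G'$, then on the edges between $R_s \cup R'_s$ and $\cS_{G_0}(i,m+2)$ --- and ensuring at each layer that enough independence remains for the next probabilistic bound to apply; the individual tail estimates themselves are routine Chernoff/Bernstein calculations.
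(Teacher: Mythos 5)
Your proposal follows essentially the same route as the paper's proof: verify Conditions \ref{enum:b1}--\ref{enum:b7} for all but $n^{1-c}$ vertices via Propositions~\ref{prop:anvavg80q34811} and~\ref{prop:nasva8ya8b341} and Lemmas~\ref{lem:nabon3q0hq0bq0597yg}, \ref{lem:  class size}, and~\ref{lem:1034712597198324}, then feed each such vertex into Lemmas~\ref{lem:small-overlaps-asdf} and~\ref{lem:exp-diff-cor-sig} and calibrate $\kappa$ against $m \le C\log\log n$ exactly as you do. The only (immaterial) divergence is Condition~\ref{enum:b6}, which you establish by a direct Chernoff count over $\HD(m)$, whereas the paper reruns the argument for Condition~\ref{enum:4} with $G\cap G'$ viewed as a $G(n,p(1-\alpha))$ graph; both work.
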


\begin{proof}
% In order to apply Lemma~\ref{lem:exp-diff-cor-sig}, 
We first check that, with probability at least $1-n^{-D-1}$, Conditions~\ref{enum:b1} to~\ref{enum:b7} hold for at least $n - n^{1-c}$ vertices $i \in [n]$ with constants $\kappa > 0$ to be chosen and $K = K(C,D) > 0$, where $c>0$ depends on $C$, $D$, and $\kappa$: 
\begin{enumerate}[leftmargin=*]
\item
Condition~\ref{enum:b1} is the same as Condition~\ref{enum:1}, so the result follows from Proposition~\ref{prop:anvavg80q34811}. 

\item
For Condition~\ref{enum:b2}, we apply Lemma~\ref{lem:nabon3q0hq0bq0597yg} and the relations $n \ge n_0$, $\log n \le nq \le n^{\frac{1}{R \log \log n}}$, and $m \le C \log \log n$, where $n_0$ and $R$ depend on $C$ and $D$, to obtain the following: With probability at least $1 - n^{-D-2}$, it holds for all $i \in [n]$ that $|\cB_{G_0}(i,m+1)| \le K(np)^{m+1} \le n^{0.1}$ where $K$ depends on $D$.

\item
Condition~\ref{enum:b3} is standard and in fact holds for all vertices $j \in [n]$ with probability at least $1 - n^{-D-2}$ for a constant $K>0$ depending only on $D$. 

\item
By Lemma~\ref{lem:  class size}, with probability at least $1 - n^{-D-2}$,  Condition~\ref{enum:b4} holds for all vertices $i$ such that $G_0(\cB_{G_0}(i,m+1))$ is a tree, that is, whenever Condition~\ref{enum:b1} holds. 

\item
Condition~\ref{enum:b5} is the conclusion of Lemma~\ref{lem:1034712597198324}, so it suffices to check the three assumptions in that lemma, but those are already guaranteed by Conditions~\ref{enum:b1}, \ref{enum:b3}, and~\ref{enum:b4} respectively. Therefore, Condition~\ref{enum:b5} holds up to a possible change of the constant $K$. 

\item
Condition~\ref{enum:b6} is very similar to Condition~\ref{enum:4}, with $l = m$, $\kappa = 1/3$, and the graph $G_0$ replaced by $G \cap G'$ which is a $G(n,p(1-\alpha))$ random graph. 
As a result, a straightforward modification of Point~4 in the proof of Proposition~\ref{prop:anvavg80q34811} yields result. 
Namely, we obtain the bound $\big| \big\{ j \in \cS_{G_0}(i,m): \deg_{G \cap G'}(j) \le \frac 23 np (1-\alpha) \big\} \big|
\le \frac{1}{K \cdot 3^m} |\cS_{G_0}(i,m)|$ for at least $n-n^{1-2c}$ vertices $i \in [n]$ with probability at least $1 - n^{-D-2}$, but $|\cS_{G_0}(i,m)| \le K(nq)^m$ by Lemma~\ref{lem:nabon3q0hq0bq0597yg}. 

\item
Given the constants $\kappa$, $C$, and $D$, we can choose $\delta$ and $K$ according to Proposition~\ref{prop:anvavg80q34811}, and then choose $\alpha_0$ according to Proposition~\ref{prop:nasva8ya8b341} to obtain Condition~\ref{enum:b7} for at least $n-n^{1-2c}$ typical vertices $i \in [n]$ with probability at least $1 - n^{-D-2}$. 
\end{enumerate}
Next, for any $i \in [n]$ satisfying Conditions~\ref{enum:b1} to~\ref{enum:b7}, we can apply Lemma~\ref{lem:small-overlaps-asdf} to obtain that $\eta_s(i)$ and $\eta'_s(i)$ satisfy \eqref{eq:eta-bounds} with probability at least $1-n^{-D-2}$ for a constant $K' = K'(D,K) > 0$. 
Then by Lemma~\ref{lem:exp-diff-cor-sig}, we obtain \eqref{eq:sig-upp-bd} for any such vertex $i$ with probability at least $1 - n^{-D-2}$, if $\kappa > 0$ is chosen according to the lemma and is sufficiently small depending on $C$ so that $(1-9\kappa)^m \ge \frac{1}{(\log n)^{0.1}}$. 
% A union bound over $i \in [n]$ finishes the proof.
\end{proof}

\section{Signatures of wrong pairs of vertices}\label{s: wrong sign}

The structure of proofs in this subsection is similar to that in the previous subsection. However, the technical details are slightly more involved when considering two (possibly intersecting) neighborhoods $\neigh_{G_0}(i,m+1)$ and $\neigh_{G_0}(i',m+1)$, where $i$ and $i'$ are distinct vertices. 

\subsection{Sparsification}

The following lemma is in the same spirit as Lemma~\ref{lem:1034712597198324}. 
In particular, note the extra factor $\big( \frac{w}{np (1-\alpha)^m} + \sqrt{ w \log n } \big)$ in \eqref{eq:hab8gs98a823ygg8ba} compared to \eqref{eq:hab8gs98a823ygg8ba--1} (a trivial bound would give a factor $|J| = 2w$ instead).  This will be crucial to controlling $\zeta_s(i,i')$ and $\zeta'_s(i,i')$ in Lemma~\ref{lem:asvvbav91gb391b9b9vs} and subsequent estimates in Lemma~\ref{lem:exp-diff-cor-sig-2222}.

\begin{lemma}[Sparsification for a pair of vertices]
\label{lem:1034712597198324-22}
For constants $C_1, C_2, C_3, D > 0$, there exists $K = K(C_1, C_2, C_3, D) > 0$ with the following property. 
Let $J$ be a uniform random subset of $\{-1,1\}^m$ of cardinality $2w \ge \log n$ for $w \in \N$. 
With respect to the randomness of $J$, the following holds with probability at least $1 - n^{-D}$ for any distinct vertices $i, i' \in [n]$: 
If 
\begin{itemize}
\item
$G_0\big(\cB_{G_0}(i,m+1)\big)$ and $G_0\big(\cB_{G_0}(i',m+1)\big)$ are trees, 

\item 
$\deg_{G}(j) \lor \deg_{G'}(j) \le C_1 np$ for all $j \in \cB_{G_0}(i,m) \cup \cB_{G_0}(i,m)$,

\item
$|T_s(i)| \lor |T_s'(i')| \le C_2 \big( \frac{np}{2} \big)^m$ for all $s \in \{-1,1\}^m$, and 

\item 
$|\cB_{G_0}(i,m+1) \cap \cB_{G_0}(i',m+1)| \le C_3 (nq)^m$, 
\end{itemize}
% then there is a subset $\hat J(i,i') \subset J$
% such that
% \begin{equation}
% |J \setminus \hat J(i,i')| < (\log n)^2 ,
% \label{eq:j-jp-22}
% \end{equation}
% and 
% \begin{subequations}
% \begin{align}
% &|L_s(i,i')| \lor |L'_s(i,i')| \le  \quad \text{ for all } s \in \hat J(i,i') , \label{eq:adoisvnaiv0bq-1-22} \\
% &|L_s(i,i')| \lor |L'_s(i,i')| \le  \quad \text{ for all } s \in J ,
% % \setminus \hat J(i) , 
% \label{eq:adoisvnaiv0bq-2-22}
% \end{align}
% \end{subequations}
then
\begin{subequations}
\begin{align}
&\Big( \max_{s \in J} |L_s(i,i')| \Big) \lor \Big( \max_{s \in J} |L'_s(i,i')| \Big) 
\le K \frac{(np)^{m+1}}{2^m} , \label{eq:hab8gs98a823ygg8ba--1} \\
&\Big( \sum_{s \in J} |L_s(i,i')| \Big) \lor \Big( \sum_{s \in J} |L'_s(i,i')| \Big) 
\le K \frac{(np)^{m+1}}{2^m} \Big( \frac{w}{np (1-\alpha)^m} + \sqrt{ w \log n } \Big) ,
\label{eq:hab8gs98a823ygg8ba}
\end{align}
\end{subequations}
where 
\begin{subequations} 
\label{eq:def-ls}
\begin{align}
&L_s(i,i') := \neigh_{G}\big(T_s(i)\big) \cap \cB_{G_0}(i',m+1) \cap \cS_{G_0}(i,m+1) , \\
&L'_s(i,i') := \neigh_{G'}\big(T'_s(i')\big) \cap \cB_{G_0}(i,m+1) \cap \cS_{G_0}(i',m+1) .
\end{align}
\end{subequations}
\end{lemma}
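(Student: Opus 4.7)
The plan is to fix any pair of distinct vertices $i,i' \in [n]$ satisfying the four structural hypotheses, treat the graphs $G_0$, $G$, $G'$ as fixed so that each $|L_s(i,i')|$ and $|L'_s(i,i')|$ becomes a deterministic number, and then exploit only the randomness of $J$. Two observations carry all the weight. First, because $\cB_{G_0}(i,m+1)$ is a tree, the classes $\{T_s(i)\}_{s \in \{-1,1\}^m}$ are pairwise disjoint, and consequently so are the sets $\{\neigh_G(T_s(i)) \cap \cS_{G_0}(i,m+1)\}_{s \in \{-1,1\}^m}$. In particular the $L_s(i,i')$ are disjoint as $s$ ranges over $\{-1,1\}^m$, and each is contained in $\cB_{G_0}(i,m+1) \cap \cB_{G_0}(i',m+1)$.

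Combining the class-size hypothesis $|T_s(i)| \le C_2 (np/2)^m$ with the degree hypothesis $\deg_G(\cdot) \le C_1 np$ on $\cB_{G_0}(i,m)$ yields the entrywise estimate
\[
|L_s(i,i')| \;\le\; \sum_{j \in T_s(i)} \deg_G(j) \;\le\; C_1 C_2 \frac{(np)^{m+1}}{2^m} \;=:\; M,
\]
which, applied to every $s \in J$, immediately gives \eqref{eq:hab8gs98a823ygg8ba--1} for $L_s$. Disjointness combined with the fourth hypothesis then yields the aggregate bound
\[
\sum_{s \in \{-1,1\}^m} |L_s(i,i')| \;\le\; \big|\cB_{G_0}(i,m+1) \cap \cB_{G_0}(i',m+1)\big| \;\le\; C_3 (nq)^m.
\]

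For the sum estimate \eqref{eq:hab8gs98a823ygg8ba}, the key point is that $\sum_{s \in J} |L_s(i,i')|$ is a sum over a uniform size-$2w$ sample drawn without replacement from the population $\{|L_s(i,i')|\}_{s \in \{-1,1\}^m}$ of size $2^m$. Its expectation is
\[
\frac{2w}{2^m} \sum_{s} |L_s(i,i')| \;\le\; \frac{2 C_3\, w (nq)^m}{2^m} \;=\; 2 C_3 \cdot \frac{(np)^{m+1}}{2^m} \cdot \frac{w}{np (1-\alpha)^m},
\]
which matches the first summand of the target bound up to an absolute constant, via the identity $q = p/(1-\alpha)$. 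I would then invoke Hoeffding's inequality for sampling without replacement (equivalently, for the negatively associated indicators $\1\{s \in J\}$) with the entrywise range bound $|L_s(i,i')| \le M$, obtaining
\[
\Prob\Big\{ \sum_{s \in J} |L_s(i,i')| - \E\Big[\sum_{s \in J} |L_s(i,i')|\Big] > t \Big\} \;\le\; \exp\Big(-\frac{c t^2}{w M^2}\Big)
\]
for a universal $c>0$. Taking $t = C' M \sqrt{w \log n}$ with $C' = C'(D)$ large enough makes the right-hand side at most $n^{-D-3}$, and this fluctuation matches the second summand of \eqref{eq:hab8gs98a823ygg8ba}.

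The bound for $L'_s(i,i')$ follows by swapping the roles of $(G,i)$ and $(G',i')$ throughout. Finally, a union bound over the at most $2n^2$ ordered pairs $(i,i')$ and the two families $L, L'$ converts the per-pair $n^{-D-3}$ estimate into a statement valid with probability at least $1 - n^{-D}$ for every pair satisfying the hypotheses, after absorbing $C_1, C_2, C_3$ and the numerical constants into a final $K = K(C_1, C_2, C_3, D)$. The main technical point is simply the bookkeeping that identifies the expected value and the Hoeffding standard-deviation scale with the two explicit terms on the right-hand side of \eqref{eq:hab8gs98a823ygg8ba}; no concentration tool beyond Hoeffding without replacement is required.
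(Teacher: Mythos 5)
Your proposal is correct and follows essentially the same route as the paper's proof: the deterministic entrywise bound $|L_s|\le\sum_{j\in T_s(i)}\deg_G(j)\le C_1C_2(np)^{m+1}/2^m$, the aggregate bound $\sum_s|L_s|\le|\cB_{G_0}(i,m+1)\cap\cB_{G_0}(i',m+1)|\le C_3(nq)^m$ via disjointness of the classes, concentration for sampling without replacement over the random choice of $J$, and a union bound over pairs. The only (immaterial) difference is that you invoke Hoeffding with the uniform range bound $M$ where the paper uses Bernstein, whose extra $\max_s|a_s|\log n$ term is absorbed using $2w\ge\log n$; both yield the same fluctuation term $M\sqrt{w\log n}$.
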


\begin{proof}
Fix distinct vertices $i, i' \in [n]$ such that the four conditions in the lemma hold. 
For $s \in \{-1,1\}^m$, let 
$$
a_s := \big|L_s(i,i')\big| .
$$
By the conditions $|T_s(i)| \lor |T_s'(i')| \le C_2 \big( \frac{np}{2} \big)^m$ for all $s \in \{-1,1\}^m$ and $\deg_{G}(j) \lor \deg_{G'}(j) \le C_1 np$ for all $j \in \cB_{G_0}(i,m) \cup \cB_{G_0}(i,m)$, we have 
$$
a_s \le \sum_{j \in T_s(i)} \deg_{G}(j) 
\le C_1 C_2 \frac{(np)^{m+1}}{2^m} . 
$$
The same bound also holds for $L'_s(i,i')$. 
Hence \eqref{eq:hab8gs98a823ygg8ba--1} is proved. 

In addition, we have 
$$
\sum_{s \in \{-1,1\}^m} a_s \le \big|\cB_{G_0}(i,m+1) \cap \cB_{G_0}(i',m+1)\big| 
\le C_3 (nq)^m . 
$$
Since $J$ is a uniform random subset of $\{-1,1\}^m$, Bernstein's inequality for sampling without replacement implies that, with probability at least $1 - n^{-D-3}$, 
\begin{align*}
\sum_{s \in J} a_s 
&\le \frac{|J|}{2^m} \sum_{s \in \{-1,1\}^m} a_s 
+ K_1 \sqrt{ \frac{|J|}{2^m} \sum_{s \in \{-1,1\}^m} a_s^2 \cdot \log n }
+ K_1 \max_{s \in \{-1,1\}^m} |a_s| \cdot \log n \\
&\le K_2 \frac{|J|}{2^m} (nq)^m
+ K_2 \frac{(np)^{m+1}}{2^m} \sqrt{ |J| \cdot \log n }
+ K_2 \frac{(np)^{m+1}}{2^m} \log n \\
&\le 3 K_2 \frac{(np)^{m+1}}{2^m} \Big( \frac{w}{np (1-\alpha)^m} + \sqrt{ w \log n } \Big)
\end{align*}
for $K_1, K_2 > 0$ depending on $C_1, C_2, C_3$, and $D$, where we used that $|J| = 2w \ge \log n$. 
% Finally, note that $|T_s(i) \cap T'_J(i')| \le a_s$; together with the condition $\deg_G(j) \le C_1 np$ for all $j \in \cB_{G_0}(i,m)$, this implies
% \begin{align*}
% &\max_{s \in J} |L_s(i,i')| \le \max_{s \in J} a_s \cdot C_1 np 
% \le C_1 C_2 \frac{(np)^{m+1}}{2^m} , \\
% &\sum_{s \in J} |L_s(i,i')| \le \sum_{s \in J} a_s \cdot C_1 np 
% \le C_1 K_3 \frac{(np)^{m+1}}{2^m} \Big( \frac{|J|}{np}  + \sqrt{ |J| \cdot \log n } \Big) . 
% \end{align*}
The same bound also holds for $\sum_{s \in J} |L'_s(i,i')|$, so \eqref{eq:hab8gs98a823ygg8ba} is proved.

A union bound over all distinct $i, i' \in [n]$ then completes the proof. 
\end{proof}

\begin{lemma}[Small overlaps]
\label{lem:asvvbav91gb391b9b9vs}
For any $C,D>1$, there exists 
$c \in (0, 1/2)$ and $K, K', R, n_0 > 1$ depending on $C$ and $D$ with the following property. 
Let $J$ be a uniform random subset of $\{-1,1\}^m$ of cardinality $2w \ge \log n$ for $w \in \N$. 
If 
$$
n \ge n_0, \qquad 
\log n \le nq \le n^{\frac{1}{R \log \log n}}, \qquad
m \le C \log \log n , 
$$
then with probability at least $1 - n^{-D}$, there is a subset $\cI \subset [n]$ with $|\cI| \ge n - n^{1-c}$ such that the following holds. 
% Define $L_s(i,i')$ and $L'_s(i,i')$ as in \eqref{eq:def-ls}. 
For all distinct $i, i' \in \cI$, \eqref{eq:hab8gs98a823ygg8ba--1} and \eqref{eq:hab8gs98a823ygg8ba} hold, and 
% \begin{subequations}
% \label{eq:eta-bounds-0000}
\begin{equation}
\Big( \max_{s \in J} \big|\zeta_s(i,i')\big| \Big) \lor \Big( \max_{s \in J} \big|\zeta'_s(i,i')\big| \Big) 
\le K' \frac{(np)^{m/2+1}}{2^{m/2}} \sqrt{\log n} , \label{eq:eta-bounds-1111} %\\
% &\Big( \sum_{s \in J} \zeta_s(i,i')^2 \Big) \lor \Big( \sum_{s \in J} \zeta'_s(i,i')^2 \Big) \le K' \frac{(np)^{m+2}}{2^m} \Big( \frac{|J|}{np (1-\alpha)^m}  + \sqrt{ |J| \cdot \log n } \Big) \log n ,
% \label{eq:eta-bounds-2222}
\end{equation}
% \end{subequations}
where 
\begin{equation}
\zeta_s(i,i') := \sum_{j \in L_s(i,i')} \big( \deg_G(j) - 1 - np \big) 
\quad \text{ and } \quad
\zeta'_s(i,i') :=  \sum_{j \in L'_s(i,i')} \big( \deg_{G'}(j) - 1 - np \big) . 
\label{eq:abov0v3qb0vqevrbqveu jfbvd-2222}
\end{equation} 
\end{lemma}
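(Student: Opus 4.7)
The plan is to mimic the proof of Lemma~\ref{lem:small-overlaps-asdf}, but working with pairs of vertices rather than a single vertex. The three ingredients will be (i) a set $\cI$ of ``good'' vertices on which the hypotheses of Lemma~\ref{lem:1034712597198324-22} can be checked deterministically, (ii) an application of Lemma~\ref{lem:1034712597198324-22} itself to produce \eqref{eq:hab8gs98a823ygg8ba--1} and \eqref{eq:hab8gs98a823ygg8ba}, and (iii) a Bernstein-type bound for $\zeta_s(i,i')$ and $\zeta'_s(i,i')$ conditional on the two $(m+1)$--neighborhoods.

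First I would define $\cI \subset [n]$ to be the set of $i$ for which $G_0\big(\cB_{G_0}(i, 3(m+1))\big)$ is a tree, $|\cB_{G_0}(i, m+1)| \le K_1 (nq)^{m+1}$, $\deg_G(j) \lor \deg_{G'}(j) \le K_1 np$ for all $j \in \cB_{G_0}(i, m+1)$, and $|T_s(i)| \lor |T_s'(i)| \le K_1 (np/2)^m$ for every $s \in \{-1,1\}^m$. Invoking Lemma~\ref{lem:o3qnroq9vuh9auvg} with $x = 3(m+1)$ (which is harmless because $nq \le n^{1/(R \log\log n)}$ keeps $(nq)^{3(m+1)}$ polynomial in $n$), Lemma~\ref{lem:nabon3q0hq0bq0597yg}, Lemma~\ref{lem:  class size}, and a standard degree Chernoff bound, I get $|\cI| \ge n - n^{1-c}$ with probability at least $1 - n^{-D}/3$ for an appropriate constant $c = c(C,D) > 0$.

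Next, for any distinct $i, i' \in \cI$, I would verify the four hypotheses of Lemma~\ref{lem:1034712597198324-22}. The first three are immediate from $\cI$-membership. The fourth, $|\cB_{G_0}(i,m+1) \cap \cB_{G_0}(i',m+1)| \le C_3 (nq)^m$, follows from the second part of Lemma~\ref{lem:nabon3q0hq0bq0597yg}: since $G_0(\cB_{G_0}(i, 3(m+1)))$ is a tree, if $d := \dist_{G_0}(i,i') \le 2(m+1)$ then the intersection has size at most $K (nq)^{m+1 - \lceil d/2 \rceil} \le K(nq)^m$ (using $d \ge 1$), and if $d > 2(m+1)$ the intersection is empty. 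One application of Lemma~\ref{lem:1034712597198324-22}, whose high-probability event over $J$ is simultaneous across all pairs, then delivers \eqref{eq:hab8gs98a823ygg8ba--1} and \eqref{eq:hab8gs98a823ygg8ba} for every pair in $\cI$ with probability at least $1 - n^{-D}/3$. To establish \eqref{eq:eta-bounds-1111}, I would fix $i, i' \in \cI$ and $s \in J$ and condition on the subgraphs of $G_0, G, G'$ restricted to $\cB_{G_0}(i,m+1) \cup \cB_{G_0}(i',m+1)$ (together with $J$). Under this conditioning, $L_s(i,i')$ is determined, and for each $j \in L_s(i,i')$ the variable $\deg_G(j)$ equals a deterministic quantity (of magnitude at most $n^{0.1}$) plus an independent $\Bin(\tilde n', p)$ variable with $\tilde n' \ge n - 2 n^{0.1}$; moreover these pieces are independent across $j$ since they involve disjoint edge sets. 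Bernstein's inequality, applied exactly as in Lemma~\ref{lem:small-overlaps-asdf}, yields
\[
|\zeta_s(i,i')| \le K_2 \sqrt{|L_s(i,i')| \cdot np \log n} + K_2 \log n \le K' \frac{(np)^{m/2+1}}{2^{m/2}} \sqrt{\log n}
\]
with conditional probability at least $1 - n^{-D-5}$ using the cardinality bound \eqref{eq:hab8gs98a823ygg8ba--1}; the same holds for $\zeta'_s(i,i')$. A union bound over at most $n^2$ pairs and at most $n$ values of $s \in J$ costs at most $n^{-D}/3$, and summing the three contributions completes the argument.

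The main obstacle is bookkeeping around the intersection of neighborhoods: one must set up the conditioning so that $L_s(i,i')$ is deterministic while the residual $\Bin(\tilde n', p)$ summands in $\deg_G(j)$ remain mutually independent, and one must verify the intersection bound $|\cB_{G_0}(i,m+1) \cap \cB_{G_0}(i',m+1)| \le C_3 (nq)^m$, which is what motivates enlarging the tree condition in the definition of $\cI$ from radius $m+1$ to radius $3(m+1)$ (so that Lemma~\ref{lem:nabon3q0hq0bq0597yg} applies uniformly regardless of $d$). Once these two items are in place, the rest of the proof is essentially a verbatim repetition of the calculation carried out for Lemma~\ref{lem:small-overlaps-asdf}.
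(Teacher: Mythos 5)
Your first two steps (constructing $\cI$ via the tree/degree/class-size lemmas, verifying the four hypotheses of Lemma~\ref{lem:1034712597198324-22} using the second part of Lemma~\ref{lem:nabon3q0hq0bq0597yg}, and invoking Lemma~\ref{lem:1034712597198324-22} once to get \eqref{eq:hab8gs98a823ygg8ba--1} and \eqref{eq:hab8gs98a823ygg8ba} for all pairs) match the paper's argument. The gap is in your proof of \eqref{eq:eta-bounds-1111}. After conditioning on the subgraphs restricted to $\cB_{G_0}(i,m+1)\cup\cB_{G_0}(i',m+1)$, you write $\deg_G(j)=a_j+B_j$ with $a_j$ deterministic ``of magnitude at most $n^{0.1}$'' and $B_j\sim\Bin(\tilde n',p)$, and then claim Bernstein gives $|\zeta_s(i,i')|\le K_2\sqrt{|L_s(i,i')|\,np\log n}+K_2\log n$. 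But $\zeta_s(i,i')=\sum_{j\in L_s(i,i')}(a_j-1)+\sum_j(B_j-\tilde n'p)-(n-\tilde n')p\,|L_s(i,i')|$, and Bernstein only controls the middle sum. The deterministic offset $\sum_{j\in L_s(i,i')}(a_j-1)$ is simply dropped in your estimate, and a per-vertex bound $a_j\le n^{0.1}$ is nowhere near enough: a vertex $j\in\cS_{G_0}(i,m+1)\cap\cB_{G_0}(i',m+1)$ can have all of its $\approx np$ children inside $\cB_{G_0}(i',m+1)$, and even the edge-counting bound $\sum_j(a_j-1)\le 2\,|\cB_{G_0}(i,m+1)\cup\cB_{G_0}(i',m+1)|\lesssim (nq)^{m+1}$ exceeds the target $K'(np)^{m/2+1}2^{-m/2}\sqrt{\log n}$, which is only quasi-polylogarithmic when $np\asymp\log n$ (roughly $e^{c(\log\log n)^2}$ versus $e^{2c(\log\log n)^2}$, let alone versus $n^{0.1}$).

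This is precisely the point where the paper does real work: it splits into cases according to $d=\dist_{G_0}(i,i')$. When $d\le m+1$ it conditions only on $\cB_{G_0}(i,m+1)$, so each $j\in L_s(i,i')\subset\cS_{G_0}(i,m+1)$ has exactly one conditioned neighbor (its parent) and $\sum_j(\deg_G(j)-1)$ is exactly binomial. When $m+1<d\le 2m+2$ it conditions on the union and uses the tree structure to argue that the total deterministic offset is at most $1$ (coming from the single vertex $v$ where the $i$--$i'$ path crosses $\cS_{G_0}(i,m+1)$), writing $\sum_{j\in L_s(i,i')}(\deg_G(j)-1)-\1\{v\in L_s(i,i')\}$ as a binomial. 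When $d>2m+2$ the set $L_s(i,i')$ is empty. To repair your argument you need an analogous accounting showing that the conditioned-edge contribution to $\zeta_s(i,i')$ is $O(1)$ (or at least far below the target), not merely that each individual $a_j$ is at most $n^{0.1}$.
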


\begin{proof}
% We split the proof into several parts for readability. 
% 
% \paragraph{Checking conditions for typical pairs}
% Denote $d := \dist_{G_0}(i,i')$ throughout the proof. 
Since we will apply Lemma~\ref{lem:1034712597198324-22}, let us first show that, with probability at least $1-n^{-D-1}$, there is a subset $\cI \subset [n]$ with $|\cI| \ge n - n^{1-c}$ such that the four conditions in Lemma~\ref{lem:1034712597198324-22} hold for any distinct vertices $i, i' \in \cI$: 
\begin{itemize}[leftmargin=*]
\item 
Proposition~\ref{prop:anvavg80q34811} shows that with probability at least $1-n^{-D}$, at least $n-n^{1-c}$ vertices satisfy Condition~\ref{enum:1}. The same proof with a slight change of constants implies that, with probability at least $1-n^{-D-2}$, there is a subset $\cI \subset [n]$ with $|\cI| \ge n - n^{1-c}$ such that $G_0\big(\cB_{G_0}(i,3m+3)\big)$ is a tree for all $i \in \cI$. (Here the radius of the neighborhood is chosen to be $3m+3$ which is larger than $m+1$ required in the first condition of Lemma~\ref{lem:1034712597198324-22}; this is because $3m+3$ is needed in the fourth condition below.)

\item
As before, we have $\deg_{G}(j) \lor \deg_{G'}(j) \le C_1 np$ for all $j \in [n]$ with probability at least $1-n^{-D-2}$ where $C_1>0$ depends on $D$. 

\item
By Lemma~\ref{lem:  class size}, with probability at least $1 - n^{-D-2}$,  we have $|T_s(i)| \lor |T_s'(i')| \le C_2 \big( \frac{np}{2} \big)^m$ for all $s \in \{-1,1\}^m$ and all vertices $i$ such that $G_0(\cB_{G_0}(i,m+1))$ is a tree, where $C_2>0$ depends on $C$ and $D$.

\item
By Lemma~\ref{lem:nabon3q0hq0bq0597yg}, it holds with probability at least $1 - n^{-D-2}$ that, for any distinct vertices $i,i' \in [n]$ such that $G_0\big(\cB_{G_0}(i,3m+3)\big)$ is tree, we have 
$|\cB_{G_0}(i,m+1) \cap \cB_{G_0}(i',m+1)| \le C_3 (nq)^{m}$ where 
% $r = m + 1 - \lceil d / 2 \rceil$ and 
$C_3>0$ depends on $D$. 
\end{itemize}
Therefore, by Lemma~\ref{lem:1034712597198324-22}, we obtain \eqref{eq:hab8gs98a823ygg8ba--1} and \eqref{eq:hab8gs98a823ygg8ba} 
% with $r = m + 1 - \lceil d / 2 \rceil$
for all distinct $i, i' \in \cI$ with probability at least $1 - 2 n^{-D-1}$, where $|\cI| \ge n - n^{1-c}$. 

\medskip

The rest of the proof is split into three cases according to the value of $d := \dist_{G_0}(i,i')$. 
We focus on proving \eqref{eq:eta-bounds-1111} for $\zeta_s(i,i')$, as the same argument also works for $\zeta'_s(i,i')$. 

\paragraph{Case 1: $d \le m+1$.} 
In this case, we have $i' \in \cB_{G_0}(i,m+1)$. Let us condition on an instance of $G_0\big(\cB_{G_0}(i,m+1)\big)$, $G\big(\cB_{G_0}(i,m+1)\big)$, and $G'\big(\cB_{G_0}(i,m+1)\big)$ such that $G_0\big(\cB_{G_0}(i,m+1)\big)$ is a tree. Then $L_s(i,i')$ defined in \eqref{eq:def-ls} is equal to 
$$
L_s(i,i') = \neigh_{G}\big(T_s(i)\big) \cap \cS_{G_0}(i',m+1-d) \cap \cS_{G_0}(i,m+1) . 
$$
Under this conditioning, the random variables $\deg_G(j) - 1$, $j \in L_s(i,i')$, are conditionally independent $\Bin(\tilde n, p)$ where 
$$
\tilde n := n - \big|\cB_{G_0}(i,m+1)\big| \ge n - n^{0.1} 
$$
as before. 
Consequently, $\sum_{j \in L_s(i,i')} \big( \deg_G(j) - 1 \big) \sim \Bin(\tilde n |L_s(i,i')|, p)$, and so 
\begin{align*}
|\zeta_s(i,i')| &= \Big| \sum_{j \in L_s(i,i')} \big( \deg_G(j) - 1 - np \big) \Big|  \\
&\le K_2 \big( \sqrt{np |L_s(i,i')| \log n} + \log n \big)
+ (n - \tilde n) p\, |L_s(i,i')|  \\
&\le K_3 \sqrt{np |L_s(i,i')| \log n} 
\end{align*}
with conditional probability at least $1 - n^{-D-2}$ for constants $K_2, K_3 > 0$ depending on $D$. 
By a union bound over $s \in \{-1,1\}^m$ and $i, i' \in [n]$ together with \eqref{eq:hab8gs98a823ygg8ba--1},
% and \eqref{eq:hab8gs98a823ygg8ba} where $r = m + 1 - \lceil d / 2 \rceil \le m$,
we obtain \eqref{eq:eta-bounds-1111} 
% and \eqref{eq:eta-bounds-2222} 
for $\zeta_s(i,i')$. 
% The same argument also works for $\zeta'_s(i,i')$. 

\paragraph{Case 2: $m+1 < d \le 2m+2$.} 
In this case, we have $i' \notin \cB_{G_0}(i,m+1)$. 
Let us condition on an instance of $G_0\big(\cB_{G_0}(i,m+1) \cup \cB_{G_0}(i',m+1)\big)$, $G\big(\cB_{G_0}(i,m+1) \cup \cB_{G_0}(i',m+1)\big)$, and $G'\big(\cB_{G_0}(i,m+1) \cup \cB_{G_0}(i',m+1)\big)$ such that $G_0\big(\cB_{G_0}(i,m+1) \cup \cB_{G_0}(i',m+1)\big)$ is a tree. 
Then there is a unique path $\gamma$ in $G_0\big(\cB_{G_0}(i,m+1) \cup \cB_{G_0}(i',m+1)\big)$ connecting $i$ to $i'$, and $\gamma$ passes through a unique vertex $v \in \cS_{G_0}(i,m+1)$. 
Note that $v$ is adjacent to exactly two vertices in $\cB_{G_0}(i,m+1) \cup \cB_{G_0}(i',m+1)$, because otherwise there would be a cycle in $G_0\big(\cB_{G_0}(i,m+1) \cup \cB_{G_0}(i',m+1)\big)$. 
For the same reason, if $j \in \cS_{G_0}(i,m+1) \cap \cB_{G_0}(i',m+1)$ and $j \ne v$, then $j$ is adjacent to exactly one vertex in $\cB_{G_0}(i,m+1) \cup \cB_{G_0}(i',m+1)$.  
% the unique path connecting $j$ to $i'$ must first go from $j$ to $v$ entirely in $\cB_{G_0}(i,m+1)$ and then follow $\gamma$ from $v$ to $i'$, because otherwise 

In view of this structure, we have the following observation under the above conditioning: If $v \notin L_s(i,i')$, then the random variables 
$$
\deg_G(j) - 1, \  j \in L_s(i,i') , 
$$
are conditionally independent $\Bin(\bar n, p)$, where 
\begin{equation}
\bar n := n - \big|\cB_{G_0}(i,m+1) \cup \cB_{G_0}(i',m+1)\big| \ge n - n^{0.1} .
\label{eq:sanva0f32h0g1t479}
\end{equation}
On the other hand, if $v \in L_s(i,i')$, then 
$$
\deg_G(v) - 2 \quad \text{ and } \quad \deg_G(j) - 1, \  j \in L_s(i,i') \setminus \{v\} , 
$$
are conditionally independent $\Bin(\bar n, p)$. 

Consequently, $\sum_{j \in L_s(i,i')} \big( \deg_G(j) - 1 \big) - \1\{v \in L_s(i,i')\}$ is $\Bin(\bar n |L_s(i,i')|, p)$, and so 
% If $v \notin L_s(i,i')$, we conclude as in the previous case. 
% If $v \in L_s(i,i')$, then similarly, 
\begin{align*}
|\zeta_s(i,i')| &\le \Big| \sum_{j \in L_s(i,i')} \big( \deg_G(j) - 1 \big) - \1\{v \in L_s(i,i')\} \Big| + 1 \\
&\le K_2 \big( \sqrt{np |L_s(i,i')| \log n} + \log n \big)
+ (n - \bar n) p\, |L_s(i,i')| + 1 \\
&\le K_3 \sqrt{np |L_s(i,i')| \log n} 
\end{align*}
with conditional probability at least $1 - n^{-D-2}$ for constants $K_2, K_3 > 0$ depending on $D$. 
By a union bound over $s \in \{-1,1\}^m$ and $i, i' \in [n]$ together with \eqref{eq:hab8gs98a823ygg8ba--1},
% and \eqref{eq:hab8gs98a823ygg8ba} where $r = m + 1 - \lceil d / 2 \rceil \le m$,
we obtain \eqref{eq:eta-bounds-1111} 
% and \eqref{eq:eta-bounds-2222} 
for $\zeta_s(i,i')$. 
% The same argument also works for $\zeta'_s(i,i')$. 

\paragraph{Case 3: $d > 2m+2$.} 
This case is trivial because $L_s(i,i') \subset \cB_{G_0}(i,m+1) \cap \cB_{G_0}(i',m+1) = \varnothing$ so that $\zeta_s(i,i') = 0$. 
\end{proof}

\subsection{Difference between signatures of a typical pair}

Throughout this subsection, we fix distinct vertices $i, i' \in [n]$ and a subset $J \subset \{-1,1\}^m$ of cardinality $2w$ for $w \in \N$. 
Moreover, let us condition on an instance of the three subgraphs 
$$
G_0\big(\cB_{G_0}(i,m+1) \cup \cB_{G_0}(i',m+1)\big), \ 
G\big(\cB_{G_0}(i,m+1) \cup \cB_{G_0}(i',m+1)\big), \  
G'\big(\cB_{G_0}(i,m+1) \cup \cB_{G_0}(i',m+1)\big),
$$
and also all the edges between 
$$
\cS_{G_0}(i,m+1) \cap \cB_{G_0}(i',m+1) 
\quad \text{ and } \quad
\cS_{G_0}(i,m+2) 
$$
in $G_0$, $G$, and $G'$, 
as well as all the edges between 
$$
\cS_{G_0}(i',m+1) \cap \cB_{G_0}(i,m+1)
\quad \text{ and } \quad
\cS_{G_0}(i',m+2) 
$$
in $G_0$, $G$, and $G'$. 
Note that under this conditioning, the quantities $\zeta_s(i,i')$ and $\zeta'_s(i,i')$ defined in \eqref{eq:abov0v3qb0vqevrbqveu jfbvd-2222} are deterministic. 
Suppose that the instance we condition on satisfies the following statements for fixed constants $K, K', \kappa > 0$: 
\begin{enumerate}[label=(C\arabic*)]
\item \label{enum:c1}
$G_0\big(\cB_{G_0}(i,m+1) \cup \cB_{G_0}(i',m+1)\big)$ is a tree or a forest of two trees;

\item \label{enum:c2}
$|\cB_{G_0}(i,m+1)| + |\cB_{G_0}(i',m+1)| \le n^{0.1};$

% \item \label{enum:c3}
% $\deg_{G}(j) \lor \deg_{G'}(j) \le K np$ for all $j \in \cB_{G_0}(i,m) \cup \cB_{G_0}(i',m)$ for a constant $K>0$;

% \item \label{enum:c4}
% $|T_s(i)| \lor |T_s'(i)| \le K (np/2)^m$ for all $s \in \{-1,1\}^m$;

\item \label{enum:c5}
\eqref{eq:hab8gs98a823ygg8ba--1}, \eqref{eq:hab8gs98a823ygg8ba}, and \eqref{eq:eta-bounds-1111} hold;

% \item 
% $|\cB_{G_0}(i,m) \cap \cB_{G_0}(i',m)| \le C_3 (nq)^{m-1}$, 

\item \label{enum:c6}
$\big| \big\{ j \in \cS_{G_0}(i,m) \cup \cS_{G_0}(i',m): \deg_{G \cap G'}(j) \le 2np/3 \big\} \big| 
\le 2 (np/3)^m$;

\item \label{enum:c7}
$|T_s(i)| \land |T'_s(i')| \geq (np/2)^m (1-8\kappa)^m$ for all $s \in \{-1,1\}^m$. 
\end{enumerate}
We consider the randomness with respect to the remaining possible edges of the graphs; let $\bar \p$ and $\bar \E$ denote the conditional probability and expectation respectively. 
Note that for any vertex $j \in \cS_{G_0}(i,m+1) \setminus \cB_{G_0}(i',m+1)$, the random variable $\deg_G(j) - 1$ is conditionally $\Bin(\bar n, p)$, where $\bar n$ is defined in \eqref{eq:sanva0f32h0g1t479}. 
Moreover, these binomial variables are independent across different $j \in \cS_{G_0}(i,m+1) \setminus \cB_{G_0}(i',m+1)$.

In the following, let $f(i)$ and $\var(i)$ be the signature vector and the variance vector respectively given by {\tt VertexSignature}$(G,i,m)$ (Algorithm~\ref{alg:ver-sig}), and let $f'(i')$ and $\var'(i')$ be given by {\tt VertexSignature}$(G',i',m)$. 
Since $i$ and $i'$ are fixed, we omit the dependency of some quantities on $(i,i')$ in the sequel to ease the notation. 
For example, we omit the argument $(i,i')$ in the quantities $L_s$ and $L'_s$ defined in \eqref{eq:def-ls}, and in the quantities $\zeta_s$ and $\zeta'_s$ defined in \eqref{eq:abov0v3qb0vqevrbqveu jfbvd-2222}.

\begin{lemma}[Entrywise difference between signatures]
\label{lem:avsob3408yt754-2222}
For every $s \in J$, we have 
$$
f_s(i) - f'_s(i') = Z_s + \Delta_s
$$
for a random variable $Z_s$ and a deterministic quantity $\Delta_s$ satisfying 
\begin{itemize}
\item 
$\bar \E[Z_s] = 0$;

\item
$\var_s(i) + \var'_s(i') - 2 n^{0.2} p - p (1-p) \bar n \big( |L_s| + |L'_s| \big) \le \bar \E[Z_s^2] \le \var_s(i) + \var'_s(i')$;

\item
$\bar \p \big\{ | Z_s | \ge t \big\} \le 2 \exp \Big( \frac{ - t^2/2 }{\bar E[Z_s^2] + t/3 } \Big)$;

\item
$|\Delta_s| \le |\zeta_s| + |\zeta'_s| + 2 n^{0.2} p$.
\end{itemize}
Moreover, the random variables $Z_s$ are conditionally independent for different $s \in J$. 
\end{lemma}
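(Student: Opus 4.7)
The proof will mirror Lemma~\ref{lem:avsob3408yt754}, but must track two separate vertex neighborhoods and the overlap region between them. First, using Condition~\ref{enum:c1} I will identify $\cS_G(i,m+1)\cap\neigh_G(T_s(i))$ with $\cS_{G_0}(i,m+1)\cap\neigh_G(T_s(i))$ and partition the latter into $A_s := \cS_{G_0}(i,m+1)\cap\neigh_G(T_s(i))\setminus\cB_{G_0}(i',m+1)$ and $L_s$; symmetrically $A'_s$ and $L'_s$ for the primed graph. For $j\in L_s\subset\cB_{G_0}(i',m+1)\cap\cS_{G_0}(i,m+1)$, every $G_0$-edge incident to $j$ lies within the main conditioning on $\cB_{G_0}(i,m+1)\cup\cB_{G_0}(i',m+1)$ (its $G_0$-parent lies in $\cS_{G_0}(i,m)$, and the extra conditioning handles edges to $\cS_{G_0}(i,m+2)$), so $\deg_G(j)$ is deterministic and the sum of $\deg_G(j)-1-np$ over $L_s$ equals exactly $\zeta_s$. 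For $j\in A_s$, the only remaining random edges are $G$-edges from $j$ to $[n]\setminus(\cB_{G_0}(i,m+1)\cup\cB_{G_0}(i',m+1))$, yielding $\deg_G(j)=c_j+B_j$ with $c_j$ deterministic and $B_j\sim\Bin(\bar n,p)$ mutually independent across $j\in A_s$; analogously for $i'$ with i.i.d.\ $B'_{j'}\sim\Bin(\bar n,p)$.

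Next, I will set
\[
Z_s := \sum_{j\in A_s}(B_j-\bar n p) - \sum_{j'\in A'_s}(B'_{j'}-\bar n p),
\]
and collect all remaining deterministic terms---$\zeta_s-\zeta'_s$, the shifts $\sum(c_j-1)-\sum(c'_{j'}-1)$, and the Poissonization corrections $(|A_s|-|A'_s|)(\bar n-n)p$---into $\Delta_s$. The key independence input is that $A_s\cap A'_s=\varnothing$ (since $A_s$ avoids $\cB_{G_0}(i',m+1)$ while $A'_s\subset\cB_{G_0}(i',m+1)$), so the $G$-edges from $A_s$ and the $G'$-edges from $A'_s$ involve disjoint underlying $G_0$-edges undergoing independent $\alpha$-thinnings; this immediately gives $\bar E[Z_s]=0$ and $\bar E[Z_s^2]=p(1-p)\bar n(|A_s|+|A'_s|)$. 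Combining with the partition identity $\var_s(i)+\var'_s(i')=np(1-p)(|A_s|+|L_s|+|A'_s|+|L'_s|)$ and using Condition~\ref{enum:c2} to absorb the $(n-\bar n)/n$ loss into $2n^{0.2}p$ yields the claimed two-sided bound on $\bar E[Z_s^2]$. Bernstein's inequality applied to $Z_s$ as a sum of independent bounded centered Binomials delivers the tail bound with $\bar E[Z_s^2]$ as the variance proxy.

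Finally, the conditional independence of $\{Z_s\}_{s\in J}$ follows from Condition~\ref{enum:c1} and Algorithm~\ref{alg:ver-sig}: each vertex of $\cS_{G_0}(i,m+1)$ has a unique $G_0$-parent in $\cS_{G_0}(i,m)$ lying in exactly one class $T_s^m(i)$, so $\neigh_G(T_s(i))\cap\neigh_G(T_t(i))=\varnothing$ for $s\ne t$ (and analogously for the primed classes); hence the $B_j$'s entering different $Z_s$'s are supported on disjoint vertex-edge families and are independent. I expect the hardest part to be the bookkeeping in the bound $|\Delta_s|\le|\zeta_s|+|\zeta'_s|+2n^{0.2}p$: beyond the routine Poissonization correction $(|A_s|-|A'_s|)(\bar n-n)p$, one must control the deterministic shifts $\sum_{j\in A_s}(c_j-1)$ coming from $G$-edges from $A_s$ into the overlap $\cB_{G_0}(i,m+1)\cap\cB_{G_0}(i',m+1)$, using the forest structure of Condition~\ref{enum:c1} and the $n^{0.1}$ ball-size cap of Condition~\ref{enum:c2}. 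This cross-edge accounting has no analogue in the single-vertex correct-pair version of Lemma~\ref{lem:avsob3408yt754} and constitutes the principal new technical ingredient here.
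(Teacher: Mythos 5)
Your proposal follows essentially the same route as the paper's proof: the same split of $\neigh_G(T_s(i))\cap\cS_{G_0}(i,m+1)$ into the part outside $\cB_{G_0}(i',m+1)$ (random, contributing to $Z_s$) and $L_s$ (deterministic under the conditioning, contributing $\zeta_s$ to $\Delta_s$), the same variance identity $\bar\E[Z_s^2]=p(1-p)\bar n(|A_s|+|A'_s|)$ compared against $\var_s(i)+\var'_s(i')$ via Condition~\ref{enum:c2}, Bernstein for the tail, and disjointness of the summation sets for independence across $s$. Your extra bookkeeping of the deterministic offsets $c_j$ for vertices of $A_s$ with more than one conditioned incident edge is a slightly more careful treatment of a point the paper's proof passes over silently, and it is harmlessly absorbed into $\Delta_s$.
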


\begin{proof}
% As $G_0\big(\cB_{G_0}(i,m+1)\big)$ is a tree by Condition~\ref{enum:b1}, for every $v \in \cS_G(i,m)$, we have 
% $$
% \neigh_G(v) \cap \cS_G(i,m+1) = \neigh_G(v) \cap \cS_{G_0}(i,m+1) .
% $$
Similar to the proof of Lemma~\ref{lem:avsob3408yt754}, we start with
\begin{equation*}
f_s(i) = \sum_{j \in \neigh_G(T_s(i)) \cap \cS_{G_0}(i,m+1)} \big( \deg_G(j) - 1 - \bar n p \big) 
+ (\bar n - n) p \, \big| \neigh_G(T_s(i)) \cap \cS_{G_0}(i,m+1) \big| . 
\end{equation*}
Furthermore, in view of the partition 
$$
\neigh_{G}(T_s(i)) = \big(\neigh_G(T_s(i)) \setminus \cB_{G_0}(i',m+1)\big) \cup \big(\neigh_G(T_s(i)) \cap \cB_{G_0}(i',m+1)\big) , 
$$
we obtain
\begin{align*}
f_s(i) 
&= \sum_{j \in (\neigh_G(T_s(i)) \setminus \cB_{G_0}(i',m+1)) \cap \cS_{G_0}(i,m+1)} \big( \deg_G(j) - 1 - \bar n p \big) \\
&\quad + \zeta_s + (\bar n - n) p \, \big| \neigh_G(T_s(i)) \cap \cS_{G_0}(i,m+1) \big| 
\end{align*}
by the definitions of $\zeta_s$ in \eqref{eq:abov0v3qb0vqevrbqveu jfbvd-2222} and $L_s$ in \eqref{eq:def-ls}. 
An analogous decomposition holds for $f'_s(i')$. 
Consequently, 
$$
f_s(i) - f'_s(i') = Z_s + \Delta_s ,
$$
where
\begin{align*}
Z_s &:= \sum_{j \in (\neigh_G(T_s(i)) \setminus \cB_{G_0}(i',m+1)) \cap \cS_{G_0}(i,m+1)} \big( \deg_G(j) - 1 - \bar n p \big) \\
&\quad - \sum_{j \in (\neigh_{G'}(T'_s(i')) \setminus \cB_{G_0}(i,m+1)) \cap \cS_{G_0}(i',m+1)} \big( \deg_{G'}(j) - 1 - \bar n p \big) . 
\end{align*}
and
$$
\Delta_s := \zeta_s - \zeta'_s + (\bar n - n) p \, \Big( \big| \neigh_G(T_s(i)) \cap \cS_{G_0}(i,m+1) \big| - \big| \neigh_{G'}(T'_s(i')) \cap \cS_{G_0}(i',m+1) \big| \Big) .
$$

For the deterministic quantity $\Delta_s$, we have 
$$
|\Delta_s| \le |\zeta_s| + |\zeta'_s| + n^{0.1} p \cdot 2 n^{0.1} = |\zeta_s| + |\zeta'_s| + 2 n^{0.2} p . 
$$
For the random variable $Z_s$, it is not hard to see that the two sums in its definition are over disjoint sets, so they are independent. 
Moreover, each sum is the deviation of a binomial random variable from its mean: namely, 
$$
\sum_{j \in (\neigh_G(T_s(i)) \setminus \cB_{G_0}(i',m+1)) \cap \cS_{G_0}(i,m+1)} \big( \deg_G(j) - 1 \big) 
$$
is $\Bin\big( \bar n \cdot \big| (\neigh_G(T_s(i)) \setminus \cB_{G_0}(i',m+1)) \cap \cS_{G_0}(i,m+1) \big| , p \big)$, and similarly for the other term. 
Hence, we obtain $\bar \E [Z_s] = 0$ and
\begin{align*}
\bar \E[Z_s^2] &= p (1-p) \bar n \big| (\neigh_G(T_s(i)) \setminus \cB_{G_0}(i',m+1)) \cap \cS_{G_0}(i,m+1) \big| \\
&\quad + p (1-p) \bar n \big| (\neigh_{G'}(T'_s(i)) \setminus \cB_{G_0}(i,m+1)) \cap \cS_{G_0}(i',m+1) \big| \\
&= p (1-p) \bar n \Big( \big| \neigh_{G}(T_s(i)) \cap \cS_{G_0}(i,m+1) \big| 
+ \big| \neigh_{G'}(T'_s(i')) \cap \cS_{G_0}(i',m+1) \big| - |L_s| - |L'_s| \Big) \\
& \ge \var_s(i) + \var'_s(i') - 2 n^{0.2} p - p (1-p) \bar n \big( |L_s| + |L'_s| \big) 
\end{align*}
by the definitions of $\var_s(i)$ and $\var'_s(i')$ in Algorithm~\ref{alg:ver-sig}, Condition~\ref{enum:c2}, and the fact $n - \bar n \le n^{0.1}$. 
It is also obvious that $\bar \E[Z_s^2] \le v_s(i) + v'_s(i')$. 
The tail bound for $Z_s$ follows from Bernstein's inequality. 
The conditional independence of $Z_s$ for different $s \in J$ follows from the disjointness of sets that we sum over in the definition of $Z_s$. 
\end{proof}

\begin{lemma}[Difference between signatures]
\label{lem:exp-diff-cor-sig-2222}
For any constants $C, D, K, K' > 0$, there exist constants $n_0, R, \kappa > 0$ with the following property.
Suppose that 
\begin{align*}
&n \ge n_0, \qquad 
\log n \le nq \le n^{\frac{1}{R \log \log n}}, \qquad
\alpha \in (0, \kappa) , \qquad
3 \le m \le C \log \log n , \qquad 
w \ge \lfloor (\log n)^5 \rfloor .
\end{align*}
Moreover, suppose that Conditions~\ref{enum:c1} to~\ref{enum:c7} hold with constants $K, K', \kappa > 0$ and a subset $J$ with $|J| = 2w$. 
Then it holds with conditional probability at least $1 - n^{-D}$ that 
\begin{align*}
\sum_{s \in J} \frac{ \big(f_s(i) - f'_s(i') \big)^2 }{\var_s(i) + \var'_s(i')} 
\ge 2w \Big( 1 - \frac{1}{(\log n)^{0.9}} \Big) .
\end{align*}
\end{lemma}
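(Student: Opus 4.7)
\medskip

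\noindent\textit{Proof plan for Lemma~\ref{lem:exp-diff-cor-sig-2222}.} The strategy is parallel to the proof of Lemma~\ref{lem:exp-diff-cor-sig}, but with the direction of the estimate reversed: here the key point is that for distinct vertices the variance of $Z_s$ from Lemma~\ref{lem:avsob3408yt754-2222} is \emph{not} reduced by a correlation correction, and the only deficit comes from the small terms $|L_s|$ and $|L'_s|$ which measure how much the two balls $\cB_{G_0}(i,m+1)$ and $\cB_{G_0}(i',m+1)$ overlap. The plan is to (i) lower bound $\var_s(i)$ and $\var'_s(i')$ using Conditions~\ref{enum:c1}, \ref{enum:c6}, \ref{enum:c7}, (ii) lower bound $\bar\E[Z_s^2]/(\var_s(i)+\var'_s(i'))$ summed over $s\in J$ via the sparsification inequality~\eqref{eq:hab8gs98a823ygg8ba}, and (iii) invoke Lemma~\ref{lem:hoeffding-truncate} to pass from the expectation to the actual sum.

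For step (i), fix $s\in J$ and set $I:=\{j\in\cS_{G_0}(i,m)\cup\cS_{G_0}(i',m):\deg_{G\cap G'}(j)\le 2np/3\}$. Condition~\ref{enum:c6} gives $|I|\le 2(np/3)^m$, while Condition~\ref{enum:c7} gives $|T_s(i)|\ge (np/2)^m(1-8\kappa)^m$; provided $\kappa$ is small enough that $(1-8\kappa)/2>1/3$, most vertices of $T_s(i)$ lie outside $I$, so each has degree exceeding $2np/3$ in $G$. Since $G_0(\cB_{G_0}(i,m+1)\cup\cB_{G_0}(i',m+1))$ is a forest (Condition~\ref{enum:c1}), the neighborhoods of distinct vertices in $T_s(i)\setminus I$ are disjoint at level $m+1$, and hence $\var_s(i)=np(1-p)|\neigh_G(T_s(i))\cap\cS_G(i,m+1)|\ge c\,(np/2)^{m+2}(1-8\kappa)^m$ for an absolute constant $c>0$; the analogous bound holds for $\var'_s(i')$.

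For step (ii), by Lemma~\ref{lem:avsob3408yt754-2222} we have
\[
\bar\E\bigl[(f_s(i)-f'_s(i'))^2\bigr]\ \ge\ \bar\E[Z_s^2]\ \ge\ \var_s(i)+\var'_s(i')-2n^{0.2}p-p(1-p)\bar n\bigl(|L_s|+|L'_s|\bigr).
\]
Dividing by $\var_s(i)+\var'_s(i')$ and summing over $s\in J$, the lower bound from step (i) together with \eqref{eq:hab8gs98a823ygg8ba} in Condition~\ref{enum:c5} will yield
\[
\sum_{s\in J}\bar\E\!\left[\tfrac{(f_s(i)-f'_s(i'))^2}{\var_s(i)+\var'_s(i')}\right]\ \ge\ 2w-\frac{C}{(1-8\kappa)^m}\left(\frac{w}{np(1-\alpha)^m}+\sqrt{w\log n}\right),
\]
and using $np\ge\log n$, $m\le C\log\log n$, $w\ge(\log n)^5$, $\alpha<\kappa$, together with the fact that $(1-8\kappa)^{-m}$ and $(1-\alpha)^{-m}$ are only polylogarithmic in $n$, the correction term will be bounded by $w/(\log n)^{0.95}$ for $\kappa$ sufficiently small depending on $C$.

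For step (iii), set $X_s:=(f_s(i)-f'_s(i'))/\sqrt{\var_s(i)+\var'_s(i')}$. Using the bound $|\Delta_s|\le|\zeta_s|+|\zeta'_s|+2n^{0.2}p$ from Lemma~\ref{lem:avsob3408yt754-2222} together with the tail estimate \eqref{eq:eta-bounds-1111} on $\zeta_s,\zeta'_s$ from Condition~\ref{enum:c5} and the variance lower bound from step (i), one gets $|\bar\E[X_s]|\le \tau:=C'\sqrt{\log n}/(1-8\kappa)^{m/2}$, while $\mathrm{Var}(X_s)\le 1$ (trivially, since $\bar\E[Z_s^2]\le \var_s(i)+\var'_s(i')$) and $X_s-\bar\E[X_s]$ has subexponential tails by the Bernstein-type bound on $Z_s$. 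Applying Lemma~\ref{lem:hoeffding-truncate} with $N=2w$ and $\delta=n^{-D}$, together with $w\ge(\log n)^5$ and $m\le C\log\log n$, the deviation of $\sum_{s\in J}X_s^2$ from its expectation is at most $w/(\log n)^{0.95}$, which combined with step (ii) yields the claimed lower bound $2w(1-1/(\log n)^{0.9})$ with probability at least $1-n^{-D}$. The main technical obstacle will be bookkeeping the competing polylogarithmic factors in steps (ii)--(iii), in particular ensuring that $(1-8\kappa)^{-m}$ and $(1-\alpha)^{-m}$ remain below $(\log n)^{0.05}$ so that the cumulative loss stays within the allotted $(\log n)^{-0.9}$ slack.
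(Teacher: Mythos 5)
Your proposal is correct and follows essentially the same route as the paper's proof: lower-bounding $\var_s(i)$ and $\var'_s(i')$ via Conditions~\ref{enum:c1}, \ref{enum:c6}, \ref{enum:c7}; lower-bounding $\sum_{s\in J}\bar\E[X_s^2]$ through the variance identity of Lemma~\ref{lem:avsob3408yt754-2222} combined with the sparsification bound \eqref{eq:hab8gs98a823ygg8ba}; and concentrating via Lemma~\ref{lem:hoeffding-truncate} with the mean bound $|\bar\E[X_s]|\lesssim\sqrt{\log n}/(1-8\kappa)^{m/2}$ coming from \eqref{eq:eta-bounds-1111}. The polylogarithmic bookkeeping you flag at the end works out exactly as you anticipate, with $\kappa$ chosen small depending on $C$ so that $(1-9\kappa)^{-m}\le(\log n)^{0.05}$.
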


\begin{proof}
Let 
$$
I := \Big\{ j \in \cS_{G_0}(i,m): \deg_{G \cap G'}(j) \le \frac 23 np (1-\alpha) \Big\} . 
$$
By Conditions~\ref{enum:c6} and~\ref{enum:c7}, we have that for any $s \in \{-1,1\}^m$, 
$$
|I| \le 2 (nq/3)^m 
\le \frac{1}{2} (np/2)^m (1-8\kappa)^m 
\le \frac{1}{2} \, \big( |T_s(i)| \land |T'_s(i')| \big)
$$
if $\kappa, \alpha \in (0,0.01)$ and $m \ge 3$. 
Since $G_0\big(\cB_{G_0}(i,m+1)\big)$ is a tree, it is easy to see that 
\begin{align*}
\var_s(i) &= n p (1-p) \big|\neigh_G(T_s(i)) \cap \cS_{G}(i,m+1)\big| 
\ge n p (1-p) \sum_{j \in T_s(i) \setminus I} \big( \deg_G(j) - 1 \big) \\
& \ge n p (1-p) \cdot \frac{1}{2} (np/2)^m (1-8\kappa)^m \cdot \frac{1}{2} np 
= (1-p) (np/2)^{m+2} (1-8\kappa)^{m} . 
\end{align*}
% On the other hand, by Conditions~\ref{enum:c3} and~\ref{enum:c4}, 
% \begin{align*}
% \var_s(i) = n p (1-p) \big|\neigh_G(T_s(i)) \cap \cS_{G}(i,m+1)\big| 
% \le np \sum_{j \in T_s(i)} \deg_G(j) 
% \le K^2 \frac{(np)^{m+2}}{2^m} .
% \end{align*}
The same estimate also holds for $\var'_s(i')$. 

We will apply Lemma~\ref{lem:hoeffding-truncate} with
$$
X_s := \frac{f_s(i) - f'_s(i')}{\sqrt{ \var_s(i) + \var'_s(i') }}
= \frac{Z_s + \Delta_s}{\sqrt{ \var_s(i) + \var'_s(i') }} ,
$$
where $Z_s$ and $\Delta_s$ satisfy the conclusion of Lemma~\ref{lem:avsob3408yt754-2222}. 
Towards that end, let us first establish some estimates for the mean and variance of $X_s$. 
Using Lemma~\ref{lem:avsob3408yt754-2222}, \eqref{eq:eta-bounds-1111}, and the above estimates for $\var_s(i)$ and $\var'_s(i')$, we obtain that for all $s \in J$, 
\begin{align}
\big|\bar \E[X_s]\big| = \frac{|\Delta_s|}{\sqrt{ \var_s(i) + \var'_s(i') }} 
\le \frac{ 2 K'  \frac{(np)^{m/2+1}}{2^{m/2}} \sqrt{\log n} + 2 n^{0.2} p}{\sqrt{ 2 (1-p) (np/2)^{m+2} (1-8\kappa)^{m} }} 
\le \frac{5 K' \sqrt{\log n}}{(1-8\kappa)^{m/2}}  
\label{eq:sadnvr0183gh0b03avs-222}
\end{align}
and 
\begin{align*}
\Var(X_s) = \frac{\bar \E[Z_s^2]}{\var_s(i) + \var'_s(i')}
&\ge \frac{\var_s(i) + \var'_s(i') - 2 n^{0.2} p - p (1-p) \bar n \big( |L_s| + |L'_s| \big)}{\var_s(i) + \var'_s(i')} \\
&\ge 1 - \frac{2 n^{0.2} p + p (1-p) \bar n \big( |L_s| + |L'_s| \big)}{2 (1-p) (np/2)^{m+2} (1-8\kappa)^{m}} . 
\end{align*}
% by Lemma~\ref{lem:avsob3408yt754-2222} and the above . 
% Therefore, 
% \begin{align*}
% \bar \E[X_s^2] 
% = \frac{\bar \E[Z_s^2] + \Delta_s^2}{\var_s(i) + \var'_s(i')} 
% \le 1 - \frac{(1-8\kappa)^m}{3 K^2} + \frac{25 (K')^2 w^4 \log n}{2^{m} (1-8\kappa)^{m}} 
% \le 1 - \frac{(1-8\kappa)^m}{4 K^2} \quad \text{ for } s \in \tilde J(i)
% \end{align*}
% if $n \ge n_0 = n_0(K,K',\kappa)$ and $m \ge 3 \log \big(w^4 (\log n)^2\big)$, and 
% \begin{align*}
% \bar \E[X_s^2] 
% = \frac{\bar \E[Z_s^2] + \Delta_s^2}{\var_s(i) + \var'_s(i')} 
% \le 1 - \frac{(1-8\kappa)^m}{3 K^2} + \frac{25 (K')^2 \log n}{(1-8\kappa)^{m}} 
% \le \frac{26 (K')^2 \log n}{(1-8\kappa)^{m}} \quad \text{ for } s \in J . 
% \end{align*}

Lemma~\ref{lem:avsob3408yt754-2222} also gives $\Var(X_s) = \frac{\bar \E[Z_s^2]}{\var_s(i) + \var'_s(i')} \le 1$ and $\bar \p \big\{ | Z_s | \ge t \big\} \le 2 \exp \Big( \frac{ - t^2/2 }{ \bar E[Z_s^2] + t/3 } \Big)$. 
It follows that 
$\bar \p \big\{ | X_s - \bar \E[X_s] | \ge t \big\} \le 2 \exp \Big( \frac{ - t^2/2 }{ 1 + t/3 } \Big)$. 
Therefore, Lemma~\ref{lem:hoeffding-truncate} can be applied to show that, with conditional probability at least $1 - n^{-D}$, 
\begin{align*}
\sum_{s \in J} X_s^2 
\ge \sum_{s \in J} \bar \E[X_s^2] - C_1 \sqrt{w} (\log n)^{3/2} - C_1 \Big( \max_{s \in J} \big| \bar \E[X_s] \big| \Big) \big( \sqrt{w \log n} + \log n \big) 
\end{align*}
for a constant $C_1 > 0$ depending on $D$, where we recall $|J| = 2w$. 
Moreover, by the above lower bound on $\Var(X_s) \le \bar \E[X_s^2]$ and \eqref{eq:hab8gs98a823ygg8ba} assumed in Condition~\ref{enum:c5}, we have 
\begin{align*}
\sum_{s \in J} \bar \E[X_s^2]
&\ge 2 w - \sum_{s \in J} \frac{2 n^{0.2} p + p (1-p) \bar n \big( |L_s| + |L'_s| \big)}{2 (1-p) (np/2)^{m+2} (1-8\kappa)^{m}} \\
&\ge 2 w - n^{-0.7} - \frac{p (1-p) \bar n}{2 (1-p) (np/2)^{m+2} (1-8\kappa)^{m}} \cdot K \frac{(np)^{m+1}}{2^m} \Big( \frac{w}{np (1-\alpha)^m} + \sqrt{ w \log n } \Big) \\
&\ge 2 w \bigg( 1 - \frac{K_2}{(1-9\kappa)^m \log n} \bigg) 
\end{align*}
for a constant $K_2 > 0$ depending on $K$, if $w \ge (\log n)^3$, $\log n \le nq \le n^{\frac{1}{R \log \log n}}$, $\alpha < \kappa$, and $n \ge n_0$. 
%MR Added $\alpha < \kappa$.
The above two bounds together with \eqref{eq:sadnvr0183gh0b03avs-222} imply that
\begin{align*}
\sum_{s \in J} X_s^2 
&\ge 2 w \bigg( 1 - \frac{K_2}{(1-9\kappa)^m \log n} \bigg) - C_1 \sqrt{w} (\log n)^{3/2} - C_1 \frac{5 K' \sqrt{\log n}}{(1-8\kappa)^{m/2}} \big( \sqrt{w \log n} + \log n \big) \\
&\ge 2 w \bigg( 1 - \frac{K_3}{(1-9\kappa)^m \log n} \bigg) 
\end{align*}
if $w \ge \lfloor (\log n)^5 \rfloor$. 
Finally, if $m \le C \log \log n$ and $\kappa > 0$ is sufficiently small depending on $C$, then 
$$
\frac{K_3}{(1-9\kappa)^m \log n} \le \frac{1}{(\log n)^{0.9}} , 
$$
so the proof is complete. 
\end{proof}

\subsection{Conclusion}

We summarize the result of this section in the following proposition.

\begin{prop}[Difference between signatures of typical wrong pairs]
\label{prop:anb0ae8gvgr0gbav0rsa31-2}
For any constants $C, D > 0$, there exist constants $n_0, R, \alpha_0, c > 0$ with the following property. 
Let $J$ be a uniform random subset of $\{-1,1\}^s$ of cardinality $2w$ for $w \in \N$. 
Suppose that 
\begin{align*}
&n \ge n_0, \qquad 
\log n \le np(1-\alpha) \le n^{\frac{1}{R \log \log n}}, \qquad
\alpha \in (0, \alpha_0) , \\
&3 \le m \le C \log \log n , \qquad 
w \ge \lfloor (\log n)^5 \rfloor .
\end{align*}
Then with probability at least $1 - n^{-D}$, there is a subset $\cI \subset [n]$ with $|\cI| \ge n - n^{1-c}$ such that for any distinct $i, i' \in \cI$,  
\begin{align}
\sum_{s \in J} \frac{ \big(f_s(i) - f'_s(i') \big)^2 }{\var_s(i) + \var'_s(i')}
\ge 2w \Big( 1 - \frac{1}{(\log n)^{0.9}} \Big) .
\label{eq:sig-low-bd}
\end{align}
\end{prop}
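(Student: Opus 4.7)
The strategy mirrors that of Proposition~\ref{prop:anb0ae8gvgr0gbav0rsa31}: first identify a high-probability ``typicality'' event on which Conditions~\ref{enum:c1}--\ref{enum:c7} are satisfied for every pair of distinct vertices drawn from a large subset $\cI \subset [n]$, and then invoke Lemma~\ref{lem:exp-diff-cor-sig-2222} together with a union bound over such pairs. The constants $R$, $\alpha_0$, $n_0$ will be fixed at the end in terms of $C$, $D$, and the constant $\kappa$ supplied by Lemma~\ref{lem:exp-diff-cor-sig-2222}.

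First, I would build the set $\cI$. Apply Proposition~\ref{prop:anvavg80q34811} (with appropriate parameters) to extract, with probability at least $1 - n^{-D-2}$, a collection of typical vertices of size at least $n - n^{1-c_1}$, which in particular guarantees that $G_0(\cB_{G_0}(i,3m+3))$ is a tree for all such $i$ (as in the proof of Lemma~\ref{lem:asvvbav91gb391b9b9vs}). Intersecting with the high-probability event that $\deg_G(j), \deg_{G'}(j) \le K_0 np$ uniformly in $j$, and with the event supplied by Lemma~\ref{lem:asvvbav91gb391b9b9vs} (which gives \eqref{eq:hab8gs98a823ygg8ba--1}, \eqref{eq:hab8gs98a823ygg8ba}, \eqref{eq:eta-bounds-1111} for all distinct $i,i'$ in a further subset), produces a subset $\cI$ of size at least $n - n^{1-c}$. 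This verifies Conditions~\ref{enum:c1}, \ref{enum:c2}, and~\ref{enum:c5} for all distinct $i,i' \in \cI$; note that $\cB_{G_0}(i,m+1) \cup \cB_{G_0}(i',m+1)$ is a tree or a forest of two trees precisely because the $(3m+3)$-neighborhoods of $i$ and $i'$ are each trees.

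For Condition~\ref{enum:c6}, I would repeat the argument used for point~6 in the proof of Proposition~\ref{prop:anb0ae8gvgr0gbav0rsa31}: the graph $G \cap G'$ is a $G(n, p(1-\alpha))$ random graph, so by the same variant of Condition~\ref{enum:4} applied to $G \cap G'$, we get that most vertices $i$ satisfy $|\{ j \in \cS_{G_0}(i,m) : \deg_{G \cap G'}(j) \le 2np/3 \}| \le (np/3)^m$ with probability at least $1-n^{-D-2}$. Applying this simultaneously to $i$ and $i'$ and unioning gives the bound in~\ref{enum:c6}, perhaps after shrinking $\cI$ once more. For Condition~\ref{enum:c7}, I would invoke Proposition~\ref{prop:nasva8ya8b341}, which yields $|T^m_s(i,G) \cap T^m_s(i,G')| \ge (np/2)^m (1-8\kappa)^m$ for typical $i$; since $T_s(i) = T^m_s(i,G) \supset T^m_s(i,G) \cap T^m_s(i,G')$, the desired lower bound on $|T_s(i)|$ follows immediately, and likewise for $|T'_s(i')|$ (noting that the marginal law of $G'$ is the same as that of $G$, so ``typical'' vertices in $G'$ also form a large subset). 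Here we must choose $\alpha_0$ small enough for Proposition~\ref{prop:nasva8ya8b341} to apply with the $\kappa$ required by Lemma~\ref{lem:exp-diff-cor-sig-2222}.

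Once Conditions~\ref{enum:c1}--\ref{enum:c7} hold for all distinct $i,i' \in \cI$ on an event of probability at least $1 - n^{-D-1}$, I apply Lemma~\ref{lem:exp-diff-cor-sig-2222} with failure probability $n^{-D-3}$ for each fixed pair and take a union bound over the at most $n^2$ pairs $(i,i') \in \cI \times \cI$ with $i \ne i'$, establishing \eqref{eq:sig-low-bd} simultaneously for all such pairs with the required probability. The main technical obstacle is tracking the conditioning: Lemma~\ref{lem:exp-diff-cor-sig-2222} holds conditionally on the joint $(m+1)$-neighborhoods of $i$ and $i'$, and Proposition~\ref{prop:nasva8ya8b341} is formulated conditionally on the parent graph $G_0$; one has to be careful that the typicality events defining $\cI$ are measurable with respect to information that the subsequent conditional applications treat as ``given,'' so that no randomness is double-used. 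This is handled by the same layered conditioning as in Sections~\ref{s: signatures} and~\ref{s: wrong sign}, but the bookkeeping when two vertices $i,i'$ have overlapping neighborhoods (Case~1 of Lemma~\ref{lem:asvvbav91gb391b9b9vs}) is the delicate point and must be threaded through consistently.
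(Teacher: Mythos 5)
Your proposal is correct and takes essentially the same route as the paper's own proof: verify Conditions~\ref{enum:c1}--\ref{enum:c7} on a high-probability event for all pairs from a large set $\cI$ (using the $(3m+3)$-neighborhood tree property for~\ref{enum:c1}, Lemma~\ref{lem:asvvbav91gb391b9b9vs} for~\ref{enum:c5}, the Condition~\ref{enum:b6}/\ref{enum:b7} arguments for~\ref{enum:c6}/\ref{enum:c7}), then apply Lemma~\ref{lem:exp-diff-cor-sig-2222} with a union bound over pairs. The conditioning bookkeeping you flag is indeed the only delicate point, and it is resolved exactly as you suggest.
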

%MR Corrected the sign in the inequality above.

\begin{proof}
We first claim that, with probability at least $1-n^{-D-1}$, there is a subset $\cI \subset [n]$ with $|\cI| \ge n - n^{1-c}$ such that Conditions~\ref{enum:c1} to~\ref{enum:c7} hold for all distinct $i, i' \in \cI$ with constants $K, K', \kappa > 0$. 
To be more precise, $K$ and $K'$ will depend on $C$ and $D$, $\kappa$ is to be chosen, and $c$ depends on $C$, $D$, and $\kappa$. 
The proof is very similar to that of Proposition~\ref{prop:anb0ae8gvgr0gbav0rsa31}, so we only provide a sketch and point out the differences. 
\begin{enumerate}[leftmargin=*]
\item
If $\dist_{G_0}(i,i') \le 2m+2$, then Condition~\ref{enum:c1} is weaker than that $G_0\big(\cB_{G_0}(i,3m+3)\big)$ is a tree. 
If $\dist_{G_0}(i,i') > 2m+2$, then Condition~\ref{enum:c1} is saying that $G_0\big(\cB_{G_0}(i,m+1)\big)$ and $G_0\big(\cB_{G_0}(i',m+1)\big)$ are both trees. 
In either case, we can show that the neighborhoods of most vertices are trees with high probability as before. 

\item
Condition~\ref{enum:c2} is essentially the same as Condition~\ref{enum:b2} up to a constant. 

\item
Condition~\ref{enum:c5} is a direct consequence of Lemma~\ref{lem:asvvbav91gb391b9b9vs}. 

\item
Condition~\ref{enum:c6} is essentially the same as Condition~\ref{enum:b6} up to constants. 

\item
Condition~\ref{enum:c7} is a consequence of Condition~\ref{enum:b7} since $|T_s(i)| \ge |T_s(i) \cap T'_s(i)|$. 
\end{enumerate}
Therefore, if $\kappa$ is chosen according to Lemma~\ref{lem:exp-diff-cor-sig-2222}, we obtain \eqref{eq:sig-low-bd} for any distinct vertices $i, i' \in \cI$ with probability at least $1 - n^{-D-2}$.  
% A union bound over all pairs of vertices finishes the proof.
\end{proof}

\smallskip

\begin{proof}[Proof of Theorem~\ref{thm:main-1}]
Note that Algorithm~\ref{alg:ver-sig} is equivariant with respect to the permutation $\pi$ in the sense that ${\tt VertexSignature}(G^\pi,\pi(i),m)$ and ${\tt VertexSignature}(G,i,m)$ have the same output. 
Therefore, we may assume without loss of generality that $\pi$ is the identity. 
With the choice $m = \lceil 22 \log \log n \rceil$ and $w = \lfloor (\log n)^5 \rfloor$ in Algorithm~\ref{alg:compare}, it is easy to check that the assumptions of Propositions~\ref{prop:anb0ae8gvgr0gbav0rsa31} and~\ref{prop:anb0ae8gvgr0gbav0rsa31-2} are satisfied. 
Therefore, these two propositions together yield a desired subset $\cI \subset [n]$ such that $\sum_{s \in J} \frac{(f_s(i) - f'_s(i))^2 }{\var_s(i) + \var'_s(i)} < 2w \big( 1 - \frac{1}{\sqrt{\log n}} \big)$ and $\sum_{s \in J} \frac{(f_s(i) - f'_s(i'))^2 }{\var_s(i) + \var'_s(i')} > 2w \big( 1 - \frac{1}{\sqrt{\log n}} \big)$ for distinct $i, i' \in \cI$.
The theorem follows immediately. 
\end{proof}

\section{Construction of an exact matching}\label{s: perfect}

This section is devoted to developing a procedure that refines an approximate matching to obtain an exact matching. 
Before proving the main result, we establish some auxiliary statements.
\begin{lemma}[An elementary decoupling]\label{938174932874-9}
Let $M>0$ be a parameter, let $\Gamma$ be a fixed graph on $[n]$, and let $Q,W$ be two (possibly intersecting) subsets of 
vertices of $\Gamma$ such that
$$
|\neigh_\Gamma(i)\cap W|\geq M\quad \mbox{ for all }i\in Q.
$$
Then there are subsets $Q'\subset Q$ and $W'\subset W$ such that $Q'\cap W'=\varnothing$, $|Q'|\geq |Q|/5$, and 
$$
|\neigh_\Gamma(i)\cap W'|\geq M/2\quad \mbox{ for all }i\in Q'.
$$
\end{lemma}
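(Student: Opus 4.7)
The plan is to use a simple probabilistic-method argument based on a random coloring of the ambient vertex set. I would independently assign each vertex $v \in [n]$ a uniformly random label $X_v \in \{0,1\}$, and set
$$
Q^{*} := \{ i \in Q : X_i = 0 \}, \qquad W^{*} := \{ j \in W : X_j = 1 \}.
$$
By construction, $Q^{*} \cap W^{*} = \varnothing$, so the disjointness constraint is enforced automatically, and the only remaining task is to show that, with positive probability, many vertices of $Q^{*}$ still have at least $M/2$ neighbors inside $W^{*}$.

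To control this, I would fix $i \in Q$ and condition on the event $\{X_i = 0\}$. Since $i \notin \neigh_\Gamma(i)$, the random variable $|\neigh_\Gamma(i) \cap W^{*}|$ is a sum of independent Bernoulli$(1/2)$ indicators (one for each $j \in \neigh_\Gamma(i) \cap W$), so it has a Binomial$(N_i, 1/2)$ distribution, where $N_i := |\neigh_\Gamma(i) \cap W| \geq M$. The elementary estimate $\Prob\{\mathrm{Bin}(N,1/2) \geq N/2\} \geq 1/2$ (immediate from the symmetry of the binomial about its mean) then yields
$$
\Prob\big\{ |\neigh_\Gamma(i) \cap W^{*}| \geq M/2 \,\big|\, X_i = 0 \big\} \geq 1/2 .
$$

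Multiplying by $\Prob\{X_i = 0\} = 1/2$, the indicator $Z_i := \1\{X_i = 0\}\cdot \1\{|\neigh_\Gamma(i)\cap W^{*}|\geq M/2\}$ has $\E Z_i \geq 1/4$, and linearity of expectation yields $\E \sum_{i \in Q} Z_i \geq |Q|/4 > |Q|/5$. By the first moment method there is a realization of $(X_v)_{v\in[n]}$ for which $\sum_{i \in Q} Z_i \geq |Q|/5$; fixing such a realization and setting
$$
Q' := \big\{ i \in Q^{*} : |\neigh_\Gamma(i) \cap W^{*}| \geq M/2 \big\}, \qquad W' := W^{*},
$$
delivers all three conclusions of the lemma. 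There is no real obstacle: the only mildly delicate ingredient is the binomial median estimate, and the constant $5$ in the statement is slack by a factor of $5/4$ compared to what this argument produces, so there is plenty of room.
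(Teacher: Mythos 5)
Your proof is correct and rests on the same idea as the paper's: decouple $Q$ from $W$ by a random splitting and use the symmetry of the Binomial$(N,1/2)$ distribution about its mean to retain at least half the neighbors with probability at least $1/2$, then conclude by a first-moment argument. The only difference is cosmetic — the paper first disposes of the case $|Q\setminus W|\geq |Q|/5$ and then randomizes only over $Q\cap W$ (keeping $W\setminus Q$ deterministically in $W'$), whereas your uniform $\{0,1\}$-coloring of all of $[n]$ handles both cases at once and even gives the slightly better constant $|Q|/4$.
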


\begin{proof}
If $|Q\setminus W|\geq |Q|/5$, then there is nothing to prove, so we can assume that $|Q\cap W|> 4|Q|/5$.
Let $\hat Q$ be a uniform random subset of $Q\cap W$.
Consider random disjoint sets $\hat Q$ and $\hat W := (W \setminus Q) \cup((Q\cap W)\setminus \hat Q)$.
Fix $i\in Q\cap W$. 
Note that each $j \in \neigh_\Gamma(i) \cap W$ belongs to $\neigh_\Gamma(i) \cap \hat W$ with probability at least $1/2$, and $|\neigh_\Gamma(i) \cap W| \ge M$ by assumption, so the variable $|\neigh_\Gamma(i)\cap \hat W|$ dominates $\Bin(M,1/2)$ stochastically. 
Since this is still true conditional on $i \in \hat Q$, we have
$$
\p \big\{ i \in \hat Q , \, |\neigh_\Gamma(i)\cap \hat W| \geq M/2 \big\}
= \p \big\{ i \in \hat Q \big\} \cdot \p \big\{ |\neigh_\Gamma(i)\cap \hat W| \geq M/2 \mid i \in \hat Q \big\} \ge \frac 12 \cdot \frac 12 = \frac 14 .
$$
It follows that
$$
\E \big|\big\{ i \in \hat Q : |\neigh_\Gamma(i)\cap \hat W| \geq M/2 \big\} \big|
= \sum_{i \in Q \cap W} \p \big\{ i \in \hat Q , \, |\neigh_\Gamma(i)\cap \hat W| \geq M/2 \big\} 
\ge \frac{|Q\cap W|}4 \geq \frac{|Q|}5. 
$$
Therefore, there is a realization of $(\hat Q, \hat W)$ such that 
$\big|\big\{ i \in \hat Q : |\neigh_\Gamma(i)\cap \hat W| \geq M/2 \big\} \big| \ge |Q|/5$.
It then suffices to take $W' = \hat W$ and $Q' = \big\{ i \in \hat Q : |\neigh_\Gamma(i)\cap \hat W| \geq M/2 \big\}$. 
\end{proof}

For any subset $I \subset [n]$, let $I^c := [n] \setminus I$. 

\begin{lemma}[Growing a subset of vertices]
\label{198370194871948}
For any $\delta'\in(0,1/2]$, there are $n_0, c > 0$ depending on
$\delta'$ with the following property. Assume $n>n_0$,
and that $rn\geq \log n$. Let $\Gamma$ be a $G(n,r)$ graph,
and let $I$ be a random subset of $[n]$ (possibly depending on $\Gamma$), with
$|I|\geq n-\delta' n/6$.
Define a random subset of vertices 
$$
\tilde I := \Big\{ i \in [n] :\; |\neigh_\Gamma(i)\cap I^c|<\delta' rn \Big\} .
$$
Then we have
$$
\Prob \Big\{ |\tilde I^c|\leq \frac{1}{4}|I^c| \Big\} \ge 1-\exp(-c \, rn\log n) . 
$$
% with probability at least $1-\exp(-c\delta' rn\log n)$, where $c>0$ is a universal constant.
\end{lemma}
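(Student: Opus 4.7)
The plan is to reduce to a union bound over deterministic subsets $S=I^c$. Define $T(S):=\{j\in[n]:|\neigh_\Gamma(j)\cap S|\geq \delta' rn\}$, so that $\tilde I^c=T(I^c)$. Since $|I^c|\leq \delta' n/6$, it suffices to show that, with probability at least $1-\exp(-c\,rn\log n)$, every deterministic $S\subset[n]$ with $|S|\leq \delta' n/6$ satisfies $|T(S)|\leq|S|/4$. The case $|S|=s<\delta' rn$ is immediate because $|\neigh_\Gamma(j)\cap S|\leq|S|<\delta' rn$ for every $j$, so the argument focuses on $\delta' rn\leq s\leq \delta' n/6$.

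The key estimate is that for every such $S$ and every fixed $A\subset[n]$ with $|A|=k$,
\[
\Prob\bigl\{A\subset T(S)\bigr\}\leq \Bigl(\frac{2es}{\delta' n}\Bigr)^{k\delta' rn/2}.
\]
Indeed, $A\subset T(S)$ forces $\sum_{j\in A}|\neigh_\Gamma(j)\cap S|\geq k\delta' rn$. Writing $A=A_1\cup A_2$ with $A_1:=A\cap S$ and $A_2:=A\setminus S$, an edge-by-edge count identifies this sum with $2X_a+X_b+X_c+X_d$, where the four independent $\mathrm{Bin}(\cdot,r)$ variables count, respectively, the edges within $A_1$, between $A_1$ and $S\setminus A_1$, between $A_1$ and $A_2$, and between $A_2$ and $S\setminus A_1$. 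Their unweighted sum $X_a+X_b+X_c+X_d$ has distribution $\mathrm{Bin}(N,r)$ with $N\leq ks$, so $2X_a+X_b+X_c+X_d\leq 2(X_a+X_b+X_c+X_d)$ is stochastically dominated by $2\,\mathrm{Bin}(ks,r)$, whose mean $2ksr$ is at most $k\delta' rn/3$ because $s\leq \delta' n/6$. A standard Chernoff upper-tail bound then yields the displayed estimate.

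Taking $k:=\lceil s/4\rceil+1$ and union-bounding over $A$ of cardinality $k$ gives $\Prob\{|T(S)|>s/4\}\leq \binom{n}{k}(2es/(\delta' n))^{k\delta' rn/2}$. A further union bound over $S$ of cardinality $s$ (a factor $\binom{n}{s}$) and summation over $s\in[\delta' rn,\delta' n/6]$ complete the argument. The principal arithmetic, and the main obstacle, is to verify that the combinatorial factor $\binom{n}{s}\binom{n}{k}$, of logarithm at most $O(s\log(en/s))$, is dominated by the tail factor, whose logarithm equals $-k\delta' rn/2\cdot\log(\delta' n/(2es))$. The delicate regime is $s$ close to $\delta' rn$, where both factors already have size $\exp(\Theta((\log n)^2))$; here the tail wins because its base $2es/(\delta' n)$ is as small as $O(r)=O(\log n/n)$, giving an exponent of order $-(\delta' rn)^2\log n$, which when $rn=\log n$ is of order $-(\log n)^3$ and swamps the combinatorial term. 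In every case the resulting bound is at most $\exp(-c\,rn\log n)$ with $c>0$ depending on $\delta'$.
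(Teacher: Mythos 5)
Your proof is correct, and it replaces the one genuinely nontrivial ingredient of the paper's argument by a different device. The paper also reduces to a double union bound over a set $W$ (your $S=I^c$) and a set $Q$ of ``bad'' vertices (your $A$), but it handles the dependence caused by $Q\cap W\neq\varnothing$ via a separate probabilistic decoupling step (Lemma~\ref{938174932874-9}): it extracts disjoint $Q'\subset Q$, $W'\subset W$ with $|Q'|\geq|Q|/5$ on which the counts $|\neigh_\Gamma(i)\cap W'|\geq \delta' rn/2$ are mutually independent, applies Chernoff to each, and raises to the power $|Q'|$. You instead aggregate: the event $A\subset T(S)$ forces $\sum_{j\in A}|\neigh_\Gamma(j)\cap S|\geq k\delta' rn$, and your edge-type decomposition correctly identifies this sum as $2X_a+X_b+X_c+X_d$ with the four counts independent binomials over disjoint vertex-pair classes totalling at most $ks$ pairs, so the whole sum is dominated by $2\,\Bin(ks,r)$ and a single Chernoff bound suffices. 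The cost is a factor $2$ in the exponent (from double-counting edges inside $A\cap S$), versus the paper's loss of the constants $1/5$ and $1/2$; both are immaterial. Your final accounting is also sound: the key point, which you state correctly, is that $\log\binom{n}{s}\binom{n}{k}=O(s\log(en/s))$ while the tail exponent is $\Theta(k\,\delta' rn\log(\delta' n/(2es)))$ with $\log(\delta' n/(2es))\geq\log(3/e)>0$ throughout the range $s\leq\delta' n/6$, and the two logarithmic factors differ only by an additive constant depending on $\delta'$; since $rn\geq\log n\to\infty$, the tail wins uniformly and leaves $\exp(-c\,\delta'^2 rn\log n)$ to spare. (I would only note that the tightest point is really $s$ near $\delta' n/6$, where the base is only $e/3$, rather than $s$ near $\delta' rn$ where it is smallest; and that $k$ should be $\lfloor s/4\rfloor+1$ rather than $\lceil s/4\rceil+1$ so that $|T(S)|>s/4$ genuinely forces a bad $k$-subset. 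Neither affects the argument.)
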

\begin{proof}
Given the assumptions on $I$, by considering sets $W = I^c$ and $Q \subset \tilde I^c$, we obtain 
\begin{align*}
\big\{&|\tilde I^c|> |I^c|/4\big\}
\subset\\
&\Event:=
\Big\{
\exists\; Q,W\subset[n]:\;|W|\leq \delta' n/6,\,|Q|=\lceil |W|/4\rceil\neq 0,\,
|\neigh_\Gamma(i)\cap W|\geq \delta' rn  \mbox{ for all }i\in Q
\Big\}.
\end{align*}
Further, according to Lemma~\ref{938174932874-9}, $\Event$ is contained in the event
\begin{align*}
\Event' := \Big\{ \exists\; Q',W'\subset[n]:\;\;&|W'|\leq \delta' n/6,\,|Q'|\geq \lceil \lceil |W'|/4\rceil/5\rceil\neq0,\, Q'\cap W'=\varnothing,\\
&|\neigh_\Gamma(i)\cap W'|\geq \delta' rn/2 \mbox{ for all }i\in Q' \Big\} .
\end{align*}
We estimate the probability of $\Event'$ by taking the union bound over all possible realizations $Q'$ and $W'$
(note that necessarily $|W'|\geq \delta' rn/2$ in the event description).
Observe that for any disjoint fixed subsets $Q'$ and $W'$, the binomial variables $|\neigh_\Gamma(i)\cap W'|$, $i\in Q'$,
are mutually independent (this is the reason for applying the decoupling lemma, Lemma~\ref{938174932874-9}).
We then get an upper bound
\begin{align*}
\Prob\big\{|\tilde I^c|> |I^c|/4\big\}
\leq
\sum_{w=\lceil \delta' rn/2\rceil}^{\lfloor \delta' n/6\rfloor} \binom{n}{w}
\sum_{k=\lceil \lceil w/4\rceil/5\rceil}^{n} \binom{n}{k} \, \Prob\bigg\{\sum_{i=1}^w b_i\geq \delta' rn/2\bigg\}^k,
\end{align*}
where $b_1,\dots,b_w$ are i.i.d.\ Bernoulli($r$) variables.

Applying Chernoff's inequality (first estimate in Lemma~\ref{akjfnpfjnpfiwnfpifn}), we get
$$
\Prob\bigg\{\sum_{i=1}^{w} b_i\geq \delta' rn/2\bigg\}
\leq \bigg(\frac{rw}{\delta' rn}\bigg)^{c_1 \delta' rn},
$$
for some universal constant $c_1>0$ and all $w\leq \delta' n/6$ (assuming that $n$ is large enough).
Thus,
$$
\Prob\big\{|\tilde I^c|> |I^c|/4\big\}
\leq \sum_{w=\lceil \delta' rn/2\rceil}^{\lfloor \delta' n/6\rfloor}\sum_{k=\lceil \lceil w/4\rceil/5\rceil}^{n}
\bigg(\frac{en}{w}\bigg)^w\bigg(\frac{en}{k}\bigg)^k\bigg(\frac{w}{\delta' n}\bigg)^{c_1 \delta' rn k}.
$$
A straightforward computation completes the proof.
\end{proof}

\begin{lemma}[Number of neighbors]
\label{1983109487209}
For any $\varepsilon\in(0,1]$ and $\alpha\in(0,\varepsilon/4]$, there is $n_0 > 0$
depending only on $\varepsilon$ with the following property.
Let $n>n_0$ and $q\in(0,1)$. Assume that $p:=(1-\alpha)q$ satisfies $pn\geq (1+\varepsilon)\log n$.
Let $G_0$, $G$, and $G'$ be as before.
Then with probability at least $1-\exp(- \eps pn/8)$, 
$$
|\neigh_G(i)\cap\neigh_{G'}(i)|\geq \varepsilon^2 pn/256\quad\mbox{ for all }i\in[n].
$$
\end{lemma}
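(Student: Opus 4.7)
The plan is to observe that the intersection graph $G\cap G'$ is distributed as an Erd\H{o}s--R\'enyi graph $G(n,\tilde p)$ with $\tilde p:=p(1-\alpha)=q(1-\alpha)^2$, and then reduce the lemma to a union bound over binomial lower tails. Indeed, for each pair $\{i,j\}\subset[n]$, the edge appears in both $G$ and $G'$ exactly when it appears in $G_0$ (probability $q$) and independently survives the $\alpha$--thinning in each of $G$ and $G'$ (with probability $(1-\alpha)^2$); by independence across pairs, $G\cap G'$ has the claimed distribution, and consequently $X_i:=|\neigh_G(i)\cap\neigh_{G'}(i)|=\deg_{G\cap G'}(i)$ follows the $\Bin(n-1,\tilde p)$ law for every $i\in[n]$.

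For fixed $i$, I will set $\mu:=(n-1)\tilde p$ and $a:=\varepsilon^2 pn/256$. Since $\alpha\leq\varepsilon/4$ and $n\geq n_0$ is chosen sufficiently large, $\mu\geq(1-\varepsilon/4)(1-1/n)pn$, while $a/\mu\leq\varepsilon^2/96$ is much smaller than $1$. The standard multiplicative Chernoff bound for the lower tail of the binomial then yields
$$
\Prob\{X_i\leq a\}\leq \exp\!\left(a\log\frac{e\mu}{a}-\mu\right),
$$
and using $\mu\leq pn$ one obtains $a\log(e\mu/a)\leq F(\varepsilon)\,pn$ with $F(\varepsilon):=\frac{\varepsilon^2}{256}(1+\log(256/\varepsilon^2))$. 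A union bound over $i\in[n]$ then delivers the target failure probability $\exp(-\varepsilon pn/8)$ provided
$$
\mu-a\log(e\mu/a)\;\geq\;\log n+\varepsilon pn/8.
$$
Substituting the bounds for $\mu$ and for $a\log(e\mu/a)$, invoking $pn\geq(1+\varepsilon)\log n$, and taking $n_0$ large enough that $1/n\leq\varepsilon^2/32$, this reduces to a purely numerical inequality in $\varepsilon\in(0,1]$ which, after expansion, takes the shape $5\varepsilon/8-13\varepsilon^2/32\geq F(\varepsilon)(1+\varepsilon)+O(\varepsilon^2/n)$; since $F(\varepsilon)=O(\varepsilon^2\log(1/\varepsilon))$ while the left-hand side is linear in $\varepsilon$, the inequality is verified directly (at $\varepsilon=1$ the largest of its terms, $F(1)\approx 0.026$, is still dwarfed by $5/8-13/32\approx 0.22$).

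The main obstacle is that the argument is tight near the connectivity threshold $pn\sim(1+\varepsilon)\log n$: the slack $\varepsilon pn/8$ in the target exponent only just accommodates the $\log n$ price paid by the union bound. This forces careful bookkeeping of the multiplicative constants in the Chernoff exponent and requires exploiting the quadratic smallness of $a/\mu$ in $\varepsilon$ in order to absorb the correction term $F(\varepsilon)\,pn$.
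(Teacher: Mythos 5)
Your proposal is correct and follows essentially the same route as the paper: identify $|\neigh_G(i)\cap\neigh_{G'}(i)|$ as a $\Bin(n-1,(1-\alpha)p)$ variable, apply the multiplicative Chernoff lower-tail bound, and absorb the $\log n$ cost of the union bound using $pn\ge(1+\varepsilon)\log n$ and $\alpha\le\varepsilon/4$. The only difference is cosmetic bookkeeping of the constants in the exponent, and your numerical verification (e.g.\ $F(1)\approx 0.026$ versus the linear-in-$\varepsilon$ slack) checks out.
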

\begin{proof}
Pick any vertex $i\in[n]$. The variable $|\neigh_G(i)\cap\neigh_{G'}(i)|$ is Binomial($n-1$,$(1-\alpha)p$),
so, applying Chernoff's inequality (second estimate in Lemma~\ref{akjfnpfjnpfiwnfpifn}), we get
$$
\Prob\big\{|\neigh_G(i)\cap\neigh_{G'}(i)|< u (1-\alpha)p(n-1)\big\}\leq \exp(-(1-\alpha)p(n-1)) \cdot (e/u)^{u (1-\alpha)p(n-1)}
$$
for every $u\in(0,1)$. Thus, assuming that $n$ is large enough, we have
\begin{align*}
\Prob\big\{|\neigh_G(i)\cap\neigh_{G'}(i)|< \varepsilon^2 pn/256\big\}
\leq \exp(-(1-\alpha)p(n-1))\exp(\varepsilon pn/16)
\leq \exp(-(1-3\varepsilon/8)pn) . 
\end{align*}
% for some $c'>0$ depending on $\varepsilon$.
Taking the union bound over $i \in [n]$, we get
$$
\Prob\big\{|\neigh_G(i)\cap\neigh_{G'}(i)|\geq \varepsilon^2 pn/256 \  \mbox{ for all }i\in[n]\big\}
\geq 1-n\exp(-(1-3\varepsilon/8)pn).
$$
It remains to use the assumption $pn\geq (1+\varepsilon)\log n$ to get the result.
\end{proof}

The next lemma is just a restatement of Lemma~\ref{183741-47} in a more specific context:
\begin{lemma}[Number of common neighbors of distinct vertices]
\label{31-9847-1987-}
For any $\delta''>0$ there are
$n_0'' \in \N$ and $c''>0$ depending on $\delta''$ with the following property.
Assume that $n>n_0''$ and that $p\in(0,1/2]$ and $\alpha\in(0,1/2]$ satisfy $pn \geq \log n$
and 
% $2e(pn)^2\log n\leq\sqrt{n}$.
$4 pn \log n \leq\sqrt{n}$.
Let $G_0$, $G$, and $G'$ be as before.
Then with probability at least $1-\exp(-c'' pn \log n)$, we have
$$
|\neigh_G(i)\cap\neigh_{G'}(i')|\leq \delta'' pn\quad\mbox{ for all } i, i' \in [n], \; i\neq i'.
$$
\end{lemma}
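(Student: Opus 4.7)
The plan is to reduce the statement to Lemma~\ref{183741-47} by passing to the union graph. Set $\Gamma := G \cup G'$, i.e., the graph on $[n]$ whose edges are the union of those of $G$ and $G'$. The key observation is that $\Gamma$ is itself an Erd\H{o}s--R\'enyi graph: for any pair of distinct vertices $u,v$, the event $\{uv \in \Gamma\}$ has probability
$$
\Prob\{uv \in G_0\} \cdot \big(1 - \Prob\{uv \notin G \mid uv \in G_0\} \Prob\{uv \notin G' \mid uv \in G_0\}\big)
= q(1-\alpha^2) = p(1+\alpha),
$$
and these events are mutually independent across edges (the parent graph $G_0$ is edge-independent, and the two subsampling steps are edge-independent conditional on $G_0$). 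Thus $\Gamma$ is distributed as $G(n,r)$ with $r := p(1+\alpha) \in [p,2p]$.

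Next, observe the inclusion $\neigh_G(i) \cap \neigh_{G'}(i') \subset \neigh_\Gamma(i) \cap \neigh_\Gamma(i')$ for any $i \neq i'$, so it suffices to bound the right-hand side. I would now apply Lemma~\ref{183741-47} to $\Gamma$ with $J := \{i,i'\}$ and with $\delta''$ replaced by $\delta''/4$. The hypotheses transfer: $rn \geq pn \geq \log n$, and $|J| \cdot rn\log n \leq 2 \cdot 2p n \log n = 4pn\log n \leq \sqrt{n}$. Applying the lemma with $v = i$ gives
$$
\Prob\Big\{\big|\neigh_\Gamma(i) \cap \big(\neigh_\Gamma(\{i'\}) \cup \{i,i'\}\big)\big| \geq (\delta''/4)\, rn\Big\} \leq \exp(-c\, rn\log n)
$$
for a constant $c>0$ depending on $\delta''$. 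Since $(\delta''/4)\, rn = (\delta''/4)(1+\alpha)pn \leq \delta'' pn/2$, we conclude $|\neigh_\Gamma(i) \cap \neigh_\Gamma(i')| \leq \delta'' pn$ with the stated probability.

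Finally, a union bound over the $\binom{n}{2} \le n^2$ unordered pairs $\{i,i'\}$ yields that the bad event has probability at most $n^2\exp(-c\, rn\log n) \leq \exp(-c''pn\log n)$ for a suitable $c''>0$, provided $n$ is large enough depending on $\delta''$, since $rn\log n \geq pn\log n \geq (\log n)^2$ absorbs the $n^2$ factor. This step is entirely routine; there is no real obstacle here, as the lemma is essentially a specialization of Lemma~\ref{183741-47}, and the only mild bookkeeping is tracking the factor $(1+\alpha)$ between $r$ and $p$.
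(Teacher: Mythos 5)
Your proposal is correct and follows essentially the same route as the paper: both reduce to Lemma~\ref{183741-47} with $J=\{i,i'\}$ applied to a $G(n,r)$ supergraph of $G$ and $G'$ with $r\le 2p$, then take a union bound over pairs. The only (immaterial) difference is your choice of supergraph — the union $G\cup G'\sim G(n,p(1+\alpha))$, which requires the small extra check that it is Erd\H{o}s--R\'enyi — whereas the paper simply uses the parent graph $G_0\sim G(n,p/(1-\alpha))$.
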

\begin{proof}
The result follows immediately by applying Lemma~\ref{183741-47} 
% to the $G(n,(1-\alpha)p)$ Erd\H{o}s--R\'enyi graph obtained by intersecting the edge sets of $G$ and $G'$, 
with $\Gamma = G_0$ and $J = \{i, i'\}$, together with a union bound over distinct $i, i' \in [n]$. 
\end{proof}

\begin{prop}[Improving a partial matching]
\label{prop:improving-partial-matching}
For every $\varepsilon\in(0,1]$, there exists $n_0 > 0$ and $\kappa \in (0,1)$ depending on $\varepsilon$ with the following property.
Assume that 
$$
n \ge n_0, \qquad
(1+\varepsilon)\log n \le pn \le \frac{\sqrt{n}}{4 \log n}, \qquad
\alpha\in(0,\varepsilon/4]. 
$$
Let the graphs $G$ and $G'$ be as before.
Assume that $\cJ$ is a random subset of $[n]$ (possibly depending on $G$ and $G'$),
and that $g:\cJ\to[n]$ is a random injective mapping (again, possibly depending on $G$ and $G'$).
Let 
$$
\Event := \big\{ \big|\{i\in \cJ:\;g(i)=i\}\big|\geq n-\kappa n \big\} .
$$

Define a random subset $\tilde \cJ\subset[n]$ and a random injective mapping $\tilde g:\tilde \cJ\to[n]$
as follows: For every $i\in[n]$, $i$ is in the set $\tilde \cJ$ if and only if there is a (unique) vertex $i'\in[n]$
such that
\begin{itemize}
\item $
|g(\neigh_G(i)\cap \cJ)\cap\neigh_{G'}(i')|\geq \varepsilon^2 pn/512;
$
\item $|g(\neigh_G(i)\cap \cJ)\cap\neigh_{G'}(j)|< \varepsilon^2 pn/512$ for all $j\in[n]\setminus\{i'\}$;
\item $|g(\neigh_G(j)\cap \cJ)\cap\neigh_{G'}(i')|<\varepsilon^2 pn/512$ for all $j\in[n]\setminus\{ i\}$.
\end{itemize}
We then set $\tilde g(i):=i'$ for any such pair of vertices $i$ and $i'$. 

Then with probability at least $\Prob(\Event)-\exp(-\varepsilon pn/9)$,
$$
\big|\big\{ i\in\tilde \cJ:\;\tilde g(i)=i \big\}\big| \geq \frac{1}{2}n + \frac{1}{2} \big|\big\{ i\in \cJ:\;g(i)=i \big\}\big|.
$$
\end{prop}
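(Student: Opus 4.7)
The plan is to prove that on the intersection of $\Event$ with a few high-probability structural events on $G$ and $G'$, \emph{every} vertex $i\in[n]$ whose $G$-neighborhood and $G'$-neighborhood are not ``polluted'' by the set $W := \{j\in\cJ : g(j)\ne j\}\cup([n]\setminus \cJ)$ will automatically satisfy $\tilde g(i)=i$. The point of this vertex-by-vertex viewpoint is that even vertices in $W$ (on which $g$ is incorrect or undefined) can be recovered by $\tilde g$, and it is exactly this gain that turns the input bound $|I|=|[n]\setminus W|$ into the target $\tfrac{1}{2}n+\tfrac{1}{2}|I|$.

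To implement this, first fix $\kappa := \eps^2/6144$ and auxiliary thresholds $\delta' := \eps^2/1024$ and $\delta'' := \eps^2/2048$; then assemble three deterministic ingredients. By Lemma~\ref{1983109487209}, with probability $\ge 1-\exp(-\eps pn/8)$ every $i$ has $|\neigh_G(i)\cap\neigh_{G'}(i)|\ge \eps^2 pn/256$. By Lemma~\ref{31-9847-1987-} (applicable because $pn\le \sqrt n/(4\log n)$), with probability at least $1-\exp(-c\,pn\log n)$ every pair of distinct $v,v'$ satisfies $|\neigh_G(v)\cap\neigh_{G'}(v')|\le \delta'' pn$. On $\Event$ we have $|W|\le \kappa n\le \delta' n/6$; applying Lemma~\ref{198370194871948} once to $G$ with the random subset $W$, and once to $G'$ with the random subset $g(W\cap\cJ)$ (whose cardinality is at most $|W|$), produces with overwhelming conditional probability the ``polluted'' sets
\[
B_1 := \{i\in[n] : |\neigh_G(i)\cap W|\ge \delta' pn\}, \qquad B_2 := \{i\in[n] : |\neigh_{G'}(i)\cap g(W\cap\cJ)|\ge \delta' pn\},
\]
each of size at most $|W|/4$. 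Intersecting with $\Event$ yields the claimed $\Prob(\Event)-\exp(-\eps pn/9)$ after the usual failure-probability bookkeeping.

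Next, I will check that any $i\in[n]\setminus(B_1\cup B_2)$ meets all three defining conditions with $i':=i$. Because $g$ is the identity on $I:=[n]\setminus W$, we have the deterministic inclusions $g(\neigh_G(v)\cap\cJ)\supseteq \neigh_G(v)\cap I$ and $g(\neigh_G(v)\cap\cJ)\subseteq(\neigh_G(v)\cap I)\cup g(W\cap\cJ)$ for every $v$. Using the first inclusion with $v=i$ and subtracting at most $\delta' pn$ from the lower bound $\eps^2 pn/256$ verifies condition~1 with slack; using the second inclusion with $v=i$ and summing $\delta'' pn$ (common neighbors in $G'$) and $\delta' pn$ (from $i\notin B_1$) verifies condition~2; using the second inclusion with $v=j\ne i$ and summing $\delta'' pn$ and $\delta' pn$ (now from $i\notin B_2$) verifies condition~3. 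Uniqueness of the witness $i'=i$ is automatic since condition~2 at $i'=i$ excludes every other vertex from satisfying condition~1. A head count then gives $|\{i\in\tilde\cJ:\tilde g(i)=i\}|\ge n-|B_1\cup B_2|\ge n-|W|/2=\tfrac{1}{2}n+\tfrac{1}{2}|I|$, as required.

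The step I anticipate as slightly delicate is the second application of Lemma~\ref{198370194871948}, where the ``wrong set'' is $g(W\cap\cJ)$ viewed as a random subset of the vertex set of $G'$. Since $\cJ$ and $g$ are permitted to depend on both $G$ and $G'$, one must verify that $g(W\cap\cJ)$ qualifies as a random set ``possibly depending on $\Gamma$'' with $\Gamma=G'$; this holds because $g(W\cap\cJ)$ is a measurable function of the joint randomness $(G,G',\cJ,g)$. It is precisely this second application that absorbs the losses caused by $g$'s errors on the $G'$ side into a single $|W|/4$ quota, and is what permits one to upgrade the naive ``recover only $I$'' bound to the stronger $\tfrac{1}{2}n+\tfrac{1}{2}|I|$ target.
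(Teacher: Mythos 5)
Your proposal is correct and follows essentially the same route as the paper's proof: the same three structural lemmas (Lemmas~\ref{198370194871948}, \ref{1983109487209}, and~\ref{31-9847-1987-}), the same verification that every ``unpolluted'' vertex satisfies the three conditions with witness $i'=i$, and the same head count $n-|W|/2=\tfrac12 n+\tfrac12|I|$. The only cosmetic difference is that you define the $G'$-side polluted set via $g(W\cap\cJ)$ where the paper uses $I^c$ (these coincide in effect, since injectivity of $g$ gives $g(\cJ\setminus I)\subseteq I^c$), and your constants check out.
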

\begin{proof}
Define random sets 
\begin{align*}
I := \{j\in \cJ:\;g(j)=j\} , \qquad 
\tilde I&:=\big\{i\in[n]:\; \big|\neigh_{G}(i)\cap I^c\big|  <2^{-10}\varepsilon^2pn\big\}, \\
\tilde I'&:=\big\{i\in[n]:\; \big|\neigh_{G'}(i)\cap I^c\big| <2^{-10}\varepsilon^2pn\big\} , 
%\tilde I'&:=\big\{i\in[n]:\; |\neigh_{G'}(i)\cap \{j\in \cJ:\;g(j)=j\}^c|<2^{-10}\varepsilon^2pn\big\},
\end{align*}
and consider the event
\begin{align*}
\Event':=\Big\{
\big|\tilde I^c\big| \lor \big|(\tilde I')^c\big| \leq \frac{1}{4} |I^c| , \quad \   %\mbox{ and }\\%|(\tilde I')^c|\leq \frac{1}{4}|\{j\in \cJ:\;g(j)=j\}^c|\;
&\big|\neigh_G(i)\cap\neigh_{G'}(i)\big|\geq \varepsilon^2 pn/256\,\quad\mbox{ for all }i\in[n] , \;\mbox{ and}\\
&\big|\neigh_G(i)\cap\neigh_{G'}(i')\big|< 2^{-10}\varepsilon^2 pn\quad\mbox{ for all } i, i' \in [n], \; i\neq i'
\Big\}.
\end{align*}
If we choose $\kappa>0$ sufficiently small and $n_0$ sufficiently large depending on $\varepsilon$, then, assuming $n>n_0$,
the event $\Event'\cap \Event$
has probability at least
$$
\Prob(\Event)-\exp(- \varepsilon pn/9),
$$
by combining Lemmas~\ref{198370194871948},~\ref{1983109487209}, and~\ref{31-9847-1987-}.
We claim that everywhere on $\Event'\cap \Event$, the set $\tilde \cJ$ and the mapping $\tilde g$ satisfy the conclusions
of the proposition.

Condition on any realization of $G,G',\cJ,g$ such that the event $\Event'\cap \Event$ holds.
Pick any vertex $i\in \tilde I \cap \tilde I'$. By the definition of $\Event'\cap \Event$, we have
% \begin{align*}
% \big|\neigh_{G}(i)\cap I^c\big|<2^{-10}\varepsilon^2pn 
% \quad \text{ and } \quad 
% \big|\neigh_{G}(i)\cap \neigh_{G'}(i)\big|\geq \varepsilon^2 pn/256,
% \end{align*}
% whence
\begin{align*}
\big|g(\neigh_G(i)\cap \cJ)\cap\neigh_{G'}(i)\big|
&\geq 
\big|\neigh_{G}(i) \cap I \cap \neigh_{G'}(i)\big| \\
&\ge \big|\neigh_{G}(i)\cap \neigh_{G'}(i)\big| - \big|\neigh_{G}(i)\cap I^c\big| \\
&\geq \varepsilon^2 pn/256 - 2^{-10}\varepsilon^2pn 
\ge \varepsilon^2 pn/512.
\end{align*}
On the other hand, for every $i'\in[n]\setminus\{i\}$ we have
\begin{align*}
\big|g(\neigh_G(i)\cap \cJ)\cap\neigh_{G'}(i')\big| 
&\leq 
\big|\neigh_G(i) \cap I \cap \neigh_{G'}(i')\big|
+ \big|g(\neigh_G(i)\cap (\cJ \setminus I))\cap \neigh_{G'}(i')\big| \\
&\leq 
\big|\neigh_G(i) \cap \neigh_{G'}(i')\big|
+ \big|\neigh_G(i) \cap I^c\big| \\
&< 2^{-10}\varepsilon^2pn  + 2^{-10}\varepsilon^2pn  
=  \varepsilon^2 pn/512,
\end{align*}
and, similarly, 
\begin{align*}
\big|g(\neigh_G(i')\cap \cJ)\cap\neigh_{G'}(i)\big| 
&\leq 
\big|\neigh_G(i') \cap I \cap \neigh_{G'}(i)\big|
+ \big|g(\neigh_G(i')\cap (\cJ \setminus I))\cap \neigh_{G'}(i)\big| \\
&\leq 
\big|\neigh_G(i') \cap \neigh_{G'}(i)\big|
+ \big|I^c \cap \neigh_{G'}(i)\big| \\
&< 2^{-10}\varepsilon^2pn  + 2^{-10}\varepsilon^2pn  
=  \varepsilon^2 pn/512 . 
\end{align*}
% $$
% \big|g(\neigh_G(i')\cap \cJ)\cap\neigh_{G'}(i)\big|<\varepsilon^2 pn/512.
% $$
Thus, $\tilde I \cap \tilde I' \subset \tilde \cJ$ and $\tilde g(i)=i$ for all $i\in \tilde I \cap \tilde I'$. 
Moreover, by the first condition in $\Event'$, we have 
$$
\big| \tilde I \cap \tilde I' \big|
\ge n - \big|\tilde I^c\big| - \big|(\tilde I')^c\big| 
\ge n - \frac{1}{2} |I^c| 
= \frac{1}{2} n + \frac{1}{2} |I| , 
$$
so the result follows.
\end{proof}

\smallskip

\begin{proof}[Proof of Theorem~\ref{1-20941-0498}]
In short, the theorem follows by applying Proposition~\ref{prop:improving-partial-matching} iteratively. 

To be more precise, first note that whenever we set $\pi_\ell(i') = i$ in Algorithm~\ref{alg:refine}, it is impossible to have $\pi_\ell(j) = i$ for $j \ne i'$ or $\pi_\ell(i') = j$ for $j \ne i$ thanks to the three conditions. 
As a result, $\pi_\ell$ is a well-defined injective function between subsets of $[n]$ after the loop through $i = 1, \dots, n$, and so $\pi_\ell$ can be extended to a permutation on $[n]$. 

Next, for any $\ell \in \big[\lceil \log_2 n \rceil\big]$ and $i \in [n]$, we have $\pi_{\ell-1}^{-1}\big(\neigh_{G^\pi}(i)\big) = \pi_{\ell-1}^{-1}\circ \pi \big(\neigh_{G}(\pi^{-1}(i))\big)$. 
Denote $g_\ell = \pi_{\ell}^{-1} \circ \pi$. 
Therefore, when setting $\pi_\ell(i') = i$ in Algorithm~\ref{alg:refine}, we are defining $g_\ell\big(\pi^{-1}(i)\big) = i'$ if the conditions
\begin{itemize}
\item 
$
\big| g_{\ell-1}\big(\neigh_G(\pi^{-1}(i))\big)\cap\neigh_{G'}(i') \big| \geq \varepsilon^2 pn/512,
$
\item 
$\big| g_{\ell-1}\big(\neigh_G(\pi^{-1}(i))\big)\cap\neigh_{G'}(j) \big| < \varepsilon^2 pn/512$ for all $j\in[n]\setminus\{i'\}$, and
\item 
$\big| g_{\ell-1}\big(\neigh_G(j)\big)\cap\neigh_{G'}(i') \big| <\varepsilon^2 pn/512$ for all $j\in[n]\setminus\{\pi^{-1}(i)\}$ 
\end{itemize}
are satisfied. 
Replacing $\pi^{-1}(i)$ by $i$ in the last statement (which is simply a change of notation as $i$ varies over $[n]$), we see that Proposition~\ref{prop:improving-partial-matching} can be applied with $\cJ = [n]$ to yield the following: With probability at least 
$$
\Prob\Big\{ \big|\{i\in [n] : g_{\ell-1}(i) \ne i\}\big| \le \frac{\kappa n}{2^{\ell-1}} \Big\} - \exp(-\varepsilon pn/9) ,
$$
we have 
$$
\big|\big\{ i \in [n] : g_\ell(i) \ne i \big\}\big| \le \frac{\kappa n}{2^\ell} .
$$

To conclude, note that 
$$
\big|\big\{ i \in [n] : g_\ell(i) \ne i \big\}\big| 
= \big|\big\{ i \in [n] : \pi_\ell^{-1} \circ \pi(i) \ne i \big\}\big|
= \big|\big\{ i \in [n] : \pi_\ell(i) \ne \pi(i) \big\}\big| . 
$$
Since $\pi_0 = \hat \pi$ and $\pi_{\lceil \log_2 n \rceil} = \tilde \pi$, applying the above argument iteratively for $\ell = 1, \dots, \lceil \log_2 n \rceil$ gives that, with probability at least 
$$
\Prob\big\{|\{i\in [n] : \hat \pi(i) \ne \pi(i)\}| \le \kappa n\big\} - \lceil \log_2 n \rceil\cdot \exp(-\eps pn/9) ,
$$
we have
$$
\big|\big\{ i \in [n] : \tilde \pi(i) \ne \pi(i) \big\}\big| \le \frac{\kappa n}{2^{\lceil \log_2 n \rceil}} < 1 , 
$$
that is, $\tilde \pi = \pi$. 
Since $\lceil \log_2 n \rceil\cdot \exp(-\eps pn/9) \le \exp(-\eps pn/10)$ by the assumptions on the parameters, this completes the proof.
\end{proof}

\bibliographystyle{alpha}
\bibliography{ref}

\newcommand{\etalchar}[1]{$^{#1}$}
\begin{thebibliography}{FQRM{\etalchar{+}}16}

\bibitem[Bab16]{babai2016graph}
L{\'a}szl{\'o} Babai.
\newblock Graph isomorphism in quasipolynomial time.
\newblock In {\em Proceedings of the forty-eighth annual ACM symposium on
  Theory of Computing}, pages 684--697, 2016.

\bibitem[BCL{\etalchar{+}}19]{barak2019nearly}
Boaz Barak, Chi-Ning Chou, Zhixian Lei, Tselil Schramm, and Yueqi Sheng.
\newblock (nearly) efficient algorithms for the graph matching problem on
  correlated random graphs.
\newblock In {\em Advances in Neural Information Processing Systems}, pages
  9190--9198, 2019.

\bibitem[BCPP98]{burkard1998quadratic}
Rainer~E Burkard, Eranda Cela, Panos~M Pardalos, and Leonidas~S Pitsoulis.
\newblock The quadratic assignment problem.
\newblock In {\em Handbook of combinatorial optimization}, pages 1713--1809.
  Springer, 1998.

\bibitem[BES80]{babai1980random}
L{\'a}szl{\'o} Babai, Paul Erd\"os, and Stanley~M Selkow.
\newblock Random graph isomorphism.
\newblock {\em SIAM Journal on computing}, 9(3):628--635, 1980.

\bibitem[Bol82]{bollobas1982distinguishing}
B{\'e}la Bollob{\'a}s.
\newblock Distinguishing vertices of random graphs.
\newblock {\em North-Holland Mathematics Studies}, 62:33--49, 1982.

\bibitem[BSH19]{bozorg2019seedless}
Mahdi Bozorg, Saber Salehkaleybar, and Matin Hashemi.
\newblock Seedless graph matching via tail of degree distribution for
  correlated erdos-renyi graphs.
\newblock {\em arXiv preprint arXiv:1907.06334}, 2019.

\bibitem[CK16]{cullina2016improved}
Daniel Cullina and Negar Kiyavash.
\newblock Improved achievability and converse bounds for
  {E}rd\"{o}s-{R}\'{e}nyi graph matching.
\newblock In {\em Proceedings of the 2016 ACM SIGMETRICS International
  Conference on Measurement and Modeling of Computer Science}, pages 63--72.
  ACM, 2016.

\bibitem[CKMP19]{cullina2019partial}
Daniel Cullina, Negar Kiyavash, Prateek Mittal, and H~Vincent Poor.
\newblock Partial recovery of erd{\H{o}}s-r{\'e}nyi graph alignment via k-core
  alignment.
\newblock {\em Proceedings of the ACM on Measurement and Analysis of Computing
  Systems}, 3(3):1--21, 2019.

\bibitem[CP08]{czajka2008improved}
Tomek Czajka and Gopal Pandurangan.
\newblock Improved random graph isomorphism.
\newblock {\em Journal of Discrete Algorithms}, 6(1):85--92, 2008.

\bibitem[DCKG19]{dai2019analysis}
Osman~Emre Dai, Daniel Cullina, Negar Kiyavash, and Matthias Grossglauser.
\newblock Analysis of a canonical labeling algorithm for the alignment of
  correlated erdos-r{\'e}nyi graphs.
\newblock {\em Proceedings of the ACM on Measurement and Analysis of Computing
  Systems}, 3(2):1--25, 2019.

\bibitem[DMWX21]{DMWX18}
Jian Ding, Zongming Ma, Yihong Wu, and Jiaming Xu.
\newblock Efficient random graph matching via degree profiles.
\newblock {\em Probability Theory and Related Fields}, 179(1):29--115, 2021.

\bibitem[FMWX19]{FMWX19b}
Zhou Fan, Cheng Mao, Yihong Wu, and Jiaming Xu.
\newblock Spectral graph matching and regularized quadratic relaxations {II}:
  {E}rd{\H{o}}s--{R}\'enyi graphs and universality.
\newblock {\em arXiv preprint arXiv:1907.08883}, 2019.

\bibitem[FMWX20]{fan2020spectral}
Zhou Fan, Cheng Mao, Yihong Wu, and Jiaming Xu.
\newblock Spectral graph matching and regularized quadratic relaxations:
  Algorithm and theory.
\newblock In {\em International Conference on Machine Learning}, pages
  2985--2995. PMLR, 2020.

\bibitem[FQRM{\etalchar{+}}16]{feizi2016spectral}
Soheil Feizi, Gerald Quon, Mariana Recamonde-Mendoza, Muriel M{\'e}dard,
  Manolis Kellis, and Ali Jadbabaie.
\newblock Spectral alignment of networks.
\newblock {\em arXiv preprint arXiv:1602.04181}, 2016.

\bibitem[GM20]{ganassali20a}
Luca Ganassali and Laurent Massouli\'e.
\newblock From tree matching to sparse graph alignment.
\newblock In {\em Proceedings of Thirty Third Conference on Learning Theory},
  volume 125 of {\em Proceedings of Machine Learning Research}, pages
  1633--1665. PMLR, 09--12 Jul 2020.

\bibitem[GML21]{ganassali2021correlation}
Luca Ganassali, Laurent Massouli{\'e}, and Marc Lelarge.
\newblock Correlation detection in trees for partial graph alignment.
\newblock {\em arXiv preprint arXiv:2107.07623}, 2021.

\bibitem[KHG15]{kazemi2015growing}
Ehsan Kazemi, S~Hamed Hassani, and Matthias Grossglauser.
\newblock Growing a graph matching from a handful of seeds.
\newblock {\em Proceedings of the VLDB Endowment}, 8(10):1010--1021, 2015.

\bibitem[LFP14]{lyzinski2014seeded}
Vince Lyzinski, Donniell~E Fishkind, and Carey~E Priebe.
\newblock Seeded graph matching for correlated {E}rd{\"o}s-{R}{\'e}nyi graphs.
\newblock {\em Journal of Machine Learning Research}, 15(1):3513--3540, 2014.

\bibitem[LS18]{lubars2018correcting}
Joseph Lubars and R~Srikant.
\newblock Correcting the output of approximate graph matching algorithms.
\newblock In {\em IEEE INFOCOM 2018-IEEE Conference on Computer
  Communications}, pages 1745--1753. IEEE, 2018.

\bibitem[MRT21]{pmlr-v134-mrt}
Cheng Mao, Mark Rudelson, and Konstantin Tikhomirov.
\newblock Random graph matching with improved noise robustness.
\newblock In {\em Proceedings of Thirty Fourth Conference on Learning Theory},
  volume 134 of {\em Proceedings of Machine Learning Research}, pages
  3296--3329. PMLR, 15--19 Aug 2021.

\bibitem[MX20]{mossel2020seeded}
Elchanan Mossel and Jiaming Xu.
\newblock Seeded graph matching via large neighborhood statistics.
\newblock {\em Random Structures \& Algorithms}, 57(3):570--611, 2020.

\bibitem[PG11]{pedarsani2011privacy}
Pedram Pedarsani and Matthias Grossglauser.
\newblock On the privacy of anonymized networks.
\newblock In {\em Proceedings of the 17th ACM SIGKDD international conference
  on Knowledge discovery and data mining}, pages 1235--1243, 2011.

\bibitem[PRW94]{Pardalos94thequadratic}
Panos~M. Pardalos, Franz Rendl, and Henry Wolkowicz.
\newblock The quadratic assignment problem: A survey and recent developments.
\newblock In {\em In Proceedings of the DIMACS Workshop on Quadratic Assignment
  Problems, volume 16 of DIMACS Series in Discrete Mathematics and Theoretical
  Computer Science}, pages 1--42. American Mathematical Society, 1994.

\bibitem[SGE17]{shirani2017seeded}
Farhad Shirani, Siddharth Garg, and Elza Erkip.
\newblock Seeded graph matching: Efficient algorithms and theoretical
  guarantees.
\newblock In {\em 2017 51st Asilomar Conference on Signals, Systems, and
  Computers}, pages 253--257. IEEE, 2017.

\bibitem[Ver18]{MR3837109}
Roman Vershynin.
\newblock {\em High-dimensional probability}, volume~47 of {\em Cambridge
  Series in Statistical and Probabilistic Mathematics}.
\newblock Cambridge University Press, Cambridge, 2018.
\newblock An introduction with applications in data science, With a foreword by
  Sara van de Geer.

\bibitem[WXY21]{wu2021settling}
Yihong Wu, Jiaming Xu, and Sophie~H. Yu.
\newblock Settling the sharp reconstruction thresholds of random graph
  matching.
\newblock {\em arXiv preprint2102.00082}, 2021.

\bibitem[YG13]{yartseva2013performance}
Lyudmila Yartseva and Matthias Grossglauser.
\newblock On the performance of percolation graph matching.
\newblock In {\em Proceedings of the first ACM conference on Online social
  networks}, pages 119--130. ACM, 2013.

\bibitem[YXL20]{yu2020graph}
Liren Yu, Jiaming Xu, and Xiaojun Lin.
\newblock Graph matching with partially-correct seeds.
\newblock {\em arXiv preprint arXiv:2004.03816}, 2020.

\end{thebibliography}

\end{document}